\DeclareMathAlphabet{\mathpzc}{OT1}{pzc}{m}{it}
\numberwithin{equation}{section}
\theoremstyle{plain}
\newtheorem{thm}{Theorem}[section]
\newtheorem{lem}[thm]{Lemma}
\newtheorem{cor}[thm]{Corollary}
\newtheorem{prop}[thm]{Proposition}
\theoremstyle{definition}
\newtheorem{dfn}[thm]{Definition}
\newtheorem{ntz}[thm]{Notation}
\newtheorem{defn}[thm]{Definition}
\newtheorem{exam}[thm]{Example}
\newtheorem{rmk}[thm]{Remark}
\DeclareMathAlphabet{\mathpzc}{OT1}{pzc}{m}{it}
\newcommand\gr{\mathfrak{g}_{\mathbb{R}}}
\DeclareMathOperator{\R}{{\mathbb{R}}}
\DeclareMathOperator{\kt}{{\kappaup}}
\DeclareMathOperator{\supp}{\mathrm{supp}}
\DeclareMathOperator{\ad}{\mathrm{ad}}
\newcommand\Cd{\mathfrak{C}}
\DeclareMathOperator{\Z}{\mathbb{Z}}
\DeclareMathOperator{\pt}{\mathfrak{p}}
\DeclareMathOperator{\C}{\mathbb{C}}
\newcommand\Kf{\mathbf{K}}
\newcommand\sgn{\mathrm{sgn}}
\newcommand\Aut{\mathpzc{Aut}}
\newcommand\Inv{\mathpzc{I\!{n}{v}}}
\newcommand\Ib{\mathbf{I}}
\newcommand\Invs{\mathpzc{I\!{n}{v}}^{\tauup}_{\!\stt}(\gr,\hr)}
\newcommand\gt{\mathfrak{g}}
\newcommand\Rad{\mathpzc{R}}
\newcommand\Btt{\texttt{B}}
\newcommand\Mtt{\texttt{M}}
\newcommand\hg{\mathfrak{h}}
\newcommand\Fq{\mathpzc{F}}
\newcommand\spt{\mathfrak{sp}}
\newcommand\Wf{\mathbf{W}}
\newcommand\Sb{\mathbf{S}}
\newcommand\bt{\mathfrak{b}}
\newcommand\gl{\mathfrak{gl}}
\newcommand\slt{\mathfrak{sl}}
\newcommand\su{\mathfrak{su}}
\newcommand\Af{\mathbf{A}}
\newcommand{\Bz}{\mathpzc{B}}
\newcommand\kll{\texttt{k}}
\newcommand{\lt}{\mathfrak{l}}
\newcommand\id{\mathrm{id}}
\newcommand\vq{\mathpzc{v}}
\newcommand\bil{\texttt{b}}
\newcommand\so{\mathfrak{so}}
\newcommand\Hb{\mathbb{H}}
\newcommand\sfv{\textsf{v}}
\newcommand\Hom{\mathrm{Hom}}
\newcommand\Homz{\mathrm{Hom}(\Z[\Rad],\Z_{2}^{*})}
\newcommand\Homs{\mathrm{Hom}_{\stt}(\Z[\Rad],\Z_{2}^{*})}
\newcommand\epi{\varepsilon}
\newcommand\sq{\mathpzc{s}}
\newcommand\kq{\mathpzc{k}}
\newcommand\ttt{\texttt{t}}
\newcommand\vtt{\texttt{v}}
\newcommand\stt{\texttt{s}}
\newcommand\att{\texttt{a}}
\newcommand\hsi{\mathfrak{h}_{\stt}}
\newcommand\ftt{\texttt{f}}
\newcommand\gtt{\texttt{g}}
\newcommand\e{\texttt{e}}
\newcommand\hr{\mathfrak{h}_{\mathbb{R}}}
\newcommand\gu{\mathfrak{g}_{\tauup}}
\newcommand\gs{\mathfrak{g}_{\sigmaup}}
\newcommand\hu{i\mathfrak{h}_{\R}}
\newcommand\hs{\mathfrak{h}_{\sigmaup}}
\newcommand\ks{\kappaup_{\sigmaup}}
\newcommand\ps{\mathfrak{p}_{\sigmaup}}
\newcommand\htt{\texttt{h}}
\newcommand\ptt{\texttt{p}}
\newcommand\Stt{\texttt{S}}
\newcommand\hst{\mathfrak{h}_{\stt}}
\newcommand\go{\mathfrak{g}_{0}}
\newcommand\ho{\mathfrak{h}_{0}}
\newcommand\Rd[2]{\Rad_{\,\;#1}^{#2}}
\newcommand\Reg{\mathpzc{Reg}}
   \def\DHLhksqrt#1#2{\setbox0=\hbox{$#1\sqrt{#2\,}$}\dimen0=\ht0
     \advance\dimen0-0.2\ht0
     \setbox2=\hbox{\vrule height\ht0 depth -\dimen0}%
     {\box0\lower0.4pt\box2}}
\title{Root involutions, real forms and diagrams}
\author{S.Marini, C.Medori, M.Nacinovich}
\address{Stefano Marini: Dipartimento di Scienze Matematiche, Fisiche e Informatiche\\ Universit\`a di Parma\\ Parco Area delle Scienze 53/A (Campus), 43124 Parma
 (Italy)} \email{stefano.marini@unipr.it}
\address{Costantino Medori:
Dipartimento di Scienze Matematiche, Fisiche e Informatiche\\ Universit\`a di Parma\\ Parco Area delle Scienze 53/A (Campus), 43124 Parma
 (Italy)} \email{costantino.medori@unipr.it}
\address{Mauro Nacinovich:
Dipartimento di Matematica\\ II Universit\`a di Roma
``Tor Ver\-ga\-ta''\\ Via della Ricerca Scientifica\\ 00133 Roma
(Italy)}
\email{nacinovi@mat.uniroma2.it}
\subjclass[2000]{Primary: 17B20 ,17B22, 17B40}
\keywords{Satake diagram, Vogan diagram, real form,  root space,  Cartan subalgebra,  semisimple Lie algebra}
\date\today
\begin{document}
\begin{abstract}
We study the correspondence between 
 equivalence classes of pairs consisting of real semisimple Lie algebras
and their Cartan subalgebras and involutions of the corresponding root
system.
This can be graphically described by introducing 
\emph{$S\!${-} and $\Sigma$-diagrams}, 
generalizing those of Satake and Vogan.
\end{abstract}

\thanks{This work has been financially supported by the Programme “FIL-Quota 
Incentivante” of University of Parma, co-sponsored by Fondazione 
Cariparma, by the PRIN project ``Real and Complex Manifolds: Topology, 
Geometry and holomorphic dynamics'',  and by 
the group G.N.S.A.G.A. of I.N.d.A.M.}

\maketitle 
\tableofcontents

\section*{Introduction}
In his papers \cite{C14, C29} \'E.  Cartan
gave  
a complete classification 
of irreducible Riemannian globally symmetric spaces
and
simple real Lie algebras. In his treatment he employes
maximally compact and maximally
vector Cartan subalgebras. 
\par 

In this paper,  to settle the ground
for application to investigating the $CR$ geometry of 
orbits of real forms in complex flag manifolds (see e.g.  \cite{MMN21, Wolf69}
and references therein), we  
study equivalence classes 
of pairs consisting of 
real semisimple Lie algebras and their Cartan subalgebras.
A study of conjugacy classes of Cartan subalgebras in real semisimple Lie algebras can be found in 
\cite{Kost55,  Sug69}.
\par
In our approach, we reduce, modulo equivalence, to real forms described
by anti-involutions fixing given split and compact forms. This has the
advantage of allowing us to keep fixed a Cartan subalgebra of the complex form
and hence the corresponding root system, very much simplifying the relative combinatorics.
\par 
The real Cartan subalgebras we consider are 
this
fixed complex Cartan subalgebra. 
They correpsond
to 
involutions $\stt$
of its root system $\Rad$; 
each one may be contained in different semisimple real forms, 
as different anti-involutions $\sigmaup$ can induce the same involution $\stt$ on the roots.
Imaginary roots for $\stt$ are  \textit{compact} or \textit{not-compact},
according with the action of $\sigmaup$ on the corresponding $\slt_{2}(\C)$.
\par
Involutions $\stt$ of $\Rad$ can be graphically described by
\textit{$S\!$-diagrams}, which slightly generalize Satake's (see \cite{Ara62, sa60}).
Different real forms corresponding to a same $\stt$ are identified
by \textit{$\Sigma$-diagrams},
that are obtained  by changing colour to imaginary 
not compact roots.  $S\!$-diagrams characterize real Cartan subalgebras,
$\Sigma$-diagram real forms. As Vogan's  (see e.g. \cite[Ch.6,\S{8}]{Kn:2002}),
$\Sigma$-diagrams are not unique. They may be made unique by adding  extra requirements
on their simple positive imaginary roots.
General $\Sigma$-diagrams 
depend on the choice of  suitable Weyl chambers and no wonder that also the different
possible choices of \textit{admissible} Weyl chambers is relevant in the applications to 
$CR$ geometry (see e.g.~\cite{AMN06b}). 
In the spirit of \cite{MMN23}  the unique closed orbit of a real form in a
 complex flag manifold can be described by utilising 
 a maximally non compact Cartan subalgebra ( see \cite{Dietrich2013} for definition and explicit computation)
 contained in the in the Lie algebra of its
 isotropy group: one can use for them 
 Satake diagrams. The isotropy groups of open orbits contain maximally compact Cartan subalgebras
 and Vogan diagrams may be employed for their description. Consideration of intermediate orbits
 require utilising intermediate classes of Cartan subalgebras and more general diagrams.\par
  \medskip
\noindent

\section{Preliminaries}\label{s1}

\subsection{Split and compact forms,  Chevalley systems}\label{s1.1}\quad\par
Let $\gt$ be a complex Lie algebra. Its \emph{real forms} are its real subalgebras $\go$ 
such that
\begin{equation*}
 \gt\,{=}\,\go\oplus{i}\go.
\end{equation*}
Real forms are loci of fixed points of anti-$\C$-linear involutions (see \S\ref{s2}).
\par\smallskip
We will focus on the case where $\gt$ is a complex finite dimensional 
semisimple Lie algebra. \par
Any complex semi\-sim\-ple Lie algebra $\gt$ admits a \emph{split real form}
$\gr$ and a \emph{compact form} $\gu$, 
which are unique modulo conjugation (see \cite[p.296]{Kn:2002}).

 \par \index{$\gr$} \index{$\gu$}
Fix a complex Cartan subalgebra
$\hg$ 
of $\gt$, i.e.  a nilpotent selfnormalizing subalgebra.  The \emph{root system} $\Rad\,{=}\,\Rad(\gt,\hg)$
is the subset of the dual $\hg^{*}$ of $\hg$ 
consisting of the nonzero $\alphaup$'s  for which 
\begin{equation}
 \gt^{\alphaup}=\{Z\in\gt\mid [H,Z]=\alphaup(H)\,Z,\;\forall H\in\hg\}\neq\{0\}.
\end{equation}
\par
We recall (see e.g. \cite[Ch.VIII,\S{2}.{4}]{Bou82})
\begin{thm}\label{thmchev} Let $\gt,\,\hg,\,\Rad$ be as above. 
Then 
\begin{equation}\label{e1.2}
 \hr=\{H\in\hg\mid \alphaup(H)\in\R,\;\forall\alphaup\in\Rad\}
\end{equation}
is a real form of $\hg$.\par\index{$\hr$}
We can find a set 
\begin{equation}\label{e1.3}
\{(X_{\alphaup},H_{\alphaup})\}_{\alphaup\in\Rad}\subset\gt\times\hr
\end{equation}
 with the properties:
\begin{enumerate}
\item $\alphaup(H_{\betaup})\,{\in}\,\Z,$ for all $\alphaup,\betaup\,{\in}\,\Rad\,;$
 \item $X_{\alphaup}\in\gt^{\alphaup}$, i.e. $[H,X_{\alphaup}]\,{=}\,\alphaup(H)X_{\alphaup},$ 
 $\forall H\in\hg\,;$
 \item For every $\alphaup\,{\in}\,\Rad$\,, we have  
\begin{equation*}
 [X_{\alphaup},X_{-\alphaup}]=-H_{\alphaup},\;\; [H_{\alphaup},X_{\alphaup}]=2X_{\alphaup},
 \;\; [H_{\alphaup},X_{-\alphaup}]=-2X_{-\alphaup};
\end{equation*}
\item  The direct sum 
\begin{equation}\label{e1.4}
 \gr=\hr\oplus{\sum}_{\alphaup\in\Rad}\gr^{\alphaup},\;\;\text{where}\;\;
 \gr^{\alphaup}{=}\,\langle X_{\alphaup}\rangle_{\R}
\end{equation}
is a split real form of $\gt\,;$
\item the $\R$-linear self-map of $\gr$ defined by 
\begin{equation*}
 \tauup(H)={-}H, \;\forall H\,{\in}\,\hr,\quad \tauup(X_{\alphaup})=X_{-\alphaup},\;\;\forall\alphaup\in\Rad
\end{equation*}
is an involutive automorphism of $\gr$, defining a compact form of $\gt\! :$
\begin{equation}\label{e1.5}
\vspace{-20pt}
 \gu= \{X\mid X\in\gr, \; \tauup(X)\,{=}\,X\}\oplus\{iX\mid X\in\gr,\;\tauup(X)\,{=}\,{-}X\}.
 \end{equation}
\qed
\end{enumerate}
\end{thm}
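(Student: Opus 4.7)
The plan is to carry out the classical Chevalley construction for the pair $(\gt, \hg)$, which packages together the real form $\hr$ of the Cartan subalgebra, the existence of a Chevalley system, and the split and compact forms of $\gt$ obtained via the Weyl unitary trick.

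First, I would prove \eqref{e1.2}. Since $\gt$ is semisimple, the Killing form $\kappaup$ is non-degenerate on $\hg$, so for each $\alphaup\in\Rad$ the vector $T_{\alphaup}\in\hg$ defined by $\kappaup(T_{\alphaup},H)=\alphaup(H)$ is well-defined, and the $T_{\alphaup}$ span $\hg$ over $\C$. Standard facts about abstract root systems give $\alphaup(T_{\betaup})=\kappaup(T_{\alphaup},T_{\betaup})\in\Q$ and show that $\kappaup$ is positive definite on $\sum_{\alphaup}\R T_{\alphaup}$, which therefore coincides with $\hr$ as defined by \eqref{e1.2}.

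Second, I would construct the system $\{(X_{\alphaup},H_{\alphaup})\}$. Set $H_{\alphaup}=2T_{\alphaup}/\kappaup(T_{\alphaup},T_{\alphaup})\in\hr$, so that $\alphaup(H_{\alphaup})=2$; the integrality in (1) then follows from $\slt_{2}(\C)$-representation theory applied to the adjoint action of the triple $\langle X_{\betaup},X_{-\betaup},H_{\betaup}\rangle$ on $\gt$. Each root space $\gt^{\alphaup}$ being one-dimensional, pick $X_{\alphaup}\in\gt^{\alphaup}$ and rescale each pair $(X_{\alphaup},X_{-\alphaup})$ so that $[X_{\alphaup},X_{-\alphaup}]=-H_{\alphaup}$, yielding (2) and (3). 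The delicate part is arranging the whole family $\{X_{\alphaup}\}$ so that the structure constants $N_{\alphaup,\betaup}$ in $[X_{\alphaup},X_{\betaup}]=N_{\alphaup,\betaup}X_{\alphaup+\betaup}$ (whenever $\alphaup+\betaup\in\Rad$) are integers with the sign symmetry $N_{-\alphaup,-\betaup}=N_{\alphaup,\betaup}$ forced by (5). This is performed by induction on the height of positive roots, using the Jacobi identity together with $|N_{\alphaup,\betaup}|=p+1$, where $p$ is the largest integer with $\betaup-p\alphaup\in\Rad$; this is the content of Chevalley's classical lemma.

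With this system in hand, (4) is immediate: $\gr=\hr\oplus\sum_{\alphaup}\R X_{\alphaup}$ is a real subspace with $\gt=\gr\oplus i\gr$, closed under the bracket thanks to the reality (actually integrality) of the structure constants; since $\hr$ is a Cartan subalgebra of $\gr$ on which every root is real-valued, $\gr$ is split. For (5), the map $\tauup$ is a well-defined $\R$-linear involution of $\gr$, and the bracket-compatibility reduces to $N_{\alphaup,\betaup}=N_{-\alphaup,-\betaup}$, the sign symmetry already arranged above. Finally, \eqref{e1.5} defines a real subspace $\gu\subset\gt$ with $\gt=\gu\oplus i\gu$, closed under the bracket; compactness is checked by evaluating $\kappaup$ on the $\R$-spanning set $\{iH_{\alphaup},\,X_{\alphaup}+X_{-\alphaup},\,i(X_{\alphaup}-X_{-\alphaup})\}$ of $\gu$: with the sign convention $\kappaup(X_{\alphaup},X_{-\alphaup})<0$ forced by $[X_{\alphaup},X_{-\alphaup}]=-H_{\alphaup}$, each entry gives a strictly negative Killing value, so $\kappaup|_{\gu}$ is negative definite.

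The main obstacle is the Chevalley lemma step: engineering the $X_{\alphaup}$'s simultaneously so that all structure constants are integers \emph{and} satisfy $N_{-\alphaup,-\betaup}=N_{\alphaup,\betaup}$. This single global consistency condition is what simultaneously makes $\gr$ a real Lie subalgebra, $\tauup$ an involutive automorphism, and $\gu$ a compact form.
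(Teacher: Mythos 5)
Your outline is correct and is exactly the classical Chevalley construction that the paper relies on: the paper gives no proof of Theorem~\ref{thmchev}, deferring entirely to Bourbaki, Ch.~VIII, \S 2.4, and your steps (duality via the Killing form, rationality and positivity on $\hr$, normalization $\alphaup(H_{\alphaup})=2$, Chevalley's integrality lemma with the symmetry $N_{-\alphaup,-\betaup}=N_{\alphaup,\betaup}$ forced by the convention $[X_{\alphaup},X_{-\alphaup}]=-H_{\alphaup}$, and negative definiteness of the Killing form on $\gu$) reproduce that argument, consistently with the sign conventions recorded in \eqref{e1.5a}. No discrepancy with the paper's approach.
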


\begin{defn}
A set $\{(X_{\alphaup},H_{\alphaup})\}_{\alphaup\in\Rad}$ satisfying Conditions 
$(1), (2), (3), (4), (5)$ of  Theorem\,\ref{thmchev} 
is called a  
\emph{Chevalley system} for $(\gt,\hg)$. 
\end{defn}
For every pair of roots $\alphaup,\betaup$ in $\Rad$, with $\betaup\,{\neq}{\pm}\alphaup$, 
there are nonnegative integers $p_{\alphaup,\betaup},q_{\alphaup,\betaup}$
such that 
\begin{equation*}
 \{j\in\Z\mid \betaup\,{+}j\,\alphaup\in\Rad\}=\{j\in\Z\mid -q_{\alphaup,\betaup}\leq{j}\leq{p}_{\alphaup,\betaup}\}.
\end{equation*}
Having fixed a Chevalley system $\{(X_{\alphaup},H_{\alphaup})\}_{\alphaup\in\Rad}$, define 
coefficients $\{N_{\alphaup,\betaup}\}_{\alphaup,\betaup\in\Rad}$ by 
\begin{equation} \label{e1.6}
\begin{cases}
[X_{\alphaup},X_{\betaup}]=N_{\alphaup,\betaup}X_{\alphaup+\betaup}, & \text{if 
$\alphaup,\betaup,\alphaup{+}\betaup\in\Rad$}\\
N_{\alphaup,\betaup}=0, &\text{if $\alphaup,\betaup\in\Rad,$ $\alphaup{+}\betaup\notin\Rad$.}
\end{cases}
\end{equation}

Then
(see e.g. \cite[Ch.VIII,\S{2.4}]{Bou82})
\begin{equation} \label{e1.5a} \begin{cases}
0\leq p_{\alphaup,\betaup}+q_{\alphaup,\betaup}\leq{3},\\
N_{\alphaup,\betaup}=N_{-\alphaup,-\betaup}={\pm}(q_{\alphaup,\betaup}+1), \\
N_{\alphaup,\betaup}\cdot N_{-\alphaup,\alphaup+\betaup}=-p_{\alphaup,\betaup}(q_{\alphaup,\betaup}+1).
 \end{cases}
\end{equation}
\begin{lem}\label{l1.2}
 For every root $\alphaup\,{\in}\,\Rad$ set 
\begin{equation}
 K_{\alphaup}=X_{\alphaup}+X_{-\alphaup},\quad T_{\alphaup}=X_{\alphaup}-X_{-\alphaup}.
\end{equation}
 Then $\ad(K_{\alphaup})$ 
 is semisimple, with eigenvalues contained in $\{0,\pm{i},\pm{2}i,{\pm}3i\}$. \par 
 We have 
\begin{equation} \label{eq1.7}
\begin{cases}
 \exp(\ttt \ad(K_{\alphaup}))(H_{\alphaup})=H_{\alphaup}\cos(2\ttt)-T_{\alphaup}\sin(2\ttt),\\
 \exp(\ttt \ad(K_{\alphaup}))(T_{\alphaup})=H_{\alphaup}\sin(2\ttt)+T_{\alphaup}\cos(2\ttt).
\end{cases}
\end{equation}
 
\end{lem}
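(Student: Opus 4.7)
The plan is to establish the formulas \eqref{eq1.7} by a direct calculation on the plane $\C H_\alphaup \oplus \C T_\alphaup$, and then to deduce the eigenvalue bound on the rest of $\gt$ via a conjugacy argument inside a compact $\su(2)$-subalgebra of $\gu$.

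First, using the Chevalley relations $[H_\alphaup, X_{\pm\alphaup}] = \pm 2 X_{\pm\alphaup}$ and $[X_\alphaup, X_{-\alphaup}] = -H_\alphaup$ from Theorem~\ref{thmchev}, a short computation gives
\begin{equation*}
[K_\alphaup, H_\alphaup] = -2\,T_\alphaup, \qquad [K_\alphaup, T_\alphaup] = 2\,H_\alphaup.
\end{equation*}
Thus $\ad(K_\alphaup)$ preserves the plane $V = \C H_\alphaup \oplus \C T_\alphaup$ and acts there as a scaled rotation generator sending $H_\alphaup \mapsto -2T_\alphaup$ and $T_\alphaup \mapsto 2H_\alphaup$; exponentiating yields \eqref{eq1.7} at once.

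For the eigenvalue and semisimplicity claims, I would exploit that $\tauup(K_\alphaup) = K_\alphaup$, $\tauup(iT_\alphaup) = iT_\alphaup$ and $\tauup(iH_\alphaup) = iH_\alphaup$, so these three elements all lie in $\gu$. The brackets above (together with $[iH_\alphaup, K_\alphaup] = 2i\,T_\alphaup$, obtained the same way) show that they span an $\su(2)$-subalgebra of $\gu$ with $\so(3)$-type commutation relations. Inside this $\su(2)$ the elements $K_\alphaup$ and $iH_\alphaup$ are conjugate: the same planar rotation computation as above, applied to $\ad(iT_\alphaup)$ on $\langle iH_\alphaup, K_\alphaup\rangle$, gives $\Ad\bigl(\exp((\pi/4)\,iT_\alphaup)\bigr)(iH_\alphaup) = K_\alphaup$. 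Consequently $\ad(K_\alphaup)$ and $\ad(iH_\alphaup)$ are conjugate operators on $\gt$.

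Now $\ad(iH_\alphaup)$ is diagonal in the root decomposition $\gt = \hg \oplus \bigoplus_{\betaup \in \Rad}\gt^\betaup$, acting as $0$ on $\hg$ and as $i\,\betaup(H_\alphaup)$ on $\gt^\betaup$. The bound $|\betaup(H_\alphaup)| = |p_{\alphaup,\betaup} - q_{\alphaup,\betaup}| \leq p_{\alphaup,\betaup} + q_{\alphaup,\betaup} \leq 3$ for $\betaup \neq \pm\alphaup$, provided by \eqref{e1.5a}, combined with $\alphaup(H_\alphaup) = 2$ for $\betaup = \pm\alphaup$, confines the eigenvalues of $\ad(iH_\alphaup)$, and hence of $\ad(K_\alphaup)$, to $\{0, \pm i, \pm 2i, \pm 3i\}$ and gives the semisimplicity. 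The only delicate step is the inner conjugacy $K_\alphaup \sim iH_\alphaup$, but it is just a repetition of the planar exponential argument used for \eqref{eq1.7}, so no real obstacle arises.
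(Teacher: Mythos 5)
Your proof is correct, but it takes a genuinely different route from the paper's. For the rotation formulas \eqref{eq1.7} both arguments coincide: the brackets $[K_{\alphaup},H_{\alphaup}]={-}2T_{\alphaup}$, $[K_{\alphaup},T_{\alphaup}]=2H_{\alphaup}$ and a planar exponentiation. The divergence is in the spectral claim. The paper decomposes $\gr$ into explicit $\ad(K_{\alphaup})$-invariant blocks --- the kernel pieces, the plane $\langle H_{\alphaup},T_{\alphaup}\rangle_{\R}$, and for each $\alphaup$-string the space $V_{\betaup}=\sum_{j}\gr^{\betaup+j\alphaup}$ --- and then computes the characteristic polynomial of the resulting tridiagonal matrices using the relations \eqref{e1.5a} on the structure constants $N_{\alphaup,\betaup}$, checking case by case ($\dim V_{\betaup}=1,2,3,4$) that the roots are distinct. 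You instead observe that $K_{\alphaup}$, $iT_{\alphaup}$, $iH_{\alphaup}$ span an $\su(2)$ inside $\gu$ and that $\exp\bigl(\tfrac{\piup}{4}\ad(iT_{\alphaup})\bigr)$ carries $iH_{\alphaup}$ to $K_{\alphaup}$, so that $\ad(K_{\alphaup})$ is conjugate to the visibly semisimple operator $\ad(iH_{\alphaup})$, whose eigenvalues $i\betaup(H_{\alphaup})$ are bounded by the string inequality $p_{\alphaup,\betaup}+q_{\alphaup,\betaup}\leq 3$ of \eqref{e1.5a}. Your conjugacy argument is cleaner and avoids all matrix computations (it is essentially the same trick the authors themselves use later for the Cayley transform in Thm.\ref{t3.13} and Prop.\ref{p4.10}); what the paper's explicit block decomposition buys is the finer information that the eigenvalues are distinct on each string space $V_{\betaup}$, which is reused implicitly when these matrices reappear in the proof of Prop.\ref{p4.10}. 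Both establish the lemma as stated.
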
 
\begin{proof} Having fixed $\alphaup\,{\in}\,\Rad$,
we decompose $\gr$ into a direct sum of $\ad(K_{\alphaup})$-invariant
 subspaces, consisting of:
\begin{itemize}
\item the subspace $\{H\,{\in}\,\hr\mid \alphaup(H)\,{=}\,0\}$ of $\hr$;
\item the subspace $\langle{K}_{\alphaup}\rangle_{\R}$;
 \item the subspace $\langle{H}_{\alphaup},T_{\alphaup}\rangle_{\R}$;
 \item for each root $\betaup\,{\neq}\,{\pm}\alphaup$ and such that $\betaup{-}\alphaup\,{\notin}\,\Rad$,
 the subspace $$V_{\betaup}{=}\,\langle{X}_{\betaup+j\alphaup}\,{\mid}\,j\in\Z,\;\betaup\,{+}j\alphaup\in\Rad
 \rangle_{\R}\,{=}\,{\sum}_{j=0}^{p_{\alphaup,\betaup}}\gr^{\betaup+j\alphaup}.$$
\end{itemize}
\par
If $\alphaup(H)\,{=}\,0,$ then $[K_{\alphaup},H]\,{=}\,0$. Since $[K_{\alphaup},K_{\alphaup}]\,{=}\,0,$ 
the first two subspaces of the list
are contained in the kernel of $\ad(K_{\alphaup})$.\par
The restriction of $\ad(K_{\alphaup})$ to the 
invariant subspace $\langle{H}_{\alphaup},T_{\alphaup}\rangle_{\R}$
is represented, in the canonical basis, by the matrix 
\begin{equation*} 
\begin{pmatrix}
 0 & 2\\
 -2\; & 0
\end{pmatrix}
\end{equation*}
and therefore is semisimple with eigenvalues ${\pm}2i$. \par
The restriction of the exponential 
$\exp(\ttt\ad(K_{\alphaup}))$ 
is represented by the matrix 
\begin{equation*} 
\begin{pmatrix}
 \cos(2\ttt) & \sin(2\ttt)\\
 -\sin(2\ttt) & \cos(2\ttt)
\end{pmatrix},
\end{equation*}
yielding \eqref{eq1.7}.
\par\smallskip
Clearly the subspaces $V_{\betaup}$ are $\ad(K_{\alphaup})$-invariant. 
The matrix of the restriction of $\ad(K_{\alphaup})$ to $V_{\betaup}$,
in the canonical basis $(X_{\betaup+j\alphaup})_{0\leq{j}\leq{p}_{\alphaup,\betaup}}$ depends on its
dimension, which can be $1,2,3,4$. We have, in the different cases,  
\begin{equation*} \begin{cases}
 (0), & \text{if $\dim(V_{\betaup})=1$},\\[4pt]
\begin{pmatrix}
0 & N_{-\alphaup,\betaup+\alphaup}\\
  N_{\alphaup,\betaup} & 0
\end{pmatrix}, & \text{if $\dim(V_{\betaup})=2$},\\[12pt]
\begin{pmatrix}
 0 & N_{-\alphaup,\betaup+\alphaup} & 0\\
 N_{\alphaup,\betaup} & 0 & N_{-\alphaup,\betaup+2\alphaup}\\
 0 & N_{\alphaup,\betaup+\alphaup} & 0
\end{pmatrix}, & \text{if $\dim(V_{\betaup})=3$},\\[20pt]
\begin{pmatrix}
 0 & N_{-\alphaup,\betaup+\alphaup} & 0 & 0\\
 N_{\alphaup,\betaup} & 0 & N_{-\alphaup,\betaup+2\alphaup} & 0\\
 0 & N_{\alphaup,\betaup+\alphaup} & 0 & N_{-\alphaup,\betaup+3\alphaup}\\
 0 & 0 & N_{\alphaup,\betaup+2\alphaup}& 0
\end{pmatrix}, & \text{if $\dim(V_{\betaup})=4$}.
\end{cases}
\end{equation*}
By \eqref{e1.5a}
the characteristic polynomials $\ptt(\lambdaup)$ are: 
\begin{equation*} 
\ptt(\lambdaup)=\begin{cases}
 \lambdaup, & \text{if $\dim(V_{\betaup})=1$,}\\
 \lambdaup^{2}+1, & \text{if $\dim(V_{\betaup})=2$,}\\
 \lambdaup(\lambdaup^{2}+4),& \text{if $\dim(V_{\betaup})=3$,}\\ 
 (\lambdaup^{2}+1)(\lambdaup^{2}+9), & \text{if $\dim(V_{\betaup})=4$}.
\end{cases}
\end{equation*}
In all cases the eigenvalues are distinct. This yields the statement. 
\end{proof}
\subsection{Weyl chambers}
\label{s1.2.1} An $H$ of $\hr$ is \emph{regular} if $\alphaup(H)\,{\neq}\,0$
for all $\alphaup\,{\in}\,\Rad$. Regular elements form an open subset $\Reg(\hr)$ of $\hr$.
The \emph{Weyl chambers} are the connected components of $\mathpzc{Reg}(\hr)$.
They form a set $\Cd(\Rad)$ on which the dual action of the Weyl group, i.e.  a subgroup of the isometry group of the root system,  is simple and transitive.
If $H_{0}\,{\in}\,C\,{\in}\,\Cd(\Rad),$ then 
\begin{gather}\label{e1.10}
 \Rad=\Rad^{+}(C)\cup\Rad^{-}(C),
\intertext{where}
\label{e1.11}
\begin{cases}
 \Rad^{+}(C)\,{=}\,\{\alphaup\,{\in}\,\Rad\,{\mid}\,\alphaup(H_{0})\,{>}\,0\}\,{=}\,
 \{\alphaup\,{\in}\,\Rad\,{\mid}\,\alphaup(H)\,{>}\,0,\;\forall H\,{\in}\,C\},\\
  \Rad^{-}(C)\,{=}\,\{\alphaup\,{\in}\,\Rad\,{\mid}\,\alphaup(H_{0})\,{<}\,0\}\,{=}\,
 \{\alphaup\,{\in}\,\Rad\,{\mid}\,\alphaup(H)\,{<}\,0,\;\forall H\,{\in}\,C\},
\end{cases}
\end{gather}
are the cones of
\textit{positive and negative roots} for the lexicographic order defined by $C$. 
The set    
\begin{equation*}
 \Bz(C)\,{=}\,\{\alphaup\in\Rad^{+}(C)\mid \alphaup\,{-}\,\betaup\notin\Rad^{+}(C),\;\forall\betaup\,{\in}\,\Rad^{+}(C)\}
\end{equation*}
of minimal elements of $\Rad^{+}(C)$
is the \emph{base of simple  roots} in $\Rad^+(C)$. 
Each $\alphaup$ in $\Rad$ is a linear combination 
\begin{equation}\label{e1.15}
 \alphaup={\sum}_{\betaup\in\Bz(C)}k_{\alphaup,\betaup}\betaup,\;\;\text{with $k_{\alphaup,\betaup}\,{\in}\,\Z$,}\;
\begin{cases}
 k_{\alphaup,\betaup}{\geq}0\;\;\forall \betaup, &\text{if $\alphaup\,{\in}\,\Rad^{+}(C),$}\\
  k_{\alphaup,\betaup}{\leq}0\;\;\forall \betaup, &\text{if $\alphaup\,{\in}\,\Rad^{-}(C).$}
\end{cases}
\end{equation}
The choice of $C$ defines a 
$\Z$-grading $\kq_{\; C}$ \index{$\kq_{\; C}$} 
and a notion of $C$-\emph{support} 
on $\Z[\Rad]$: \index{$\supp_{C}$}

\begin{equation} \begin{cases}
 \kq_{\;C}(\alphaup)=
{\sum}_{\betaup\in\Bz(C)}k_{\alphaup,\betaup},\\[5pt]
 \supp_{C}(\alphaup)=\{\betaup\,{\in}\,\Bz(C)\,{\mid}\,k_{\alphaup,\betaup}\,{\neq}\,0\}.
 \end{cases}
\end{equation}
where  $ \alphaup={\sum}_{\betaup\in\Bz(C)}k_{\alphaup,\betaup}\betaup.$

\section{Involutions of $\Rad$}
\subsection{Involutions
}\label{s1.4} 
The restriction to $\hr$ of 
the Killing form 
of $\gt$ is a real valued 
positive definite scalar product, 
that will be denoted by
$(\;\cdot\;|\;\cdot\;)$. 
The \textit{duality map}
\begin{equation}
 \hr^{*}\ni\xiup\to{\htt}_{\xiup}\in\hr,\;\text{is characterized by}\;\, (H\mid\htt_{\xiup})\,{=}\,
 \xiup(H),\;\forall
 H\in\hr,
\end{equation}
and  the 
\textit{dual scalar product} on $\hr^{*}$ is 
\begin{equation}(\xiup\,|\,\etaup)= (\htt_{\xiup}\,|\, \htt_{\etaup}),\;\;\forall\,\xiup,\etaup\in\hr^{*}.
\end{equation} 
Since $\alphaup(H_{\alphaup})\,{=}\, 2$, we have 
\begin{equation}
 \htt_{\alphaup}=\dfrac{ 2H_{\alphaup}}{\;\,\|H_{\alphaup}\|^{2}}\,, \quad
 H_{\alphaup}=\dfrac{2\htt_{\alphaup}}{\; \,\|\htt_{\alphaup}\|^{2}}\,.
\end{equation}
 
We also use the notation 
\begin{equation}
 \langle\xiup \mid \etaup\rangle = \frac{2(\xiup \mid \etaup)}{\;(\etaup \mid \etaup)}
 = \frac{2(\xiup\mid \etaup)}{\;\|\etaup \|^{2}},\;
 \text{if $\xiup,\etaup\in\hr^{*},\; \etaup\neq{0}$.}
\end{equation}
\par
For $\alphaup\,{\in}\Rad$, the reflection 
\begin{equation*}
 s_{\alphaup}(\xiup)= \xiup-\langle\xiup\mid\alphaup\rangle\,\alphaup,\;\; 
 \xiup\in\hr^{*},
\end{equation*}
maps $\Rad$ onto itself. The  
reflections $\sq_{\alphaup}$, for $\alphaup\,{\in}\,\Rad$,
generate the
\textit{Weyl group}
$\Wf(\Rad)$ of $\Rad$. \index{$\Wf(\Rad)$}
It is a normal subgroup of the \textit{full automorphism group} 
$\Af(\Rad)$, consisting of all orthogonal transformations of $\hr^{*}$ 
leaving $\Rad$ invariant.
\par
\smallskip
We will denote by  
$\Ib(\Rad)$ the involutions in $\Af(\Rad)$ and by $\Ib_{\Wf}(\Rad)$ \index{$\Af(\Rad)$}
those belonging to $\Wf(\Rad)$. \index{$\Ib(\Rad)$} \index{$\Ib_{\Wf}(\Rad)$}
\par
An involution $\stt\,{\in}\,\Ib(\Rad)$ defines by duality an
involution $\stt^{*}$ on $\hr$, yielding a decomposition
\index{$\hst^{+}$}\index{$\hst^{-}$}
\begin{equation} \label{e2.7}
\begin{cases}
\hr\,{=}\,\hst^{+}\,{\oplus}\,\hst^{-},\;\;\text{with}\, \hst^{\pm}\,{=}\,\{H\,{\in}\,\hr\,{\mid}\,\stt^{*}(H)\,{=}\,{\pm}H\},\\
	H=H^{\stt}_{+}+H^{\stt}_{-},\;\;\text{with}\;\; H^{\stt}_{\pm}=\tfrac{1}{2}(H\pm\stt^{*}(H))\,{\in}\,\hst^{\pm}.
	\end{cases}
	\end{equation}
	\index{$H^{\stt}_{+}$} \index{$H^{\stt}_{-}$}
	\par
	
	We associate to $\stt$ the real form
\begin{equation}
	\hst=\hst^{+}\oplus {i}\hst^{-}
\end{equation}
	of $\hg$ and the
	$\stt$-invariant partition of $\Rad$
\begin{equation} \label{e1.13}
\Rad=\Rad_{\,\;\circ}^{\stt}\cup \Rad_{\,\;\bullet}^{\stt}\cup\Rad_{\,\;\star}^{\stt},\;\;
\text{with}\;\;
\begin{cases}
 \Rad_{\,\;\circ}^{\stt}=\{\alphaup\in\Rad\mid \stt(\alphaup)=\alphaup\},\\
 \Rad_{\,\;\bullet}^{\stt}=\{\alphaup\in\Rad\mid \stt(\alphaup)={-}\alphaup\},\\
  \Rad_{\,\;\star}^{\stt}=\{\alphaup\in\Rad\mid \stt(\alphaup)\neq{\pm}\alphaup\}.
  \end{cases}
\end{equation}
We have
\begin{equation*}
	\stt(\alphaup)(H)=\alphaup(\stt^{*}(H))=\alphaup(H_{+}^{\stt})-\alphaup(H_{-}^{\stt}),\;\;\forall\alphaup\,{\in}\,\Rad,\;
	\forall H\in\hr
	\end{equation*}
and therefore the roots of $\Rad^{\stt}_{\,\;\circ}$ (resp. $\Rad_{\,\;\bullet}^{\stt}$)
are those taking real (resp. purely imaginary) values on $\hst$.
For this reason we call the roots of $\Rad^{\stt}_{\,\;\circ}$, $\Rad^{\stt}_{\,\;\bullet}$, $\Rad^{\stt}_{\,\;\star}$
\textit{real}, \textit{imaginary} and \textit{complex} for $\stt$, respectively. \par
	If $H$ is any regular element of $\hr$, then
	\begin{equation*}
	\Rd{\circ}{\stt}\,{=}\,\{\alphaup\,{\mid}\,\alphaup(H^{\stt}_{-})\,{=}\,0\},\;\;
	\Rd{\bullet}{\stt}\,{=}\,\{\alphaup\,{\mid}\,\alphaup(H^{\stt}_{+})\,{=}\,0\},\;\;
	\Rd{\star}{\stt}\,{=}\,\{\alphaup\,{\mid}\,\alphaup(H^{\stt}_{+})\,{\cdot}\,\alphaup(H^{\stt}_{-})\,{\neq}\,0\}.
	\end{equation*}
\begin{rmk} 
 Real and  imaginary roots form  
 closed subsystems.
 Indeed 
 all roots which are contained in the linear span of $\Rad_{\,\;\circ}^{\stt}$
(resp. $\Rad_{\,\;\bullet}^{\stt}$) belong to $\Rad_{\,\;\circ}^{\stt}$
(resp. $\Rad_{\,\;\bullet}^{\stt}$)
and therefore $\Rad_{\,\;\circ}^{\stt}$ and $\Rad_{\,\;\bullet}^{\stt}$
are root systems
 (see e.g. \cite[Ch.VI,\S{1},Prop.4]{Bou68}).\par
 Since an involution is the restriction to $\Rad$ of an isometry of $\hr^{*}$, we have
\begin{lem} If $\stt$ is an involution of $\Rad$, then 
\begin{equation*}\vspace{-20pt}
 \Rd{\bullet}{\stt}\perp{\Rd{\circ}{\stt}}.
\end{equation*}\qed
\end{lem}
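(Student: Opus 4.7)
The plan is to use the fact, already noted in the paragraph immediately preceding the statement, that any involution $\stt$ of $\Rad$ extends to an isometry of $\hr^{*}$ with respect to the scalar product $(\,\cdot\,|\,\cdot\,)$ induced by the Killing form. This is precisely what reduces the claim to a one-line symmetry argument.

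First, I would pick arbitrary $\alphaup\in\Rd{\circ}{\stt}$ and $\betaup\in\Rd{\bullet}{\stt}$. Since $\stt\in\Af(\Rad)$ is orthogonal, we have $(\stt(\alphaup)\mid\stt(\betaup))=(\alphaup\mid\betaup)$. On the other hand, by the very definitions in \eqref{e1.13}, $\stt(\alphaup)=\alphaup$ and $\stt(\betaup)={-}\betaup$, so the left-hand side equals $-(\alphaup\mid\betaup)$. Hence $(\alphaup\mid\betaup)=0$, which is exactly the orthogonality assertion $\Rd{\bullet}{\stt}\perp\Rd{\circ}{\stt}$.

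There is essentially no obstacle here: the computation is forced by the eigenvalue decomposition of an order-two isometry, for which the $(+1)$- and $(-1)$-eigenspaces are automatically orthogonal. The only thing worth being careful about is that $\Af(\Rad)$ has been defined as a subgroup of the orthogonal group of $\hr^{*}$, so no further verification is required before invoking $\stt$-invariance of $(\,\cdot\,|\,\cdot\,)$.
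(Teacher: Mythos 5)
Your proof is correct and is exactly the argument the paper intends: the sentence preceding the lemma ("Since an involution is the restriction to $\Rad$ of an isometry of $\hr^{*}$...") is the paper's entire justification, and your computation $(\alphaup\mid\betaup)=(\stt(\alphaup)\mid\stt(\betaup))=-(\alphaup\mid\betaup)$ just makes that one-liner explicit. No differences worth noting.
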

\end{rmk} 
\begin{dfn} 
The \emph{antipodal involution} of $\hr^{*}$ is the map
\begin{equation} \att:\hr^{*}\ni\xiup\longrightarrow
{-}\,\xiup\,{\in}\,\hr^{*}.
\end{equation}
 \index{$\att$}
\end{dfn}
A root system $\Rad$ is  \emph{irreducible} if it is not the union of two proper  orthogonal subsets.
\begin{lem} If $\Rad$ is irreducible,  then 
the center of $\Af(\Rad)$  is $\{\id,\att\}$.
\end{lem}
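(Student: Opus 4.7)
The plan is to show the two inclusions separately. For $\{\id,\att\}\subseteq Z(\Af(\Rad))$, I note that $\att=-\id$ as an $\R$-linear map on $\hr^{*}$, hence orthogonal and commuting with every $\R$-linear endomorphism; since $\Rad=-\Rad$, it lies in $\Af(\Rad)$.

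For the reverse inclusion, let $\phiup\,{\in}\, Z(\Af(\Rad))$. The first key step uses the standard identity $\phiup\circ s_{\alphaup}\circ \phiup^{-1}=s_{\phiup(\alphaup)}$, valid for every $\alphaup\,{\in}\,\Rad$ and every $\phiup\,{\in}\,\Af(\Rad)$. Since $\phiup$ is central, $s_{\phiup(\alphaup)}=s_{\alphaup}$ for all $\alphaup\,{\in}\,\Rad$, which forces $\phiup(\alphaup)\,{=}\,\pm\alphaup$ for every root. This allows a partition
\begin{equation*}
 \Rad=\Rad_{+}\cup\Rad_{-},\qquad \Rad_{\pm}=\{\alphaup\,{\in}\,\Rad\mid\phiup(\alphaup)=\pm\alphaup\},
\end{equation*}
disjoint because no root is zero.

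The main step is to prove that $\Rad_{+}\perp\Rad_{-}$. I would take $\alphaup\,{\in}\,\Rad_{+}$, $\betaup\,{\in}\,\Rad_{-}$ and assume $(\alphaup\mid\betaup)\,{\neq}\,0$. Linear independence of $\alphaup$ and $\betaup$ is automatic because $\phiup$ acts with opposite signs. By the usual root system fact, one of $\alphaup{+}\betaup$ or $\alphaup{-}\betaup$ then lies in $\Rad$; call it $\gammaup$. Applying $\phiup$ gives $\phiup(\gammaup)\,{=}\,\alphaup{\mp}\betaup$, but $\gammaup$ being a root also forces $\phiup(\gammaup)\,{=}\,\pm\gammaup$. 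Comparing the two expressions in terms of $\alphaup,\betaup$ (which are linearly independent) gives $\alphaup\,{=}\,0$ or $\betaup\,{=}\,0$, a contradiction. Hence $(\alphaup\mid\betaup)\,{=}\,0$, which is the heart of the argument.

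To conclude, I invoke irreducibility of $\Rad$: a nontrivial orthogonal decomposition $\Rad\,{=}\,\Rad_{+}\cup\Rad_{-}$ is impossible unless one of the parts is empty. If $\Rad_{-}\,{=}\,\varnothing$ then $\phiup$ fixes $\Rad$ pointwise; if $\Rad_{+}\,{=}\,\varnothing$ then $\phiup\,{=}\,{-}\id$ on $\Rad$. Since $\Rad$ spans $\hr^{*}$ (the Cartan subalgebra acts faithfully on $\gt$, so no nonzero element of $\hr^{*}$ annihilates all roots), in either case $\phiup\,{=}\,\id$ or $\phiup\,{=}\,\att$ globally on $\hr^{*}$. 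The only potentially delicate point is the orthogonality step, which is a two-line root-string argument, so no real obstacle arises.
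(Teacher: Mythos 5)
Your proposal is correct and follows essentially the same route as the paper: commutation with each reflection $\sq_{\alphaup}$ forces $\phiup(\alphaup)\,{=}\,{\pm}\alphaup$ for every root, and irreducibility then forces the sign to be constant. The only difference is that you spell out the sign-constancy step (orthogonality of $\Rad_{+}$ and $\Rad_{-}$ via the root-string argument), which the paper's proof asserts in one line without detail; your version is the more complete one.
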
 
\begin{proof} If $\ftt$ belongs to the center of $\Af(\Rad)$, 
 then $\ftt{\circ}\sq_{\alphaup}\,{=}\,\sq_{\alphaup}{\circ}\ftt$
 for every root $\alphaup$. 
 In particular, 
\begin{align*}
 {-}\ftt(\alphaup)=\ftt{\circ}\sq_{\alphaup}(\alphaup)=\sq_{\alphaup}\circ\ftt(\alphaup)=
 \ftt(\alphaup)-\langle\ftt(\alphaup)|\alphaup\rangle\alphaup \,\Rightarrow\,
 \ftt(\alpha)=\tfrac{1}{2}\langle\ftt(\alphaup)|\alphaup\rangle\alphaup.
\end{align*}
Since $\ftt$ is an isometry, we have $\tfrac{1}{2}\langle\ftt(\alphaup)|\alphaup\rangle\,{=}\,{\pm}1$
and, by the assumption that $\Rad$ is irreducible, this coefficients is always equal to either $1$
or  $({-}1)$. 
\end{proof}
\par
An involution $\stt\,{\in}\,\Ib(\Rad)$ defines by duality an 
involution $\stt^{*}$ on $\hr$, yielding a decomposition 
\index{$\hst^{+}$}\index{$\hst^{-}$}
\begin{equation} \label{e2.7}
\begin{cases}
\hr\,{=}\,\hst^{+}\,{\oplus}\,\hst^{-},\;\;\text{with}\, \hst^{\pm}\,{=}\,\{H\,{\in}\,\hr\,{\mid}\,\stt^{*}(H)\,{=}\,{\pm}H\},\\
H=H^{\stt}_{+}+H^{\stt}_{-},\;\;\text{with}\;\; H^{\stt}_{\pm}=\tfrac{1}{2}(H\pm\stt^{*}(H))\,{\in}\,\hst^{\pm}.
\end{cases}
\end{equation}
\index{$H^{\stt}_{+}$} \index{$H^{\stt}_{-}$} 
\par 
We have 
\begin{equation*}
 \stt(\alphaup)(H)=\alphaup(\stt^{*}(H))=\alphaup(H_{+}^{\stt})-\alphaup(H_{-}^{\stt}),\;\;\forall\alphaup\,{\in}\,\Rad,\;
 \forall H\in\hr.
\end{equation*}
If $H$ is a regular element of $\hr$, then
for every $H\,{\in}\,\Reg(\hr)$, we have 
\begin{equation*}
 \Rd{\circ}{\stt}\,{=}\,\{\alphaup\,{\mid}\,\alphaup(H^{\stt}_{-})\,{=}\,0\},\;\;
  \Rd{\bullet}{\stt}\,{=}\,\{\alphaup\,{\mid}\,\alphaup(H^{\stt}_{+})\,{=}\,0\},\;\;
   \Rd{\star}{\stt}\,{=}\,\{\alphaup\,{\mid}\,\alphaup(H^{\stt}_{+})\,{\cdot}\,\alphaup(H^{\stt}_{-})\,{\neq}\,0\}.
\end{equation*}
%
\subsection{Orthogonal and strongly orthogonal roots} Let $\Rad$ be a root system.
Two roots $\alphaup,\betaup$ are said to be
\emph{strongly orthogonal} when
neither $\alphaup\,{+}\,\betaup$ nor $\alphaup\,{-}\,\betaup$ is a root. Strongly orthogonal
roots are orthogonal, and also the vice versa is true when 
one of them is long, i.e. has maximal length within the irreducible component 
to which it belongs.  
 If $\Rad$ contains roots of different lengths and two short roots
$\alphaup,\betaup$ are orthogonal, but not strongly orthogonal,  
then  ${\pm}\alphaup{\pm}\betaup$ are long roots. 
\begin{lem}\label{l2.4}
If $\alphaup_{1},\hdots,\alphaup_{r}$ is an orthogonal set of roots, then there is a
strongly orthogonal
set of  roots $\betaup_{1},\hdots,\betaup_{r}$  with 
\begin{equation*}\tag{$*$}
 \langle\alphaup_{1},\hdots,\alphaup_{r}\rangle_{\R}=\langle\betaup_{1},\hdots,\betaup_{r}\rangle_{\R}.
\end{equation*} 
\end{lem}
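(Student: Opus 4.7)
The plan is to do induction on the number $s$ of short roots appearing in the given orthogonal set. In the base case $s=0$ every $\alpha_i$ is long, so by the observation recorded just before the statement (orthogonality plus one long root implies strong orthogonality) the set $\{\alpha_1,\dots,\alpha_r\}$ is already strongly orthogonal and we take $\beta_i=\alpha_i$.

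For the inductive step, suppose $s\ge 1$ and that the conclusion is known for orthogonal sets with fewer short roots. If every pair in $\{\alpha_1,\dots,\alpha_r\}$ is already strongly orthogonal we are done; otherwise fix indices $i,j$ such that $(\alpha_i,\alpha_j)$ is a bad pair. By the discussion preceding the lemma, $\alpha_i$ and $\alpha_j$ must both be short and $\alpha_i\pm\alpha_j$ are long roots. Set
\begin{equation*}
\beta_i=\alpha_i+\alpha_j,\qquad \beta_j=\alpha_i-\alpha_j,
\end{equation*}
and keep all the other roots unchanged. Since $\|\alpha_i\|=\|\alpha_j\|$, we get $(\beta_i,\beta_j)=\|\alpha_i\|^2-\|\alpha_j\|^2=0$, and because both $\beta_i$ and $\beta_j$ are long this orthogonality upgrades to strong orthogonality. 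For any $k\ne i,j$, bilinearity gives $(\beta_i,\alpha_k)=(\beta_j,\alpha_k)=0$; since $\beta_i,\beta_j$ are long, the same observation yields that $\beta_i,\beta_j$ are strongly orthogonal to every $\alpha_k$ with $k\ne i,j$.

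Thus the new set is still orthogonal and spans the same real subspace as the old one, since $\alpha_i=\tfrac12(\beta_i+\beta_j)$ and $\alpha_j=\tfrac12(\beta_i-\beta_j)$ are recovered from it. Moreover, having replaced the two short roots $\alpha_i,\alpha_j$ by the two long roots $\beta_i,\beta_j$, the total number of short roots has dropped by $2$. The inductive hypothesis applied to this new orthogonal set produces the desired strongly orthogonal family spanning the same subspace, which also spans $\langle\alpha_1,\dots,\alpha_r\rangle_{\R}$.

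The main obstacle, and really the only nontrivial point, is the verification that the substitution $(\alpha_i,\alpha_j)\rightsquigarrow(\alpha_i+\alpha_j,\alpha_i-\alpha_j)$ does not create new pathologies with the remaining roots. This is precisely controlled by the fact that long roots which are orthogonal to a given root are automatically strongly orthogonal to it; once this is in hand, the induction on $s$ runs mechanically.
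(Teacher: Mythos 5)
Your proof is correct and rests on exactly the same mechanism as the paper's: replace an orthogonal but not strongly orthogonal pair (necessarily two short roots) by their sum and difference, which are long roots and therefore strongly orthogonal to every root they are orthogonal to. The only difference is bookkeeping — you induct on the number of short roots while the paper inducts on $r$ after first making $\alphaup_{2},\hdots,\alphaup_{r}$ strongly orthogonal — and your choice makes the termination of the replacement process slightly more transparent.
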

\begin{proof} 
We
argue by recurrence on $r$. 
When $r\,{=}\,1,$ there is noting to prove. If $r\,{=}\,2$ and $\alphaup_{1},\alphaup_{2}$ are not
strongly orthogonal, then $\betaup_{1}{=}\,\alphaup_{1}{+}\,\alphaup_{2}$ and
$\betaup_{2}{=}\,\alphaup_{1}{-}\,\alphaup_{2}$ are strongly orthogonal roots and
$(*)$ holds. \par
If $r{>}2$ and we assume that the statement is true
for a lesser number of orthogonal roots, we first 
reduce to the case where $\alphaup_{2},\hdots,\alphaup_{r}$
are strongly orthogonal. If $\alphaup_{1},\alphaup_{2},\hdots,\alphaup_{r}$ are strongly
orthogonal, then 
there is nothing to prove. Otherwise, by reordering, we can assume that $\alphaup_{1}$ and
$\alphaup_{2}$ are not strongly orthogonal. Then  $\betaup_{1}{=}\,\alphaup_{1}{+}\,\alphaup_{2}$
and  $\betaup_{2}{=}\,\alphaup_{1}{-}\,\alphaup_{2}$ are roots 
forming with $\alphaup_{3},\hdots,\alphaup_{r}$ a 
strongly orthogonal system
because, being long, they are strongly orthogonal to their orthogonal roots. 
\end{proof}
\begin{ntz}\label{perp}
 We write $\alphaup\Perp\betaup$
 to indicate that the roots $\alpha$ and $\beta$ are \textit{strongly}
 orthogonal.

\end{ntz}
\subsection{Decompositions} \label{s2.3}
\begin{dfn} \label{d2.2} We call \emph{special} an involution $\epi$ of $\Rad$ for which
$\Rad^{\epi}_{\,\;\bullet}\,{=}\,\emptyset$ and
denote by $\Ib^{*}(\Rad)$ the set of special involutions of $\Rad$.
\index{$\Ib^{\star}(\Rad)$} \index{$\Ib^{\star}(\Rad)$}
\end{dfn}
\begin{lem}
If $\stt\,{\in}\,\Ib(\Rad)$ and $\betaup\,{\in}\,\Rad$, then $\stt$ and $\sq_{\,\betaup}$ commute if and
only if $\betaup\,{\in}\,\Rd{\circ}{\stt}\,{\cup}\,\Rd{\bullet}{\stt}$.  
\begin{proof}
If $\stt$ and $\sq_{\,\betaup}$ commute, we have 
\begin{align*} {-}\stt(\betaup)= \stt\circ\sq_{\,\betaup}(\betaup)=
 \sq_{\,\betaup}\circ\stt(\betaup)=\stt(\betaup)-\langle\stt(\betaup)|\betaup\rangle\betaup\,\Rightarrow
\stt( \betaup)\,{=}\,\tfrac{1}{2}\langle\stt(\betaup)|\betaup\rangle\betaup
\end{align*}
and this implies that $\stt(\betaup)\,{=}\,{\pm}\betaup$. If vice versa $\stt(\betaup)\,{=}\,\lambdaup\betaup$,
with $\lambdaup\,{=}\,{\pm}1$, we obtain, for all $\xiup\,{\in}\,\hr^{*}$, 
\begin{equation*} 
 \stt\circ\sq_{\,\betaup}(\xiup)=\stt(\xiup)-\lambdaup \langle\xiup|\betaup\rangle\betaup=
 \stt(\xiup)-\langle\xiup|\stt(\betaup)\rangle\betaup=\stt(\xiup)-\langle\stt(\xiup)|\betaup\rangle\betaup=
 \sq_{\,\betaup}\circ\stt(\xiup).
\end{equation*}
\end{proof}
\end{lem}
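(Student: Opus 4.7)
The plan is to prove both directions by direct computation using the explicit formula $s_\beta(\xi)=\xi-\langle\xi\mid\beta\rangle\beta$ for reflections and the fact that $\stt$, being in $\Af(\Rad)$, is an isometry of $\hr^{*}$.

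For the forward direction, I would evaluate the commutation identity $\stt\circ s_\beta = s_\beta\circ\stt$ on the specific vector $\beta$. Since $s_\beta(\beta)=-\beta$, the left-hand side yields $-\stt(\beta)$, while the right-hand side yields $\stt(\beta)-\langle\stt(\beta)\mid\beta\rangle\,\beta$. Equating these and solving gives $\stt(\beta)=\tfrac{1}{2}\langle\stt(\beta)\mid\beta\rangle\,\beta$, so $\stt(\beta)$ is a scalar multiple of $\beta$. Because $\stt$ is an isometry it preserves norms, forcing the scalar to be $\pm 1$. Thus $\stt(\beta)=\pm\beta$, i.e.\ $\beta\in\Rd{\circ}{\stt}\cup\Rd{\bullet}{\stt}$. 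This mirrors the computation already used in the proof of the earlier lemma about the center of $\Af(\Rad)$.

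For the backward direction, assume $\stt(\beta)=\lambda\beta$ with $\lambda\in\{\pm 1\}$. I would apply $\stt$ to the general reflection formula: for any $\xiup\in\hr^{*}$,
\begin{equation*}
\stt\circ s_\beta(\xiup)=\stt(\xiup)-\langle\xiup\mid\beta\rangle\,\stt(\beta)=\stt(\xiup)-\lambda\langle\xiup\mid\beta\rangle\,\beta.
\end{equation*}
Since $\stt$ is an isometry, $\langle\xiup\mid\beta\rangle=\langle\stt(\xiup)\mid\stt(\beta)\rangle=\lambda\langle\stt(\xiup)\mid\beta\rangle$, so $\lambda\langle\xiup\mid\beta\rangle=\langle\stt(\xiup)\mid\beta\rangle$ (using $\lambda^{2}=1$). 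Substituting back gives $\stt\circ s_\beta(\xiup)=\stt(\xiup)-\langle\stt(\xiup)\mid\beta\rangle\,\beta=s_\beta\circ\stt(\xiup)$, establishing commutativity.

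There is essentially no serious obstacle: the lemma is a direct application of the definition of reflection together with the isometry property of $\stt$. The only subtlety is to recognize, in the forward direction, that testing the commutation on $\beta$ alone already forces $\stt(\beta)$ to be proportional to $\beta$ — no further roots need be examined — and, in the backward direction, to notice that the isometry identity $\langle\stt(\xiup)\mid\stt(\beta)\rangle=\langle\xiup\mid\beta\rangle$ converts the factor $\lambda$ into the correct form so that the right-hand side collapses to $s_\beta\circ\stt(\xiup)$.
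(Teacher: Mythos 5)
Your proposal is correct and follows essentially the same route as the paper: evaluating the commutation relation on $\betaup$ to force $\stt(\betaup)=\pm\betaup$ in the forward direction, and using the isometry identity $\langle\stt(\xiup)\mid\stt(\betaup)\rangle=\langle\xiup\mid\betaup\rangle$ to collapse $\stt\circ\sq_{\,\betaup}$ to $\sq_{\,\betaup}\circ\stt$ in the converse. No gaps.
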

\begin{prop}\label{p2.7}

 For every  
$\stt\,{\in}\,\Ib(\Rad)$ we can find  an $\epi\,{\in}\,\Ib^{*}(\Rad)$ and a system $(\betaup_{1},\hdots,\betaup_{r})$ 
of strongly orthogonal roots in~$\Rd{\circ}{\epi}\,{\cap}\,\Rd{\bullet}{\stt}$
such that 
\begin{equation}\label{e2.8}
 \stt=\varepsilon\circ\sq_{\,\betaup_{1}}\circ\cdots\circ\sq_{\,\betaup_{r}}.
\end{equation}
\end{prop}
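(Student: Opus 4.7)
The plan is to take $\epi:=\stt\circ\sq_{\,\betaup_1}\circ\cdots\circ\sq_{\,\betaup_r}$ for a suitably chosen strongly orthogonal system $\betaup_1,\dots,\betaup_r\in\Rd{\bullet}{\stt}$; with this definition \eqref{e2.8} is immediate (the $\sq_{\,\betaup_i}$'s pairwise commute and square to the identity), so what remains to check is that (i)~$\epi$ is an involution, (ii)~every $\betaup_i$ is fixed by $\epi$, and (iii)~$\epi$ is special.

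First I would dispose of (i) and (ii). Since each $\betaup_i\in\Rd{\bullet}{\stt}$, the preceding lemma yields $\stt\,\sq_{\,\betaup_i}=\sq_{\,\betaup_i}\,\stt$, and reflections in pairwise orthogonal vectors commute, so $\epi^{2}=\stt^{2}\prod_i\sq_{\,\betaup_i}^{\,2}=\id$. Orthogonality also gives $\prod_j\sq_{\,\betaup_j}(\betaup_i)=\sq_{\,\betaup_i}(\betaup_i)=-\betaup_i$, whence $\epi(\betaup_i)=\stt(-\betaup_i)=\betaup_i$.

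For (iii), I would split $\hr=\hst^{+}\oplus\hst^{-}$ into the $\stt$-eigenspaces as in \eqref{e2.7} and set $U:=\langle\betaup_1,\dots,\betaup_r\rangle_{\R}\subset\hst^{-}$. Each $\sq_{\,\betaup_i}$ commutes with $\stt$ and hence preserves $\hst^{\pm}$; since $\betaup_i\perp\hst^{+}$, it restricts to the identity on $\hst^{+}$. Restricted to $\hst^{-}$, the product $\prod_i\sq_{\,\betaup_i}$ is a composition of commuting reflections in an orthogonal basis of $U$, so it equals $-\id$ on $U$ and $+\id$ on $\hst^{-}\cap U^{\perp}$. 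Combining with $\stt=\pm\id$ on $\hst^{\pm}$ yields $\epi=+\id$ on $\hst^{+}\oplus U$ and $\epi=-\id$ on $\hst^{-}\cap U^{\perp}$. Therefore $\epi$ is special exactly when no root of $\Rd{\bullet}{\stt}$ lies in $U^{\perp}$, that is, when no root of $\Rd{\bullet}{\stt}$ is orthogonal to every $\betaup_i$.

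The substantive step, which I view as the main obstacle, is the existence of such a system $(\betaup_1,\dots,\betaup_r)$. I would prove by induction on the rank of a root system $\Psi$ the auxiliary statement: there exists an orthogonal set $\gammaup_1,\dots,\gammaup_r\in\Psi$ with $\Psi\cap\langle\gammaup_1,\dots,\gammaup_r\rangle_{\R}^{\perp}=\emptyset$. For the inductive step one picks any $\gammaup_1\in\Psi$ and applies the hypothesis to the closed subsystem $\Psi\cap\gammaup_1^{\perp}$, which spans a strictly smaller subspace; any root of $\Psi$ orthogonal to all the $\gammaup_i$ lies in this subsystem, and so is excluded. Applied to $\Psi=\Rd{\bullet}{\stt}$, and then upgraded via Lemma~\ref{l2.4} into a strongly orthogonal set with the same $\R$-span---still inside $\Rd{\bullet}{\stt}$ because the latter is a closed subsystem---this produces the desired $\betaup_i$'s and concludes the proof.
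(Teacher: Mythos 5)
Your proof is correct and follows essentially the same route as the paper: both define $\epi=\stt\circ\sq_{\,\betaup_{1}}\circ\cdots\circ\sq_{\,\betaup_{r}}$ for a strongly orthogonal system in $\Rd{\bullet}{\stt}$ obtained via Lemma~\ref{l2.4}, and both reduce specialness of $\epi$ to the non‑existence of a root of $\Rd{\bullet}{\stt}$ orthogonal to all the $\betaup_{i}$. The only cosmetic differences are that the paper gets this last property at once by taking a \emph{maximal} orthogonal system (your rank induction just rebuilds such a system greedily) and verifies $\Rd{\bullet}{\epi}=\emptyset$ by a direct eigenvector computation rather than your equivalent eigenspace decomposition.
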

\begin{proof} If $\stt\,{\in}\,\Ib^{*}(\Rad)$, then there is nothing to prove. \par 
If $\stt\,{\notin}\,\Ib^{*}(\Rad)$, then we take a maximal system $\betaup_{1},\hdots,\betaup_{r}$ 
of orthogonal roots in $\Rd{\bullet}{\stt}$, which, by Lemma\,\ref{l2.4} we can take strongly orthogonal,  
and define 
\begin{equation*}\tag{$*$}
 \epi=\stt\circ\sq_{\,\betaup_{1}}\circ\cdots\circ\sq_{\,\betaup_{r}}.
\end{equation*}
We claim that $\Rd{\bullet}{\epi}\,{=}\,\emptyset$. Assume indeed, by contradiction, that there is
a root $\alphaup$ in $\Rd{\bullet}{\epi}$. Then 
\begin{equation*}
 {-}\alphaup=\epi(\alphaup)=\stt(\alphaup){+}{\sum}_{i=1}^{r}\langle\alphaup|\betaup_{i}\rangle\betaup_{i}\;\;\;
 \text{and}\;\; \; \alphaup{+}\stt(\alphaup)={-}{\sum}_{i=1}^{r}\langle\alphaup|\betaup_{i}\rangle\betaup_{i}
\end{equation*}
implies that $\stt(\alphaup)\,{=}\,{-}\alphaup$ and 
$\langle\alphaup|\betaup_{i}\rangle\,{=}\,0$ for all $i\,{=}\,1,\hdots,r$, because 
the left and right side of the last equality are eigenvectors of $\stt$, corresponding to the eigenvalues
$1$ and $({-}1)$, respectively.
Thus $\alphaup$ is a root of $\Rd{\bullet}{\stt}$ orthogonal to $\betaup_{1},\hdots,\betaup_{r},$
contradicting maximality.\par
Therefore $\epi\,{\in}\,\Ib^{*}(\Rad)$ and $(*)$ implies \eqref{e2.8}. Clearly
$\betaup_{1},\hdots,\betaup_{r}$ are in $\Rd{\circ}{\epi}$. The proof is complete. 
\end{proof}
\begin{rmk}
 The number $r$ 
 of reflections 
 in the decomposition \eqref{e2.8} equals the maximum number of pairwise orthogonal roots
 in $\Rad_{\,\;\bullet}^{\stt}$ and, in particular, 
 is an invariant of $\stt$, which is called its \emph{lenght} and will be indicated by 
 $r(\stt)$. \index{$r(\stt)$}
\end{rmk}
\subsection{Sets of strongly orthogonal roots}\label{s2.4}
\quad\par
We showed in \S\ref{s2.3}
that involutions of a root system 
are associated to subsets of strongly orthogonal roots. In this subsection we 
classify, modulo equivalence, systems of strongly
orthogonal roots. We can restrain our consideration to irreducible root systems.\par
Maximal systems of strongly orthogonal roots were classified in~\cite{AK2002}. 
When all roots have the same length, or in a system of type $\textsc{G}_{2}$, 
all maximal systems are  equivalent; orthogonal short roots of $\textsc{B}_{\ell}$ and $\textsc{F}_{4}$
are not strongly  
orthogonal and thus 
systems of strongly orthogonal roots
in $\textsc{B}_{\ell}$ and $\textsc{F}_{4}$  
may contain at most
one short root: in $\textsc{B}_{2n}$ and $\textsc{F}_{4}$ 
there are two classes, one containing and the other one not containing
a short root, while 
a maximal system of $\textsc{B}_{2n{+}1}$ always contains a short root and also
in this case we have a unique class.
In a system of type $\textsl{C}_{\ell}$ there 
are systems of up to $[\ell/2]$  strongly orthogonal short roots and each can be completed to a maximal
one: the equivalence classes are indexed by the number
of short roots they contain. The number of equivalence classes of maximal systems of strongly orthogonal
roots in irreducible root systems 
is given in the table: \vspace{5pt}
\begin{equation*} 
\begin{array}{| c | c | c |  c | c | c | c | c | c | c | c|}
\hline
 &\textsc{A}_{\ell} & \textsc{B}_{2n} & \textsc{B}_{2n{+}1} & \textsc{C}_{\ell} & \textsc{D}_{\ell} & \textsc{E}_{6} 
 & \textsc{E}_{7} 
 &\textsc{E}_{8} & \textsc{F}_{4} & \textsc{G}_{2} \\
 \hline
 \#classes&1 & 2 & 1 & [\ell/2] & 1 & 1 & 1 & 1 & 2 & 1\\
 \hline 
\#elements &[(\ell{+}1)/2]& 2n  & 2n{+}1 & \ell &  [\ell/2] & 4& 7 & 8 & 4,3 & 2\\
 \hline 
\end{array}
\end{equation*}\vspace{5pt}
\par 
 As usual, while explicitly describing below root systems, we will indicate by 
$\e_{1},\hdots,\e_{n}$ orthonormal vectors in suitable euclidean spaces. 
\subsubsection*{Strongly orthogonal roots in $\textsc{A}_{\ell}$} We take 
\begin{equation}\label{rA}
 \Rad(\textsc{A}_{\ell})\,{=}\,\{{\pm}(\e_{i}{-}\e_{j})\mid 1{\leq}i{<}j{\leq}\ell{+}1\}.
\end{equation}
Sets of  orthogonal roots contain at most $[(\ell{+}1)/2]$ elements and, 
for each positive integer $h$, with $2h{\leq}\ell{+}1$, all sets of $h$ strongly orthogonal roots  are equivalent to 
\begin{equation*}
 \texttt{S}_{h}(\textsc{A}_{\ell})\,{=}\,\{\e_{2i-1}{-}\e_{2i}\,{\mid}\,1{\leq}i{\leq}h\}.
\end{equation*}
The maximal ones are equivalent to $\Stt_{[(\ell+1)/2]}(\textsc{A}_{\ell})$.
\subsubsection*{Strongly orthogonal roots in $\textsc{B}_{\ell}$}
We take for $\textsc{B}_{\ell}$ the root system 
\begin{equation}\label{rB}
 \Rad(\textsc{B}_{\ell})=\{{\pm}\e_{i}\mid 1{\leq}i{\leq}\ell\}\cup\{{\pm}\e_{i}{\pm}\e_{j}\mid 1{\leq}i{<}j{\leq}\ell\}.
\end{equation}
Systems of strongly orthogonal roots of $\Rad(\textsc{B}_{\ell})$ 
are characterized by couples of nonnegative integers $r_{1},r_{2}$
with  $2r_{1}{+}r_{2}\,{\leq}\,\ell$
and are equivalent~to 
\begin{equation*}
 L_{r_{1},r_{2}}(\textsc{B}_{\ell})\,{=}
\begin{cases}
 \{\e_{2i-1}{-}\e_{2i}\,{\mid}\,1{\leq}i{\leq}r_{1}\}\,{\cup}\,\{\e_{\ell{-}2i}{\pm}\e_{\ell{-}2i{+}1}\,{\mid}\,
 \begin{smallmatrix}2{\leq}2i{<}r_{2}\end{smallmatrix}\}\,{\cup}\,\{\e_{\ell}\}, \;
 \text{if $r_{2}$ is odd,}\\
 \{\e_{2i-1}{-}\e_{2i}\,{\mid}\,1{\leq}i{\leq}r_{1}\}\cup\{\e_{\ell{-}2i{+}1}{\pm}\e_{\ell{-}2i{+}2}\,{\mid}\,
\begin{smallmatrix} 2{\leq}2i{\leq}r_{2}\end{smallmatrix}\}, \quad
 \text{if $r_{2}$ is even}.
\end{cases}
\end{equation*}
We have $\#(L_{r_{1},r_{2}}(\textsc{B}_{\ell}))\,{=}\,r_{1}{+}r_{2}$, with $2r_{1}{+}r_{2}\,{\leq}\,\ell$, and
those containing a short root have $r_{2}$ odd, while
those with $r_{2}$ even consist of long roots.
\begin{rmk}
	The intersection of $\Rad$ with the linear subspace orthogonal of $ L_{r_{1},r_{2}}(\textsc{B}_{\ell})$
	in $\hr^{*}$
	is a root system of type
$	
\underbrace{\textsc{A}_{1}\,{\times}\,\cdots\,{\times}\,\textsc{A}_{1}}_{r_{1}\;\text{times}}\,\times\;
	\textsc{B}_{\ell-r_{1}-r_{2}} .$
This clarifies the invariant meaning of the pair $(r_{1},r_{2})$. Analogoys consideration hold for the
pairs $(r_{1},r_{2})$ that will be introduced for types $\textsc{C}_{\ell}$, $\textsc{D}_{\ell}$.
\end{rmk}

\subsubsection*{Strongly orthogonal roots in $\textsc{C}_{\ell}$}
We take for $\textsc{C}_{\ell}$ the root system 
\begin{equation}\label{rC}
 \Rad(\textsc{C}_{\ell})=\{{\pm}2\e_{i}\mid 1{\leq}i{\leq}\ell\}\cup\{{\pm}\e_{i}{\pm}\e_{j}\mid 1{\leq}i{<}j{\leq}\ell\}.
\end{equation}
The equivalence classes of sets of
strongly orthogonal roots are indexed by two
integers $r_{1},r_{2}$ with $2r_{1}{+}r_{2}\,{\leq}\,\ell$, and are equivalent to 
\begin{align}
\texttt{S}_{r_{1},r_{2}}(\textsc{C}_{\ell}){=}\{\e_{2i-1}{-}\e_{2i}\,{\mid}\,1{\leq}i{\leq}r_{1}\}\,{\cup}\,
\{2\e_{\ell-i+1}\,{\mid}\,1{\leq}i{\leq}r_{2}\},\qquad\\
\notag \text{with}\; \#\texttt{S}_{r_{1},r_{2}}\,{=}\,r_{1}{+}r_{2}.
\end{align}

The maximal ones have $2r_{1}{+}r_{2}\,{=}\,\ell$.
\subsubsection*{Strongly orthogonal roots in $\textsc{D}_{\ell}$}
We take for $\textsc{D}_{\ell}$ the root system 
\begin{equation}\label{rD}
 \Rad(\textsc{D}_{\ell})=\{{\pm}\e_{i}{\pm}\e_{j}\mid 1{\leq}i{<}j{\leq}\ell\}.
\end{equation}
The equivalence classes of sets of 
 strongly orthogonal roots are indexed by two
integers $r_{1},r_{2}$ with $2(r_{1}{+}r_{2})\,{\leq}\,\ell$, and are equivalent to 
\begin{equation*}
\texttt{S}_{r_{1},r_{2}}{=}\{\e_{2i-1}{-}\e_{2i}\mid 1{\leq}i{\leq}r_{1}\}\cup
\{\e_{\ell-2i+2}{\pm}\e_{\ell-2i+1}\mid 1{\leq}i{\leq}r_{2}\},\;\;\text{with}\;\; \#\texttt{S}_{r_{1},r_{2}}\,{=}\,r_{1}{+}2r_{2}.
\end{equation*}
The maximal one has $r_{1}\,{=}\,0$, $r_{2}\,{=}\,[\ell/2]$.
\subsubsection*{Root systems of type $\textsc{E}$}
 In  the following three  subsections
we will consider subsets of orthogonal roots in 
exceptional root systems of type $\textsc{E}$. \par
\begin{ntz}
We found convenient to introduce the notation 
\begin{equation} \label{zE}
\begin{cases}
 \zetaup_{\emptyset}{=}{-}\tfrac{1}{2}(\e_{1}{+}\cdots{+}\e_{8}),\\
 \begin{aligned}
 & \zetaup_{\, i_{1},\hdots,i_{k}}\,{=}\,\tfrac{1}{2}(\e_{i_{1}}{+}\cdots{+}\e_{i_{k}}{-}\e_{i_{k+1}}{-}\cdots{-}\e_{i_{8}}),
  \\
 &\quad \text{if $1{\leq}k{\leq}8$ and 
$i\,{\in}\,\Sb_{8}$ is any permutation of $\{1,\hdots,8\}$.}
  \end{aligned} 
\end{cases}
\end{equation}
\end{ntz}
\par
We represent a root system of type $\textsc{E}_{8}$ by 
\begin{equation} \label{re8}
 \Rad(\textsc{E}_{8}){=}\{{\pm}\zetaup_{\emptyset}\}{\cup}\{{\pm}\e_{i}{\pm}\e_{j},\,
 {\pm}\zetaup_{i,j}\,{\mid}\,\begin{smallmatrix}
  1{\leq}i{<}j{\leq}8\end{smallmatrix}\}{\cup}\{\zetaup_{i_{1},i_{2},i_{3},i_{4}}\,{\mid}\,
 \begin{smallmatrix} 1{\leq}i_{1}{<}i_{2}{<}i_{3}{<}i_{4}{\leq}8\end{smallmatrix}\}
\end{equation}
an take as a standard basis of positive simple roots 
\begin{equation}\label{bE8}
 \Bz(\textsc{E}_{8})=\left\{\!\!\!\!\!\begin{array}{l l l l}
 & \alphaup_{1}{=}\zetaup_{\emptyset},\;&\alphaup_{2}{=}\e_{1}{+}\e_{2},\;
 \alphaup_{3}{=}\e_{2}{-}\e_{1},\; 
  \alphaup_{4}{=}\e_{3}{-}\e_{2},\\
 &  \alphaup_{5}{=}\e_{4}{-}\e_{3},\; &\alphaup_{6}{=}\e_{5}{-}\e_{4},\; 
  \alphaup_{7}{=}\e_{6}{-}\e_{5},\; \alphaup_{8}{=}\e_{7}{-}\e_{6}\,
  \end{array}\right \},
\end{equation}
corresponding to the
Dynkin diagram
\begin{equation*}
  \xymatrix @M=0pt @R=4pt @!C=20pt{
  \alphaup_{3}&\alphaup_{4}&\alphaup_{5}&\alphaup_{6}&\alphaup_{7}&\alphaup_{8}\\
  \medcirc \ar@{-}[r] 
  &\medcirc\ar@{-}[r]\ar@{-}[dddd]
  &\medcirc\ar@{-}[r]
  &\medcirc\ar@{-}[r]
  &\medcirc\ar@{-}[r]
  &\medcirc \\
  \\
  \\
  \\
  &\medcirc\ar@{-}[dddd] &\!\!\!\!\!\!\!\!\!\!\!\!\!\!\!\!\!\!\!\!\!\!\!\!\!\!\!\!\alphaup_{2}
  \\
  \\
  \\
  \\
  &\medcirc&\!\!\!\!\!\!\!\!\!\!\!\!\!\!\!\!\!\!\!\!\!\!\!\!\!\!\!\!\alphaup_{1}
  }
\end{equation*}
\par\medskip
We fix  
 the maximal system of  strongly  orthogonal roots
\begin{equation}
 \Mtt'(\textsc{E}_{8})=\left\{ 
\begin{aligned}
& \betaup_{1}{=}\zetaup_{1,3,5,7},&\betaup_{2}{=}\zetaup_{1,2,7,8},&&\betaup_{3}{=}\zetaup_{1,3,6,8},
&&\betaup_{4}{=}\zetaup_{1,2,5,6},\\
&\betaup_{5}{=}\zetaup_{1,4,5,8},&\betaup_{6}{=}\zetaup_{1,2,3,4}, && \betaup_{7}{=}\zetaup_{1,4,6,7}, &&
\betaup_{8}{=}{-}\zetaup_{\emptyset}
\end{aligned}\right\}
\end{equation}
and  note that the involution $\texttt{c}\,{=}\,\sq_{\,\zetaup_{1,4,6,8}}{\circ}\sq_{\,\zetaup_{7,8}}{\circ}\sq_{\,\betaup_{4}}{\circ}
\sq_{\,\betaup_{6}}$ transforms $\Mtt'(\textsc{E}_{8})$ into  
\begin{equation}
  \Mtt(\textsc{E}_{8})=\left\{ 
\begin{aligned}
& \gammaup_{1}{=}\e_{1}{-}\e_{2},&\gammaup_{2}{=}\e_{1}{+}\e_{2},&&\gammaup_{3}{=}\e_{3}{-}\e_{4},
&&\gammaup_{4}{=}\e_{3}{+}\e_{4},\\
&\gammaup_{5}{=}\e_{5}{-}\e_{6},&\gammaup_{6}{=}\e_{5}{+}\e_{6}, && \gammaup_{7}{=}\e_{7}{-}\e_{8}, &&
\gammaup_{8}{=}\e_{7}{+}\e_{8}.
\end{aligned}\right\}
\end{equation}
[We set $\gammaup_{i}{=}\texttt{c}(\betaup_{i}),$ for $i{=}1,\hdots,8$.] \par\smallskip
\begin{rmk} The root system $\Rad(\textsc{E}_{8})$ contains the root system $\Rad(\textsc{D}_{8})$.
Its complement $\Theta$ consists of  roots $\zetaup_{I}$ with $I\,{\subset}\,\{1,\hdots,8\}$
having an even number of indices.  It is useful to recall their orthogonality relations: 
\begin{equation*} 
\begin{cases}
 \zetaup_{\emptyset}^{\perp}\,{\cap}\,\Theta=\{\zetaup_{i_{1},i_{2},i_{3},i_{4}}\,{\mid}\, 
\begin{smallmatrix}
 1{\leq}i_{1}{<}i_{2}{<}i_{3}{<}i_{4}{\leq}8
\end{smallmatrix}\!\},\\[15pt]
\zetaup_{j_{1},j_{2}}^{\perp}\,{\cap}\,\Theta=
\left\{{\pm}\zetaup_{i_{1},i_{2}}\left| 
\begin{gathered}
 \begin{smallmatrix}
 1{\leq}i_{1}{<}i_{2}{\leq}8,
\end{smallmatrix}\\[-6pt]
\begin{smallmatrix}
 \{i_{1},i_{2}\}\cap\{j_{1},j_{2}\}=\emptyset
\end{smallmatrix}
\end{gathered}\!\right\}\right.\cup
\left\{\zetaup_{i_{1},i_{2},i_{3},i_{4}}\left| 
\begin{gathered}
 \begin{smallmatrix}
 1{\leq}i_{1}{<}i_{2}{<}i_{3}{<}i_{4}{\leq}8,
\end{smallmatrix}\\[-6pt]
\begin{smallmatrix}
 \#(\{i_{1},i_{2},i_{3},i_{4}\}\cap\{j_{1},j_{2}\})=1
\end{smallmatrix}
\end{gathered}\!\right\}\right. ,\\[15pt] 
\begin{aligned}
\zetaup_{j_{1},j_{2},j_{3},j_{4}}^{\perp}\,{\cap}\,\Theta=\{{\pm}\zetaup_{\emptyset}\}\,{\cup}\,
\left\{{\pm}\zetaup_{i_{1},i_{2}}\left| 
\begin{gathered}
 \begin{smallmatrix}
 1{\leq}i_{1}{<}i_{2}{\leq}8,
\end{smallmatrix}\\[-6pt]
\begin{smallmatrix}
 \#(\{i_{1},i_{2}\}\cap\{j_{1},j_{2},j_{3},j_{4}\})=1
\end{smallmatrix}
\end{gathered}\!\right\}\right. \,\,\,\,\,\,\,\,\,\,\,\\
\cup
\left\{\zetaup_{i_{1},i_{2},i_{3},i_{4}}\left| 
\begin{gathered}
 \begin{smallmatrix}
 1{\leq}i_{1}{<}i_{2}{<}i_{3}{<}i_{4}{\leq}8,
\end{smallmatrix}\\[-6pt]
\begin{smallmatrix}
 \#(\{i_{1},i_{2},i_{3},i_{4}\}\cap\{j_{1},j_{2},j_{3},j_{4}\})=2
\end{smallmatrix}
\end{gathered}\!\right\}\right.
\end{aligned}
\end{cases}
\end{equation*}

 \end{rmk}
The choice \eqref{bE8} has 
the advantage that the Dynkin diagrams of $\textsc{E}_{7}$ and $\textsc{E}_{6}$
may be obtained by erasing the node $\alphaup_{8}$ in the first and the nodes $\alphaup_{7},\alphaup_{8}$
in the second case. Indeed we can define $\Rad(\textsc{E}_{7})$ to be the subsystem 
consisting of the roots of $\Rad(\textsc{E}_{8})$
orthogonal to $\e_{7}{-}\e_{8}$: 
\begin{equation}\label{rE7}
\Rad(\textsc{E}_{7})=\{{\pm}\zetaup_{\emptyset},\; {\pm}(\e_{7}{+}\e_{8}),\, {\pm}\zetaup_{7,8}\}\cup
 \{{\pm}\e_{i}{\pm}\e_{j},\,{\pm}\zetaup_{i,j}\, {\pm}\zetaup_{i,j,7,8}{\mid}\, 
\begin{smallmatrix}
 1{\leq}i{<}j{\leq}6
\end{smallmatrix}\},
\end{equation}
with basis of positive simple roots 
\begin{equation}\label{bE7}
 \Bz(\textsc{E}_{7})=\{\alphaup_{1},\alphaup_{2},\alphaup_{3},
 \alphaup_{4},\alphaup_{5},\alphaup_{6},\alphaup_{7}\},
\end{equation}
and $\Rad(\textsc{E}_{6})$ to be the subsystem of $\Rad(\textsc{E}_{8})$
orthogonal to $\e_{7}{-}\e_{8}$ and to $\e_{6}{-}\e_{7}$:
\begin{equation}\label{rE6}
 \Rad(\textsc{E}_{6})=\{{\pm}\zetaup_{\emptyset}\}\cup
 \{{\pm}\e_{i}{\pm}\e_{j},\;{\pm}\zetaup_{i,j}\,{\mid}\, 
\begin{smallmatrix}
 1{\leq}i{<}j{\leq}5
\end{smallmatrix}\}\cup\{{\pm}\zetaup_{i,6,7,8}\,{\mid}\, 
\begin{smallmatrix}
 1{\leq}i{\leq}5
\end{smallmatrix}\}
\end{equation}
with basis of positive simple roots 
\begin{equation}\label{bE6}
 \Bz(\textsc{E}_{6})=\{\alphaup_{1},\alphaup_{2},\alphaup_{3},
 \alphaup_{4},\alphaup_{5},\alphaup_{6}\}.
\end{equation}
 \par
 With these choices, it is convenient to fix the maximal systems of strongly  orthogonal roots 
\begin{align}\label{maxE7}
 & \Mtt(\textsc{E}_{7})\;=\{\gammaup_{1},\gammaup_{2},\gammaup_{3},\gammaup_{4},\gammaup_{5},
 \gammaup_{6},\gammaup_{8}\} , && \text{for $\Rad(\textsc{E}_{7})$},\\
\label{maxE6}
 &\Mtt(\textsc{E}_{6})\; =\{\gammaup_{1},\gammaup_{2},\gammaup_{3},\gammaup_{4}\},
 && \text{for $\Rad(\textsc{E}_{6})$}. 
\end{align}
Another convenient choice for a system of type $\textsc{E}_{7}$ is to take the orthogonal
of $\zetaup_{\emptyset}$ in $\Rad(\textsc{E}_{8})$, and for type $\textsc{E}_{6}$ that of
$\{\zetaup_{\emptyset},\;\e_{7}{+}\e_{8}\}$: 
\begin{align}
 \label{rE7a}
 &\Rad'(\textsc{E}_{7})\;{=}\{{\pm}(\e_{i}{-}\e_{j})\,{\mid}\, 
\begin{smallmatrix}
 1{\leq}i{<}{j}{\leq}8
\end{smallmatrix}\!\}\cup\{\zetaup_{i_{1},i_{2},i_{3},i_{4}}\,{\mid}\, 
\begin{smallmatrix}
 1{\leq}i_{1}{<}i_{2}{<}i_{3}{<}i_{4}{\leq}8
\end{smallmatrix}\!\},\\
\label{rE6a}
&\Rad'(\textsc{E}_{6})\;{=}\{{\pm}(\e_{7}{-}\e_{8})\}\cup
\{{\pm}(\e_{i}{-}\e_{j})\,{\mid}\, 
\begin{smallmatrix}
 1{\leq}i{<}{j}{\leq}6
\end{smallmatrix}\!\}\cup\{{\pm}\zetaup_{i_{1},i_{2},i_{3},7}\,{\mid}\, 
\begin{smallmatrix}
 1{\leq}i_{1}{<}i_{2}{<}i_{3}{\leq}6
\end{smallmatrix}\!\}
\end{align}
The corresponding Dynkin diagrams, canonical bases and maximal systems of  strongly  orthogonal roots
are 

\begin{align} 
&  \xymatrix @M=0pt @R=4pt @!C=20pt{
  \alphaup'_{1}&\alphaup'_{2}&\alphaup'_{3}&\alphaup'_{4}&\alphaup'_{5}&\alphaup'_{6}\\
  \medcirc \ar@{-}[r] 
  &\medcirc\ar@{-}[r]
  &\medcirc\ar@{-}[r]\ar@{-}[dddd]
  &\medcirc\ar@{-}[r]
  &\medcirc\ar@{-}[r]
  &\medcirc \\
  \\
  \\
  \\
  &&\medcirc &\!\!\!\!\!\!\!\!\!\!\!\!\!\!\!\!\!\!\!\!\!\!\!\!\!\!\!\!\alphaup'_{7}
  }\\[4pt]
  \label{bE7a}
 &\Bz'(\textsc{E}_{7})=\{\alphaup'_{i}{=}\e_{i}{-}\e_{i+1}\,{\mid}\, 
\begin{smallmatrix}
 1{\leq}i{\leq}6
\end{smallmatrix}\!\}\cup\{\alphaup'_{7}{=}\zetaup_{4,5,6,7}\},\\
\label{maxE7a}
& \Mtt'(\textsc{E}_{7})=\{\betaup_{1},\betaup_{2},\betaup_{3},\betaup_{4},\betaup_{5},\betaup_{6},\betaup_{7}\},
\end{align}
and 
\begin{align}
&  \xymatrix @M=0pt @R=4pt @!C=20pt{
  \alphaup'_{1}&\alphaup'_{2}&\alphaup'_{3}&\alphaup'_{4}&\alphaup'_{5}\\
  \medcirc \ar@{-}[r] 
  &\medcirc\ar@{-}[r]
  &\medcirc\ar@{-}[r]\ar@{-}[dddd]
  &\medcirc\ar@{-}[r]
  &\medcirc
\\
  \\
  \\
  \\
  &&\medcirc &\!\!\!\!\!\!\!\!\!\!\!\!\!\!\!\!\!\!\!\!\!\!\!\!\!\!\!\!\alphaup'_{6}
  }\\[4pt]
\label{bE6a}
& \Bz'(\textsc{E}_{6})=\{\alphaup'_{i}{=}\e_{i}{-}\e_{i+1}\,{\mid}\, 
\begin{smallmatrix}
 1{\leq}i{\leq}5
\end{smallmatrix}\!\}\cup\{\alphaup'_{6}{=}\zetaup_{4,5,6,7}\}\\
 &\Mtt'(\textsc{E}_{6})=\{\betaup_{1},\betaup_{3},\betaup_{5},\betaup_{7}\}
\end{align}
\begin{rmk} The presentation $\Rad'(\textsc{E}_{7})$ of type $\textsc{E}_{7}$ is nicely symmetrical.
A nice symmetrical root system of type $\textsc{E}_{6}$ is obtained 
by considering the vectors $\vtt_{i}\,{=}\,\e_{i}{-}\tfrac{1}{3}(\e_{1}{+}\e_{2}{+}\e_{3})$ ($i=1,2,3$)
of $\R^{3}$ and setting 
\begin{align}
  \Rad'''(\textsc{E}_{6})\,&{=}\,\{(\vtt_{i}{-}\vtt_{j},0,0),\, (0,\vtt_{i}{-}\vtt_{j},0),\, (0,0,\vtt_{i}{-}\vtt_{j})\,{\mid}\,1{\leq}i{\neq}j{\leq}3\}
  \qquad\\
  \notag
 \,&{\cup}\,\{{\pm}(\vtt_{i},\vtt_{j},\vtt_{h})\,{\mid}\,1{\leq}i,j,k{\leq}3\}.
\end{align}
\end{rmk}
\vfill
\pagebreak 

\subsubsection*{Strongly orthgonal roots in $\textsc{E}_{8}$}
On $\Rad(\textsc{E}_{8})$ there is a natural $\Z_{2}$-gradation, making its subalgebra
$\Rad(\textsc{D}_{8})$ (see \eqref{rD})
its even 
and $\Theta$ its odd subset. This makes easier to compute permutations
of $\Mtt'(\textsc{E}_{8})$ induced by the Weyl group of $\Rad(\textsc{E}_{8})$ 
by restricting to the elements of the
Weyl group of~$\Rad(\textsc{D}_{8})$.
\par
We represent in the Table I  below 
Weyl rotations of  $\Rad(\textsc{D}_{8})$ 
acting on $\Mtt'(\textsc{E}_{8})$. To 
the pair of roots $\alphaup_{1},\alphaup_{2}$ on the left corresponds 
the rotation $\sq_{\alphaup_{1}}{\circ}\,\sq_{\alphaup_{2}}$, whose effect on $\Mtt'(\textsc{E}_{8})$ is 
the product of transpositions $(\betaup_{i_{1}},\betaup_{i_{2}})
(\betaup_{i_{3}},\betaup_{i_{4}})$ on the right. Since $\Mtt'(\textsc{E}_{8})$ is a basis of
$\hr^{*}{\simeq}\,\R^{8}$,
this means that 
\begin{equation}
 \sq_{\,\alphaup_{1}}\circ\sq_{\,\alphaup_{2}}=\sq_{\,\betaup_{i_{1}}-\,\betaup_{i_{2}}}\circ
 \sq_{\,\betaup_{i_{3}}-\,\betaup_{i_{4}}}.
\end{equation}\par
\begin{equation*} \begin{gathered}
\begin{array}{| c |c || c | c |}
\hline 
(\e_{2}{-}\e_{3}),(\e_{5}{-}\e_{8}) & (\betaup_{1},\betaup_{2})(\betaup_{3},\betaup_{4}) &
 (\e_{5}{-}\e_{8}),(\e_{6}{-}\e_{7}) & (\betaup_{1},\betaup_{3})(\betaup_{2},\betaup_{4}) \\
 \hline
 (\e_{2}{-}\e_{3}),(\e_{6}{-}\e_{7}) & (\betaup_{1},\betaup_{4})(\betaup_{2},\betaup_{3}) & &\\
 \hline\hline
 (\e_{2}{-}\e_{5}),(\e_{3}{-}\e_{8}) & (\betaup_{1},\betaup_{2})(\betaup_{5},\betaup_{6}) & 
  (\e_{3}{-}\e_{8}),(\e_{4}{-}\e_{7}) & (\betaup_{1},\betaup_{5})(\betaup_{2},\betaup_{6}) \\
  \hline
 (\e_{2}{-}\e_{5}),(\e_{4}{-}\e_{7}) & (\betaup_{1},\betaup_{6})(\betaup_{2},\betaup_{5})&& \\
 \hline\hline
  (\e_{5}{-}\e_{6}),(\e_{7}{-}\e_{8}) & (\betaup_{1},\betaup_{3})(\betaup_{5},\betaup_{7}) &
 (\e_{3}{-}\e_{4}),(\e_{7}{-}\e_{8}) &
  (\betaup_{1},\betaup_{5})(\betaup_{3},\betaup_{7})\\
 \hline
(\e_{3}{-}\e_{4}),(\e_{5}{-}\e_{6}) & (\betaup_{1},\betaup_{7})(\betaup_{3},\betaup_{5}) && \\
 \hline\hline
   (\e_{2}{-}\e_{7}),(\e_{3}{-}\e_{6}) & (\betaup_{1},\betaup_{4})(\betaup_{6},\betaup_{7}) &
 (\e_{2}{-}\e_{7}),(\e_{4}{-}\e_{5}) & (\betaup_{1},\betaup_{6})(\betaup_{4},\betaup_{7}) \\ %
 \hline
  (\e_{3}{-}\e_{6}),(\e_{4}{-}\e_{5}) & (\betaup_{1},\betaup_{7})(\betaup_{4},\betaup_{6}) &&\\ %
   \hline
 \hline
  (\e_{2}{-}\e_{6}),(\e_{3}{-}\e_{7}) & (\betaup_{2},\betaup_{3})(\betaup_{6},\betaup_{7}) &
 (\e_{3}{-}\e_{7}),(\e_{4}{-}\e_{8}) & (\betaup_{2},\betaup_{6})(\betaup_{3},\betaup_{7}) \\
 \hline
  (\e_{2}{-}\e_{6}),(\e_{4}{-}\e_{8}) & (\betaup_{2},\betaup_{7})(\betaup_{3},\betaup_{6}) & & \\
  \hline\hline
 (\e_{5}{-}\e_{7}),(\e_{6}{-}\e_{8}) & (\betaup_{2},\betaup_{4})(\betaup_{5},\betaup_{7}) & %
  (\e_{2}{-}\e_{4}),(\e_{5}{-}\e_{7}) & (\betaup_{2},\betaup_{5})(\betaup_{4},\betaup_{7}) 
  \\
  \hline
 (\e_{2}{-}\e_{4}),(\e_{6}{-}\e_{8}) & (\betaup_{2},\betaup_{7})(\betaup_{4},\betaup_{5}) && \\ 
 \hline\hline 
  (\e_{2}{-}\e_{8}),(\e_{3}{-}\e_{5}) & (\betaup_{3},\betaup_{4})(\betaup_{5},\betaup_{6}) &
 (\e_{3}{-}\e_{5}),(\e_{4}{-}\e_{6}) & (\betaup_{3},\betaup_{5})(\betaup_{4},\betaup_{6}) \\
 \hline
 (\e_{2}{-}\e_{8}),(\e_{4}{-}\e_{6}) & (\betaup_{3},\betaup_{6})(\betaup_{4},\betaup_{5}) & & \\ 
 \hline
\hline
  (\e_{2}{+}\e_{8}),(\e_{3}{+}\e_{5}) & (\betaup_{1},\betaup_{2})(\betaup_{7},\betaup_{8})&
 (\e_{3}{+}\e_{5}),(\e_{4}{+}\e_{6}) & (\betaup_{1},\betaup_{7})(\betaup_{2},\betaup_{8}) \\
 \hline
   (\e_{2}{+}\e_{8}),(\e_{4}{+}\e_{6}) & (\betaup_{1},\betaup_{8})(\betaup_{2},\betaup_{7}) 
 & &\\
  \hline\hline
    (\e_{5}{+}\e_{7}),(\e_{6}{+}\e_{8}) & (\betaup_{1},\betaup_{3})(\betaup_{6},\betaup_{8}) &
    (\e_{2}{+}\e_{4}),(\e_{5}{+}\e_{7}) & (\betaup_{1},\betaup_{6})(\betaup_{3},\betaup_{8})
     \\
  \hline
 (\e_{2}{+}\e_{4}),(\e_{6}{+}\e_{8}) & (\betaup_{1},\betaup_{8})(\betaup_{3},\betaup_{6}) 
 &&\\ 
  \hline\hline
 (\e_{2}{+}\e_{6}),(\e_{3}{+}\e_{7}) & (\betaup_{1},\betaup_{4})(\betaup_{5},\betaup_{8}) 
&
  (\e_{3}{+}\e_{7}),(\e_{4}{+}\e_{8}) & (\betaup_{1},\betaup_{5})(\betaup_{4},\betaup_{8}) \\
  \hline
    (\e_{2}{+}\e_{6}),(\e_{4}{+}\e_{8}) & (\betaup_{1},\betaup_{8})(\betaup_{4},\betaup_{5}) &&\\ 
  \hline\hline
 (\e_{2}{+}\e_{7}),(\e_{3}{+}\e_{6}) & (\betaup_{2},\betaup_{3})(\betaup_{5},\betaup_{8}) &
   (\e_{2}{+}\e_{7}),(\e_{4}{+}\e_{5}) & (\betaup_{2},\betaup_{5})(\betaup_{3},\betaup_{8}) \\ \hline
   (\e_{3}{+}\e_{6}),(\e_{4}{+}\e_{5}) & (\betaup_{2},\betaup_{8})(\betaup_{3},\betaup_{5}) && 
 \\
 \hline\hline
 (\e_{5}{+}\e_{6}),(\e_{7}{+}\e_{8}) & (\betaup_{2},\betaup_{4})(\betaup_{6},\betaup_{8}) 
& 
  (\e_{3}{+}\e_{4}),(\e_{7}{+}\e_{8}) & (\betaup_{2},\betaup_{6})(\betaup_{4},\betaup_{8}) \\
  \hline
 (\e_{3}{+}\e_{4}),(\e_{5}{+}\e_{6}) & (\betaup_{2},\betaup_{8})(\betaup_{4},\betaup_{6}) && \\ 
  \hline\hline
  (\e_{2}{+}\e_{5}),(\e_{3}{+}\e_{8}) & (\betaup_{3},\betaup_{4})(\betaup_{7},\betaup_{8}) &
 (\e_{3}{+}\e_{8}),(\e_{4}{+}\e_{7}) & (\betaup_{3},\betaup_{7})(\betaup_{4},\betaup_{8}) \\
  \hline
  (\e_{2}{+}\e_{5}),(\e_{4}{+}\e_{7}) & (\betaup_{3},\betaup_{8})(\betaup_{4},\betaup_{7}) &&\\ 
  \hline\hline
 (\e_{2}{+}\e_{3}),(\e_{5}{+}\e_{8}) & (\betaup_{5},\betaup_{6})(\betaup_{7},\betaup_{8}) &
  (\e_{5}{+}\e_{8}),(\e_{6}{+}\e_{7}) & (\betaup_{5},\betaup_{7})(\betaup_{6},\betaup_{8}) \\
  \hline
 (\e_{2}{+}\e_{3}),(\e_{6}{+}\e_{7}) & (\betaup_{5},\betaup_{8})(\betaup_{6},\betaup_{7}) &&\\ 
  \hline
\end{array}\\
 [\text{Table I}]
 \end{gathered}
\end{equation*}
\begin{ntz}
Let $\sfv_{1},\sfv_{2},\sfv_{3},\sfv_{4}$ be four orthogonal vectors in
$\hr^{*}{=}\langle\Rad(\textsc{E}_{8})\rangle_{\R}$. 
We denote by $\Kf_{4}(\sfv_{1},\sfv_{2},\sfv_{3},\sfv_{4})$
 the Klein $4$-group of isometries of $\hr^{*}{\simeq}\R^{8}$ consisting of 
\begin{equation*}
\id,\; \sq_{\,\sfv_{1}-\,\sfv_{2}}\circ \sq_{\,\sfv_{3}-\,\sfv_{4}},\;
\sq_{\,\sfv_{1}-\,\sfv_{3}}\circ \sq_{\,\sfv_{2}-\,\sfv_{4}},\;
\sq_{\,\sfv_{1}-\,\sfv_{4}}\circ \sq_{\,\sfv_{2}-\,\sfv_{3}}.
\end{equation*}  
\end{ntz}
Lines in Table\,{I} are divided into couples, each one containing four $\betaup_{i}$'s
and representing a corresponding Klein $4$-group of involutions of $\Rad(\textsc{E}_{8})$. 
\par
This can be summarized in the following Lemma. 
\begin{lem} $\Wf(\Rad(\textsc{E}_{8}))$ contains the fourteen  
Klein $4$-groups 
\begin{align*}
& \Kf_{4}(\betaup_{1},\betaup_{2},\betaup_{3},\betaup_{4}),\;\;
 \Kf_{4}(\betaup_{1},\betaup_{2},\betaup_{5},\betaup_{6}),\;\;
 \Kf_{4}(\betaup_{1},\betaup_{2},\betaup_{7},\betaup_{8}),\;\;
  \Kf_{4}(\betaup_{1},\betaup_{3},\betaup_{5},\betaup_{7}),\\
& \Kf_{4}(\betaup_{1},\betaup_{3},\betaup_{6},\betaup_{8}),\;\; 
   \Kf_{4}(\betaup_{1},\betaup_{4},\betaup_{6},\betaup_{7}),\;\;
 \Kf_{4}(\betaup_{1},\betaup_{4},\betaup_{5},\betaup_{8}),\;\; %
 \Kf_{4}(\betaup_{2},\betaup_{3},\betaup_{6},\betaup_{7}),\\
& \Kf_{4}(\betaup_{2},\betaup_{3},\betaup_{5},\betaup_{8}),\;\;%
 \Kf_{4}(\betaup_{2},\betaup_{4},\betaup_{5},\betaup_{7}),\;\;
  \Kf_{4}(\betaup_{2},\betaup_{4},\betaup_{6},\betaup_{8}),\;\;
 \Kf_{4}(\betaup_{3},\betaup_{4},\betaup_{5},\betaup_{6}),\\
 &\Kf_{4}(\betaup_{3},\betaup_{4},\betaup_{7},\betaup_{8}),\;\;
 \Kf_{4}(\betaup_{5},\betaup_{6},\betaup_{7},\betaup_{8}).\;\;\qquad\qquad\qquad\quad
 \qquad\qquad\qquad\qed
\end{align*}
 \end{lem}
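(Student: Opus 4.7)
The proof is essentially a direct reading of Table\,I, so my plan is to explain carefully why the table does what the lemma claims and then identify the fourteen Klein $4$-groups.

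First, I would note the enabling observation: since $\Rad(\textsc{D}_{8})\subset \Rad(\textsc{E}_{8})$, every reflection $\sq_{\,\alphaup}$ with $\alphaup\in\Rad(\textsc{D}_{8})$ lies in $\Wf(\Rad(\textsc{E}_{8}))$, and hence so does any product of two such reflections. Thus any element of $\Wf(\Rad(\textsc{E}_{8}))$ listed via a pair $(\alphaup_{1},\alphaup_{2})$ in the left column of Table\,I is automatic; the content lies in its action on $\Mtt'(\textsc{E}_{8})$.

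Second, I would justify the displayed equation preceding the notation, namely that if $\sq_{\,\alphaup_{1}}\circ\sq_{\,\alphaup_{2}}$ induces the permutation $(\betaup_{i_{1}},\betaup_{i_{2}})(\betaup_{i_{3}},\betaup_{i_{4}})$ on $\Mtt'(\textsc{E}_{8})$, then it coincides with $\sq_{\,\betaup_{i_{1}}-\betaup_{i_{2}}}\circ\sq_{\,\betaup_{i_{3}}-\betaup_{i_{4}}}$. This is immediate because both sides are isometries of $\hr^{*}$, the roots $\betaup_{i_{1}}-\betaup_{i_{2}}$ and $\betaup_{i_{3}}-\betaup_{i_{4}}$ are orthogonal, and $\Mtt'(\textsc{E}_{8})$ is an orthogonal basis of $\hr^{*}\simeq\R^{8}$ on which both sides perform the same double transposition (fixing the remaining four $\betaup_{j}$'s).

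Third, I would verify the entries of Table\,I themselves. The $\betaup_{i}$'s are of the form $\zetaup_{I}=\tfrac{1}{2}(\pm\e_{1}\pm\cdots\pm\e_{8})$, on which each reflection $\sq_{\,\e_{i}\pm\e_{j}}$ acts by permuting and/or negating the relevant coordinates. A composition $\sq_{\,\alphaup_{1}}\circ\sq_{\,\alphaup_{2}}$ with $\alphaup_{1},\alphaup_{2}\in\Rad(\textsc{D}_{8})$ orthogonal therefore permutes the eight $\betaup_{i}$'s (it preserves the $\Z_{2}$-grading, so $\Theta$ and hence $\Mtt'(\textsc{E}_{8})$ are stable). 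A direct check on each line—routine but unavoidable—confirms the double transposition recorded in the right column. Here I expect the main obstacle: there are forty-two entries to verify and one must be disciplined about signs, though symmetry between the "$\e_{i}{-}\e_{j}$" and "$\e_{i}{+}\e_{j}$" halves of the table (the second half simply toggles $\betaup_{8}={-}\zetaup_{\emptyset}$ into the mix) halves the effective work.

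Finally, I would conclude by reading off the structure of the table. It is organized into fourteen blocks of three lines each (seven in the upper "$e_i-e_j$" half and seven in the lower "$e_i+e_j$" half); within a block, the same four indices $\{i_{1},i_{2},i_{3},i_{4}\}\subset\{1,\dots,8\}$ occur, and the three non-identity involutions $(\betaup_{i_{1}},\betaup_{i_{2}})(\betaup_{i_{3}},\betaup_{i_{4}})$, $(\betaup_{i_{1}},\betaup_{i_{3}})(\betaup_{i_{2}},\betaup_{i_{4}})$, $(\betaup_{i_{1}},\betaup_{i_{4}})(\betaup_{i_{2}},\betaup_{i_{3}})$ appear. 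Together with the identity, these form $\Kf_{4}(\betaup_{i_{1}},\betaup_{i_{2}},\betaup_{i_{3}},\betaup_{i_{4}})$. Listing the fourteen quadruples $\{i_{1},i_{2},i_{3},i_{4}\}$ exhausts the lemma's enumeration, so each of the stated Klein $4$-groups is realized inside $\Wf(\Rad(\textsc{E}_{8}))$.
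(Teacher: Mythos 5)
Your proposal is correct and follows essentially the same route as the paper, which offers no proof beyond the remark that the lemma ``summarizes'' Table\,I: each of the fourteen blocks of the table lists the three nontrivial elements of one $\Kf_{4}(\betaup_{i_{1}},\betaup_{i_{2}},\betaup_{i_{3}},\betaup_{i_{4}})$ as products $\sq_{\,\alphaup_{1}}\circ\sq_{\,\alphaup_{2}}$ of reflections in roots of $\Rad(\textsc{D}_{8})\subset\Rad(\textsc{E}_{8})$. Your two supplementary observations --- that such products automatically lie in $\Wf(\Rad(\textsc{E}_{8}))$, and that an isometry inducing the double transposition $(\betaup_{i_{1}},\betaup_{i_{2}})(\betaup_{i_{3}},\betaup_{i_{4}})$ on the orthogonal basis $\Mtt'(\textsc{E}_{8})$ while fixing the remaining $\betaup_{j}$ must equal $\sq_{\,\betaup_{i_{1}}-\betaup_{i_{2}}}\circ\sq_{\,\betaup_{i_{3}}-\betaup_{i_{4}}}$ --- correctly supply exactly the details the paper leaves implicit.
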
 
 \begin{rmk} The indices of the $\betaup_{i}$'s involved in each of these Klein groups
 are all pairs on the same column of the diagram 
 \begin{equation*} \begin{array}{|c|c|c|c|c|c|c|} \hline
 12 & 13 & 14 & 15 & 16 & 17 & 18\\
 \hline
 34 & 24 & 23 & 26 & 25 & 28 & 27\\
 \hline
 56 & 57 & 58 & 48 & 47 & 35 & 36\\
 \hline
 78 & 68 & 67 & 37 & 38 & 46 & 45\\
 \hline
 \end{array}
 \end{equation*}
 In particular, if $i_{1},\hdots,i_{8}$ is a permutation of $\{1,\hdots,8\}$, then
 \begin{equation*}
 \Kf_{4}(\betaup_{i_{1}},\betaup_{i_{2}},\betaup_{i_{3}},\betaup_{i_{4}})\subset
 \Wf(\Rad(\textsc{E}_{8})) \;\Longleftrightarrow  \;
 \Kf_{4}(\betaup_{i_{5}},\betaup_{i_{6}},\betaup_{i_{7}},\betaup_{i_{8}})\subset
 \Wf(\Rad(\textsc{E}_{8})).
 \end{equation*}
\end{rmk}
\begin{cor}\label{c2.11}
 For any triple $(i,j,h)$ with 
 $1{\leq}i{<}j{<}h{\leq}8$ there is a unique $k,$ with $1{\leq}k{\leq}8$ and
 $k\,{\notin}\,\{i,j,h\}$ such that $\Kf_{4}(\betaup_{i},\betaup_{j},\betaup_{h},\betaup_{k})\,{\subset}\,
 \Wf(\Rad(\textsc{E}_{8}))$. \qed
\end{cor}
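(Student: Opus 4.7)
The plan is to combine a short counting argument with the structural facts about the diagram in the preceding Remark. First I would record the counting: $\{1,\ldots,8\}$ has $\binom{8}{3}=56$ triples, while each of the fourteen listed Klein-group index sets is a $4$-element subset containing $\binom{4}{3}=4$ triples, so the triple-block incidence totals $14\cdot 4 = 56$. Matching this with the total number of triples, as soon as every triple is contained in at least one listed block, every triple is contained in exactly one, forcing the required $k$ to be unique.

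For existence I would use two structural observations about the diagram: (a) each of the $\binom{8}{2}=28$ index pairs $\{i,j\}$ occupies exactly one cell of the $7\times 4$ table; and (b) the four pairs in any single column partition $\{1,\ldots,8\}$, and every union of two such pairs from the same column is one of the $14$ listed Klein-group index sets. Given a triple $\{i,j,h\}$, I would locate the unique column $C$ containing the pair $\{i,j\}$; writing the three remaining pairs of $C$ as $P_1,P_2,P_3$, which then partition $\{1,\ldots,8\}\setminus\{i,j\}$, the element $h$ lies in exactly one of them, say $P_\ell=\{h,k\}$. The set $\{i,j\}\cup P_\ell=\{i,j,h,k\}$ is by (b) a listed block containing $\{i,j,h\}$, which supplies the sought $k$.

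The main obstacle is the preliminary verification of facts (a) and (b), but both are read off by direct inspection of the diagram in the Remark: (a) amounts to the $28$ cells carrying $28$ distinct pairs, while (b) amounts to the arithmetic $7\cdot\binom{4}{2}=42=14\cdot 3$, reflecting that each listed block arises from the diagram via each of its three partitions into two pairs (and hence from three different columns). Once (a) and (b) are granted, existence of $k$ is immediate and the uniqueness claim is forced by the counting above.
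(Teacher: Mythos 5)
Your argument is correct, and it is essentially the argument the paper intends: the corollary is stated with no written proof, being read off from the $7\times 4$ table of pairs in the preceding Remark exactly as you do via facts (a) and (b). Your double count $14\cdot 4=56=\binom{8}{3}$ (for uniqueness) and $7\cdot\binom{4}{2}=42=14\cdot 3$ (for consistency of (b)) simply makes explicit what the paper leaves to inspection.
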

\begin{prop} All systems of strongly orthogonal roots of $\Rad(\textsc{E}_{8})$ containing 
$k$ elements, for $0{\leq}k{\leq}8$ and $k{\neq}4$,
 are equivalent. \par
 There are two equivalence classes of systems
 of $4$ strongly orthogonal roots $\alphaup_{1},\alphaup_{2},\alphaup_{3},\alphaup_{4}$
 in $\Rad(\textsc{E}_{8})$, depending on: 
\begin{itemize}
 \item[(I)\;] $\Kf_{4}(\alphaup_{1},\alphaup_{2},\alphaup_{3},\alphaup_{4})$ is contained in
 $\Wf(\Rad(\textsc{E}_{8}))$; 
 \item[(II)] $\Kf_{4}(\alphaup_{1},\alphaup_{2},\alphaup_{3},\alphaup_{4})$ is not contained in
 $\Wf(\Rad(\textsc{E}_{8}))$.
\end{itemize}
 \end{prop}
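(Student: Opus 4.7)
The plan is to reduce the classification to a combinatorial problem on the fixed maximal system $\Mtt'(\textsc{E}_{8}) = \{\betaup_{1},\dots,\betaup_{8}\}$. Any system of $k$ strongly orthogonal roots in $\Rad(\textsc{E}_{8})$ extends to a maximal one of eight, and since $\textsc{E}_{8}$ is simply-laced all such maximal systems are $\Wf(\Rad(\textsc{E}_{8}))$-equivalent (the table at the start of this subsection records that $\textsc{E}_{8}$ has a single equivalence class of maximal systems). Thus I may assume the $k$ roots lie inside $\Mtt'(\textsc{E}_{8})$, and $\Wf$-equivalence of $k$-subsets reduces to orbits, on $k$-subsets of $\{1,\dots,8\}$, of the subgroup $G\le S_{8}$ induced by the stabilizer of $\Mtt'(\textsc{E}_{8})$ in $\Wf(\Rad(\textsc{E}_{8}))$ modulo sign changes.

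By the preceding Lemma, $G$ contains each of the fourteen Klein $4$-groups $\Kf_{4}(\betaup_{i_{1}},\dots,\betaup_{i_{4}})$ relabelled on $\{1,\dots,8\}$, and by Corollary~\ref{c2.11} every triple $\{i,j,h\}\subset\{1,\dots,8\}$ is contained in exactly one such Klein quadruple. The fourteen quadruples therefore form a Steiner system $S(3,4,8)$ on $\{1,\dots,8\}$, and a counting argument ($14\cdot 6 = 3\cdot 28$) shows each pair lies in exactly three of them. Using the three non-trivial double transpositions inside each Klein group, and chaining through Klein quadruples that share a triple or a pair, one deduces that $G$ is $3$-transitive on $\{1,\dots,8\}$: this settles the cases $k=0,1,2,3$, and by complementation (the $G$-action commutes with $X\mapsto\{1,\dots,8\}\!\setminus\!X$, so orbits on $k$-subsets correspond to orbits on $(8{-}k)$-subsets) also the cases $k=5,6,7,8$.

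The case $k=4$ is the heart of the statement. Since conjugation by a Weyl element sends a Klein $4$-group onto a Klein $4$-group, the property ``$\Kf_{4}(\alphaup_{1},\dots,\alphaup_{4})\subset\Wf(\Rad(\textsc{E}_{8}))$'' is a genuine $\Wf$-invariant, so classes (I) and (II) are disjoint $\Wf$-classes. Transitivity within (I) — the fourteen Klein quadruples — follows from $3$-transitivity of $G$ together with the uniqueness part of Corollary~\ref{c2.11}: moving a triple forces the unique Klein quadruple containing it to move correspondingly. For (II), I identify $\{1,\dots,8\}$ with the affine space $\Fb_{2}^{3}$ so that the fourteen Klein quadruples become precisely the $4$-subsets whose vector sum vanishes (equivalently, the affine $2$-planes); then the fifty-six non-Klein quadruples are exactly the $4$-subsets with nonzero sum, and on these $\mathrm{AGL}(3,\Fb_{2})\subseteq G$ acts transitively, because after translating one element to the origin the remaining three are linearly independent in $\Fb_{2}^{3}$, and $\mathrm{GL}(3,\Fb_{2})$ acts transitively on unordered bases.

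The main obstacle I expect is the single-orbit claim for class (II). If the identification with the affine Steiner system is to be avoided, I would fall back to a direct argument using Table~I: after applying $3$-transitivity to align the first three indices of two non-Klein $4$-subsets, one shows by inspection that suitable double transpositions lying in the Klein groups that contain pairs among these three indices permute freely the five indices disjoint from the Klein completion of the aligned triple.
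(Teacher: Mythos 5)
Your proposal follows the same skeleton as the paper's proof --- reduce to subsets of the fixed maximal system $\Mtt'(\textsc{E}_{8})$, use complementation to halve the work, and separate the two classes of $4$-sets by the Weyl-invariance of the Klein-group property --- but the execution is genuinely different. The paper establishes transitivity on $k$-subsets for each $k$ by exhibiting explicit chains of double transpositions read off Table\,I, and never names the underlying combinatorial structure. You instead observe (correctly, via Cor.\ref{c2.11}) that the fourteen Klein quadruples form a Steiner system $S(3,4,8)$, identify it with $\mathrm{AG}(3,\Fb_{2})$ so that blocks are exactly the $4$-subsets of zero sum, and deduce everything from the action of $\mathrm{AGL}(3,\Fb_{2})$. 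This is cleaner, explains the count $70=14+56$ and the existence of exactly two orbits conceptually, and the invariance argument separating classes (I) and (II) --- which the paper leaves implicit --- is made explicit.

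The one step you assert but do not prove is the inclusion $\mathrm{AGL}(3,\Fb_{2})\subseteq G$ (and, a fortiori, the $3$-transitivity of $G$, for which your ``chaining'' remark is only a sketch). What Table\,I actually gives is that $G$ contains the $42$ double transpositions attached to the fourteen blocks; it is not immediate that these generate $\mathrm{AGL}(3,\Fb_{2})$. The gap closes in one line: this generating set is invariant under conjugation by $\mathrm{Aut}(S(3,4,8))=\mathrm{AGL}(3,\Fb_{2})$, so it generates a normal subgroup; the only proper nontrivial normal subgroup of $\mathrm{AGL}(3,\Fb_{2})$ is the translation group, which acts without fixed points, whereas each of your generators fixes four points; hence the generated group is all of $\mathrm{AGL}(3,\Fb_{2})$. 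Failing that, your fallback is viable and essentially reproduces the paper's argument; note only that after aligning a non-Klein $4$-set to contain $\{\betaup_{1},\betaup_{2},\betaup_{3}\}$, the indices disjoint from the Klein completion $\{\betaup_{1},\hdots,\betaup_{4}\}$ are the \emph{four} roots $\betaup_{5},\hdots,\betaup_{8}$ (not five), and the single Klein group $\Kf_{4}(\betaup_{5},\betaup_{6},\betaup_{7},\betaup_{8})$, which fixes $\betaup_{1},\hdots,\betaup_{4}$, already permutes the four candidates $\{\betaup_{1},\betaup_{2},\betaup_{3},\betaup_{j}\}$, $j=5,\hdots,8$, transitively --- a shortcut the paper's longer chain does not exploit.
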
 
\begin{proof}
 Since all maximal systems of strongly orthogonal roots are equivalent, we may restrict our consideration
 to subsets of $\Mtt'(\textsc{E}_{8})$. \par
Since all roots of the same length are conjugated under the Weyl group,  as can also seen by  checking the permutations of the $\betaup_{i}$' on Table\,{I},  we find that all sets containing
 a single root are equivalent. Thus, to prove that those containing two roots are equivalent, we may
 restrict to those containing $\betaup_{1}$. This is shown by the chain of equivalences 
  \begin{equation}\label{2-e8}
 \xymatrixcolsep{6pc}\xymatrix @M=4pt @R=35pt @!C=30pt{
 \{\betaup_{1},\betaup_{2}\}\ar@{<->}^{ (\betaup_{2},\betaup_{3})(\betaup_{6},\betaup_{7})}[r] &
 \{\betaup_{1},\betaup_{3}\}\ar@{<->}^{ (\betaup_{3},\betaup_{4})(\betaup_{5},\betaup_{6})}[r] &
 \{\betaup_{1},\betaup_{4}\}\ar@{<->}^{ (\betaup_{2},\betaup_{7})(\betaup_{4},\betaup_{5})}[r] & 
  \{\betaup_{1},\betaup_{5}\}\ar@{<->}_{ (\betaup_{3},\betaup_{4})(\betaup_{5},\betaup_{6})}[d]\\ 
  & \{\betaup_{1},\betaup_{8}\}\ar@{<->}^{ (\betaup_{5},\betaup_{6})(\betaup_{7},\betaup_{8})}[r]
 & \{\betaup_{1},\betaup_{7}\}
  \ar@{<->}^{ (\betaup_{2},\betaup_{3})(\betaup_{6},\betaup_{7})}[r]& 
  \{\betaup_{1},\betaup_{6}\}.}
 \end{equation}
 Hence, to check that all triples are equivalent, we may restrict to those containing $\betaup_{1}$
 and $\betaup_{2}$. We utilize the chain of equivalences:
 \begin{equation}\label{3-e8}
 \xymatrixcolsep{6pc}\xymatrix @M=4pt @R=35pt @!C=50pt{
 \{\betaup_{1},\betaup_{2},\betaup_{3}\}\ar@{<->}^{ (\betaup_{1},\betaup_{2})(\betaup_{3},\betaup_{4})}[r] &
 \{\betaup_{1},\betaup_{2},\betaup_{4}\}\ar@{<->}^{ (\betaup_{3},\betaup_{6})(\betaup_{4},\betaup_{5})}[r] &
 \{\betaup_{1},\betaup_{2},\betaup_{5}\}\ar@{<->}^{ (\betaup_{1},\betaup_{2})(\betaup_{5},\betaup_{6})}[d] \\
  \{\betaup_{1},\betaup_{2},\betaup_{7}\}\ar@{<->}^{ (\betaup_{5},\betaup_{6})(\betaup_{7},\betaup_{8})}[r]& 
  \{\betaup_{1},\betaup_{2},\betaup_{8}\}
  \ar@{<->}^{ (\betaup_{5},\betaup_{7})(\betaup_{6},\betaup_{8})}[r]& 
  \{\betaup_{1},\betaup_{2},\betaup_{6}\}.}
 \end{equation}
 By considering complements, we obtained that all sets of $k$ orthogonal roots of $\Rad(\textsc{E}_{8})$
 are equivalent when $0{\leq}k{\leq}8$ and $k\,{\neq}\,4$. \par
 While considering the equivalence of $4$-tuples, we may restrict to the case of those containing
 $\{\betaup_{1},\betaup_{2},\betaup_{3}\}$.  By Cor.\ref{c2.11} we know that 
 $\{\betaup_{1},\betaup_{2},\betaup_{3},\betaup_{4}\}$ is the single one yielding a Klein group.
 It will suffice to prove that the remaining four are all equivalent. We get the chain of equivalences:
  \begin{equation}\label{4-e8}
 \xymatrixcolsep{6pc}\xymatrix @M=4pt @R=35pt @!C=64pt{
 \{\betaup_{1},\betaup_{2},\betaup_{3},\betaup_{5}\}\ar@{<->}^{ (\betaup_{1},\betaup_{2})(\betaup_{5},\betaup_{6})}[r] &
 \{\betaup_{1},\betaup_{2},\betaup_{3},\betaup_{6}\}
 \ar@{<->}^{ (\betaup_{1},\betaup_{3})(\betaup_{6},\betaup_{8})}[d] \\
 \{\betaup_{1},\betaup_{2},\betaup_{3},\betaup_{7}\}\ar@{<->}^{ (\betaup_{5},\betaup_{6})(\betaup_{7},\betaup_{8})}[r] 
 & 
  \{\betaup_{1},\betaup_{2},\betaup_{3},\betaup_{8}\}.}
 \end{equation}
 The proof is complete. 
\end{proof}
\begin{rmk}
The condition that  
$\Kf_{4}(\alphaup_{1},\alphaup_{2},\alphaup_{3},\alphaup_{4})\,{\subset}\,\Wf(\Rad(\textsc{E}_{8}))$ is 
equivalent to the fact that $\alphaup_{1},\alphaup_{2},\alphaup_{3},\alphaup_{4}$ belong to 
a subsystem of type $\textsc{D}_{4}$ contained in $\Rad(\textsc{E}_{8})$.
 \end{rmk}

\subsubsection*{Strongly orthogonal roots in $\textsc{E}_{6}$} 
Let us take in $\Rad'(\textsc{E}_{6})$ the maximal system of strongly orthogonal roots 
\begin{equation*}
 \Mtt''(\textsc{E}_{6})=\{\gammaup_{1}=\e_{1}{-}\e_{2},\; 
 \gammaup_{3}=\e_{3}{-}\e_{4},\;
 \gammaup_{5}=\e_{5}{-}\e_{6},\;
 \gammaup_{7}=\e_{7}{-}\e_{8}\}.
\end{equation*}
The composition $\sq_{\,\e_{1}-\e_{3}}{\circ}\sq_{\,\e_{2}-\e_{4}}$ is an involution in $\Rad'(\textsc{E}_{6})$
which restricts on $\Mtt''(\textsc{E}_{6})$ to the permutation $(\gammaup_{1},\gammaup_{3})$.\par
The composition $\sq_{\,\e_{3}-\e_{5}}{\circ}\sq_{\,\e_{4}-\e_{6}}$ is an involution in $\Rad'(\textsc{E}_{6})$
which restricts on $\Mtt''(\textsc{E}_{6})$ to the permutation $(\gammaup_{3},\gammaup_{5})$.\par
The composition $\sq_{\,\zetaup_{1,2,5,8}}{\circ}\sq_{\,\zetaup_{3,4,5,8}}$ is an involution in $\Rad'(\textsc{E}_{6})$
which restricts on $\Mtt''(\textsc{E}_{6})$ to the permutation $(\gammaup_{5},\gammaup_{7})$.  Indeed 
$\gammaup_{1},\gammaup_{2}, \zetaup_{1,2,5,8}, \zetaup_{3,4,5,8}$ are orthogonal and
\begin{align*}
 \sq_{\,\zetaup_{1,2,5,8}}{\circ}\sq_{\,\zetaup_{3,4,5,8}}(\e_{5}{-}\e_{6})=
 \sq_{\,\zetaup_{1,2,5,8}}(\e_{5}{-}\e_{6}-\zetaup_{3,4,5,8})= \sq_{\,\zetaup_{1,2,5,8}}(\zetaup_{1,2,5,7})\,\\
 =\zetaup_{1,2,5,7}-\zetaup_{1,2,5,8}=\e_{7}{-}\e_{8}.
\end{align*}
The transpositions $(\gammaup_{1},\gammaup_{3})$,
$(\gammaup_{3},\gammaup_{5})$, $(\gammaup_{5},\gammaup_{7})$ on $\Mtt''(\textsc{E}_{6})$,
and thus all permutations of the elements of $\Mtt''(\textsc{E}_{6})$,  
are restrictions of Weyl rotations of $\Rad'(\textsc{E}_{6})$. 
Thus we obtain

\begin{prop}
 Systems of orthogonal roots of a root system of type 
 $\textsc{E}_{6}$ are equivalent if and only if they
 contain the same number of elements. \qed
\end{prop}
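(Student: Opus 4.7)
The plan is to follow the reduction pattern used in the preceding $\textsc{E}_8$ argument. First I would use that, by the classification table at the start of \S2.4, there is a single equivalence class of maximal strongly orthogonal systems in $\textsc{E}_6$. Any strongly orthogonal subset $S$ of $\Rad'(\textsc{E}_6)$ extends to a maximal one, which is Weyl-equivalent to $\Mtt'(\textsc{E}_6)=\{\betaup_1,\betaup_3,\betaup_5,\betaup_7\}$. Pulling back, one may assume $S\subseteq\Mtt'(\textsc{E}_6)$, so the task reduces to showing that, for each $k$, all $k$-subsets of $\Mtt'(\textsc{E}_6)$ lie in a single $\Wf(\Rad'(\textsc{E}_6))$-orbit.

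Next I would invoke $\Gf=\Kf_4(\betaup_1,\betaup_3,\betaup_5,\betaup_7)$, identified via Table I (it is one of the fourteen Klein groups in the preceding Lemma). Its three non-trivial elements are rotations in $\langle\betaup_1,\betaup_3,\betaup_5,\betaup_7\rangle_{\R}$ and fix its orthogonal complement pointwise; a brief check shows that this complement contains both $\zetaup_\emptyset=-\betaup_8$ and $\e_7+\e_8$, so $\Gf$ preserves $\Rad'(\textsc{E}_6)$ and in fact sits inside $\Wf(\Rad'(\textsc{E}_6))$. As a permutation group on $\Mtt'(\textsc{E}_6)$, $\Gf$ realises the Klein 4-group inside $S_4$: it acts transitively on singletons and, by complementation inside the 4-set, on triples, while the cases $k=0,4$ are trivial.

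The main obstacle is the case $k=2$, because the Klein 4-group has three orbits on the six 2-subsets of a 4-set, so $\Gf$ alone is insufficient. To collapse these three orbits I would step outside the stabiliser of $\Mtt'(\textsc{E}_6)$ and exploit the simply-laced structure of $\textsc{E}_6$: orthogonal coincides with strongly orthogonal, and the orthogonal complement of any root $\alphaup\in\Rad'(\textsc{E}_6)$ is a sub-root-system of type $\textsc{A}_5$ (the $\textsc{A}_5$-factor of the maximal subsystem $\textsc{A}_1{+}\textsc{A}_5\subset\textsc{E}_6$). The pointwise stabiliser of $\alphaup$ in $\Wf(\Rad'(\textsc{E}_6))$ acts as the full $\Wf(\textsc{A}_5)\cong S_6$ on this complement, and in $\textsc{A}_5$ all pairs of orthogonal roots are equivalent by the earlier classification for $\textsc{A}_\ell$. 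Hence all 2-element orthogonal subsets through $\alphaup$ form a single orbit of the stabiliser of $\alphaup$, and combining with the transitivity of $\Wf(\Rad'(\textsc{E}_6))$ on individual roots then yields a single orbit on all 2-subsets, completing the proof.
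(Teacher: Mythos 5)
Your argument is correct and, for $k\neq 2$, follows the same route as the paper: reduce, via the uniqueness of the class of maximal systems, to subsets of $\Mtt'(\textsc{E}_{6})=\{\betaup_{1},\betaup_{3},\betaup_{5},\betaup_{7}\}$, and then act by $\Gf=\Kf_{4}(\betaup_{1},\betaup_{3},\betaup_{5},\betaup_{7})\subset\Wf(\Rad'(\textsc{E}_{6}))$. Where you diverge is exactly where the paper is too quick: the published proof asserts that $\Gf$ is transitive on the equal-cardinality subsets of $\{\betaup_{1},\betaup_{3},\betaup_{5},\betaup_{7}\}$, but a Klein $4$-group acting by double transpositions on a $4$-set has \emph{three} orbits on its six $2$-subsets, so transitivity genuinely fails for pairs and an extra argument is needed. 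Your repair is a clean one: fix a root $\alphaup$, observe that $\alphaup^{\perp}\cap\Rad'(\textsc{E}_{6})$ is a subsystem of type $\textsc{A}_{5}$, that the reflections in its roots lie in the stabiliser of $\alphaup$ and generate a group transitive on the roots of that $\textsc{A}_{5}$, and combine this with the transitivity of $\Wf(\Rad'(\textsc{E}_{6}))$ on roots to get a single orbit on orthogonal pairs. Two minor points of phrasing: what the $\textsc{A}_{5}$ step requires is transitivity of $\Wf(\textsc{A}_{5})$ on the individual roots of the complement (not on pairs of them), and you do not need the stabiliser of $\alphaup$ to act as the \emph{full} $\Wf(\textsc{A}_{5})$ -- containing it is enough; likewise the nontrivial elements of $\Gf$ fix pointwise a space slightly larger than $\langle\betaup_{1},\betaup_{3},\betaup_{5},\betaup_{7}\rangle_{\R}^{\perp}$, though what you use (that they preserve $\Rad'(\textsc{E}_{6})$) is correct. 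With these tightened, your proof is complete, and in fact supplies a step that the paper's own one-line justification omits.
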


\subsubsection*{Strongly orthogonal roots in $\textsc{E}_{7}$} 
We take $\Rad'(\textsc{E}_{7})$ and $\Mtt'(\textsc{E}_{7})$ of \eqref{rE7a} and \eqref{maxE7a}.
Since two subsets $\Stt$ and $\Stt'$ of $\Mtt'(\textsc{E}_{7})$ are equivalent if and only if
their complements $\Mtt'(\textsc{E}_{7}){\setminus}\Stt$, $\Mtt'(\textsc{E}_{7}){\setminus}\Stt'$
are equivalent, the chains of equivalences \eqref{2-e8} show that all subsets of $k$
elements are equivalent if $0{\leq}k{\leq}7$ and $k\,{\neq}\,3,4$.\par
In the case $k\,{=}\,3$, by reducing to sets containing $\betaup_{1},\betaup_{2},$ 
the chain of equivalences at \eqref{3-e8} shows that 
all $\{\betaup_{1},\betaup_{2},\betaup_{i}\}$, with $i\,{=}\,3,4,5,6$ are equivalent. 
As we showed that there are two distinct classes of sets of four orthogonal roots in $\Rad(\textsc{E}_{8})$
if follows that in root systems of type $\textsc{E}_{7}$ 
there are two classes of equivalence for sets of either $3$ or $4$ elements. We obtained 
\begin{prop}
 All systems of $k$ orthogonal roots of $\Rad(\textsc{E}_{7})$ are equivalent if $k{\neq}3,4$. 
 The two classes of systems of $4$ orthogonal roots $\alphaup_{1},\alphaup_{2},\alphaup_{3},\alphaup_{4}$
are characterized by 
\begin{itemize}
 \item[(\textsc{I})\;] 
 $\Kf_{4}(\alphaup_{1},\alphaup_{2},\alphaup_{3},\alphaup_{4})\,{\subset}\,\Wf(\Rad(\textsc{E}_{7})$;
 \item[(\textsc{II})] 
 $\Kf_{4}(\alphaup_{1},\alphaup_{2},\alphaup_{3},\alphaup_{4})\,{\not\subset}\,\Wf(\Rad(\textsc{E}_{7})$.
\end{itemize}
The equivalence classe of
systems of orthogonal roots $\{\alphaup_{1},\alphaup_{2},\alphaup_{3}\}$ are characterized by 
\begin{itemize}
 \item[(\textsc{I})\;] $\exists\,\alphaup_{4}\,{\in}\,\{\alphaup_{1},\alphaup_{2},\alphaup_{3}\}^{\perp}$
 such that $\Kf_{4}(\alphaup_{1},\alphaup_{2},\alphaup_{3},\alphaup_{4})\,{\subset}\,\Wf(\Rad(\textsc{E}_{7})$;
 \item[(\textsc{II})] $\not\exists\,\alphaup_{4}\,{\in}\,\{\alphaup_{1},\alphaup_{2},\alphaup_{3}\}^{\perp}$
 such that $\Kf_{4}(\alphaup_{1},\alphaup_{2},\alphaup_{3},\alphaup_{4})\,{\subset}\,\Wf(\Rad(\textsc{E}_{7})$.\qed
\end{itemize}
\end{prop}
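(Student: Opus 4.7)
Most of the proof is in fact already laid out in the discussion preceding the statement, so the plan is to stitch those observations together and supply the missing verifications. We reduce, as for $\Rad(\textsc{E}_{8})$, to subsets of the fixed maximal system $\Mtt'(\textsc{E}_{7}){=}\{\betaup_{1},\hdots,\betaup_{7}\}$, which is legitimate because all maximal strongly orthogonal systems of $\Rad(\textsc{E}_{7})$ are equivalent. Since $\Stt\,{\sim}\,\Stt'$ iff $\Mtt'(\textsc{E}_{7}){\setminus}\Stt\,{\sim}\,\Mtt'(\textsc{E}_{7}){\setminus}\Stt'$, the $k$ and $7{-}k$ cases are paired. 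Together with the chains of equivalences at p.\,\pageref{2-e8}, whose Weyl rotations fix $\e_{7}{-}\e_{8}$ and therefore act inside $\Wf(\Rad(\textsc{E}_{7}))$, this settles the equivalence of all $k$-subsets for $k\,{\in}\,\{1,2,5,6,7\}$.

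For $k\,{=}\,4$, the Klein-group condition $\Kf_{4}(\alphaup_{1},\alphaup_{2},\alphaup_{3},\alphaup_{4})\,{\subset}\,\Wf(\Rad(\textsc{E}_{7}))$ is manifestly invariant under equivalence, and from the $\textsc{E}_{8}$ analysis we already know it separates at most two classes. Both classes are realised inside $\Mtt'(\textsc{E}_{7})$, since the list of fourteen Klein $4$-groups of $\Wf(\Rad(\textsc{E}_{8}))$ given earlier contains seven whose indices lie entirely in $\{1,\hdots,7\}$, providing class (I), while $\{\betaup_{1},\betaup_{2},\betaup_{3},\betaup_{5}\}$ exhibits class (II). To finish the case I would check, by inspection of Table I, that the rotations fixing $\e_{7}{-}\e_{8}$ suffice to connect any two class-(I) representatives, and likewise for the class-(II) ones, adapting the chain at p.\,\pageref{4-e8}.

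The $k\,{=}\,3$ case then follows by complementation inside the $7$-element set $\Mtt'(\textsc{E}_{7})$: a triple is of class (I) precisely when its complementary $4$-subset contains, after a Weyl rotation, a Klein-group $4$-tuple, equivalently when there exists an $\alphaup_{4}\,{\in}\,\{\alphaup_{1},\alphaup_{2},\alphaup_{3}\}^{\perp}\,{\cap}\,\Rad(\textsc{E}_{7})$ with $\Kf_{4}(\alphaup_{1},\alphaup_{2},\alphaup_{3},\alphaup_{4})\,{\subset}\,\Wf(\Rad(\textsc{E}_{7}))$. The uniqueness, inside $\Rad(\textsc{E}_{8})$, of a root completing an orthogonal triple to a Klein group, which is guaranteed by Corollary \ref{c2.11}, then delivers the stated dichotomy: the completion either lies in $\Rad(\textsc{E}_{7})$ (class (I)) or it does not (class (II)).

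The principal obstacle is the $k\,{=}\,4$ bookkeeping: Table I supplies many Weyl rotations of $\Rad(\textsc{E}_{8})$ that do not preserve $\e_{7}{-}\e_{8}$ and therefore do not lie in $\Wf(\Rad(\textsc{E}_{7}))$; one must verify that discarding them still leaves enough rotations to connect all representatives within each class. I expect this to be routine but tedious: the seven Klein groups entirely contained in $\Mtt'(\textsc{E}_{7})$ are themselves subgroups of $\Wf(\Rad(\textsc{E}_{7}))$, and the transpositions $(\betaup_{i},\betaup_{j})(\betaup_{h},\betaup_{k})$ with $i,j,h,k\,{\in}\,\{1,\hdots,7\}$ appearing in Table I should provide enough flexibility to navigate within each of the two classes.
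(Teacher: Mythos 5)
Your overall strategy --- reduce to subsets of $\Mtt'(\textsc{E}_{7})\,{=}\,\{\betaup_{1},\hdots,\betaup_{7}\}$, pair $k$ with $7{-}k$ by complementation, reuse the chains from the $\textsc{E}_{8}$ analysis, and separate the classes for $k\,{=}\,3,4$ by the Klein-group invariant --- is the paper's strategy (the paper runs $k\,{=}\,3$ directly via the chain of triples and gets $k\,{=}\,4$ by complementation; you propose the reverse order). But two of your specific claims are false, and one of them would mislabel the classes.

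First, the copy of $\textsc{E}_{7}$ in which $\Mtt'(\textsc{E}_{7})$ lives is $\Rad'(\textsc{E}_{7})\,{=}\,\zetaup_{\emptyset}^{\perp}$, not $\Rad(\textsc{E}_{7})\,{=}\,(\e_{7}{-}\e_{8})^{\perp}$: one has $(\betaup_{1}\,|\,\e_{7}{-}\e_{8})\,{=}\,1\,{\neq}\,0$, so $\Mtt'(\textsc{E}_{7})\,{\not\subset}\,(\e_{7}{-}\e_{8})^{\perp}$. Accordingly, the rotations in the chains do \emph{not} fix $\e_{7}{-}\e_{8}$ (e.g. $\sq_{\e_{3}-\e_{7}}$, which occurs in the pair chain, moves it); what puts them in the $\textsc{E}_{7}$ Weyl group is that they are products of reflections in roots of the form $\e_{i}{-}\e_{j}$, which are orthogonal to $\zetaup_{\emptyset}$ and hence lie in $\Rad'(\textsc{E}_{7})$. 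With your membership test the whole $k\,{=}\,4$ bookkeeping would reject valid rotations and stall.

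Second, your asserted equivalence for triples is backwards. Complementation inside $\Mtt'(\textsc{E}_{7})$ does interchange the two classes of triples with the two classes of $4$-subsets, but it \emph{swaps} the labels: a triple admits a Klein completion in $\Wf(\Rad'(\textsc{E}_{7}))$ exactly when its complement is \emph{not} a Klein $4$-subset. Concretely, $\{\betaup_{1},\betaup_{2},\betaup_{7}\}$ has complement $\{\betaup_{3},\betaup_{4},\betaup_{5},\betaup_{6}\}$, which is one of the fourteen Klein quadruples, yet by Corollary~\ref{c2.11} its unique Klein completion in $\textsc{E}_{8}$ is $\betaup_{8}\,{=}\,{-}\zetaup_{\emptyset}$, and the elements of $\Kf_{4}(\betaup_{1},\betaup_{2},\betaup_{7},\betaup_{8})$ move $\zetaup_{\emptyset}$, hence do not belong to $\Wf(\Rad'(\textsc{E}_{7}))$: this triple is of class (\textsc{II}). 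Conversely $\{\betaup_{1},\betaup_{2},\betaup_{3}\}$ completes to $\Kf_{4}(\betaup_{1},\betaup_{2},\betaup_{3},\betaup_{4})\,{\subset}\,\Wf(\Rad'(\textsc{E}_{7}))$, so it is of class (\textsc{I}), while its complement $\{\betaup_{4},\betaup_{5},\betaup_{6},\betaup_{7}\}$ is not a Klein quadruple. The counts agree: exactly the $7$ triples in $\{\betaup_{1},\hdots,\betaup_{7}\}$ whose $\textsc{E}_{8}$-completion is $\betaup_{8}$ are of class (\textsc{II}), and these are precisely the complements of the $7$ Klein quadruples contained in $\{\betaup_{1},\hdots,\betaup_{7}\}$. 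Your closing sentence (the dichotomy via Corollary~\ref{c2.11}: the unique completion either lies in the $\textsc{E}_{7}$ system or it does not) is the correct argument; the intermediate ``equivalently'' contradicts it and must be removed.
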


\subsubsection*{Strongly orthogonal roots in $\textsc{F}_{4}$} We take for $\textsc{F}_{4}$ the root
system 
\begin{equation}\label{rf4}
 \Rad(\textsc{F}_{4})\,{=}\,\{{\pm}\e_{i}\,{\mid}\,1{\leq}i{\leq}4\}\cup\{{\pm}\e_{i}{\pm}\e_{j}\,{\mid}\,1{\leq}i{<}j{\leq}4\}\cup
 \{\tfrac{1}{2}({\pm}\e_{1}{\pm}\e_{2}{\pm}\e_{3}{\pm}\e_{4})\}.
\end{equation}
Modulo equivalence, there are two classes of maximal sets of strongly orthogonal roots, with
representatives 
\begin{equation} 
\begin{cases}
 \Mtt(\textsc{F}_{4})=\{\e_{1}{\pm}\e_{2},\,\e_{3}{\pm}\e_{4}\},\\
 \Mtt'(\textsc{F}_{4})=\{\e_{1}{\pm}\e_{2},\, \e_{4}\}.
\end{cases}
\end{equation}
If $\betaup_{1},\betaup_{2}$
are long orthogonal roots, then $\tfrac{1}{2}(\betaup_{1}{\pm}\betaup_{2})$ are still roots. 
If
$\betaup_{1},\betaup_{2},\betaup_{3},\betaup_{4}$ are four distinct long orthogonal roots, then 
\begin{equation*}\begin{cases}
 \sq_{(\betaup_{1}-\betaup_{2})/2}\circ \sq_{(\betaup_{3}-\betaup_{4})/2}(
 \{\betaup_{1},\betaup_{2}\})=\{\betaup_{3},\betaup_{4}\},\\
 \sq_{(\betaup_{2}-\betaup_{3})/2}(
 \{\betaup_{1},\betaup_{2}\})=\{\betaup_{1},\betaup_{3}\}.
 \end{cases}
\end{equation*}
This shows that pairs of long orthogonal roots are equivalent and hence, since all long roots are
equivalent, that all  sets of orthogonal long roots containing the same number of terms are
equivalent. \par
Analogously, one easily checks that systems of strongly orthogonal roots containing
a short root are equivalent if and only if they have the same number of elements.\par 
\begin{prop}
 Modulo equivalence, the systems $\Stt$ of strongly orthogonal roots in $\Rad(\textsc{F}_{4})$ are 
\begin{equation*} 
\begin{array}{| c | l | l |} 
\hline
\#\Stt & \qquad\text{all long} &\qquad \text{one short} \\
\hline
1 & \Stt_{1,0}(\textsc{F}_{4})=\{\e_{1}{-}\e_{2}\} & \Stt_{0,1}(\textsc{F}_{4})=\{\e_{4}\} \\
\hline
2 & \Stt_{2,0}(\textsc{F}_{4})=\{\e_{1}{\pm}\e_{2}\} & \Stt_{1,1}(\textsc{F}_{4})=\{\e_{1}{-}\e_{2},\,\e_{4}\}\\
\hline
3 & \Stt_{3,0}(\textsc{F}_{4})=\{\e_{1}{\pm}\e_{2},\, \e_{3}{-}\e_{4}\} &\Stt_{2,1}(\textsc{F}_{4})= \{\e_{1}{\pm}\e_{2},\,\e_{4}\}\\
\hline
4 & \Stt_{4,0}(\textsc{F}_{4})=\{\e_{1}{\pm}\e_{2},\,\e_{3}{\pm}\e_{4}\} &\\
\hline 
\end{array}
 \end{equation*} \qed
\end{prop}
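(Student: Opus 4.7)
The plan is to assemble the classification already sketched in the two paragraphs preceding the proposition into the tabular form claimed, and to verify that the list is exhaustive. For a strongly orthogonal system $\Stt\subset\Rad(\textsc{F}_{4})$, write $r_{1}$ for the number of long roots and $r_{2}$ for the number of short roots in $\Stt$. Both are invariants under the action of $\Wf(\Rad(\textsc{F}_{4}))$, since every element of the Weyl group acts as an isometry of $\hr^{*}$ and therefore preserves root lengths. Consequently, two systems with different pairs $(r_{1},r_{2})$ are automatically inequivalent, and the proof reduces to (a) pinning down the admissible pairs and (b) showing uniqueness of the equivalence class for each one.

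For step (a), the key input is the remark recalled just before Lemma\,\ref{l2.4}: orthogonal short roots of $\Rad(\textsc{F}_{4})$ are never strongly orthogonal, since their sum and difference are long roots. This forces $r_{2}\leq 1$. Since $\textsc{F}_{4}$ has rank $4$, one has $r_{1}+r_{2}\leq 4$, and when $r_{2}=1$ a direct inspection (using the shape of $\Mtt'(\textsc{F}_{4})$) further bounds $r_{1}\leq 2$. This yields exactly the seven admissible pairs of the table.

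For step (b), the two paragraphs just before the proposition already carry all the content. When $r_{2}=0$, the Weyl rotations $\sq_{(\betaup_{i}-\betaup_{j})/2}$ and composites $\sq_{(\betaup_{i}-\betaup_{j})/2}\circ\sq_{(\betaup_{h}-\betaup_{k})/2}$ built from half-differences of long orthogonal roots act transitively on all-long strongly orthogonal systems of each fixed cardinality, reducing any such system to the representative $\Stt_{r_{1},0}(\textsc{F}_{4})$; when $r_{2}=1$, the same idea applied to the long part, combined with the transitivity of $\Wf(\Rad(\textsc{F}_{4}))$ on short roots, reduces any such system to the representative $\Stt_{r_{1},1}(\textsc{F}_{4})$.

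What remains is purely formal: one lists a representative in each class (as done in the table) and verifies by inspection that $\Stt_{r_{1},r_{2}}(\textsc{F}_{4})\subset\Rad(\textsc{F}_{4})$ consists of pairwise strongly orthogonal roots, which is immediate because $\e_{1}\pm\e_{2}$ and $\e_{3}\pm\e_{4}$ are mutually strongly orthogonal long roots and $\e_{4}$ is a short root strongly orthogonal to $\e_{1}\pm\e_{2}$. The only nontrivial input is the non-strong orthogonality of short roots already used in step (a); no further obstacle arises beyond this bookkeeping.
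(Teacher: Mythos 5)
Your proposal is correct and follows essentially the same route as the paper, which proves this proposition only through the two paragraphs preceding it (the swapping reflections $\sq_{(\betaup_{i}-\betaup_{j})/2}$ for the all-long case, the "at most one short root" observation, and the remark that the mixed case is checked "analogously"). You merely make explicit what the paper leaves implicit — that $(r_{1},r_{2})$ is a complete invariant because length is preserved by any isometry, and that $r_{1}\leq 2$ when $r_{2}=1$ — so no further comment is needed.
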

\subsubsection*{Strongly orthogonal roots in $\textsc{G}_{2}$} We take for $\textsc{G}_{2}$ the root
system 
\begin{equation}\label{rg2}
 \Rad(\textsc{G}_{2})
 =\{{\pm}(\e_{i}{-}\e_{j})\,{\mid}\,1{\leq}i{<}j{\leq}3\}\cup\{{\pm}(2\e_{i}{-}\e_{j}{-}\e_{h})\mid \{i,j,k\}{=}\{1,2,3\}\}. 
\end{equation}
There are no pairs of orthogonal short roots, nor of orthogonal long roots. Thus, we obtain 
\begin{prop}
Modulo equivalence the classes of strongly orthogonal root systems 
$\texttt{S}$  in $\Rad(\textsc{G}_{2})$ are
\begin{equation*}
\begin{array}{| c | l | l | }
\hline 
\#\texttt{S} & \qquad\text{all long} & \qquad\text{one short} \\
\hline 
1 & \Stt_{1,0}(\textsc{G}_{2})=\{2\e_{3}{-}\e_{1}{-}\e_{2}\} &  \Stt_{0,1}(\textsc{G}_{2})=\{\e_{1}{-}\e_{2}\}\\ 
\hline 
2 & &  \Stt_{1,1}(\textsc{G}_{2})=\{\e_{1}{-}\e_{2},\, 2\e_{3}{-}\e_{1}{-}\e_{2}\}\\
\hline
 \end{array}
\end{equation*}\qed
\end{prop}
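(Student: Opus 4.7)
The plan is to exploit the very small size of $\Rad(\textsc{G}_2)$: it has only six short roots and six long roots, so every claim reduces to an inner-product check together with an application of the transitivity of the Weyl group on roots of each given length.

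First, I would dispose of same-length pairs, confirming the claim stated immediately before the proposition. Two distinct (up to sign) short roots $\e_i-\e_j$ and $\e_k-\e_\ell$ must share an index (there are only three indices), so their inner product is $\pm 1\neq 0$. Two distinct long roots $2\e_i-\e_j-\e_k$ and $2\e_{i'}-\e_{j'}-\e_{k'}$, say with $i\neq i'$, give inner product $-2-2+1=-3\neq 0$. So no strongly orthogonal system contains two roots of the same length, and in particular every singleton $\Stt$ lies in one of the two classes represented by $\Stt_{1,0}(\textsc{G}_2)$ or $\Stt_{0,1}(\textsc{G}_2)$; these are inequivalent because equivalences preserve length.

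Next, I analyse mixed pairs. Fix the short root $\e_1-\e_2$ and compute its inner product against each long root: only $\pm(2\e_3-\e_1-\e_2)$ is orthogonal to it, and in that case the sums $\pm(\e_1-\e_2)\pm(2\e_3-\e_1-\e_2)=\pm 2(\e_3-\e_2)$ or $\pm 2(\e_1-\e_3)$ are easily checked not to be roots (they are not of either form \eqref{rg2}), so the pair $\Stt_{1,1}(\textsc{G}_2)$ is indeed strongly orthogonal. Since $\Wf(\textsc{G}_2)$ is transitive on short roots, any strongly orthogonal pair $\{\alphaup,\betaup\}$ with $\alphaup$ short can be moved to one with $\alphaup=\e_1-\e_2$, at which point $\betaup$ must be $\pm(2\e_3-\e_1-\e_2)$; applying the reflection in the long root if necessary (or using $-\id\in\Wf(\textsc{G}_2)$) yields exactly $\Stt_{1,1}(\textsc{G}_2)$.

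Finally, a set of three pairwise strongly orthogonal roots cannot exist: since any such set necessarily contains both lengths and within each length class orthogonality already fails, and since the orthogonal complement of $\e_1-\e_2$ among the roots is exhausted by $\pm(2\e_3-\e_1-\e_2)$, no third root can be appended to $\Stt_{1,1}(\textsc{G}_2)$. This completes the classification in the table. The only mildly delicate point is the verification that $\pm 2(\e_1-\e_3)$ and $\pm 2(\e_2-\e_3)$ are not roots of $\Rad(\textsc{G}_2)$, so that the orthogonal pair $\Stt_{1,1}(\textsc{G}_2)$ is genuinely \emph{strongly} orthogonal; but this is immediate from the explicit list \eqref{rg2}.
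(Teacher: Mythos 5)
Your proof is correct and follows essentially the same route as the paper, whose entire argument is the observation that $\Rad(\textsc{G}_{2})$ contains no pair of orthogonal short roots and no pair of orthogonal long roots; your first paragraph establishes exactly this, and the remaining verifications (orthogonal complement of $\e_{1}{-}\e_{2}$, strong orthogonality of the mixed pair, impossibility of triples) are the routine details the paper leaves implicit. Note only that the strong orthogonality of $\Stt_{1,1}(\textsc{G}_{2})$ also follows at once from the general fact, recorded earlier in the paper, that orthogonality implies strong orthogonality whenever one of the two roots is long.
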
\par\smallskip
\subsection{Special involutions}\label{S2.5} In this subsection we characterize special involutions.
\begin{lem}\label{l2.18}
If $\epi$  is a special involution of $\Rad$, then we can find a Weyl chamber $C\,{\in}\,\Cd(\Rad)$ such that 
$\epi(\Rad^{+}(C))\,{=}\,\Rad^{+}(C)$. 
\end{lem}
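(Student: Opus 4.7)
My plan is to produce a regular element $H\,{\in}\,\hr$ lying in the $({+}1)$-eigenspace $\hst^{+}$ of the dual involution $\epi^{*}$, and to let $C$ be the Weyl chamber that contains it. Once $H$ satisfies $\epi^{*}(H)\,{=}\,H$ and $\alphaup(H)\,{\neq}\,0$ for every $\alphaup\,{\in}\,\Rad$, the computation
\begin{equation*}
\epi(\alphaup)(H)\,{=}\,\alphaup(\epi^{*}(H))\,{=}\,\alphaup(H)
\end{equation*}
immediately shows that $\epi(\alphaup)(H)$ has the same sign as $\alphaup(H)$, hence $\epi$ maps $\Rad^{+}(C)$ into itself, and being an involution on a finite set it maps it onto itself.

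The key step is therefore to show that such an $H$ exists, or equivalently that the restriction $\alphaup|_{\hst^{+}}$ is a nonzero linear form for every $\alphaup\,{\in}\,\Rad$; once this is settled, the set of bad $H$'s is a finite union of proper hyperplanes of $\hst^{+}$, and its complement is nonempty (indeed open and dense). For the nonvanishing I would split the roots according to the partition \eqref{e1.13}. If $\alphaup\,{\in}\,\Rd{\circ}{\epi}$, then $\epi(\alphaup)\,{=}\,\alphaup$ means $\alphaup$ vanishes on $\hst^{-}$, so $\alphaup|_{\hst^{+}}\,{=}\,\alphaup\,{\neq}\,0$. If $\alphaup\,{\in}\,\Rd{\star}{\epi}$, then $\alphaup|_{\hst^{+}}$ is represented (via the duality map) by the symmetric part $\tfrac{1}{2}(\alphaup+\epi(\alphaup))$, and this would vanish only if $\epi(\alphaup)\,{=}\,{-}\alphaup$, i.e.\ $\alphaup\,{\in}\,\Rd{\bullet}{\epi}$, a contradiction.

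The only remaining case, $\alphaup\,{\in}\,\Rd{\bullet}{\epi}$, is precisely the one excluded by the hypothesis that $\epi$ is special, since by Definition \ref{d2.2} we have $\Rd{\bullet}{\epi}\,{=}\,\emptyset$. Thus the nonvanishing holds for every root, and the argument closes. The place where the special hypothesis is genuinely used is exactly this last observation; without it, roots in $\Rd{\bullet}{\epi}$ would vanish identically on $\hst^{+}$, no regular $H$ in $\hst^{+}$ would exist, and the construction would break down (which is indeed what happens for a general involution, and motivates the decomposition of Proposition~\ref{p2.7}). The main technical obstacle is just the case analysis on $\Rad\,{=}\,\Rd{\circ}{\epi}\,{\cup}\,\Rd{\star}{\epi}$ to verify nonvanishing of $\alphaup|_{\hst^{+}}$; the rest is formal.
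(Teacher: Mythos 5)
Your proposal is correct and follows essentially the same route as the paper: the paper obtains a regular element of $\hst^{+}$ by symmetrizing an arbitrary regular $H$ to $H^{\epi}_{+}\,{=}\,\tfrac{1}{2}(H{+}\epi^{*}(H))$ (which is regular precisely because $\Rd{\bullet}{\epi}\,{=}\,\emptyset$), whereas you argue directly that a generic element of $\hst^{+}$ is regular; both then conclude with the identical computation $\epi(\alphaup)(H)\,{=}\,\alphaup(\epi^{*}(H))\,{=}\,\alphaup(H)$. Your case analysis correctly isolates the role of the special hypothesis, and fills in a detail the paper leaves implicit.
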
 
\begin{proof}
 If $\epi$ is a special involution and $H$ is a regular element of $\hr,$ then 
 $H^{\epi}_{+}\,{=}\,\tfrac{1}{2}(H\,{+}\,\epi^{*}(H))$
 is regular and, if $C$ is the connected component of $H^{\epi}_{+}$ in $\Reg(\hr)$,
then $\epi(\Rad^{+}(C))\,{=}\,\Rad^{+}(C)$.
\end{proof}
\begin{dfn} 
For  $C\,{\in}\,\Cd(\Rad)$, we set 
\begin{equation}
\begin{cases}
 \Af(\Rad,C)=\{\ftt\in\Af(\Rad)\mid\ftt(\Rad^{+}(C))=\Rad^{+}(C)\},\\
 \Ib(\Rad,C)=\Af(\Rad,C)\cap\Ib(\Rad).
 \end{cases}
\end{equation}
\end{dfn}
\begin{exam}[Complex type] Let $\Rad\,{=}\,\Rad'\,{\sqcup}\,\Rad''$ be the disjoint union of two
copies of an irreducible roots system. If $\ftt,\gtt:\Rad'\,{\to}\,\Rad''$ are two isometries, then
\begin{equation*}
(\ftt^{-1},\gtt): \Rad'\sqcup\Rad''\ni(\alphaup',\alphaup'')\longrightarrow(\ftt^{-1}(\alphaup''),\gtt(\alphaup'))
\in \Rad'\sqcup\Rad''
\end{equation*}
is an element of $\Af(\Rad){\setminus}\Wf(\Rad)$. When $\gtt\,{=}\,\ftt$, this is an involution 
belonging to $\Ib(\Rad,(C,\ftt^{*}(C))$ for all $C\,{\in}\,\Cd(\Rad')$.
All involutions $\stt$ of $\Rad$ with $\stt(\Rad')\,{\neq}\,\Rad'$, 
$\stt(\Rad'')\,{\neq}\,\Rad''$, are of this type. 
\end{exam}
Lemma\,\ref{l2.18} can be reformulated in the following:
\begin{lem}
If $\epi$  is a special involution, then we can find a chamber $C\,{\in}\,\Cd(\Rad)$ such that $\epi\,{\in}\,\Ib(\Rad,C)$.
\qed
\end{lem}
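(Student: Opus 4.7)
The statement is a reformulation of Lemma~\ref{l2.18} in the notation just introduced, so no new ideas are required; the plan is simply to package the two pieces together.

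First, I would observe that by the definition of $\Ib(\Rad,C)$, membership in this set is equivalent to the conjunction of two conditions: that $\epi$ lies in $\Ib(\Rad)$, which is automatic from the hypothesis that $\epi$ is a special involution (special involutions are involutions by Definition~\ref{d2.2}), and that $\epi$ lies in $\Af(\Rad,C)$, i.e.\ $\epi(\Rad^{+}(C))\,{=}\,\Rad^{+}(C)$.

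Next, I would invoke Lemma~\ref{l2.18} directly: because $\epi$ is special, that lemma produces a Weyl chamber $C\,{\in}\,\Cd(\Rad)$ with $\epi(\Rad^{+}(C))\,{=}\,\Rad^{+}(C)$. This is exactly the second condition above, so for this particular $C$ we conclude $\epi\,{\in}\,\Af(\Rad,C)\cap\Ib(\Rad)\,{=}\,\Ib(\Rad,C)$, which is the desired statement.

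Since the proof is essentially a relabeling, there is no real obstacle; the only thing to be careful about is confirming that the argument of Lemma~\ref{l2.18} (taking $C$ to be the Weyl chamber containing $H_{+}^{\epi}\,{=}\,\tfrac{1}{2}(H+\epi^{*}(H))$ for a regular $H$) really delivers $\Af(\Rad,C)$-invariance and not merely setwise invariance of $\Rad$, but this is built into how $\Rad^{+}(C)$ is defined in \eqref{e1.11} via positivity against any element of $C$. So the proof reduces to a one-line appeal.
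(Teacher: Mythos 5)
Your proposal is correct and matches the paper exactly: the paper states this lemma as an immediate reformulation of Lemma~\ref{l2.18} (offering no separate proof beyond that observation), and your argument simply unpacks the definition of $\Ib(\Rad,C)$ and cites Lemma~\ref{l2.18} for the invariance of $\Rad^{+}(C)$, which is all that is needed.
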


\begin{thm} \label{t3.16}
Let $\Rad$ be an irreducible root system. 
Then the set $\Ib^{*}(\Rad)$ 
\begin{itemize}
 \item contains only the identity if $\Rad$ is of type $\textsc{B}_{\ell}$, $\textsc{C}_{\ell}$, $\textsc{E}_{7}$,
$ \textsc{E}_{8}$, $\textsc{F}_{4}$, $\textsc{G}_{2}$;
\item contains non trivial involutions if $\Rad$ is of type $\textsc{A}_{\ell}$, $\textsc{D}_{\ell}$, $\textsc{E}_{6}$;
for each case the nontrivial involutions are all equivalent and equivalent to 
\begin{equation*} 
\begin{array}{| c | c |} \hline
\Rad(\textsc{A}_{\ell}) & \epi(\e_{i})={-}\e_{\ell{+}2-i},\; 1{\leq}i{\leq}\ell{+}1 \\
\hline
\Rad(\textsc{D}_{\ell}) & \epi(\e_{i})=\e_{i},\; 1{\leq}i{\leq}{\ell}{-}1,\; \epi(\e_{\ell})={-}\e_{\ell} \\
 \hline 
\Rad'( \textsc{E}_{6}) & \epi(\e_{i})={-}\e_{7-i},\; 1{\leq}i{\leq}6,\; \epi(\e_{7})={-}\e_{8}\\
 \hline
\end{array}
\end{equation*}
\end{itemize}
 \end{thm}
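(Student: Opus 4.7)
The plan is to first use Lemma\,\ref{l2.18} to reduce the study of special involutions to involutions that preserve a fixed positive system, and then to identify these with involutive Dynkin diagram automorphisms.

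First, by Lemma\,\ref{l2.18}, every $\epi\,{\in}\,\Ib^{*}(\Rad)$ lies in $\Ib(\Rad,C)$ for some Weyl chamber $C\,{\in}\,\Cd(\Rad)$. Since $\Wf(\Rad)$ acts simply transitively on $\Cd(\Rad)$, two such involutions corresponding to different chambers are always $\Wf(\Rad)$-conjugate if they produce the same permutation on the corresponding bases; so for the purpose of classification up to equivalence we may fix a chamber $C$ and a basis $\Bz(C)\,{=}\,\{\alphaup_{1},\hdots,\alphaup_{\ell}\}$. Conversely, any $\ftt\,{\in}\,\Af(\Rad)$ preserving $\Rad^{+}(C)$ preserves the set of minimal elements of $\Rad^{+}(C)$, hence permutes $\Bz(C)$; since $\ftt$ is an isometry, this permutation is an automorphism of the Dynkin diagram, and $\ftt$ is determined by its restriction to $\Bz(C)$ because $\Bz(C)$ spans $\hr^{*}$. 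This establishes a bijection between $\Ib(\Rad,C)$ and involutive automorphisms of the Dynkin diagram of $\Rad$.

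Next I would observe that every $\epi\,{\in}\,\Ib(\Rad,C)$ is automatically special: if $\alphaup\,{\in}\,\Rd{\bullet}{\epi}$ and e.g. $\alphaup\,{\in}\,\Rad^{+}(C)$, then $\epi(\alphaup)\,{=}\,{-}\alphaup\,{\in}\,\Rad^{-}(C)$ contradicts $\epi(\Rad^{+}(C))\,{=}\,\Rad^{+}(C)$. Combined with the reduction above, this shows $\Ib^{*}(\Rad)$ coincides, modulo $\Af(\Rad)$-equivalence, with the set of involutive diagram automorphisms of $\Rad$.

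The classification then follows by inspection of each Dynkin diagram. The diagrams of $\textsc{B}_{\ell}, \textsc{C}_{\ell}, \textsc{E}_{7}, \textsc{E}_{8}, \textsc{F}_{4}, \textsc{G}_{2}$ admit no nontrivial automorphism (because of multiple bonds, or the asymmetric shape of $\textsc{E}_{7}, \textsc{E}_{8}$), so $\Ib^{*}(\Rad)\,{=}\,\{\id\}$. On the other hand, the diagrams of $\textsc{A}_{\ell}$ ($\ell{\geq}2$) and $\textsc{E}_{6}$ have a unique nontrivial (involutive) automorphism, namely the horizontal flip, and those of $\textsc{D}_{\ell}$ ($\ell{\geq}5$) have a unique nontrivial involution, the swap of the two end nodes of the fork. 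In each such case the explicit formula can be read off from the standard realizations \eqref{rA}, \eqref{rD}, \eqref{rE6a} of the root systems, giving the table in the statement.

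The main subtle point, which I would treat last, is the case $\textsc{D}_{4}$, whose diagram has automorphism group $S_{3}$ (triality) and therefore admits three distinct nontrivial involutions. I would check that these three involutions are nonetheless equivalent in $\Af(\Rad)$: the triality automorphisms themselves belong to $\Af(\Rad)\,{\setminus}\,\Wf(\Rad)$ and conjugate the three involutions to one another, so they all represent the same class, and the formula in the table (the reflection that flips only $\e_{\ell}$) may be chosen as the representative. A secondary point to verify is that, in the cases of types $\textsc{A}_{\ell}$, $\textsc{D}_{\ell}$ and $\textsc{E}_{6}$, different Weyl chambers do produce $\Wf(\Rad)$-conjugate special involutions with the same underlying diagram involution, which follows from transitivity of the Weyl group on chambers together with the naturality of the identification $\Ib(\Rad,C)\,{\simeq}\,\mathrm{Aut}(\mathrm{Dyn})^{2}$.
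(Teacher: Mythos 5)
Your argument is correct and follows essentially the same route as the paper's own proof: reduce via Lemma\,\ref{l2.18} to an involution in $\Ib(\Rad,C)$, identify it with an involutive automorphism of the Dynkin diagram, and classify by inspection. You supply details the paper leaves implicit — notably that every element of $\Ib(\Rad,C)$ is automatically special, and the $\textsc{D}_{4}$ triality case (which the paper only touches on in the remark following the theorem, and which does require conjugating by elements of $\Af(\Rad)\setminus\Wf(\Rad)$ to get a single equivalence class) — but the approach is the same.
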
 
\begin{proof}
For $\epi$ in $\Ib^{*}(\Rad)$, take a $C\,{\in}\,(\Rad)$
such that $\epi\,{\in}\,\Ib(\Rad,C)$. \par 
Since $\epi(\Rad^{+}(C))\,{=}\,\Rad^{+}(C)$,
the involution $\epi$ restricts to an involution of $\Bz(C)$.  The statement follows by checking the involutions of
the corresponding Dynkin diagrams.
\end{proof}
\begin{rmk} In $\Rad(\textsc{D}_{4})$ we take the Weyl chamber $C$ with 
\begin{equation*}
 \Bz(C)=\{\alphaup_{1}{=}\e_{1}{-}\e_{2},\,\alphaup_{2}{=}\,\e_{2}{-}\e_{3},\,\alphaup_{3}{=}\e_{3}{-}\e_{4},
 \alphaup_{4}{=}\e_{3}{+}\e_{4}\}. 
\end{equation*} Then $\sq_{\alphaup_{1}-\alphaup_{3}}$ is an involution in $\Ib(\Rad(\textsc{D}_{4}))$
which exchanges $\alphaup_{1}$ and $\alphaup_{3}$ and keeps $\alphaup_{2},\alphaup_{4}$ fixed,
while $\sq_{\e_{4}}$ is an involution exchanging $\alphaup_{3}$ and $\alphaup_{4}$ and keeping
$\alphaup_{1},\alphaup_{2}$ fixed. The composition $\sq_{\alphaup_{1}-\alphaup_{3}}{\circ}\sq_{\e_{4}}$
is the rotation $(\alphaup_{1},\alphaup_{3},\alphaup_{4})$, which keeps $\alphaup_{2}$ fixed.  
It is an element of $\Af(\Rad(\textsc{D}_{4}),C)$ which is not an involution.
\end{rmk}
\begin{prop}\label{prop.2.30}
Let $C\,{\in}\,\Cd(\Rad)$ and $\Af(\Rad,C)$ the subgroup of the elements of $\Af(\Rad)$ leaving
$\Rad^{+}(C)$ invariant. Then 
\begin{equation}
 \label{e2.35}
 \Af(\Rad)=\Wf(\Rad)\ltimes\Af(\Rad,C) \quad\text{(semidirect product)}.
\end{equation}
\end{prop} 
\begin{proof} 
 Decomposition \eqref{e2.35} is a consequence of 
 the fact that
 $\Wf(\Rad)$ is simply transitive on the set $\Cd(\Rad)$ of the Weyl chambers
 (see e.g. \cite[Ch.VI,\S{1}, n$^{\mathrm{o}}$5, Thm.2]{Bou68}).
\end{proof}
\subsection{Classification of involutions}

We use the classification of subsets of strongly orthogonal roots of \S\ref{s2.4}
and of special involutions of \S\ref{S2.5} 
to describe the involutions of irreducible root systems. 
For the notation, we refer to
the description of irreducible root systems given therein.
We have:
\begin{thm}\label{t2.26}
The involutions $\stt{\in}\Ib(\Rad)$ of  irreducible root systems are, modulo equivalence,

 \begin{equation*} 
\begin{gathered}
\begin{array}{| c |  l |}
\hline
 \text{Root system} & \qquad\qquad\qquad\quad \text{involution}  \\
 \hline
 \Rad(\textsc{A}_{\ell})
 & \begin{aligned}
 &\sq_{\e_{1}{-}\e_{2}}\circ\cdots\circ\sq_{\e_{2h-1}{-}\e_{2h}} ,\; \begin{smallmatrix}1{\leq}h{\leq}[(\ell+1)/2]
 \end{smallmatrix}\\[3pt]
 & \begin{aligned}\sq_{\e_{1}{+}\e_{\ell+1}}{\circ}\cdots{\circ}\sq_{\e_{p}{+}\e_{\ell+2-p}}\circ 
 \sq_{\e_{p+1}}{\circ}\cdots{\circ}\sq_{\e_{\ell+1-p}}, 
 \begin{smallmatrix}
  0{\leq}p{\leq}[(\ell+1)/2]
\end{smallmatrix}\end{aligned}\end{aligned}
\\
\hline
 \begin{aligned}
  &\Rad(\textsc{B}_{\ell})\\
  &\Rad(\textsc{C}_{\ell})\\ 
  &\Rad(\textsc{D}_{\ell})
  \end{aligned}
  & \begin{aligned}
  \sq_{\e_{1}{-}\e_{2}}\circ\cdots\circ\sq_{\e_{2r_{1}-1}{-}\e_{2r_{1}}}\circ
  \sq_{\e_{\ell-r_{2}+1}}\circ\sq_{\e_{\ell-r_{2}+2}}{\circ}\cdots\circ\sq_{\e_{\ell}}, 
\begin{smallmatrix}
 2r_{1}+r_{2}{\leq}\ell
\end{smallmatrix}
  \end{aligned}
\\
\hline
\Rad'(\textsc{E}_{6}) 
&  \begin{aligned}
& {\sq_{\betaup_1}, \sq_{\betaup_1}\circ\sq_{\betaup_3},\sq_{\betaup_1}\circ\sq_{\betaup_3}\circ\sq_{\betaup_5},\sq_{\betaup_1}\circ\sq_{\betaup_3}\circ\sq_{\betaup_5}\circ\sq_{\betaup_7}},\\
& \sq_{\e_{1}+\e_{6}}{\circ}\sq_{\e_{2}+\e_{5}}{\circ}\sq_{\e_{3}+\e_{4}},\, 
\sq_{\e_{1}+\e_{6}}{\circ}\sq_{\e_{2}+\e_{5}}{\circ}\sq_{\e_{3}}{\circ}\sq_{\e_{4}},\\
&\sq_{\e_{1}+\e_{6}}{\circ}\sq_{\e_{2}}{\circ}\sq_{\e_{3}}{\circ}\sq_{\e_{4}}{\circ}\sq_{\e_{5}},\, 
\sq_{\e_{1}}{\circ}\sq_{\e_{2}}{\circ}\sq_{\e_{3}}{\circ}\sq_{\e_{4}}{\circ}\sq_{\e_{5}}{\circ}\sq_{\e_{6}}
\end{aligned}
\\
\hline 
\Rad(\textsc{E}_{7}) & \begin{aligned}
&\sq_{\e_{1}-\e_{2}},\; \sq_{\e_{1}}{\circ}\sq_{\e_{2}},\; 
 \sq_{\e_{1}}{\circ}\sq_{\e_{2}}{\circ}\sq_{\e_{3}-\e_{4}},
 \sq_{\e_{1}-\e_{2}}{\circ}\sq_{\e_{3}-\e_{4}}{\circ}\sq_{\e_{5}-\e_{6}},\\
 &\sq_{\e_{1}}{\circ}\sq_{\e_{2}}{\circ}\sq_{\e_{3}}{\circ}\sq_{\e_{4}},\; \sq_{\e_{1}}{\circ}{\sq}_{\e_{2}}{\circ}
 \sq_{\e_{3}-\e_{4}}{\circ}\sq_{\e_{5}{-}\e_{6}},\; 
  \sq_{\e_{1}}{\circ}\sq_{\e_{2}}{\circ}\sq_{\e_{3}}{\circ}\sq_{\e_{4}}{\circ}\sq_{\e_{5}{-}\e_{6}},\\
 & \sq_{\e_{1}}{\circ}\sq_{\e_{2}}{\circ}\sq_{\e_{3}}{\circ}\sq_{\e_{4}}{\circ}\sq_{\e_{5}}{\circ}\sq_{\e_{6}},
\; \sq_{\e_{1}}{\circ}\sq_{\e_{2}}{\circ}\sq_{\e_{3}}{\circ}\sq_{\e_{4}}{\circ}\sq_{\e_{5}}{\circ}\sq_{\e_{6}}{\circ}
\sq_{\e_{7}+\e_{8}}
\end{aligned}
\\
\hline
\Rad(\textsc{E}_{8}) & 
\begin{aligned}
&\sq_{\e_{1}-\e_{2}},\; \sq_{\e_{1}}{\circ}\sq_{\e_{2}},\; \sq_{\e_{1}}{\circ}\sq_{\e_{2}}{\circ}\sq_{\e_{3}-\e_{4}},\;
\sq_{\e_{1}}{\circ}\sq_{\e_{2}}{\circ}\sq_{\e_{3}}\circ\sq_{\e_{4}},\\
& \sq_{\e_{1}}{\circ}\sq_{\e_{2}}{\circ}\sq_{\e_{3}-\e_{4}}{\circ}\sq_{\e_{5}-\e_{6}},\;
\sq_{\e_{1}}{\circ}\sq_{\e_{2}}{\circ}\sq_{\e_{3}}{\circ}\sq_{\e_{4}}{\circ}\sq_{\e_{5}-\e_{6}},\\
&\sq_{\e_{1}}{\circ}\sq_{\e_{2}}{\circ}\sq_{\e_{3}}{\circ}\sq_{\e_{4}}{\circ}\sq_{\e_{5}}{\circ}\sq_{\e_{6}},\;
\sq_{\e_{1}}{\circ}\sq_{\e_{2}}{\circ}\sq_{\e_{3}}{\circ}\sq_{\e_{4}}{\circ}\sq_{\e_{5}}{\circ}\sq_{\e_{6}}{\circ}
\sq_{\e_{7}-\e_{8}},\\
&\sq_{\e_{1}}{\circ}\sq_{\e_{2}}{\circ}\sq_{\e_{3}}{\circ}\sq_{\e_{4}}{\circ}\sq_{\e_{5}}{\circ}\sq_{\e_{6}}{\circ}
\sq_{\e_{7}}{\circ}\sq_{\e_{8}}
\end{aligned}
\\
\hline 
\Rad(\textsc{F}_{4})&  
\begin{aligned}
&\sq_{\e_{1}},\;\;\sq_{\e_{1}-\e_{2}},\;\; \sq_{\e_{1}}{\circ}\sq_{\e_{2}-\e_{3}}, \;\;
\sq_{\e_{1}}{\circ}\sq_{\e_{2}},\;\;
\sq_{\e_{1}}{\circ}\sq_{\e_{2}}{\circ}\sq_{\e_{3}},\\
& \sq_{\e_{2}}{\circ}\sq_{\e_{3}}{\circ}\sq_{\e_{1}+\e_{4}},\;\;
\sq_{\e_{1}}{\circ}\sq_{\e_{2}}{\circ}\sq_{\e_{3}}{\circ}\sq_{\e_{4}}
 \end{aligned}\\
\hline
\Rad(\textsc{G}_{2}) &  \sq_{\e_{1}-\e_{2}},\;\; \sq_{2\e_{1}-\e_{2}-\e_{3}},\;\;
\sq_{\e_{1}}{\circ}\sq_{\e_{2}}{\circ}\sq_{\e_{3}}\\
\hline
\end{array}\\[6pt]
\textsc{Table II}
\end{gathered}
\end{equation*}

{
We used Prop. \ref{p2.7} and Prop. \ref{prop.2.30},  together with  the classification of subsets of strongly orthogonal roots of \S\ref{s2.4} 
and of special involutions given in Thm.\ref{t3.16}.We observe that in cases where there are distinct equivalence classes for subsets of strongly orthogonal roots, the reflections must be taken within a single class.
The first line for $\Rad(\textsc{A}_{\ell})$ describes the involutions in the Weyl group,  the second one
those in $\Af{\setminus}\Wf$,  we notice that when $p{=}0$ and $\ell{>}1$ the second line is meant to be only $\sq_{\e_{1}}{\circ}\cdots{\circ}\sq_{\e_{\ell+1}}=-\id$.  
 The involutions of $\Rad(\textsc{D}_{\ell})$ belong to the Weyl group
iff $r_{2}{\in}\,2\Z$. We used the root system  $\Rad'(\textsc{E}_{6})$ (see \eqref{rE6a}) for the
involutions in the Weyl group and  for those in $\Af{\setminus}\Wf$, first line and second and thirds lines respectively}.\qed
\end{thm}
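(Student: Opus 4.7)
The plan is to assemble Theorem\,\ref{t2.26} from three ingredients already established: the decomposition $\stt=\epi\circ\sq_{\,\betaup_{1}}\circ\cdots\circ\sq_{\,\betaup_{r}}$ of Proposition\,\ref{p2.7}, the classification of special involutions in Theorem\,\ref{t3.16}, and the classification of equivalence classes of strongly orthogonal subsets of the various irreducible root systems carried out in \S\ref{s2.4}. The overall strategy is: parametrize involutions by pairs $(\epi,\Stt)$ with $\epi\,{\in}\,\Ib^{*}(\Rad)$ and $\Stt\,{=}\,\{\betaup_{1},\hdots,\betaup_{r}\}$ a strongly orthogonal system contained in $\Rd{\circ}{\epi}$; show that equivalence of two such pairs under $\Af(\Rad)$ coincides with equivalence of the resulting involutions; then list the resulting representatives type by type.

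First I would fix an irreducible $\Rad$ and a special involution $\epi$ on it, using Theorem\,\ref{t3.16}: for $\Rad$ of type $\textsc{B},\textsc{C},\textsc{E}_{7},\textsc{E}_{8},\textsc{F}_{4},\textsc{G}_{2}$ only $\epi\,{=}\,\id$ occurs, while for $\textsc{A}_{\ell},\textsc{D}_{\ell},\textsc{E}_{6}$ there is the extra antipodal-type involution explicitly written in that theorem (which is the one responsible for the second block of lines in the table). For each such $\epi$ one identifies $\Rd{\circ}{\epi}$ as a (closed) root subsystem of $\Rad$: this is a root system of the same type for $\epi\,{=}\,\id$, and in the other cases it is computed directly from the formulas of Theorem\,\ref{t3.16} (e.g.\ for $\Rad(\textsc{A}_{\ell})$ with $\epi$ the antipode, $\Rd{\circ}{\epi}$ consists of the roots $\e_{i}{-}\e_{j}$ with $i{+}j\,{=}\,\ell{+}2$, so it is of type $(\textsc{A}_{1})^{[(\ell+1)/2]}$; similarly for $\textsc{D}_{\ell}$ one gets an $\textsc{A}_{1}$-factor supported on $\e_{\ell}$ and an orthogonal $\textsc{A}_{\ell-1}$; and for $\Rad'(\textsc{E}_{6})$ a concrete $(\textsc{A}_{1})^{3}$-subsystem reflected in the six $\e_{i}{+}\e_{7-i}$).

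Next, inside each $\Rd{\circ}{\epi}$ I would enumerate, modulo equivalence under the subgroup of $\Af(\Rad)$ centralizing $\epi$, the strongly orthogonal systems $\Stt$: these are exactly the data appearing in the classification of \S\ref{s2.4}. Combining $\epi$ with each such $\Stt$ through \eqref{e2.8} then produces precisely one candidate involution per row of the table, and a routine verification (using Lemma\,\ref{l1.2}-style reflections and the fact that reflections in orthogonal roots commute) shows each candidate is indeed an involution of $\Rad$. The non-trivial step is to confirm that distinct rows give inequivalent involutions and that every involution of $\Rad$ is equivalent to one of them. For the latter, any $\stt\,{\in}\,\Ib(\Rad)$ is decomposed by Proposition\,\ref{p2.7}, and, up to conjugation, its special part $\epi$ is brought into the normal form of Theorem\,\ref{t3.16}; the strongly orthogonal system $\Stt\,{\subset}\,\Rd{\circ}{\epi}\,{\cap}\,\Rd{\bullet}{\stt}$ is then normalized by an element of the stabilizer of $\epi$ using the lists in \S\ref{s2.4}. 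For the former, one uses the invariant $r(\stt)$ (Remark after Proposition\,\ref{p2.7}) to separate rows of different length, and — when two rows have the same $r$ — one distinguishes them either by whether $\stt$ lies in $\Wf(\Rad)$ (detected, after the $\Af(\Rad)\,{=}\,\Wf(\Rad)\,{\ltimes}\,\Af(\Rad,C)$ decomposition \eqref{e2.35}, by whether the associated special part is $\id$ or not) or by the equivalence class of $\Stt$ as a strongly orthogonal subset (which is where the $\textsc{E}_{7}$/$\textsc{E}_{8}$ case-split into Klein-group and non-Klein-group $4$-tuples enters).

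The main obstacle will be the exceptional types $\textsc{E}_{7}$ and $\textsc{E}_{8}$, where the distinction between equivalence classes of $3$- and $4$-element strongly orthogonal sets is detected by Klein-group membership; here one must check carefully that the normal-form reflections listed in the table exhaust the equivalence classes. Concretely one verifies, via Table\,I and Corollary\,\ref{c2.11}, that $\sq_{\e_{1}}\circ\sq_{\e_{2}}\circ\sq_{\e_{3}}\circ\sq_{\e_{4}}$ corresponds to the Klein-group class while $\sq_{\e_{1}}\circ\sq_{\e_{2}}\circ\sq_{\e_{3}-\e_{4}}\circ\sq_{\e_{5}-\e_{6}}$ corresponds to the other class, and similarly at length three. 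The same machinery handles $\textsc{F}_{4}$ (where the split long/short given in the classification of strongly orthogonal systems in $\textsc{F}_{4}$ explains the two columns of representatives) and $\textsc{G}_{2}$ (where the three rows correspond to the three classes of strongly orthogonal systems listed). Putting all this together produces exactly Table\,II.
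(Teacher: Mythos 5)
Your strategy is exactly the one the paper itself relies on: Theorem\,\ref{t2.26} is stated without a separate proof precisely because it is meant to be read off from the decomposition of Proposition\,\ref{p2.7}, the list of special involutions in Theorem\,\ref{t3.16}, and the case-by-case classification of strongly orthogonal systems in \S\ref{s2.4}, which is what you assemble. The skeleton (parametrize by pairs $(\epi,\Stt)$ with $\epi\,{\in}\,\Ib^{*}(\Rad)$ and $\Stt\,{\subset}\,\Rd{\circ}{\epi}$, separate rows by $r(\stt)$, by membership in $\Wf(\Rad)$, and by the class of $\Stt$) is sound, and your handling of the $\textsc{E}_{7}$/$\textsc{E}_{8}$ Klein-group dichotomy is consistent with Table\,I and Corollary\,\ref{c2.11}.

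There are, however, two concrete errors in your identification of the fixed subsystems $\Rd{\circ}{\epi}$ for the nontrivial special involutions, and these would corrupt the enumeration if carried out as written. For $\Rad(\textsc{D}_{\ell})$ with $\epi(\e_{i})\,{=}\,\e_{i}$ ($i\,{<}\,\ell$), $\epi(\e_{\ell})\,{=}\,{-}\e_{\ell}$, the fixed roots are $\{{\pm}\e_{i}{\pm}\e_{j}\,{\mid}\,1{\leq}i{<}j{<}\ell\}$, a system of type $\textsc{D}_{\ell-1}$; there is no $\textsc{A}_{1}$-factor ``supported on $\e_{\ell}$'' since $\e_{\ell}$ and $2\e_{\ell}$ are not roots of $\textsc{D}_{\ell}$, and $\textsc{D}_{\ell-1}$ is not $\textsc{A}_{1}{\times}\textsc{A}_{\ell-1}$. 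For $\Rad'(\textsc{E}_{6})$ the paper computes (in the proof of Theorem\,\ref{t3.15}) that $\Rd{\circ}{\epi}$ consists of $24$ roots of equal length and rank $4$ — the six roots ${\pm}(\e_{1}{-}\e_{6}),{\pm}(\e_{2}{-}\e_{5}),{\pm}(\e_{3}{-}\e_{4})$, the pair ${\pm}(\e_{7}{-}\e_{8})$, and the sixteen half-sums — so it is of type $\textsc{D}_{4}$, not $(\textsc{A}_{1})^{3}$; the vectors $\e_{i}{+}\e_{7-i}$ you mention are not roots and enter only as the reflecting hyperplanes realizing $\epi$ inside $\Af{\setminus}\Wf$. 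Finally, you should make explicit the (easy but necessary) observation that $\sq_{\,\Btt}$ depends only on the span $\langle\Btt\rangle_{\R}$, so that the passage from equivalence classes of strongly orthogonal systems to equivalence classes of involutions is well defined and, conversely, inequivalent rows are separated by invariants of $\Rd{\bullet}{\stt}\,{=}\,\Rad\,{\cap}\,\langle\Btt\rangle_{\R}$ (its cardinality and type), not merely by the chosen set $\Btt$.
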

\section{$S\!$-diagrams} 
Weyl chambers define lexicographic orders on $\Rad$. 
Thus, when an involution $\stt$ is fixed,
the choice of a Weyl chamber $C$ yields  
a decomposition of the set of $\stt$-complex roots: 
\begin{equation} 
\begin{cases}
 \Rd{\star}{\stt}=\Rd{\oplus}{\stt}(C)\sqcup\Rd{\ominus}{\stt}(C).\quad\text{where}\\
 \Rd{\oplus}{\stt}(C)=\{\alphaup\,{\in}\,\Rad^{\pm}(C)\,{\mid}\,\stt(\alphaup)\,{\in}\,\Rad^{\pm}(C)\}\cap
 \Rd{\star}{\stt}
 \\
  \Rd{\ominus}{\stt}(C)=\{\alphaup\,{\in}\,\Rad^{\pm}(C)\,{\mid}\,\stt(\alphaup)\,{\in}\,\Rad^{\mp}(C)\}
\cap
 \Rd{\star}{\stt}.
\end{cases}
\end{equation}\par 
If $H$ is any element of $C$, 
then
\begin{equation*} \left\{ \begin{aligned}
&
 \Rd{\oplus}{\stt}(C)=\{\alphaup\,{\in}\Rd{\star}{\stt}\,{\mid}\, 
 \alphaup(H)\,{\cdot}\stt(\alphaup)(H)\,{>}\,0\},\\
 &
 \Rd{\ominus}{\stt}(C)=\{\alphaup\,{\in}\Rd{\star}{\stt}\,{\mid}\,  
 \alphaup(H)\,{\cdot}\stt(\alphaup)(H)\,{<}\,0\}.
 \end{aligned}
 \right.
\end{equation*}\par
With obvious notation, we get  a corresponding partition of the basis:
\begin{equation}
 \Bz(C)=\Bz^{\stt}_{\circ}(C)\sqcup\Bz^{\stt}_{\bullet}(C)\sqcup\Bz^{\stt}_{\,\star}(C),\;\;
 \Bz_{\star}^{\stt}(C)\,{=}\,\Bz_{\oplus}^{\stt}(C)\,{\sqcup}\,\Bz_{\ominus}^{\stt}(C).
\end{equation} \par \index{$\Bz_{\oplus}^{\stt}(C)$} \index{$\Bz_{\ominus}^{\stt}(C)$}
\index{$\Bz_{\circ}^{\stt}(C)$} \index{$\Bz_{\bullet}^{\stt}(C)$} \index{$\Bz_{\star}^{\stt}(C)$} 
Since $\Bz(C)$ is  basis of $\hr^*$,  
an involution $\stt$ is completely
determined by its action on its elements. We may take track of the involution by 
labelling the nodes of the Dynkin diagram corresponding to $C$ with \par\smallskip
\centerline{
\begin{tabular}{c l}
 $\medcirc$ & for a root in $\Bz_{\circ}^{\stt}(C)$;\\
$\medbullet$ & for a root in $\Bz_{\bullet}^{\stt}(C)$;\\
$\oplus$ & for a root in $\Bz_{\oplus}^{\stt}(C)$;\\
$\ominus$ & for  a root in $\Bz_{\ominus}^{\stt}(C)$.
\end{tabular}}
\par\smallskip
However, the information we obtain in this way is, in general, not sufficient to
characterize the involution.\par
\begin{exam} \label{e2.33}
The involutions $\stt'\,{=}\,\sq_{\e_{1}}{\circ}\sq_{\e_{3}}{\circ}\sq_{\e_{2}-\e_{4}}$,
$\stt''\,{=}\,\sq_{\e_{1}{-}\e_{4}}{\circ}\sq_{\e_{3}}$ \par \noindent
and $\stt'''\,{=}\,\sq_{\e_{1}+\e_{3}}{\circ}\sq_{\e_{2}-\e_{4}}$
yield the same diagram 
 \begin{equation*}
 \xymatrix @M=0pt @R=2pt @!C=15pt{ \alphaup_{1}&\alphaup_{2}&\alphaup_{3}&\alphaup_{4}\\
  \qquad\quad 
 \\
 \ominus\ar@{-}[r]
 &\oplus\ar@{-}[r]
 &\ominus\ar@{=>}[r]
 &\oplus}
 \end{equation*}
 for the canonical basis $\alphaup_{1}{=}\e_{1}{-}\e_{2},\,\alphaup_{2}{=}\e_{2}{-}\e_{3},\,\alphaup_{3}{=}\e_{3}{-}\e_{4},\,
 \alphaup_{4}{=}\e_{4}$. 
 Indeed:

\begin{align*} 
&\begin{cases}
 \stt'(\alphaup_{1})={-}(\alphaup_{1}{+}\alphaup_{2}{+}\alphaup_{3}),\\
 \stt'(\alphaup_{2})=\alphaup_{3}{+}2\alphaup_{4},\\
 \stt'(\alphaup_{3})={-}(\alphaup_{2}{+}2\alphaup_{3}{+}2\alphaup_{4}),\\
 \stt'(\alphaup_{4})=\alphaup_{2}{+}\alphaup_{3}{+}\alphaup_{4},
\end{cases}
\begin{cases}
 \stt''(\alphaup_{1})={-}(\alphaup_{2}{+}\alphaup_{3}),\\
 \stt''(\alphaup_{2})=\alphaup_{2}{+}2\alphaup_{3}{+}2\alphaup_{4},\\
 \stt''(\alphaup_{3})={-}(\alphaup_{1}{+}\alphaup_{2}{+}2\alphaup_{3}{+}2\alphaup_{4}),\\
 \stt''(\alphaup_{4})=\alphaup_{1}{+}\alphaup_{2}{+}\alphaup_{3}{+}\alphaup_{4},
\end{cases} \\
&\begin{cases}
 \stt'''(\alphaup_{1})\,{=}\, {-}\alphaup_{3},\\
 \stt'''(\alphaup_{2})\,{=}\,\alphaup_{1}{+}\alphaup_{2}{+}\alphaup_{3}{+}2\alphaup_{4},\\
 \stt'''(\alphaup_{3})\,{=}\,{-}\alphaup_{1},\\
 \stt'''(\alphaup_{4})\,{=}\,\alphaup_{2}{+}\alphaup_{3}{+}\alphaup_{4}.
\end{cases}
\end{align*}
\end{exam}\par\medskip
We will introduce below a nicer class of 
Weyl chambers,
characterized  by  condition \eqref{e2.39}
below (see e.g. \cite{AMN06b, Ara62,  sa60}),
that are suitable to graphically describe involutions.
\begin{lem}
 For $\stt\,{\in}\,\Ib(\Rad)$ and $C\,{\in}\,\Cd(\Rad)$ the following are equivalent 
\begin{gather}\label{e2.39}
  \Rd{\oplus}{\stt}(C)=\Rd{\star}{\stt},\\
  \label{e2.40}
  \Bz^{\stt}_{\star}(C)=\Bz^{\stt}_{\oplus}(C).
\end{gather}
\end{lem}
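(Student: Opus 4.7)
The plan is to prove the two implications separately. The direction $\eqref{e2.39}\Rightarrow\eqref{e2.40}$ will be essentially tautological: since $\Bz^{\stt}_\star(C) \subseteq \Rd{\star}{\stt} \cap \Rad^+(C)$ by definition, applying \eqref{e2.39} to each $\alphaup \in \Bz^{\stt}_\star(C)$ gives $\stt(\alphaup) \in \Rad^+(C)$, i.e.\ $\alphaup \in \Bz^{\stt}_\oplus(C)$; the reverse inclusion $\Bz^{\stt}_\oplus(C) \subseteq \Bz^{\stt}_\star(C)$ is built into the definition.

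For the converse I will argue by contradiction. Assume \eqref{e2.40} and suppose some $\alphaup \in \Rd{\star}{\stt} \cap \Rad^+(C)$ has $\stt(\alphaup) \in \Rad^-(C)$. Writing $\alphaup = \sum_{\betaup \in \Bz(C)} k_\betaup \betaup$ with $k_\betaup \geq 0$ (via \eqref{e1.15}), I split the sum along $\Bz(C) = \Bz^{\stt}_\circ(C) \sqcup \Bz^{\stt}_\bullet(C) \sqcup \Bz^{\stt}_\star(C)$. On the three pieces $\stt$ acts respectively as the identity, as negation, and — by the hypothesis \eqref{e2.40} — by sending each $\betaup$ to a nonnegative integer combination $\stt(\betaup) = \sum_\deltaup m_{\betaup,\deltaup}\,\deltaup$ with $m_{\betaup,\deltaup} \geq 0$. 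Collecting, the coefficient in $\stt(\alphaup)$ of a simple root $\deltaup$ equals
\[
  \epi_\deltaup\,k_\deltaup \;+\; \sum_{\betaup \in \Bz^{\stt}_\star(C)} k_\betaup \, m_{\betaup,\deltaup},
  \qquad
  \epi_\deltaup = \begin{cases} +1, & \deltaup \in \Bz^{\stt}_\circ(C),\\ -1, & \deltaup \in \Bz^{\stt}_\bullet(C),\\ 0, & \deltaup \in \Bz^{\stt}_\star(C). \end{cases}
\]

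The assumption $\stt(\alphaup) \in \Rad^-(C)$ forces every such coefficient to be $\leq 0$. For $\deltaup \in \Bz^{\stt}_\circ(C) \cup \Bz^{\stt}_\star(C)$ the sign $\epi_\deltaup$ is nonnegative and every summand is $\geq 0$, so all terms must vanish: $k_\deltaup = 0$ for all real simple $\deltaup$, and $k_\betaup\,m_{\betaup,\deltaup} = 0$ whenever $\betaup$ is complex simple and $\deltaup \notin \Bz^{\stt}_\bullet(C)$. The decisive step is then: any complex simple $\betaup$ with $k_\betaup > 0$ would have $\stt(\betaup)$ supported on $\Bz^{\stt}_\bullet(C)$, hence lying in the $\R$-span of $\Rd{\bullet}{\stt}$; because imaginary roots form a closed subsystem (the remark following \eqref{e1.13}), this forces $\stt(\betaup) \in \Rd{\bullet}{\stt}$, contradicting the fact that $\stt(\betaup) \in \Rd{\star}{\stt}$ (as $\stt$ preserves the tripartition). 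So $k_\betaup = 0$ for every complex simple $\betaup$ as well, and $\alphaup$ is supported entirely on $\Bz^{\stt}_\bullet(C)$; a second application of the closed-subsystem argument yields $\alphaup \in \Rd{\bullet}{\stt}$, contradicting $\alphaup \in \Rd{\star}{\stt}$.

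The main obstacle is essentially organizing the coefficient bookkeeping cleanly; the substantive input is the closed-subsystem property of $\Rd{\bullet}{\stt}$, which turns a statement about the support of a root into actual membership in $\Rd{\bullet}{\stt}$ and thereby short-circuits any need for induction on the height of $\alphaup$.
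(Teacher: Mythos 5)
Your proof is correct and follows essentially the same route as the paper's: both arguments come down to expanding $\stt(\alphaup)$ through the partition $\Bz(C)=\Bz_{\circ}^{\stt}(C)\sqcup\Bz_{\bullet}^{\stt}(C)\sqcup\Bz_{\star}^{\stt}(C)$, exploiting the nonnegativity of the coefficients of $\stt(\betaup)$ for $\betaup\in\Bz_{\oplus}^{\stt}(C)$, and using the fact that a root supported on imaginary simple roots is itself imaginary. The paper packages this as the additive functional $\kq_{\;C}^{\stt,\star}$ and leaves the details implicit; your version by contradiction simply makes the coefficient bookkeeping and the appeal to the closed-subsystem remark explicit.
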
 
\begin{proof}
 Clearly \eqref{e2.39}$\Rightarrow$\eqref{e2.40}. To prove the vice versa, we introduce on $\Rad$ 
 the $\Z$-valued function
\begin{equation*}
 \kq_{\;C}^{\stt,\star}(\alphaup)={\sum}_{\betaup\,{\in}\,\Bz_{\star}^{\stt}(C)}k_{\alphaup,\betaup},\;\;\;\text{for}\;\;\;
 \alphaup={\sum}_{\betaup\in\Bz(C)}k_{\alphaup,\betaup}\betaup.
\end{equation*}
This function is additive and,
when \eqref{e2.40} is valid, we have $\kq_{\;C}^{\stt,\star}(\betaup)\,{\geq}\,1$ for all $\betaup\,{\in}\,\Bz_{\star}^{\stt}(C)$.
From this observation we obtain \eqref{e2.39}.
\end{proof}
The following theorem extends Lemma\,\ref{l2.18} to general involutions.
\begin{thm}\label{t3.10}\label{thmSch}
 If $\stt\,{\in}\,\Ib(\Rad)$, then we can find a Weyl chamber $C$ for which 
the equivalent conditions \eqref{e2.39}, \eqref{e2.40} are satisfied.
\end{thm}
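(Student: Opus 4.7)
The plan is to exhibit a regular element $H \in \hr$ with the property that $\alphaup(H)\cdot\stt(\alphaup)(H) > 0$ for every $\alphaup \in \Rd{\star}{\stt}$; by the characterization of $\Rd{\oplus}{\stt}(C)$ recorded right after \eqref{e2.7}, the chamber $C \ni H$ will then satisfy \eqref{e2.39}. I would construct $H$ as a small perturbation $H = H_+ + t\, H_-$ with $H_\pm \in \hst^\pm$ suitably chosen and $t > 0$ small. Lemma\,\ref{l2.18} is the analogue when $\Rd{\bullet}{\stt} = \emptyset$, in which case a single $H_+$ suffices; the role of $t H_-$ here is to also separate the imaginary roots without destroying the separation of complex ones.

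The first ingredient is a vanishing analysis on $\hst^\pm$. Using $H_+^\stt = \tfrac{1}{2}(H + \stt^*(H))$ one gets $\alphaup(H_+^\stt) = \tfrac{1}{2}(\alphaup + \stt(\alphaup))(H)$ for all $H \in \hr$, so the restriction $\alphaup|_{\hst^+}$ is identically zero exactly when $\stt(\alphaup) = -\alphaup$, i.e.\ when $\alphaup \in \Rd{\bullet}{\stt}$. Symmetrically, $\alphaup|_{\hst^-}$ is identically zero exactly when $\alphaup \in \Rd{\circ}{\stt}$. Since $\Rad$ is finite, I can then pick $H_+ \in \hst^+$ with $\alphaup(H_+) \neq 0$ for every $\alphaup \notin \Rd{\bullet}{\stt}$, and $H_- \in \hst^-$ with $\alphaup(H_-) \neq 0$ for every $\alphaup \in \Rd{\bullet}{\stt}$: each requires avoiding only finitely many hyperplanes on which the corresponding linear forms are non-trivial.

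With $H_t := H_+ + t\, H_-$ and the vanishing remarks above, I obtain $\alphaup(H_t) = \alphaup(H_+) \neq 0$ and $\stt(\alphaup)(H_t) = \alphaup(H_t)$ for $\alphaup \in \Rd{\circ}{\stt}$; $\alphaup(H_t) = t\,\alphaup(H_-) \neq 0$ and $\stt(\alphaup)(H_t) = -\alphaup(H_t)$ for $\alphaup \in \Rd{\bullet}{\stt}$; and, for $\alphaup \in \Rd{\star}{\stt}$,
\begin{equation*}
\alphaup(H_t) \cdot \stt(\alphaup)(H_t) \;=\; \alphaup(H_+)^2 - t^2\, \alphaup(H_-)^2 .
\end{equation*}
Since $\alphaup(H_+) \neq 0$ for every $\alphaup \in \Rd{\star}{\stt}$ and this set is finite, there exists a uniform $t_0 > 0$ such that this expression is strictly positive for all complex roots whenever $0 < t < t_0$. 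Any such $H_t$ is regular, and its Weyl chamber $C$ realizes \eqref{e2.39}.

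The only quantitative point in the argument is precisely this uniform smallness of $t$, which is the main (and essentially the only) obstacle; everything else is a straightforward unravelling of the eigenspace decomposition of $\stt^*$ together with the elementary fact that a finite union of proper hyperplanes does not cover a real vector space.
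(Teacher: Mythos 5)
Your argument is correct and is essentially the paper's own proof: the authors likewise take a regular element, split it as $tH^{\stt}_{+}+H^{\stt}_{-}$ with $H^{\stt}_{\pm}\in\hst^{\pm}$, and push the ratio of the two components to an extreme so that every complex root satisfies $\alphaup(H)\cdot\stt(\alphaup)(H)=|\alphaup(H^{\stt}_{+})|^{2}-|\alphaup(H^{\stt}_{-})|^{2}>0$, which is exactly your computation up to rescaling ($t$ large on the $+$ side versus $t$ small on the $-$ side). If anything, your explicit hyperplane-avoidance choice of $H_{+}$, guaranteeing $\alphaup(H_{+})\neq 0$ for every $\alphaup\notin\Rd{\bullet}{\stt}$, supplies a detail the paper leaves implicit, since for an arbitrary regular $H$ the $+$-component need not have this non-vanishing property.
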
 
\begin{proof} If $H\,{\in}\,\Reg(\hr)$, then (see \eqref{e2.7} for the definition of $H^{\stt}_{\pm}$) 
\begin{equation*}
 \alphaup(H)\,{\cdot}\,\stt(\alphaup)(H)=|\alphaup(H_{+}^{\stt})|^{2}-|\alphaup(H_{-}^{\stt})|^{2}.
\end{equation*}
Hence, for $t\,{>}\,(\sup_{\alpha\in\Rad}|\alphaup(H_{-}^{\stt})|/
(\min_{\alphaup\in\Rad{\setminus}\Rd{\bullet}{\stt}}|\alphaup(H)|)$, the Weyl chamber $C$ containing
$(t{H}_{+}^{\stt}{+}H_{-}^{\stt})$ satisfies \eqref{e2.39}.
 \end{proof}
We rember that a \emph{parabolic} set of roots is a closed subset of roots $\Rad'$ such that $\Rad' \cup (-\Rad') = \Rad$.  A
\emph{horocyclic} set is the complement of a parabolic set.  
 
\begin{prop}
For chambers $C$
 satisfying \eqref{e2.39}
\begin{equation*}
 \begin{array}{  l}
   \Rad^{+}(C){\setminus}\Rd{\bullet}{\stt} \;\;\text{is $\stt$-invariant and horocyclic},\\
   \Rad^{+}(C)\,{\cup}\,\Rd{\bullet}{\stt} \;\;\text{is $\stt$-invariant and parabolic}.
 \end{array}
\end{equation*}
\end{prop}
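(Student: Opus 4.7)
The plan is to reduce both claims to the single observation that the set $\Rad^{+}(C)\cup\Rd{\bullet}{\stt}$ can be described as a half-space cut out by a linear functional from $\hst^{+}$. Concretely, I would fix any $H\,{\in}\,C$ and set $H_{+}\,{=}\,\tfrac{1}{2}(H{+}\stt^{*}(H))\,{\in}\,\hst^{+}$, then evaluate $\alphaup(H_{+})$ on roots of each type: for $\alphaup\,{\in}\,\Rd{\bullet}{\stt}$ one has $\alphaup(H_{+})\,{=}\,\tfrac{1}{2}(\alphaup(H){-}\alphaup(H))\,{=}\,0$; for $\alphaup\,{\in}\,\Rd{\circ}{\stt}$ one has $\alphaup(H_{+})\,{=}\,\alphaup(H)$, whose sign is that of $\alphaup$ with respect to $C$; and for $\alphaup\,{\in}\,\Rd{\star}{\stt}$ one has $\alphaup(H_{+})\,{=}\,\tfrac{1}{2}(\alphaup(H){+}\stt(\alphaup)(H))$, where by \eqref{e2.39} both summands carry the same sign, giving a nonzero value of that sign.

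Collecting these three cases yields the clean identification
\begin{equation*}
\Rad^{+}(C)\cup\Rd{\bullet}{\stt}\,{=}\,\{\alphaup\,{\in}\,\Rad\,{\mid}\,\alphaup(H_{+})\,{\geq}\,0\},
\qquad
\Rad^{+}(C){\setminus}\Rd{\bullet}{\stt}\,{=}\,\{\alphaup\,{\in}\,\Rad\,{\mid}\,\alphaup(H_{+})\,{>}\,0\}.
\end{equation*}
The first set is manifestly closed (sums of roots with non-negative values have non-negative values) and, together with its image under the antipodal map $\att$, covers $\Rad$; hence it is parabolic. Its complement in $\Rad$ is $\{\alphaup\,{\mid}\,\alphaup(H_{+})\,{<}\,0\}\,{=}\,\Rad^{-}(C){\setminus}\Rd{\bullet}{\stt}$, which is the negative of $\Rad^{+}(C){\setminus}\Rd{\bullet}{\stt}$, so by definition the latter is horocyclic.

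For $\stt$-invariance, the key point is that $H_{+}\,{\in}\,\hst^{+}$, hence $\stt^{*}(H_{+})\,{=}\,H_{+}$, giving $\stt(\alphaup)(H_{+})\,{=}\,\alphaup(\stt^{*}(H_{+}))\,{=}\,\alphaup(H_{+})$. Therefore both half-space descriptions are automatically preserved by $\stt$. I expect no serious obstacle here: the only thing to verify with care is that the condition \eqref{e2.39} is exactly what is needed to make $\alphaup(H_{+})$ have a definite sign on complex roots, which is the reason $H_{+}$ (built from an arbitrary chamber point) avoids the hyperplanes of all non-imaginary roots. Once this is noted, parabolicity, horocyclicity and $\stt$-invariance follow simultaneously from the functional description.
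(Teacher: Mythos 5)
Your proof is correct and follows essentially the same route as the paper, whose entire argument consists of the two half-space identities $\Rad^{+}(C){\setminus}\Rad_{\,\;\bullet}^{\stt}\,{=}\,\{\alphaup\,{\mid}\,\alphaup(H_{+}^{\stt})\,{>}\,0\}$ and $\Rad^{+}(C)\,{\cup}\,\Rad_{\,\;\bullet}^{\stt}\,{=}\,\{\alphaup\,{\mid}\,\alphaup(H_{+}^{\stt})\,{\geq}\,0\}$ for $H\,{\in}\,C$; you merely make explicit the case analysis on real, imaginary and complex roots and the routine deductions of closedness, parabolicity and $\stt$-invariance that the paper leaves implicit.
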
 
\begin{proof} 
If $C$ satisfies \eqref{e2.39} and $H\,{\in}\,C$, then  
\begin{equation*}\vspace{-18pt}
 \Rad^{+}(C){\setminus}\Rad_{\,\;\bullet}^{\stt}\,{=}\,\{\alphaup\,{\in}\,\Rad\,{\mid}\,
 \alphaup(H_{+}^{\stt})\,{>}\,0\},\;\;
  \Rad^{+}(C)\,{\cup}\,\Rad_{\,\;\bullet}^{\stt}\,{=}\,\{\alphaup\,{\in}\,\Rad\,{\mid}\,
 \alphaup(H_{+}^{\stt})\,{\geq}\,0\}.
\end{equation*}
\end{proof}
\begin{defn}
 We call a 
 $C\,{\in}\,\Cd(\Rad)$ satisfying  
 \eqref{e2.39}  an \emph{$S\!$-chamber for $\stt$}.\par
Denote by $\Cd_{\stt}(\Rad)$ the family of $S$-chambers for~$\stt$. \index{$\Cd_{\stt}(\Rad)$}
\end{defn} 
\begin{prop}\label{p1.5} Let $\stt\,{\in}\,\Ib(\Rad)$ and 
$C\,{\in}\,\Cd_{\stt}(\Rad)$. Then 
\begin{equation}\label{e2.29}
 \forall\,\alphaup\,{\in}\,\Bz_{\star}^{\stt}(C)\;\exists ! \alphaup'\,{\in}\,\Bz(C)\;\;\text{such that}\;\;
 \stt(\alphaup)-\alphaup'\,{\in}\,\Z[\Bz_{\bullet}^{\stt}(C)],
\end{equation}

and $\alphaup''\,{=}\,\stt(\alphaup)\,{-}\,\alphaup'$ is the unique element of maximal $\kq_{\; C}$-degree in 
$\Z^{+}[\Bz_{\bullet}^{\stt}(C)]$ for which $\alphaup\,{+}\,\alphaup''$ is still a root. 
\end{prop}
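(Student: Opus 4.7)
The plan is to expand $\stt(\alphaup)$ on the simple basis $\Bz(C)$ and then bootstrap using $\stt^{2}\,{=}\,\id$. Since $C$ is an $S$-chamber and $\alphaup\,{\in}\,\Bz_{\star}^{\stt}(C)\,{\subset}\,\Rad^{+}(C)$, we have $\stt(\alphaup)\,{\in}\,\Rad^{+}(C)$, so we may write $\stt(\alphaup)\,{=}\,\sum_{\betaup\in\Bz(C)}k_{\betaup}\betaup$ with every $k_{\betaup}\,{\geq}\,0$. Applying $\stt$ once more and using that $\stt$ fixes $\Bz_{\circ}^{\stt}(C)$, reverses the sign on $\Bz_{\bullet}^{\stt}(C)$, and sends $\Bz_{\star}^{\stt}(C)$ into $\Rad^{+}(C)$ (again by the $S$-chamber property), I equate coefficients in $\alphaup\,{=}\,\sum k_{\betaup}\,\stt(\betaup)$ against the trivial expansion $\alphaup\,{=}\,1{\cdot}\alphaup$. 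Writing $[\gammaup]\xiup$ for the coefficient of $\gammaup$ in $\xiup$, I get
\begin{equation*}
 k_{\gammaup}\,{+}\,{\sum}_{\betaup\in\Bz_{\star}^{\stt}(C)}k_{\betaup}\,[\gammaup]\stt(\betaup)\,{=}\,0\quad\forall\gammaup\,{\in}\,\Bz_{\circ}^{\stt}(C),\qquad
 {\sum}_{\betaup\in\Bz_{\star}^{\stt}(C)}k_{\betaup}\,[\gammaup]\stt(\betaup)\,{=}\,\delta_{\gammaup,\alphaup}\quad\forall\gammaup\,{\in}\,\Bz_{\star}^{\stt}(C).
\end{equation*}
The first relation forces $k_{\gammaup}\,{=}\,0$ on $\Bz_{\circ}^{\stt}(C)$ and annihilates every $[\gammaup]\stt(\betaup)$ with $k_{\betaup}\,{>}\,0$ and $\gammaup\,{\in}\,\Bz_{\circ}^{\stt}(C)$; the second, being a sum of nonnegative integers equal to a Kronecker delta, singles out a unique $\betaup\,{\in}\,\Bz_{\star}^{\stt}(C)$ with $k_{\betaup}\,{\neq}\,0$. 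Call it $\alphaup'$: it satisfies $k_{\alphaup'}\,{=}\,1$ and $[\alphaup]\stt(\alphaup')\,{=}\,1$. Setting $\alphaup''\,{=}\,\stt(\alphaup)\,{-}\,\alphaup'\,{\in}\,\Z^{+}[\Bz_{\bullet}^{\stt}(C)]$ proves \eqref{e2.29} together with the uniqueness of~$\alphaup'$.

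For the second assertion, applying $\stt$ to $\stt(\alphaup)\,{=}\,\alphaup'\,{+}\,\alphaup''$ and using $\stt(\alphaup'')\,{=}\,{-}\alphaup''$ gives $\stt(\alphaup')\,{=}\,\alphaup\,{+}\,\alphaup''$, so in particular $\alphaup\,{+}\,\alphaup''\,{\in}\,\Rad$. Now take any $\sigma\,{\in}\,\Z^{+}[\Bz_{\bullet}^{\stt}(C)]$ with $\alphaup\,{+}\,\sigma\,{\in}\,\Rad$ and apply $\stt$:
\begin{equation*}
 \stt(\alphaup\,{+}\,\sigma)\,{=}\,\alphaup'\,{+}\,(\alphaup''\,{-}\,\sigma)\,{\in}\,\Rad.
\end{equation*}
Since $\alphaup'\,{\in}\,\Bz(C)$ occurs with coefficient $1$ in this root and $\alphaup''\,{-}\,\sigma\,{\in}\,\Z[\Bz_{\bullet}^{\stt}(C)]$, the root is positive, so every $\Bz(C)$-coefficient is nonnegative; in particular $\alphaup''\,{-}\,\sigma\,{\in}\,\Z^{+}[\Bz_{\bullet}^{\stt}(C)]$. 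Hence $\sigma\,{\leq}\,\alphaup''$ coordinatewise, giving $\kq_{C}(\sigma)\,{\leq}\,\kq_{C}(\alphaup'')$ with equality if and only if $\sigma\,{=}\,\alphaup''$.

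The main obstacle is the first step: one has to juggle the $\stt$-action on the three families $\Bz_{\circ}^{\stt}(C)$, $\Bz_{\bullet}^{\stt}(C)$, $\Bz_{\star}^{\stt}(C)$ simultaneously in order to extract from $\stt^{2}\,{=}\,\id$ the fact that exactly one complex simple root can appear in $\stt(\alphaup)$. Once this is established, the maximality statement falls out from the clean observation that $\stt$ sends any witness $\alphaup\,{+}\,\sigma$ to a positive root starting with the simple root~$\alphaup'$, which automatically bounds $\sigma$ above by~$\alphaup''$.
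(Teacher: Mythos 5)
Your argument runs, at bottom, on the same fuel as the paper's proof: the $S$-chamber condition makes all $\Bz(C)$-coefficients of $\stt(\alphaup)$ nonnegative, and $\stt^{2}\,{=}\,\id$ then pins them down. The paper packages this as the additive function $\kq^{\stt,+}_{\;C}$ together with the identity $\kq^{+}_{\;C}(\stt(\alphaup))\,{=}\,\kq^{+}_{\;C}(\alphaup)$; your coefficient-matching is a reorganization of the same computation. Your treatment of the maximality claim is, if anything, cleaner than the paper's: you obtain $\sigma\,{\leq}\,\alphaup''$ coordinatewise for \emph{every} admissible $\sigma$, whereas the paper starts from a maximal one and shows the defect $\betaup'$ vanishes. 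The second half of your proof is correct as written.

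There is, however, one step in the first half that does not follow from what you wrote. From $\sum_{\betaup\in\Bz_{\star}^{\stt}(C)}k_{\betaup}\,[\gammaup]\stt(\betaup)\,{=}\,\delta_{\gammaup,\alphaup}$ for all $\gammaup\,{\in}\,\Bz_{\star}^{\stt}(C)$ you conclude that a \emph{unique} $\betaup\,{\in}\,\Bz_{\star}^{\stt}(C)$ has $k_{\betaup}\,{\neq}\,0$. As a statement about nonnegative integers this is not valid: a second simple root $\betaup_{1}$ with $k_{\betaup_{1}}\,{>}\,0$ could contribute $0$ to every one of these sums, namely if $[\gammaup]\stt(\betaup_{1})\,{=}\,0$ for all $\gammaup\,{\in}\,\Bz_{\star}^{\stt}(C)$. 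You must rule this out, and it can be done in one line with tools you already have in hand: for such a $\betaup_{1}$, your first relation (the one over $\gammaup\,{\in}\,\Bz_{\circ}^{\stt}(C)$) already forces $[\gammaup]\stt(\betaup_{1})\,{=}\,0$ for every real simple $\gammaup$, so $\stt(\betaup_{1})$ would lie in $\Z^{+}[\Bz_{\bullet}^{\stt}(C)]$, hence in the linear span of $\Rd{\bullet}{\stt}$, hence would be an imaginary root; since $\Rd{\star}{\stt}$ is $\stt$-invariant (equivalently, applying $\stt$ once more would make $\betaup_{1}$ itself imaginary), this contradicts $\betaup_{1}\,{\in}\,\Bz_{\star}^{\stt}(C)$. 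This is exactly the fact --- a positive non-imaginary root has a nonzero coefficient on at least one simple root outside $\Bz_{\bullet}^{\stt}(C)$ --- on which the paper's inequality $\kq^{+}_{\;C}(\stt(\alphaup))\,{\geq}\,1$ also rests. With that line inserted the proof is complete.
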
 
\begin{proof} For $\alphaup\,{=}\,{\sum}_{\betaup\,{\in}\,\Bz(C)}k_{\alphaup,\betaup}\betaup$, we set
$\kq^{\stt,+}_{\; C}(\alphaup)\,{=}\,{\sum}_{\betaup\in\Bz^{\stt}_{\circ}\cup\Bz_{\star}^{\stt}}k_{\alphaup,\betaup}$.
For every fixed $\betaup$ in $\Bz(C)$, the coefficient $k_{\alphaup,\betaup}$ is an additive function of $\alphaup$. 
Hence  also \begin{equation*}
\kq^{\stt,+}_{\; C}(\alphaup)\,{=}\,{\sum}_{\betaup\in\Bz_{\circ}^{\stt}\cup
\Bz_{\star}^{\stt}}k_{\alphaup,\betaup}\end{equation*}
is an additive function of $\alphaup$.
If $C$ is an $S$-chamber, then
$\kq^+_{\;C}(\stt(\alphaup)){\geq}1$ for  $\alphaup\,{\in}\,\Rad^{+}(C){\setminus}\Rd{\bullet}{\stt}$ 
and this implies that
 $\kq^+_{\;C}(\stt(\alphaup))\,{\geq}\,\kq^+_{\;C}(\alphaup)$ 
for all $\alphaup\,{\in}\,\Rad^{+}(C){\setminus}\Rad_{\,\;\bullet}^{\stt}$.
Since $\stt^{2}{=}\id,$  
also the opposite inequality holds and thus
\begin{equation}\label{eq1.21}
 \kq^+_{\;C}(\stt(\alphaup))=\kq^+_{\;C}(\alphaup),\;\;\forall \alphaup\in\Rad^{+}(C)\setminus\Rad_{\,\;\bullet}^{\stt}.
\end{equation}
In particular, for an $\alphaup\,{\in}\,\Bz_{\star}^{\stt}(C)$, in the linear combination 
\begin{equation*}
 \stt(\alphaup)={\sum}_{\betaup\in\Bz(C)}k_{\stt(\alphaup),\betaup}\betaup
 \end{equation*}
 there is only one $\alphaup'\,{\in}\,\Bz(C){\setminus}\Rad_{\,\;\bullet}^{\stt}$ with 
$k_{\stt(\alphaup),\alphaup'}\,{=}\,1,$
while all other $\betaup\,{\neq}\,\alphaup'$ in $\supp_{C}(\stt\alphaup))$
are imaginary for $\stt$.  
Then $\stt(\alphaup)\,{=}\,\alphaup'{+}\,\alphaup''$ with $\alphaup''\,{\in}\,\Z[\Bz_{\bullet}^{\stt}(C)]$.\par
Take now $\betaup$ with $\kq_{\;C}(\betaup)$ maximal among the $\betaup\,{\in}\,\Z[\Bz_{\bullet}^{\stt}(C)]$
with $\alphaup\,{+}\,\betaup\,{\in}\,\Rad.$ 
We have $k_{\alphaup{+}\betaup,\alphaup}{=}1$ and therefore $\alphaup\,{+}\,\betaup\,{\in}\,\Rad^{+}(C)$.
\par
Since $C\,{\in}\,\Cd_{\stt}(\Rad)$
and $\kq^{+}_{\;C}(\alphaup\,{+}\,\betaup)\,{=}\,\kq^{+}_{\;C}(\alphaup)\,{=}\,1$,
we obtain, by the first part of the proof, 
\begin{equation*}
 \stt(\alphaup\,{+}\,\betaup)=\alphaup'\,{+}\,\betaup',\;\;\;\text{with\; $\betaup'\,{\in}\,\Z^{+}[\Bz_{\bullet}^{\stt}(C)]$.}
\end{equation*}
By applying $\stt$ to both sides of the equality we get 
\begin{equation*}
 \alphaup\,{+}\,\betaup\,{+}\,\betaup'=\stt(\alphaup')\in\Rad.
\end{equation*}
Thus $\betaup'{=}\,0$ by the maximality assumption, and $\betaup\,{=}\,\alphaup''.$ 
\end{proof}
\begin{prop} \label{p2.32}
Let $\stt\,{\in}\,\Ib(\Rad)$. 
If $C\,{\in}\,\Cd_{\stt}(\Rad)$, then $\Bz_{\bullet}^{\stt}(C)$ is a basis of simple roots for  $\Rad_{\,\;\bullet}^{\stt}$
and for any $C'\,{\in}\,\Cd(\Rad_{\,\;\bullet}^{\stt})$ 
one can find a $C\,{\in}\,\Cd(\Rad)$ for which
$\Bz_{\bullet}^{\stt}(C)\,{=}\,\Bz(C)\,{\cap}\,\Rad_{\,\;\bullet}^{\stt}{=}\,\Bz(C')$.
\end{prop}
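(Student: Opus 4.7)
The argument has two parts, and the main leverage comes from combining the $S$-chamber condition \eqref{e2.39} with the commutation lemma: reflections $\sq_\beta$ with $\beta \in \Rad_\bullet^\stt$ commute with $\stt$.

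\emph{Step 1: $\Bz_\bullet^\stt(C)$ is a basis of simple roots for $\Rad_\bullet^\stt$.} I will show that every $\alpha \in \Rad^+(C) \cap \Rad_\bullet^\stt$ expands in $\Bz(C)$ using only elements of $\Bz_\bullet^\stt(C)$. Write $\alpha = \sum_{\beta \in \Bz(C)} k_{\alpha,\beta}\beta$ with $k_{\alpha,\beta} \geq 0$. Applying $\stt$ yields $-\alpha$, all of whose coefficients in $\Bz(C)$ are non-positive. If some $\beta_\circ \in \Bz_\circ^\stt(C)$ appeared with $k_{\alpha,\beta_\circ}>0$, then $\stt(\beta_\circ)=\beta_\circ$ would contribute the positive coefficient $k_{\alpha,\beta_\circ}$ on $\beta_\circ$ to $\stt(\alpha)$, a contradiction. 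If instead some $\beta_\star \in \Bz_\star^\stt(C) = \Bz_\oplus^\stt(C)$ (this last equality because $C$ is an $S$-chamber) appeared with positive coefficient, then Proposition \ref{p1.5} gives $\stt(\beta_\star) = \alpha' + \alpha''$ with $\alpha' \in \Bz(C)\setminus\Rad_\bullet^\stt$ of coefficient $1$, again producing a positive coefficient in $\stt(\alpha)$. Hence $\alpha \in \Z^+[\Bz_\bullet^\stt(C)]$. Linear independence of $\Bz_\bullet^\stt(C) \subseteq \Bz(C)$ is automatic, and indecomposability of its elements in the sub-positive-system $\Rad^+(C)\cap\Rad_\bullet^\stt$ follows by comparing linear expansions, so $\Bz_\bullet^\stt(C)$ is a basis of simple roots for $\Rad_\bullet^\stt$.

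\emph{Step 2: realizing an arbitrary $C' \in \Cd(\Rad_\bullet^\stt)$.} By Theorem \ref{t3.10} some $S$-chamber $C_0$ for $\stt$ exists, and by Step 1, $\Bz_\bullet^\stt(C_0)$ is a basis of simple roots for $\Rad_\bullet^\stt$. Since $\Wf(\Rad_\bullet^\stt)$ acts simply transitively on $\Cd(\Rad_\bullet^\stt)$, there is a unique $w \in \Wf(\Rad_\bullet^\stt)$ with $w(\Bz_\bullet^\stt(C_0)) = \Bz(C')$. Set $C := w(C_0)\in\Cd(\Rad)$. Because $w$ is a product of reflections in roots of $\Rad_\bullet^\stt$, each commuting with $\stt$, we have $w\circ\stt = \stt\circ w$. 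Consequently $C$ is still an $S$-chamber for $\stt$: for any complex $\alpha$, $\alpha\in\Rad^+(C)$ and $\stt(\alpha)\in\Rad^+(C)$ correspond to $w^{-1}(\alpha)\in\Rad^+(C_0)$ and $w^{-1}(\stt(\alpha)) = \stt(w^{-1}(\alpha))\in\Rad^+(C_0)$, and condition \eqref{e2.39} transfers from $C_0$ to $C$. Moreover $w$ permutes $\Rad_\bullet^\stt$, so
\begin{equation*}
\Bz(C) \cap \Rad_\bullet^\stt = w(\Bz(C_0)) \cap w(\Rad_\bullet^\stt) = w\bigl(\Bz(C_0)\cap\Rad_\bullet^\stt\bigr) = w(\Bz_\bullet^\stt(C_0)) = \Bz(C'),
\end{equation*}
which gives the required chain of equalities.

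\emph{Main obstacle.} The delicate step is the coefficient analysis in Step 1, where one must simultaneously exclude contributions from $\Bz_\circ^\stt(C)$ (handled by $\stt$-fixedness) and from $\Bz_\star^\stt(C)$ (handled via Proposition \ref{p1.5}, which requires the $S$-chamber hypothesis). Once this is in place, Step 2 is essentially formal, resting on the commutation of $\stt$ with reflections in imaginary roots together with simple transitivity of the Weyl group on chambers.
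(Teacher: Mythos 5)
Your proof is correct and follows essentially the same route as the paper: Step 1 is the coefficient/support argument resting on the $S$-chamber condition and Proposition \ref{p1.5}, and Step 2 uses transitivity of $\Wf(\Rad_{\,\;\bullet}^{\stt})$ together with the fact that reflections in imaginary roots preserve the family of $S$-chambers. The only (immaterial) difference is that you verify this last stability via the commutation $w\circ\stt=\stt\circ w$ and direct transfer of condition \eqref{e2.39}, whereas the paper checks it through the characterization $|\alphaup(H_{+}^{\stt})|>|\alphaup(H_{-}^{\stt})|$.
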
 
\begin{proof}
Let $C$ be an $S$-chamber for $\stt$.  Every root $\alphaup$ 
 is a linear combination \eqref{e1.15},
with integral coefficients, 
 of elements of $\Bz(C)$. By Prop.\ref{p1.5} the root $\alphaup$ belongs to $\Rad_{\,\;\bullet}^{\stt}$
 if and only if $k_{\alphaup,\betaup}\,{=}\,0$ for all $\betaup\,{\notin}\,\Bz_{\bullet}^{\stt}(C).$ 
 Thus every root in $\Rad_{\,\;\bullet}^{\stt}$ is a linear combination with integral coefficients 
 of roots in $\Bz_{\bullet}^{\stt}(C)$ and this shows that this is a basis of simple roots for $\Rad_{\,\;\bullet}^{\stt}$.
 \par
 If $\betaup\,{\in}\,\Rad_{\,\;\bullet}^{\stt},$ then $\sq_{\,\betaup}(\Rd{\bullet}{\stt})\,{=}\,\Rd{\bullet}{\stt}$
 and hence, for $H\,{\in}\,C,$ 
\begin{equation*}
 \big| \alphaup(\sq_{\,\betaup}^{*}(H^{{\stt}}_{+}))\big|=\big|\sq_{\,\betaup}(\alphaup)(H^{{\stt}}_{+})\big|
 >\big| \sq_{\,\betaup}(\alphaup)(H^{{\stt}}_{-})\big|=\big| \alphaup(\sq_{\,\betaup}^{*}((H^{{\stt}}_{-}))\big|\;\;
 \forall\alphaup\,{\in}\,\Rad{\setminus}\Rad_{\,\;\bullet}^{\stt},
\end{equation*}
where the previous inequalities trivially holds when $\alphaup$ is real,  while when $\alphaup$ is complex we have  $\alphaup(H)\,{\cdot}\,\stt(\alphaup)(H)=|\alphaup(H_{+}^{\stt})|^{2}-|\alphaup(H_{-}^{\stt})|^{2}{>}0\,$ since $C{\in}\Cd_{\stt}(\Rad)$.
Since $\betaup(H^{{\stt}}_{+})\,{=}\,0$, we have $\big(\sq_{\,\betaup}^{*}(H)\big)^{\stt}_{-}\,{=}\,\sq_{\,\betaup}^{*}(H^{\stt}_{-})$
and therefore, by \eqref{e2.39} this shows that $\sq_{\,\betaup}^{*}(\Cd_{\stt}(\Rad))\,{=}\,\Cd_{\stt}(\Rad)$.

 \par
 Since restrictions of reflections with respect to imaginary roots generate 
the Weyl group of $\Rad_{\,\;\bullet}^{\stt},$ which is transitive on the basis of simple roots for 
$\Rad_{\,\;\bullet}^{\stt},$ 
this yields also the last statement of the Proposition. 
\end{proof} 
\begin{rmk}[$V$-chambers] If $\stt\,{\in}\,\Ib(\Rad)$, then also $\check{\stt}\,{=}\,\att\,{\circ}\,\stt$, defined by 
\begin{equation*}
 \check{\stt}(\alphaup)={-}\stt(\alphaup),\;\;\forall \alphaup\,{\in}\hr^{*}
\end{equation*}
 defines an element of $\Ib(\Rad),$ with 
\begin{equation*}
 \Rad_{\,\;\circ}^{\check{\stt}}=\Rad_{\,\;\bullet}^{\stt},\;\; \;
  \Rad_{\,\;\bullet}^{\check{\stt}}=\Rad_{\,\;\circ}^{\stt},\;\;\;
   \Rad_{\,\;\star}^{\check{\stt}}=\Rad_{\,\;\star}^{\stt}.
\end{equation*}
\par An $S$-chamber for $\check{\stt}$ has the property that 
\begin{equation}\label{e2.42}
\Bz_{\star}^{\stt}(C)=\Bz_{\ominus}^{\stt}(C).
\end{equation}
\par
A Weyl chamber $C$ satisfying \eqref{e2.42} is called a \emph{$V$-chamber for~$\stt$}
(cf.~\cite{AMN06b}).
\end{rmk}

\begin{prop} \label{p2.34}
If $\stt\,{\in}\,\Ib(\Rad)$, then we can find an $S$-chamber $C$ for $\stt$ and a
decomposition $\stt\,{=}\,\epi\,{\circ}\,\sq_{\,\betaup_{1}}\,{\circ}\,\cdots\,{\circ}\,\sq_{\,\betaup_{r}}$,
with $\epi\,{\in}\,\Ib^{*}(\Rad)$ and 
strongly orthogonal
$\betaup_{1},\hdots,\betaup_{r}$ in $\Rd{\circ}{\epi}\,{\cap}\,\Rd{\bullet}{\stt}$, 
such that $C$ is also an $S$-chamber for $\epi$.
 \end{prop}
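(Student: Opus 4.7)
The plan is to start from Proposition~\ref{p2.7}, which already supplies a decomposition $\stt = \epi \circ \sq_{\betaup_1} \circ \cdots \circ \sq_{\betaup_r}$ with $\epi \in \Ib^{*}(\Rad)$ and $\betaup_1,\ldots,\betaup_r$ strongly orthogonal roots in $\Rd{\circ}{\epi}\cap\Rd{\bullet}{\stt}$. The remaining task is to exhibit a single Weyl chamber $C$ satisfying \eqref{e2.39} both for $\stt$ and for $\epi$.

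The structural input is that, by the commutation lemma at the start of Section~\ref{s2.3}, each $\sq_{\betaup_i}$ commutes with $\stt$ (because $\betaup_i\in\Rd{\bullet}{\stt}$) and with $\epi$ (because $\betaup_i\in\Rd{\circ}{\epi}$). From this I extract two facts. First, $\epi^{*}$ acts as the identity on $\hst^{+}$ and preserves $\hst^{-}$: this holds because $\betaup_i$ vanishes on $\hst^{+}$, so $\sq^{*}_{\betaup_i}|_{\hst^{+}}=\id$, and $\htt_{\betaup_i}\in\hst^{-}$, since $\stt(\betaup_i)=-\betaup_i$ dualises to $\stt^{*}(\htt_{\betaup_i})=-\htt_{\betaup_i}$. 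Second, $\epi$ restricts to an involution of the sub-root-system $\Rd{\bullet}{\stt}$: for $\alphaup\in\Rd{\bullet}{\stt}$, $\stt(\epi(\alphaup))=\epi(\stt(\alphaup))=-\epi(\alphaup)$.

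Using these, I apply Theorem~\ref{thmSch} to $\Rd{\bullet}{\stt}$ inside $(\hst^{-})^{*}$, with the involution $\epi|_{\Rd{\bullet}{\stt}}$, to produce $H_{0}\in\hst^{-}$ which is regular for $\Rd{\bullet}{\stt}$ and satisfies $\alphaup(H_{0})\cdot\epi(\alphaup)(H_{0})>0$ for every $\epi$-complex $\alphaup\in\Rd{\bullet}{\stt}$. I then pick $H^{+}\in\hst^{+}$ generic, so that $\alphaup(H^{+})\neq 0$ whenever $\alphaup\notin\Rd{\bullet}{\stt}$, and set $K=tH^{+}+H_{0}$ for $t$ large enough; this $K$ is regular in $\hr$. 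Let $C$ be the Weyl chamber containing $K$. The proposition will then follow from verifying \eqref{e2.39} for both involutions on $C$.

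The verification is short but is the heart of the argument. For $\stt$, the relation $\stt^{*}(K)=tH^{+}-H_{0}$ gives $\alphaup(K)\cdot\stt(\alphaup)(K) = t^{2}\alphaup(H^{+})^{2}-\alphaup(H_{0})^{2}$, which is positive for large $t$ whenever $\alphaup\notin\Rd{\bullet}{\stt}$, in particular on $\Rd{\star}{\stt}$. For $\epi$, using $\epi^{*}(H^{+})=H^{+}$ and $\epi^{*}(H_{0})\in\hst^{-}$, the product $\alphaup(K)\cdot\epi(\alphaup)(K)$ factors as $[t\alphaup(H^{+})+\alphaup(H_{0})]\cdot[t\alphaup(H^{+})+\alphaup(\epi^{*}(H_{0}))]$. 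When $\alphaup(H^{+})\neq 0$ this is positive for large $t$; the main obstacle is the remaining case $\alphaup\in\Rd{\bullet}{\stt}$, where the expression collapses to $\alphaup(H_{0})\cdot\epi(\alphaup)(H_{0})$, and this is where the careful choice of $H_{0}$ via the restricted $S$-chamber (rather than an arbitrary regular element of $\hst^{-}$) is essential.
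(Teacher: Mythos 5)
Your proof is correct, but it takes a genuinely different route from the paper's. The paper also starts from the decomposition supplied by Proposition~\ref{p2.7}, but then finds the chamber by an extremal argument: it picks $C\,{\in}\,\Cd_{\stt}(\Rad)$ maximizing $\#(\Rad^{+}(C)\,{\cap}\,\epi(\Rad^{+}(C)))$ and shows that if some $\betaup\,{\in}\,\Bz(C)$ had $\epi(\betaup)\,{\notin}\,\Rad^{+}(C)$, then $\betaup$ would have to lie in $\Bz_{\bullet}^{\stt}(C)$ (otherwise $\sq_{\,\betaup_{1}}{\circ}\cdots{\circ}\sq_{\,\betaup_{r}}(\betaup)$ is a positive root outside $\Rd{\bullet}{\stt}$ whose image under $\stt$ is $\epi(\betaup)$, forcing $\epi(\betaup)\,{\in}\,\Rad^{+}(C)$); reflecting in such a $\betaup$ then yields another $S$-chamber for $\stt$ with a strictly larger count, a contradiction. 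You instead build the chamber explicitly as the one containing $K\,{=}\,tH^{+}{+}H_{0}$, reducing the problem to Theorem~\ref{thmSch} applied to the subsystem $\Rd{\bullet}{\stt}$ with the restricted involution $\epi|_{\Rd{\bullet}{\stt}}$; this is legitimate because each $\sq_{\,\betaup_{i}}$ commutes with both $\stt$ and $\epi$, so $\epi$ and $\stt$ commute, $\epi$ preserves $\Rd{\bullet}{\stt}$, and $\epi^{*}$ fixes $\hst^{+}$ pointwise. Your three sign computations are exactly what is needed and are correct; in a written-up version you should only add that a suitable $H^{+}$ exists because $\alphaup|_{\hst^{+}}$ coincides with $\tfrac{1}{2}(\alphaup{+}\stt(\alphaup))|_{\hst^{+}}$ and is therefore nonzero for $\alphaup\,{\notin}\,\Rd{\bullet}{\stt}$, and that one value of $t$ works uniformly since $\Rad$ is finite. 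The paper's route is shorter; yours is constructive, makes the common $S$-chamber explicit, and shows as a by-product that its trace on $\Rd{\bullet}{\stt}$ can be prescribed to be any $S$-chamber of $(\Rd{\bullet}{\stt},\epi|_{\Rd{\bullet}{\stt}})$, in the spirit of Proposition~\ref{p2.32}.
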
 
\begin{proof} Take a decomposition \eqref{e2.8} and 
$C\,{\in}\,\Cd_{\stt}(\Rad)$ with $\#(\Rad^{+}(C)\,{\cap}\,\epi(\Rad^{+}(C)))$ maximal. 
Assume by contradiction that $C$ is not an $S$-chamber for $\epi$. Then there is a $\betaup\,{\in}\,\Bz(C)$
such that $\epi(\betaup)\,{\notin}\,\Rad^{+}(C)$. We claim that
$\betaup\,{\in}\,\Bz_{\bullet}^{\stt}(C)$, because otherwise
$\betaup'\,{=}\,\sq_{\,\betaup_{1}}\,{\circ}\,\cdots\,{\circ}\,\sq_{\,\betaup_{r}}(\betaup)
\,{=}\,\betaup{-}{\sum}_{i=1}^{r}\langle\betaup|\betaup_{i}\rangle\betaup_{i}$ would be an element of
$\Rad^{+}(C)$ and $\stt(\betaup')\,{=}\,\epi(\betaup)\,{\in}\,\Rad^{+}(C)$. Being $\betaup\,{\in}\,\Bz_{\bullet}^{\stt}(C)$,
we have \begin{equation*}
\sq_{\,\betaup}(\Rad^{+}(C))\,{=}\,(\Rad^{+}(C){\setminus}\{\betaup\})\,{\cup}\,\{{-}\betaup\}\,{=}\,
\Rad^{+}(C')\end{equation*}
 for a $C'\,{\in}\,\Cd_{\stt}(\Rad)$ with $\epi(\Rad^{+}(C'))\,{=}\,\epi(\Rad^{+}(C))\,{\cup}\,\{{-}\betaup\}$,
 contradicting maximality.
 The proof is complete.
\end{proof}

Involutions of $\Rad$ can be described by using 
generalized  Satake  diagrams (see e.g. \cite{Ara62, sa60}),
which we call \mbox{\emph{$S\!${-}diagrams}},  where the Cartan subalgebra is not the maximally non compact
one.  \par
These  are obtained from the Dynkin diagram
relative to a $C\,{\in}\,\Cd_{\stt}(\Rad)$ by: 
\begin{itemize}
 \item indicating by ``$\medbullet$'' a node corresponding to a root in $\Bz_{\bullet}^{\stt}(C)$;
 \item if $\alphaup\,{\neq}\,\alphaup'\,{\in}\,\Bz_{\star}^{\stt}(C)$ and 
 $\alphaup'{-}\stt(\alphaup)\,{\in}\,\Z[\Rad_{\,\;\bullet}^{\stt}]$, then joining  the nodes corresponding
 to $\alphaup$ and $\alphaup'$ by a double arrow. 
 \end{itemize}
Nodes  not corresponding to roots in $\Bz_{\bullet}^{\stt}(C)$ are indicated by ``$\medcirc$''.
Those from $\Bz_{\star}^{\stt}(C)$ are either adjacent to one from $\Bz_{\bullet}^{\stt}(C)$ or 
connected to some other node by a double arrow, or both. \par
We can avoid to distinguish between roots in $\Bz_{\circ}^{\stt}(C)$ and in
$\Bz_{\star}^{\stt}(C)\,{=}\,\Bz_{\oplus}^{\stt}(C)$, as the latter are the ``white'' ones which
are adjacent to a ``black'' one or are joined to another ``white'' by a double arrow.\par
In \,Table\,$\textsc{II}$  we described typical involutions in such a way that the standard
Weyl chamber  is an $S\!$-chamber.\par 
The involutions are represented, on the subdiagrams consisting of the node of a complex root
and those which are either connected to it by an arrow or composed by adjacent \textit{black  nodes}, by
the following rules, some of which 
show extra features with respect to those appearing
in Satake's diagrams. We note that the black nodes and the lines connecting them form
the diagram of a \textit{compact} Lie algebra. 
\smallskip
\begin{equation*}\begin{aligned}
 & \xymatrix @M=0pt @R=2pt @!C=20pt{
\medcirc \ar@/^8pt/@{<->}[r] & \medcirc\\
\alphaup_1&\alphaup_2
} \qquad  \xymatrix @M=0pt @R=2pt @!C=20pt{
\medcirc \ar@/^8pt/@{<->}[r]\ar@{-}[r] & \medcirc\\
\alphaup_1&\alphaup_2
}\qquad && \stt(\alphaup_{1})=\alphaup_{2}
\end{aligned}
\end{equation*}
\begin{equation*}
\begin{aligned}
&  \xymatrix @M=0pt @R=2pt @!C=20pt{
\medcirc \ar@{-}[r] & \medbullet\\
\alphaup_1&\alphaup_2
}\qquad   \xymatrix @M=0pt @R=2pt @!C=20pt{
\medbullet \ar@{-}[r] & \medcirc\\
\alphaup_2&\alphaup_1
} && \stt(\alphaup_{1})=\alphaup_{1}{+}\alphaup_{2}
\end{aligned}
\end{equation*}

\begin{equation*}
\begin{aligned}
&  \xymatrix @M=0pt @R=2pt @!C=20pt{
\medbullet \ar@{-}[r] &\medcirc\ar@{-}[r]& \medbullet\\
\alphaup_1&\alphaup_2&\alphaup_{3}
} && \stt(\alphaup_{2})=\alphaup_{1}{+}\alphaup_{2}{+}\alphaup_{3}
\\[14pt]
&  \xymatrix @M=0pt @R=2pt @!C=20pt{
\medcirc \ar@/^14pt/@{<->}[rrrr] \ar@{-}[r]
&\medbullet\ar@{--}[rr]
&
&\medbullet\ar@{-}[r]
&\medcirc\\
\alphaup_1&\alphaup_2&\cdots&\alphaup_{\ell-1}&\alphaup_{\ell}
} && 
\begin{cases}
 \stt(\alphaup_{1})=\alphaup_{2}{+}\cdots{+}\alphaup_{\ell}\\
 \stt(\alphaup_{\ell})=\alphaup_{1}{+}\cdots{+}\alphaup_{\ell-1}
\end{cases}
\end{aligned}
\end{equation*}
\begin{equation*}
\begin{aligned}
&  \xymatrix @M=0pt @R=2pt @!C=20pt{
\medcirc  \ar@{-}[r]
&\medbullet\ar@{--}[rr]
&
&\medbullet\ar@{=>}[r]
&\medbullet\\
\alphaup_1&\alphaup_2&\cdots&\alphaup_{\ell-1}&\alphaup_{\ell}
} && \stt(\alphaup_{1})=\alphaup_{1}{+}2(\alphaup_{2}{+}\cdots{+}\alphaup_{\ell})
\\[8pt]
&  \xymatrix @M=0pt @R=2pt @!C=20pt{
\medcirc  \ar@{-}[r]
&\medbullet\ar@{--}[rr]
&
&\medbullet\ar@{<=}[r]
&\medbullet\\
\alphaup_1&\alphaup_2&\cdots&\alphaup_{\ell-1}&\alphaup_{\ell}
} && \stt(\alphaup_{1})=\alphaup_{1}{+}2(\alphaup_{2}{+}\cdots{+}\alphaup_{\ell-1}){+}\alphaup_{\ell}
\\[-14pt]
&  \xymatrix @M=0pt @R=4pt @!C=20pt{
&&&\alphaup_{\ell-2}\\
\medcirc  \ar@{-}[r]
&\medbullet\ar@{--}[rr]
&
&\medbullet\ar@{-}[r]\ar@{-}[dd]
&\medbullet\\
\alphaup_1&\alphaup_2&\cdots&&\alphaup_{\ell-1}\\
&&\qquad\alphaup_{\ell}\!\!\!\!\!\!\!\!\!\!\!\!\!\!\!\!\!\!&\medbullet}
 && \begin{aligned}\quad
 \\
 \quad \\
 \stt(\alphaup_{1})=\alphaup_{1}{+}2(\alphaup_{2}{+}\cdots{+}\alphaup_{\ell-2})\quad\\
 {+}
 \alphaup_{\ell-1}{+}\alphaup_{\ell}\end{aligned}\\[-10pt]
 &  \xymatrix @M=0pt @R=8pt @!C=20pt{
 \alphaup_{1}&\alphaup_{2}&\alphaup_{3}\\
 \medbullet \ar@{-}[r] & \medcirc \ar@{-}[r]\ar@{-}[dd] &\medbullet\\
 \\
 \qquad\alphaup_{4}\!\!\!\!\!\!\!\!\!\!\!\!\!\!\!\!\!\!\!&\medbullet}
 &&\begin{aligned}
 \quad \\ \quad \\
 \stt(\alphaup_{2})=\alphaup_{1}{+}\alphaup_{2}{+}\alphaup_{3}{+}\alphaup_{4}
 \end{aligned}
\end{aligned}
\end{equation*}\vspace{8pt}
\begin{equation*}\begin{gathered}
  \xymatrix @M=0pt @R=4pt @!C=20pt{
  \alphaup_{1}&\alphaup_{2}&\alphaup_{3}&\alphaup_{4}&\alphaup_{5}\\
  \medbullet \ar@{-}[r] 
  &\medbullet\ar@{-}[r]
  &\medbullet\ar@{-}[r]\ar@{-}[dddd]
  &\medbullet\ar@{-}[r]
  &\medbullet\\
  \\
  \\
  \\
  &&\medcirc &\!\!\!\!\!\!\!\!\!\!\!\!\!\!\!\!\!\!\!\!\!\!\!\!\!\!\!\!\alphaup_{6}
  }
  \\[7pt]
  \stt(\alphaup_{6})=\alphaup_{1}{+}2\alphaup_{2}{+}3\alphaup_{3}{+}2\alphaup_{4}{+}\alphaup_{5}{+}\alphaup_{6}
  \end{gathered}
\end{equation*}
\vspace{8pt}
\par
 \begin{equation*}\begin{gathered}
  \xymatrix @M=0pt @R=4pt @!C=20pt{
  \alphaup_{1}&\alphaup_{2}&\alphaup_{3}&\alphaup_{4}&\alphaup_{5}\\
   \medbullet\ar@{-}[r]
  &\medbullet\ar@{-}[r]
  &\medbullet\ar@{-}[r]
  &\medbullet\ar@{-}[r]\ar@{-}[dddd]
  &\medbullet \\
  \\
  \\
  \\
  &&&\medbullet\ar@{-}[dddd] &\!\!\!\!\!\!\!\!\!\!\!\!\!\!\!\!\!\!\!\!\!\!\!\!\!\!\!\!\alphaup_{6}
  \\
  \\
  \\
  \\
  &&&\medcirc&\!\!\!\!\!\!\!\!\!\!\!\!\!\!\!\!\!\!\!\!\!\!\!\!\!\!\!\!\alphaup_{7}
  }\\[7pt] 
  \stt(\alphaup_{7})=\alphaup_{1}{+}2\alphaup_{2}{+}3\alphaup_{3}{+}4\alphaup_{4}{+}2\alphaup_{5}{+}3\alphaup_{6}
  {+}\alphaup_{7}  
  \end{gathered}
\end{equation*}\par\medskip
\begin{equation*}\begin{gathered}
  \xymatrix @M=0pt @R=4pt @!C=20pt{
  \alphaup_{1}&\alphaup_{2}&\alphaup_{3}&\alphaup_{4}&\alphaup_{5}&\alphaup_{6}\\
  \medcirc \ar@{-}[r] 
  &\medbullet\ar@{-}[r]
  &\medbullet\ar@{-}[r]
  &\medbullet\ar@{-}[r]
  &\medbullet\ar@{-}[r]\ar@{-}[dddd]
  &\medbullet \\ 
  \\
  \\
  \\
  &&&&\medbullet\ar@{-}[dddd] &\!\!\!\!\!\!\!\!\!\!\!\!\!\!\!\!\!\!\!\!\!\!\!\!\!\!\!\!\alphaup_{7}
  \\
  \\
  \\
  \\
  &&&&\medbullet&\!\!\!\!\!\!\!\!\!\!\!\!\!\!\!\!\!\!\!\!\!\!\!\!\!\!\!\!\alphaup_{8}
  }
  \\[7pt]
  \stt(\alphaup_{1})=\alphaup_{1}{+}3\alphaup_{2}{+}4\alphaup_{3}{+}5\alphaup_{4}{+}6\alphaup_{5}{+}3\alphaup_{6}{+}
  4\alphaup_{7}{+}2\alphaup_{8}
  \end{gathered}
\end{equation*}
\begin{equation*} \begin{gathered}
 \xymatrix @M=0pt @R=1pt @!C=15pt{ \alphaup_{1}&\alphaup_{2}&\alphaup_{3}&\alphaup_{4}\\
  \qquad\quad 
 \\
\\
 \medcirc\ar@{-}[r]
 &\medbullet\ar@{=>}[r]
 &\medbullet\ar@{-}[r]
 &\medbullet} 
 \\[9pt]
 \stt(\alphaup_{1})=\alphaup_{1}{+}3\alphaup_{2}{+}4\alphaup_{3}{+}2\alphaup_{4}.
 \end{gathered}
 \end{equation*}
 \par\bigskip
\begin{equation*}\begin{gathered}
 \xymatrix @M=0pt @R=2pt @!C=15pt{ \alphaup_{1}&\alphaup_{2}&\alphaup_{3}&\alphaup_{4}\\
  \qquad\quad 
 \\
 \medbullet\ar@{-}[r]
 &\medbullet\ar@{=>}[r]
 &\medbullet\ar@{-}[r]
 &\medcirc}\\[9pt]
 \stt(\alphaup_{4})=\alphaup_{1}{+}2\alphaup_{2}{+}3\alphaup_{3}{+}\alphaup_{4}.
 \end{gathered}
 \end{equation*}
  \par\bigskip
\begin{equation*}\begin{gathered}
 \xymatrix @M=0pt @R=2pt @!C=15pt{ \alphaup_{1}&\alphaup_{2}\\
  \qquad\quad 
 \\
 \medcirc\ar@3{->}[r]
 &\medbullet}\\[9pt]
\qquad \stt(\alphaup_{1})=\alphaup_{1}{+}3\alphaup_{2},
 \end{gathered}
\end{equation*}

\qquad\qquad
\begin{equation*}
\begin{gathered}
 \xymatrix @M=0pt @R=2pt @!C=15pt{ \alphaup_{1}&\alphaup_{2}\\
  \qquad\quad 
 \\
 \medbullet\ar@3{->}[r]
 &\medcirc}\\[9pt]
 \qquad\stt(\alphaup_{2})=\alphaup_{1}{+}\alphaup_{2}.
 \end{gathered}
 \end{equation*}
\par\smallskip
\par
\smallskip

\begin{exam} For types $\textsc{B}_{\ell}$, $\textsc{C}_{\ell}$, $\textsc{D}_{\ell}$  
and an involution with parameters $r_{1},r_{2}$, the roots spaces $\Rd{\bullet}{\stt}$
consists of $r_{1}$ disjoint
copies of $\textsc{A}_{1}$ and a $\textsc{B}_{r'_{2}}$,  $\textsc{C}_{r_{2}}$, 
$\textsc{D}_{r_{2}}$,
where $r'_{2}\,{=}\,\left[\tfrac{r_{2}{+}1}{2}\right]$, respectively. 
We agree that $\textsc{B}_{0}$, $\textsc{C}_{0}$, $\textsc{D}_{0}$,
$\textsc{D}_{1}$ are empty
while $\textsc{B}_{1}\,{\simeq}\,\textsc{C}_{1}\,{\simeq}\,\textsc{A}_{1}$, $\textsc{D}_{2}$ is two disjoint copies of
$\textsc{A}_{1}$ and $\textsc{D}_{3}{\simeq}\textsc{A}_{3}$.
\par
The corresponding $S\!$-diagram has, in the case of a $\textsc{D}_{\ell\geq 4}$,  the last two nodes white and joined
by a curved arrow,  otherwise 
are obtained by  
blackening the last
$r_{2}$ or $r_{2}'$ roots. When $r_{1}$ is positive,
$r_{1}$ of the preceding roots
are blackened, with the unique restriction that the root to its right, if any, stays white. 
\par 
For instance,  for the involution class on $\textsc{D}_{4}$ with $r_{1}{=}r_{2}{=}1$ there are two admissible
$S\!$-diagrams  
\begin{equation*}
 \xymatrix @M=0pt @R=4pt @!C=16pt{
 &&\alphaup_{3}\\
 && \medcirc\ar@/^8pt/@{<->}[dddd]\\
 \alphaup_{1}&\alphaup_{2} \\
 \medcirc \ar@{-}[r] & \medbullet \ar@{-}[uur]  \ar@{-}[ddr]\\
 \\
 &&\medcirc\\
 &&\alphaup_{4}}\quad\qquad\quad
  \xymatrix @M=0pt @R=4pt @!C=16pt{
 &&\alphaup_{3}\\
 && \medcirc\ar@/^8pt/@{<->}[dddd]\\
 \alphaup_{1}&\alphaup_{2} \\
 \medbullet \ar@{-}[r] & \medcirc \ar@{-}[uur]  \ar@{-}[ddr]\\
 \\
 &&\medcirc\\
 &&\alphaup_{4}}
\end{equation*}
and we have, according to the second and fourth lines above, 
\begin{equation*} 
\begin{cases}
 \stt(\alphaup_{1})=\alphaup_{1}{+}\alphaup_{2},\\
 \stt(\alphaup_{3})=\alphaup_{2}{+}\alphaup_{4},\\
 \stt(\alphaup_{4})=\alphaup_{2}{+}\alphaup_{3},
\end{cases}\quad\qquad\quad\quad 
\begin{cases}
 \stt(\alphaup_{2})=\alphaup_{1}+\alphaup_{2},\\
 \stt(\alphaup_{3})=\alphaup_{4},\\
 \stt(\alphaup_{4})=\alphaup_{3},
\end{cases}
\end{equation*}
respectively.
\par\smallskip For the involutions discussed in Example\,\ref{e2.33} we have for $\stt'$ 
an $S$-diagram in the basis $\e_{2}{-}\e_{4},\,\e_{4}{-}\e_{1},\,\e_{1}{-}\e_{3},\e_{3}$: 
  \begin{equation*} 
 \xymatrix @M=0pt @R=2pt @!C=15pt{ \alphaup_{1}&\alphaup_{2}&\alphaup_{3}&\alphaup_{4}\\
  \qquad\quad 
 \\
 \medbullet\ar@{-}[r]
 &\medcirc\ar@{-}[r]
 &\medbullet\ar@{=>}[r]
 &\medbullet &\quad\qquad (r_{1}{=}1,\,r_{2}{=}2),}
 \end{equation*}
 for $\stt''$ an $S$-diagram in the basis $\e_{1}{-}\e_{4},\,\e_{4}{-}\e_{2},\,\e_{2}{-}\e_{3},\,\e_{3}$:
   \begin{equation*}
 \xymatrix @M=0pt @R=2pt @!C=15pt{ \alphaup_{1}&\alphaup_{2}&\alphaup_{3}&\alphaup_{4}\\
  \qquad\quad 
 \\
 \medbullet\ar@{-}[r]
 &\medcirc\ar@{-}[r]
 &\medcirc\ar@{=>}[r]
 &\medbullet &\quad\qquad (r_{1}{=}1,\, r_{2}{=}1),}
 \end{equation*}
 for $\stt'''$ in the basis $\{\e_{1}{+}\e_{3},\, \e_{2}{-}\e_{3},\, \e_{4}{-}\e_{2},\,{-}\e_{4}\}$ 
    \begin{equation*}
 \xymatrix @M=0pt @R=2pt @!C=15pt{ \alphaup_{1}&\alphaup_{2}&\alphaup_{3}&\alphaup_{4}\\
  \qquad\quad 
 \\
 \medbullet\ar@{-}[r]
 &\medcirc\ar@{-}[r]
 &\medbullet\ar@{=>}[r]
 &\medcirc &\quad\qquad(r_{1}{=}2,\, r_{2}{=}0).}
 \end{equation*}
\end{exam}
\begin{exam} While computing the involution for an \textit{inner}
complex root of the basis, we need to take care of each subdiagram of which
the root is an edge.   Take for instance
\begin{equation*}\begin{gathered}
  \xymatrix @M=0pt @R=4pt @!C=20pt{
  \alphaup_{1}&\alphaup_{2}&\alphaup_{3}&\alphaup_{4}&\alphaup_{5}\\
   \medbullet\ar@{-}[r]
  &\medcirc\ar@{-}[r]
  &\medbullet\ar@{-}[r]
  &\medbullet\ar@{-}[r]\ar@{-}[dddd]
  &\medbullet \\
  \\
  \\
  \\
  &&&\medbullet
  &\!\!\!\!\!\!\!\!\!\!\!\!\!\!\!\!\!\!\!\!\!\!\!\!\!\!\!\!\alphaup_{6}
  \\
  }
  \end{gathered}
\end{equation*} 
We consider $\alphaup_{2}$ as an \textit{edge} root of the subdiagrams $\{\alphaup_{1},\alphaup_{2}\}$, of type 
$\textsc{A}_{2}$, and $\{\alphaup_{2},\alphaup_{3},\alphaup_{4},\alphaup_{5},\alphaup_{6}\}$,
of type $\textsc{D}_{5}$. Using the corresponding involution rules we get 
\begin{equation*} 
\begin{cases}
 \alphaup_{2}\to\alphaup_{1}{+}\alphaup_{2},&(\textsc{A}_{2}),\\
 \alphaup_{2}\to \alphaup_{2}{+}2\alphaup_{3}{+}2\alphaup_{4}{+}\alphaup_{5}{+}\alphaup_{6},
 &(\textsc{D}_{5}),\\
 \stt(\alphaup_{2})=\alphaup_{1}{+}\alphaup_{2}{+}2\alphaup_{3}{+}2\alphaup_{4}
 {+}\alphaup_{5}{+}\alphaup_{6},
\end{cases}
\end{equation*}
i.e. to compute $\stt(\alphaup_{2})$ we add to $\alphaup_{2}$ the 
linear combinations of imaginary roots which are added to $\alphaup_{2}$ according with the conjugation
rules of the two subdiagrams. 
\end{exam}
We saw that 
an $S\!$-diagram is a graph 
obtained from a Dynkin diagram by painting black some of its nodes and by joining by
a double arrow some pairs of the remaining white nodes.
The fact that 
the simple roots corresponding to  black nodes generate a subsystem of roots corresponding to
a Lie subalgebra of a quasi-split real form\footnote{see Definiton\,\ref{d2.1} below.} 
rules out some graphs (see e.g. \cite{heck83, osh06}).
\begin{exam}
 \begin{equation*}\begin{gathered}
  \xymatrix @M=0pt @R=4pt @!C=20pt{
  \alphaup_{1}&\alphaup_{2}&\alphaup_{3}&\alphaup_{4}&\alphaup_{5}\\
  \medcirc \ar@{-}[r] 
  &\medbullet\ar@{-}[r]
  &\medbullet\ar@{-}[r]\ar@{-}[dddd]
  &\medbullet\ar@{-}[r]
  &\medbullet\\
  \\
  \\
  \\
  &&\medbullet &\!\!\!\!\!\!\!\!\!\!\!\!\!\!\!\!\!\!\!\!\!\!\!\!\!\!\!\!\alphaup_{6}
  }
  \end{gathered}
\vspace{6pt}\end{equation*}%
is not the $S\!$-diagram of a root system of type $\textsc{E}_{6}$. In fact, modulo equivalence,  
there is a single  involution having a $\Rd{\bullet}{\stt}$ of
type $\textsc{D}_{5}$ and 
whose $S\!$-diagram was given above. 
\end{exam}

We will now list  by type the admissible $S$-dia\-grams.

\subsubsection*{Type $\textsc{A}_{\ell}$} 
There are two graphs with no black nodes, one equal to the Dynkin diagram, the other
having nodes at the same distance from the edges connected with a double arrow.
In case there are black nodes, 
either there are no pairs of adjacent black nodes, or 
the black nodes form a connected subgraph in the center of the diagram and white nodes at the
same distance from this subgraph are connected by a double arrow.
\subsubsection*{Type $\textsc{B}_{\ell}$, $\textsc{C}_{\ell}$}  An $S\!$-diagram of type $\textsc{B}_{\ell}$ 
or $\textsc{C}_{\ell}$ is admissible
if and only if  the complement of the connected
component of $\Bz_{\bullet}^{\stt}(C)$ containing\footnote{This connected component is the empty set if the node of 
$\alphaup_{\ell}$ is white.} 
$\alphaup_{\ell}$ 
has no
adjacent black nodes. 
\subsubsection*{Type $\textsc{D}_{\ell}$} The  conditions below characterize admissible $S$-dia\-grams: 
\begin{itemize}
\item all diagrams  which do not contain adjacent black nodes are admissible;
 \item if $\ell{>}4$ and $\alphaup_{\ell-1}$ and $\alphaup_{\ell}$ are white nodes, 
 then they may or may not 
be joined by
 a double arrow; if $\ell\,{=}\,4$ only one single couple of white nodes  $\alphaup_{1},\alphaup_{3},\alphaup_{4}$
 may be joined by a curved arrow;
\item if not empty, the set of 
adjacent black nodes  contains at least three elements and
forms a connected subdiagram with a connected complement.
\end{itemize}
\subsubsection*{Type $\textsc{E}_{6}$} The involution $\epi$ 
yields the involution of the Dynkin diagram 
which keeps $\alphaup_{6}$ fixed and maps $\alphaup_{i}$ to $\alphaup_{6-i}$ for $1{\leq}i{\leq}5.$\par
The admissible $S\!$-diagrams are those which 
\begin{itemize}
 \item do not contain neither double arrows nor adjacent black nodes;
 \item do
not contain double arrows and the black nodes form a $\textsc{D}_{4}$ subdiagram;
\item all nodes are black; 
 \item $\alphaup_{6}$ is white, 
 the black nodes form an $\epi$-invariant connected subdiagram and white nodes different from
 $\alphaup_{6}$ are  
 joined by  double arrows. 
\end{itemize}
The last point yields the 
four $S\!$-diagrams below: \vspace{4pt}
 \begin{equation*} \vspace{12pt}
  \xymatrix @M=0pt @R=4pt @!C=3pt{
&\alphaup_{1}&&\alphaup_{3}&&\alphaup_{5}\\
 &\medcirc \ar@{-}[r]\ar@/^20pt/@{<->}[rrrr] 
 &\medcirc\ar@{-}[r] \ar@/^16pt/@{<->}[rr]
 &\medcirc\ar@{-}[r]\ar@{-}[ddd]
 &\medcirc\ar@{-}[r] 
 &\medcirc 
\\
\qquad\qquad
&&\alphaup_{2}
 &&\alphaup_{4}
\\
\\
&&&\quad\;\medcirc\;\alphaup_{6}}
 \xymatrix @M=0pt @R=4pt @!C=3pt{
&\alphaup_{1}&&\alphaup_{3}&&\alphaup_{5}\\
 &\medcirc \ar@{-}[r]\ar@/^20pt/@{<->}[rrrr] 
 &\medcirc\ar@{-}[r] \ar@/^16pt/@{<->}[rr]
 &\medbullet\ar@{-}[r]\ar@{-}[ddd]
 &\medcirc\ar@{-}[r] 
 &\medcirc 
\\
\qquad\qquad
&&\alphaup_{2}
 &&\alphaup_{4}
\\
\\
&&&\quad\;\medcirc\;\alphaup_{6}}
\end{equation*}
\begin{equation*}
 \xymatrix @M=0pt @R=4pt @!C=3pt{
&\alphaup_{1}&&\alphaup_{3}&&\alphaup_{5}\\
 &\medcirc \ar@{-}[r]\ar@/^20pt/@{<->}[rrrr] 
 &\medbullet\ar@{-}[r] 
 &\medbullet\ar@{-}[r]\ar@{-}[ddd]
 &\medbullet\ar@{-}[r] 
 &\medcirc 
\\
\qquad\qquad
&&\alphaup_{2}
 &&\alphaup_{4}
\\
\\
&&&\quad\;\medcirc\;\alphaup_{6}}
  \xymatrix @M=0pt @R=4pt @!C=3pt{
&\alphaup_{1}&&\alphaup_{3}&&\alphaup_{5}\\
 &\medbullet \ar@{-}[r] 
 &\medbullet\ar@{-}[r] 
 &\medbullet\ar@{-}[r]\ar@{-}[ddd]
 &\medbullet\ar@{-}[r] 
 &\medbullet
\\
\qquad\qquad
&&\alphaup_{2}
 &&\alphaup_{4}
\\
\\
&&&\quad\;\medcirc\;\alphaup_{6}}
\end{equation*} 
\par\medskip
Other examples of $S\!$-diagrams for $\textsc{E}_{6}$ are: 
 \begin{equation*} \vspace{12pt}
  \xymatrix @M=0pt @R=4pt @!C=3pt{
&\alphaup_{1}&&\alphaup_{3}&&\alphaup_{5}\\
 &\medcirc \ar@{-}[r] 
 &\medcirc\ar@{-}[r] 
 &\medcirc\ar@{-}[r]\ar@{-}[ddd]
 &\medcirc\ar@{-}[r] 
 &\medcirc 
\\
\qquad\qquad
&&\alphaup_{2}
 &&\alphaup_{4}
\\
\\
&&&\quad\;\medbullet\;\alphaup_{6}}
 \xymatrix @M=0pt @R=4pt @!C=3pt{
&\alphaup_{1}&&\alphaup_{3}&&\alphaup_{5}\\
 &\medcirc \ar@{-}[r] 
 &\medbullet\ar@{-}[r] 
 &\medcirc\ar@{-}[r]\ar@{-}[ddd]
 &\medcirc\ar@{-}[r] 
 &\medcirc 
\\
\qquad\qquad
&&\alphaup_{2}
 &&\alphaup_{4}
\\
\\
&&&\quad\;\medbullet\;\alphaup_{6}}
\end{equation*}
\begin{equation*}
 \xymatrix @M=0pt @R=4pt @!C=3pt{
&\alphaup_{1}&&\alphaup_{3}&&\alphaup_{5}\\
 &\medcirc \ar@{-}[r]
 &\medbullet\ar@{-}[r] 
 &\medcirc\ar@{-}[r]\ar@{-}[ddd]
 &\medbullet\ar@{-}[r] 
 &\medcirc 
\\
\qquad\qquad
&&\alphaup_{2}
 &&\alphaup_{4}
\\
\\
&&&\quad\;\medbullet\;\alphaup_{6}}
  \xymatrix @M=0pt @R=4pt @!C=3pt{
&\alphaup_{1}&&\alphaup_{3}&&\alphaup_{5}\\
 &\medcirc\ar@{-}[r] 
 &\medbullet\ar@{-}[r] 
 &\medbullet\ar@{-}[r]\ar@{-}[ddd]
 &\medbullet\ar@{-}[r] 
 &\medcirc
\\
\qquad\qquad
&&\alphaup_{2}
 &&\alphaup_{4}
\\
\\
&&&\quad\;\medbullet\;\alphaup_{6}}
\end{equation*} 
\par\smallskip
In the first three diagrams above black nodes may be moved arbitrarily, with the unique constraint of 
not allowing 
pairs of adjacent black nodes.
\subsubsection*{Types $\textsc{E}_{7}$, $\textsc{E}_{8}$} The admissible $S\!$-diagram for $\textsc{E}_{7}$, 
$\textsc{E}_{8}$, are those
in which the black nodes which are not isolated form a subdiagram which is either of type $\textsc{D}_{4}$,
$\textsc{D}_{6}$, $\textsc{E}_{7}$. 
\subsubsection*{Type $\textsc{F}_{4}$} The non admissible $S\!$-diagrams for $\textsc{F}_{4}$ are 
\begin{equation*}
  \xymatrix @M=0pt @R=2pt @!C=15pt{ \alphaup_{1}&\alphaup_{2}&\alphaup_{3}&\alphaup_{4}\\
  \qquad\quad 
 \\
 \medbullet\ar@{-}[r]
 &\medbullet\ar@{=>}[r]
 &\medcirc\ar@{-}[r]
 &\medcirc & \text{and}}\quad
  \xymatrix @M=0pt @R=2pt @!C=15pt{ \alphaup_{1}&\alphaup_{2}&\alphaup_{3}&\alphaup_{4}\\
  \qquad\quad 
 \\
 \medcirc\ar@{-}[r]
 &\medcirc\ar@{=>}[r]
 &\medbullet\ar@{-}[r]
 &\medbullet \; .}
\end{equation*}
\subsubsection*{Type $\textsc{G}_{2}$} All $S\!$-diagrams for $\textsc{G}_{2}$ are admissible.
\section{Involutions of $\gt$}\label{s2} 
Here  and in the next section, 
given a complex semisimple Lie algebra $\gt$, 
we discuss the conjugacy classes of pairs 
$(\go,\ho)$ consisting of
its real forms and their
Cartan subalgebras. First we show that, modulo conjugation, we can reduce to the special 
pairs $(\gs,\hs)$ of \eqref{e1.28}, \eqref{e1.29} below, 
which are consistent with the real split and compact forms associated to
a Chevalley system (see
Thm.\ref{thmchev}). 
\begin{ntz} Let $\gt$ be a complex semisimple Lie algebra, $\go$ a real form of $\gt$,
$\ho$ a Cartan subalgebra of $\go$ and $\hg$ its complexification. We will  show that every
involution of the root system lifts to an anti-involution of 
a quasi-split real form.\footnote{see Definiton\,\ref{d2.1} below.}  General real forms will be studied in the next section. Set
 \begin{itemize}
\item $\Aut_{\R}(\gt)$ for the $\R$-linear automorphisms of $\gt$; \index{$\Aut_{\R}(\gt)$}
\smallskip
\item $\Aut_{\C}(\gt)$ for the $\C$-linear automorphisms of $\gt$; \index{$\Aut_{\C}(\gt)$}
\smallskip
\item $\Aut_{\C}^{\!\vee}(\gt)$ for the anti-$\C$-linear automorphisms of $\gt$; \index{$\Aut_{\C}^{V}(\gt)$}
\smallskip
\item $\Aut(\go)$ for the $\R$-linear automorphisms of $\go$; \index{$\Aut(\go)$}
\smallskip
\item 
$\Inv_{\R}(\gt)$ for the $\R$-linear involutions of $\gt$; \index{$\Inv_{\R}(\gt)$}
\smallskip
\item $\Inv_{\C}(\gt)$ for the $\C$-linear involutions of $\gt$; \index{$\Inv_{\C}(\gt)$}
\smallskip
\item $\Inv_{\C}^{\!\vee}(\gt)$ for the anti-$\C$-linear involutions of $\gt$; \index{$\Inv_{\C}^{V}(\gt)$}
\smallskip
\item $\Inv(\go)$ for the $\R$-linear involutions of $\go$; \index{$\Ib(\go)$}
\smallskip
 \item $\Aut_{\R}(\gt,\hg)$ for the $\R$-linear automorphisms of $\gt$ leaving $\hg$ invariant; \index{$\Aut_{\R}(\gt,\hg)$}
 \smallskip
 \item $\Aut_{\C}(\gt,\hg)$ for the $\C$-linear elements of $\Aut_{\R}(\gt,\hg)$; \index{$\Aut_{\C}(\gt,\hg)$}
  \smallskip
 \item $\Aut_{\C}^{\!\vee}(\gt,\hg)$ for the anti-$\C$-linear elements of $\Aut_{\R}(\gt,\hg)$; \index{$\Aut_{\C}^{V}(\gt,\hg)$}
  \smallskip
   \item $\Aut(\go,\ho)$ for the  automorphisms of $\go$ leaving \index{$\Aut(\go,\ho)$}
  $\ho$ invariant;
  \smallskip
   \item $\Aut_{0}(\go,\ho)$ for the  automorphisms of $\go$ pointwise fixing \index{$\Aut_{0}(\go,\ho)$}
  $\ho$;
 \smallskip
 \item $\Inv(\gt,\hg)$ for the involutions in $\Aut_{\R}(\gt,\hg)$; \index{$\Ib(\gt,\hg)$}
   \smallskip
  \item $\Inv_{0}(\gt,\hg)$ for the involutions in $\Inv(\gt,\hg)$ pointwise fixing $\hg$; \index{$\Inv_{0}(\gt,\hg)$}
  \smallskip
  \item $\Inv_{\C}(\gt,\hg)$ for the involutions in $\Aut_{\C}(\gt,\hg)$; \index{$\Inv_{\C}(\gt,\hg)$}
   \smallskip
  \item $\Inv_{\C}^{\!\vee}(\gt,\hg)$ for the  involutions in $\Aut_{\C}^{\!\vee}(\gt,\hg)$;\index{$\Inv_{\C}^{V}(\gt,\hg)$}
  \smallskip
  \item $\Inv(\go,\ho)$ for the involutions in $\Aut(\go,\ho)$; \smallskip
  \item $\Inv_{0}(\go,\ho)$ for the involutions in $\Inv(\go,\ho)$ pointwise fixing $\ho$.\index{$\Inv_{0}(\go,\ho)$}
\end{itemize}\par\vspace{4pt}\noindent
For a $\sigmaup$ in $\Aut(\go)$, we will use  
the same symbol $\sigmaup$
for its extension to an element of
$\Aut_{\C}^{\!\vee}(\gt)$ and $\Hat{\sigmaup}$ for its extension to an element of $\Aut_{\C}(\gt).$  
\par\medskip
Fixing a Chevalley system $\{(X_{\alphaup},H_{\alphaup})\}_{\alphaup\in\Rad}$
for $(\gt,\hg)$ yields the split real form $\gr$, with Cartan subalgebra $\hr$ of \eqref{e1.4}
and the compact real form $\gu$, with Cartan subalgebra $\hu,$ of \eqref{e1.5}. We set
\begin{equation}\label{e2.1}\begin{cases}
\Aut^{\tauup}(\gr,\hr)=\{\ftt\in\Aut(\gr,\hr)\mid \ftt(\gu)=\gu\},\\
\Inv^{\tauup}(\gr,\hr)=\{\sigmaup\in\Inv(\gr,\hr)\mid \sigmaup(\gu)=\gu\}.
 \end{cases}
\end{equation} \index{$\Inv^{\tauup}(\gr,\hr)$}
\par 
A 
$\sigmaup\,{\in}\,\Inv(\gr,\hr)$ defines  $\Z_{2}$-gradations of $\gr$ and $\hr$: 
\begin{equation}\label{e1.28}
\begin{cases}
 \gr\,{=}\,\gs^{+}\oplus\gs^{-},\\
 \hr\,{=}\,\hs^{+}\oplus\hs^{-},
 \end{cases} \text{with}\;
\begin{cases}
 \gs^{+}\,{=}\,\{X\in\gr\mid \sigmaup(X)\,{=}\,X\},&\hs^{+}\,{=}\,\hr\cap\gs^{+},\\
\gs^{-}\,{=}\,\{X\in\gr\mid \sigmaup(X)\,{=}\,{-}X\},&\hs^{-}\,{=}\,\hr\cap\gs^{-},
\end{cases}
\end{equation}
and yields a pair $(\gs,\hs)$, with   
\begin{equation}\label{e1.29} 
\gs=\gs^{+}\oplus {i}\gs^{-},\quad\hs=\hs^{+}\oplus {i}\hs^{-},
\end{equation}
consisting of a real form $\gs$ of $\gt$, and its Cartan subalgebra $\hs$.
\end{ntz}
\begin{thm}\label{t3.1}
Any pair $(\go,\ho)$ consisting of a real form $\go$ of $\gt$ and its Cartan subalgebra $\ho$
is conjugated in $\Aut_{\C}(\gt)$ with a pair $(\gs,\hs)$, for some $\sigmaup\,{\in}\,\Inv^{\tauup}(\gr,\hr).$ 
 \end{thm}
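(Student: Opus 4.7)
\medskip

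\noindent\textbf{Proof plan.} The strategy is a two-stage conjugation: first bring the complex Cartan $\ho\otimes_{\R}\C$ onto $\hg$, and then adjust the resulting anti-involution so that it is compatible with the fixed Chevalley system on $(\gt,\hg)$.

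\emph{Stage 1 (putting the Cartan in place).} All complex Cartan subalgebras of $\gt$ are conjugate under $\Aut_{\C}(\gt)$, so we choose $\phi_{1}\in\Aut_{\C}(\gt)$ with $\phi_{1}(\ho\otimes_{\R}\C)=\hg$. Replacing $(\go,\ho)$ by $(\phi_{1}(\go),\phi_{1}(\ho))$, we may assume from the outset that $\ho\otimes_{\R}\C=\hg$; then the anti-$\C$-linear involution $\sigmaup_{1}\in\Inv_{\C}^{\!\vee}(\gt)$ with fixed set $\go$ satisfies $\sigmaup_{1}(\hg)=\hg$.

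\emph{Stage 2 ($\sigmaup_{1}$ preserves $\hr$).} Since $\sigmaup_{1}$ is an automorphism of $\gt$ preserving $\hg$, it permutes the root spaces $\gt^{\alphaup}$; concretely, the dual map on $\hg^{*}$ defined by $\xiup\mapsto\overline{\xiup\circ\sigmaup_{1}^{-1}}$ induces an involution of $\Rad$. Because $\hr$ is the intrinsic locus $\{H\in\hg:\alphaup(H)\in\R,\;\forall\alphaup\in\Rad\}$, and this locus is preserved by any bijection of $\gt$ permuting $\Rad$ up to complex conjugation, we conclude that $\sigmaup_{1}(\hr)=\hr$.

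\emph{Stage 3 (aligning with the Chevalley system).} It remains to produce a $\phi_{2}\in\Aut_{\C}(\gt,\hg)$ such that $\sigmaup:=\phi_{2}\,\sigmaup_{1}\,\phi_{2}^{-1}$ belongs to $\Inv^{\tauup}(\gr,\hr)$, i.e.\ $\sigmaup(\gr)=\gr$ and $\sigmaup\,\tauup=\tauup\,\sigmaup$. Write $\sigmaup_{1}(X_{\alphaup})=c_{\alphaup}X_{\sigmaup_{1}^{*}(\alphaup)}$ with $c_{\alphaup}\in\C^{\times}$. The plan is to take $\phi_{2}=\exp(\ad H)$ for a suitable $H\in\hg$, so that conjugation rescales each $X_{\alphaup}$ by $e^{\alphaup(H)}$ and transforms the coefficients $\{c_{\alphaup}\}$ into $\{\pm 1\}$. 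Once this is achieved, $\sigmaup$ sends $\hr\oplus\sum_{\alphaup}\R X_{\alphaup}$ into itself, so $\sigmaup(\gr)=\gr$; the explicit description of $\tauup$ in Theorem\,\ref{thmchev}(5) together with $\sigmaup(X_{\alphaup})=\pm X_{\sigmaup_{1}^{*}(\alphaup)}$ then forces $\sigmaup$ to commute with $\tauup$. Setting $\phi=\phi_{2}\phi_{1}\in\Aut_{\C}(\gt)$ yields $(\phi(\go),\phi(\ho))=(\gs,\hs)$ with $\sigmaup\in\Inv^{\tauup}(\gr,\hr)$, as required.

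\emph{Main obstacle.} Stage 3 is the technical crux: the existence of an $H\in\hg$ that simultaneously rescales every $c_{\alphaup}$ to $\pm 1$ is not automatic. The involutivity $\sigmaup_{1}^{2}=\id$ produces the identity $c_{\alphaup}\,\overline{c_{\sigmaup_{1}^{*}(\alphaup)}}=1$, and the Chevalley relations \eqref{e1.6}--\eqref{e1.5a} impose multiplicative compatibility among the $c_{\alphaup}$'s along each root string. Together these constraints cut the ambiguity down to a $\Z_{2}$-valued function on $\Rad$, whose realizability by a Cartan element $H$ is precisely what makes Stage 3 succeed, and the argument will likely proceed by splitting $\Rad$ into $\sigmaup_{1}^{*}$-orbits and choosing the correct value of $\alphaup(H)$ on a set of representatives.
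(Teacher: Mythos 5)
There is a genuine gap: your Stage 3 is the entire content of the theorem, and you leave it unproved, explicitly labelling it the ``main obstacle''. Stages 1 and 2 are correct but routine (conjugacy of complex Cartan subalgebras, and the fact that an anti-involution preserving $\hg$ preserves the real locus $\hr$); everything difficult is deferred to the normalization of the coefficients $c_{\alphaup}$, for which you only offer a guess at a strategy. Moreover, the transformation law under conjugation by $\exp(\ad H)$ is not the simple rescaling you describe: for an anti-$\C$-linear $\sigmaup_{1}$ one gets $c_{\alphaup}\mapsto c_{\alphaup}\,e^{\stt(\alphaup)(H)-\overline{\alphaup(H)}}$, which mixes $\alphaup$, $\stt(\alphaup)$ and complex conjugation, so the solvability of the resulting system for $H$ is far from automatic. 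Note also that for an imaginary root $\alphaup$ the constraints $c_{\alphaup}\bar{c}_{-\alphaup}=1$ and $c_{\alphaup}c_{-\alphaup}=1$ only force $c_{\alphaup}$ to be real, not unimodular, so there is a genuine modulus to kill, and one must do so while also landing in $\Inv^{\tauup}(\gr,\hr)$, i.e.\ preserving the compact form $\gu$.

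The paper resolves exactly this point by a device absent from your proposal. It first fixes a Cartan involution $\thetaup_{0}$ of $\go$ leaving $\ho$ invariant, forms the compact real form $\gu=\kt_{0}\oplus i\pt_{0}$ (which is automatically $\sigmaup$-invariant), and invokes Bourbaki (Ch.\,IX, \S{3}, Prop.\,3) to choose a Chevalley system whose associated compact form is precisely this $\gu$. The invariance $\sigmaup(\gu)=\gu$ is then equivalent to $k_{-\alphaup}=\bar{k}_{\alphaup}$, which combined with $k_{\alphaup}k_{-\alphaup}=1$ yields $|k_{\alphaup}|=1$ for all roots. Only after this unimodularity is secured does the rescaling work: choosing square roots $\lambdaup_{\alphaup}$ of the $k_{\alphaup}$ on a basis of simple roots and setting $Y_{\alphaup}=\lambdaup_{\alphaup}X_{\alphaup}$ gives $\sigmaup(Y_{\alphaup})=\mu_{\alphaup}Y_{\stt(\alphaup)}$ with $\mu_{\alphaup}^{2}=|k_{\alphaup}|^{4}=1$, hence $\mu_{\alphaup}=\pm1$, and the new system is again a Chevalley system with the same $\gu$. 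If you want to salvage your $\exp(\ad H)$ approach, you would still need an analogue of this first step (or an independent argument that the moduli of the $c_{\alphaup}$ can be removed compatibly with the Chevalley relations and with preservation of $\gu$); as written, the proposal does not prove the theorem.
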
 
\begin{proof} 
Fix a Cartan involution $\thetaup_{0}$ of $\gt_{0}$ leaving $\hg_{0}$ invariant 
and
 let 
\begin{equation*}
 \gt_{0}{=}\kt_{0}{\oplus}\pt_{0},\quad \text{with}\quad \kt_{0}{=}\,\{X\,{\mid}\,\thetaup_{0}(X)\,{=}\,X\},\;\;
\pt_{0}{=}\,\{X\,{\mid}\,\thetaup_{0}(X)\,{=}{-}X\} 
\end{equation*}
 be the corresponding Cartan decomposition.
Then $\hg_{0}{=}(\hg_{0}\,{\cap}\,\kt_{0})\,{\oplus}\,(\hg_{0}\,{\cap}\,\pt_{0})$ and the 
$\ad(H)$'s with $H\,{\in}\,\hg_{0}\,{\cap}\,\kt_{0}$ (resp. with $H\,{\in}\,\hg_{0}{\cap}\,\pt_{0}$)
have purely imaginary (resp. real)
eigenvalues.
Indeed 
\begin{equation*}
 \bil_{0}(X,Y)={-}\kll(X,\thetaup_{0}(Y)),\;\;\forall X,Y\in\gt_{0}
\end{equation*}
is a scalar product on $\gt_{0}$ and, if $H\,{\in}\,\kt_{0}$, then
$\ad(H)$ is \mbox{$\bil_{0}$-antisymmetric} on $\gt_{0}$, and thus has purely
imaginary eigenvalues,  while, if $H\,{\in}\,\pt_{0}$, 
then $\ad(H)$  is 
\mbox{$\bil_{0}$-symmetric} on $\gt_{0}$, and thus has real
eigenvalues.
\par
The complexification $\hg$ of $\hg_{0}$ is a Cartan subalgebra of $\gt$. We consider
the corresponding root system
$\Rad\,{=}\,\Rad(\gt,\hg)$ and the corresponding real form
\begin{equation*}
 \hr=\{H\in\hg\mid \alphaup(H)\in\R,\;\forall\alphaup\in\Rad\}
\end{equation*} 
of $\hg$. Then 
\begin{equation*}
 \hr=(\hg_{0}{\cap}\,\pt_{0})\oplus {i}(\hg_{0}{\cap}\,\kt_{0})
\end{equation*}
and $i\hr$ is a Cartan subalgebra of 
the compact form
\begin{equation*}\tag{$*$}
 \gu=\kt_{0}\oplus\,i\pt_{0},
\end{equation*}
which 
is invariant with respect to the anti-$\C$-linear involution $\sigmaup$ pointwise fixing $\gt_{0}$.
 By \cite[Ch.IX, \S{3}, Prop.3]{Bou82}, we can find a Chevalley system
$\{(X_{\alphaup},H_{\alphaup})\}_{\alphaup\in\Rad}$ such that 
\begin{equation*}
 \gu\,{=}\,i\hr\,{\oplus}\,{\sum}_{\alpha\in\Rad}
\{\lambdaup{X}_{\alpha}{+}\bar{\,\lambdaup}X_{-\alpha}\,{\mid}\,\lambdaup\,{\in}\,\C\}
\end{equation*}
is the compact form described in  
(5) of Thm.\ref{thmchev}. \par
The involution $\sigmaup$ restricts to
an involution of $\hr,$ yielding by duality 
an $\stt\,{\in}\,\Ib(\Rad)$, so that, 
for suitable coefficients $k_{\alphaup}{\in}\,\C,$ 
\begin{equation*} \begin{cases}
\sigmaup(H_{\alphaup})=H_{\stt(\alphaup)},\\
 \sigmaup(X_{\alphaup})=k_{\alphaup}X_{\stt(\alphaup)},
 \end{cases} \;\;\forall \alphaup\in\Rad.
\end{equation*}
From ${\sigmaup}^{2}{=}\,\id$\, we get 
\begin{equation*}\tag{$**$}
 k_{\stt(\alphaup)}{\cdot}\,\bar{k}_{\alphaup}=1,\;\;\forall\alphaup\,{\in}\,\Rad.
\end{equation*} \par
Since $\sigmaup$ is an involution of $\gt,$ we have 
\begin{equation*}\tag{$\begin{array}{c} * \\[-7pt] **
\end{array}$}\begin{cases}
 k_{\alphaup}k_{-\alphaup}=1, &\forall\alphaup\in\Rad,\\
 k_{\alphaup+\betaup}={\pm}\,k_{\alphaup} k_{\betaup}, &\text{if}\;\; \alphaup,\betaup,\alphaup{+}\betaup\in\Rad.
 \end{cases}
\end{equation*}
Being
\begin{equation*} 
\begin{cases}
 \sigmaup(X_{\alphaup}{+}X_{-\alphaup})=k_{\alphaup}X_{\stt(\alphaup)}{+}\,k_{-\alphaup}X_{-\stt(\alphaup)}\in\gu,\\
  \sigmaup(i (X_{\alphaup}{-}X_{-\alphaup}))=
  -i k_{\alphaup}X_{\stt(\alphaup)}{+} i\,k_{-\alphaup}X_{-\stt(\alphaup)}\in\gu,
\end{cases}
\end{equation*}
the condition that $\sigmaup(\gu)\,{=}\gu$ is equivalent to $k_{-\alphaup}\,{=}\,\bar{k}_{\alpha}$, which,
by $\big(\begin{array}{c} * \\[-7pt] **
\end{array}\big)$, yields $|k_{\alpha}|\,{=}\,1$,
 for all $\alphaup\,{\in}\,\Rad.$\par
Let us fix a Weyl chamber $C$ and, for each $\alphaup\,{\in}\,\Bz(C)$,  choose a square root
$\lambdaup_{\alphaup}$ of $k_{\alphaup}$. This assignment uniquely extends to an automorphism $\ftt$
in $\Aut_{\C}(\gt)$ keeping $\hg$ 
pointwise fixed, defining coefficients $\lambdaup_{\alphaup}\,{\in}\,\C$
with 
\begin{equation*}
 \ftt(X_{\alphaup})=\lambdaup_{\alphaup}X_{\alphaup},\;\;\forall\alphaup\,{\in}\,\Rad.
\end{equation*}
Then $\{(Y_{\alphaup}{=}\lambdaup_{\alphaup}X_{\alphaup},H_{\alphaup})\}_{\alphaup\in\Rad}$  
satisfies conditions $(1),(2),(3),(4)$
in Thm.\ref{thmchev} and 
\begin{equation*}
 \sigmaup(Y_{\alphaup})=\bar{\,\lambdaup}_{\alphaup} \sigmaup(X_{\alphaup})=\bar{\,\lambdaup}_{\alphaup}
 k_{\alphaup}\cdot\lambdaup_{\stt(\alphaup)}^{-1}Y_{\stt(\alphaup)}.
 \end{equation*}
By $(**)$ and the fact that $\sigmaup^{2}{=}\,\id$, we have 
\begin{equation*}
( \bar{\,\lambdaup}_{\alphaup}
 k_{\alphaup}\cdot\lambdaup_{\stt(\alphaup)}^{-1})^{2}=\bar{k}_{\alphaup}k_{\alphaup}^{2}k_{\stt(\alphaup)}^{-1}=
|{k}_{\alphaup}|^{4}=1.
\end{equation*}
This shows that $\sigmaup(Y_{\alphaup})\,{=}\,{\pm}Y_{\stt(\alphaup)}$  \; and hence
\begin{equation*}
 \gr=\hr\oplus {\sum}_{\alphaup\in\Rad}\langle{Y}_{\alphaup}\rangle_{\R}
\end{equation*}
is a $\sigmaup$-invariant split real form of $\gt.$ From $|\lambdaup_{\alphaup}|{=}1$ and
$\lambdaup_{\alphaup}\bar{\,\lambdaup}_{-\alphaup}\,{=}\,1$ we obtain that
$\lambdaup_{-\alphaup}{=}\,\bar{\,\lambdaup}_{\alphaup}$ and hence 
$\{(Y_{\alphaup},H_{\alphaup})\}_{\alphaup\in\Rad}$ is a Chevalley system and the corresponding
compact form $(5)$ coincides with the $\gu$ of $(*)$. \par
Since different Chevalley systems are conjugated by $\Aut_{\C}(\gt)$, this yields the statement. 
\end{proof}

\begin{prop} \label{p3.2}
If $\sigmaup\,{\in}\,\Inv^{\tauup}(\gr,\hr),$ 
then the restriction of $\tauup$  
to $\gs$ is a Cartan involution, 
yielding the Cartan decomposition 
\begin{equation}\label{e1.7}
 \gs=\ks\oplus\ps,\;\;\text{with}\;\; \begin{cases}
 \ks=\gs\cap\gu=\{X\in\gs\mid\tauup(X)=X\},\\
 \ps=\{X\in\gs\mid \tauup(X)=- X\},
 \end{cases}
\end{equation}
and $\hs$ is a $\tauup$-invariant Cartan subalgebra with
\begin{equation}\vspace{-18pt}
\label{e1.8}
 \hs^{+}\subset\ps,\;\; i\hs^{-}\subset\ks.
\end{equation}
\qed
\end{prop}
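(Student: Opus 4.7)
The plan is to exploit the commutation $\sigmaup\,{\circ}\,\tauup\,{=}\,\tauup\,{\circ}\,\sigmaup$ on $\gr$ and then read off the Cartan decomposition from the joint eigenspaces. First I would show that the condition $\sigmaup(\gu)\,{=}\,\gu$ built into $\sigmaup\,{\in}\,\Inv^{\tauup}(\gr,\hr)$ is precisely this commutation: by~(5) of Thm.\,\ref{thmchev}, $\gu$ is the direct sum of the $\tauup$-fixed vectors of $\gr$ and $i$ times the $(-1)$-eigenspace of $\tauup$ on $\gr$, so preservation of $\gu$ by the $\C$-linear extension $\Hat{\sigmaup}$ translates into $\sigmaup$ preserving each $\tauup$-eigenspace of $\gr$.

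With this commutation I would decompose $\gr$ into the four joint eigenspaces $\gr^{\epsilon,\eta}$, labelled by the signs of $\sigmaup$ and $\tauup$ respectively, so that
\begin{equation*}
 \gs^{+}\,{=}\,\gr^{+,+}\,{\oplus}\,\gr^{+,-},\quad \gs^{-}\,{=}\,\gr^{-,+}\,{\oplus}\,\gr^{-,-}.
\end{equation*}
Following the Notation paragraph I would then read $\tauup$ as its anti-$\C$-linear extension to $\gt$. This extension preserves the $\gs^{\pm}$, hence $\gs\,{=}\,\gs^{+}\,{\oplus}\,i\gs^{-}$, and restricts to an $\R$-linear involution $\tauup|_{\gs}$. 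A direct computation on $X\,{+}\,iY$ with $X\,{\in}\,\gs^{+}$, $Y\,{\in}\,\gs^{-}$, identifies the eigenspaces
\begin{equation*}
 \ks\,{=}\,\gr^{+,+}\,{\oplus}\,i\gr^{-,-}\,{=}\,\gs\,{\cap}\,\gu,\qquad \ps\,{=}\,\gr^{+,-}\,{\oplus}\,i\gr^{-,+}.
\end{equation*}

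The one substantive check, and the main obstacle, is to confirm that $\tauup|_{\gs}$ is a \emph{Cartan} involution, not merely an involution. For this I would verify
\begin{equation*}
 \ks\,{\oplus}\,i\ps\,{=}\,\gr^{+,+}\,{\oplus}\,\gr^{-,+}\,{\oplus}\,i\gr^{+,-}\,{\oplus}\,i\gr^{-,-}\,{=}\,\gu,
\end{equation*}
so that $\gu$ is exactly the compact real form of $\gt$ paired with $\gs$ through $\tauup|_{\gs}$. The remainder is bookkeeping: since $\tauup\,{\equiv}\,{-}\id$ on $\hr$, one has $\hr\,{\subset}\,\gr^{+,-}\,{\oplus}\,\gr^{-,-}$, whence $\hs^{+}\,{\subset}\,\gr^{+,-}\,{\subset}\,\ps$ and $i\hs^{-}\,{\subset}\,i\gr^{-,-}\,{\subset}\,\ks$; the $\tauup$-invariance $\tauup(\hs)\,{=}\,\hs$ then follows from $\tauup(\hs^{+})\,{=}\,{-}\hs^{+}\,{=}\,\hs^{+}$ and, by anti-$\C$-linearity, $\tauup(i\hs^{-})\,{=}\,{-}i\cdot({-}\hs^{-})\,{=}\,i\hs^{-}$.
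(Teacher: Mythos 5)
The paper offers no argument for this proposition (the \qed is attached to the statement itself), so there is nothing to compare against except the routine verification the authors evidently had in mind; your write-up supplies exactly that verification, and it is correct. In particular the key points — that $\sigmaup(\gu)=\gu$ amounts to $\sigmaup$ commuting with $\tauup$ on $\gr$, that the four joint eigenspaces give $\ks=\gr^{+,+}\oplus i\gr^{-,-}=\gs\cap\gu$ and $\ps=\gr^{+,-}\oplus i\gr^{-,+}$, and that $\ks\oplus i\ps=\gu$ certifies $\tauup|_{\gs}$ as a genuine Cartan involution — are all checked accurately, including the bookkeeping for \eqref{e1.8}.
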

\begin{rmk}
 In suitable complex
 matrix representations we can identify $\gr$ with the real and $\gu$ with the
 skew Hermitian matrices of $\gt$. Then \textit{conjugation} with respect to $\gr$
 extends to matrices the usual conjugation of complex numbers, while $\tauup(Z)\,{=}{-}Z^{*}$.
\end{rmk}
\subsection{Automorphisms and involutions pointwise fixing $\hr$}  In this subsection we  describe the
automorphisms
of $\gr$ pointwise fixing $\hr.$ 
\begin{prop}\label{p3.4}
The automorphisms of $\gr$ pointwise fixing $\hr$ form a normal subgroup $\Aut_{0}(\gr,\hr)$
of $\Aut(\gr,\hr)$.
A group isomorphism 
\begin{equation}\label{e2.6}
 \psiup:\Hom(\Z[\Rad],\R^{*}) \to \Aut_{0}(\gr,\hr)
\end{equation}
is naturally defined by associating to $\etaup\,{\in}\,\Hom(\Z[\Rad],\R^{*})$ the unique automorphism
$\psiup_{\etaup}$ of $\gr$ characterized by 
\begin{equation} \label{eq4.9a}\vspace{-18pt}
\begin{cases}
 \psiup_{\etaup}(H)=H, & \forall H\in\hr,\\
 \psiup_{\etaup}(X)=\etaup({\alphaup})X, & \forall\alphaup\in\Rad,\;\forall X\in\gr^{\alphaup}.
\end{cases}
\end{equation}\qed
\end{prop}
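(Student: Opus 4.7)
\bigskip
\noindent\textbf{Proof plan.} I would split the statement into four pieces and address them in order.

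First, the normality of $\Aut_{0}(\gr,\hr)$ in $\Aut(\gr,\hr)$. Given $\ftt\,{\in}\,\Aut_{0}(\gr,\hr)$ and $\gtt\,{\in}\,\Aut(\gr,\hr)$, any $H\,{\in}\,\hr$ satisfies $\gtt^{-1}(H)\,{\in}\,\hr$, so $\ftt(\gtt^{-1}(H))\,{=}\,\gtt^{-1}(H)$ and thus $\gtt\circ\ftt\circ\gtt^{-1}$ pointwise fixes $\hr$. This is immediate.

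Second, well-definedness of $\psiup_{\etaup}$ as an element of $\Aut_{0}(\gr,\hr)$. Fix a Chevalley system $\{(X_{\alphaup},H_{\alphaup})\}$ as in Theorem~\ref{thmchev}, so that $\gr$ decomposes as in \eqref{e1.4}. Formula \eqref{eq4.9a} then defines a unique $\R$-linear map, which clearly preserves $\gr$ since $\etaup(\alphaup)\,{\in}\,\R^{*}$ and $\gr^{\alphaup}\,{=}\,\langle X_{\alphaup}\rangle_{\R}$. It pointwise fixes $\hr$ by construction. To verify it is a Lie algebra homomorphism, I would check the three families of bracket relations of Theorem~\ref{thmchev} and \eqref{e1.6}: the identity $[H,\psiup_{\etaup}(X_{\alphaup})]\,{=}\,\alphaup(H)\,\psiup_{\etaup}(X_{\alphaup})$ is automatic; the relation $[\psiup_{\etaup}(X_{\alphaup}),\psiup_{\etaup}(X_{-\alphaup})]\,{=}\,-H_{\alphaup}$ follows from $\etaup(\alphaup)\etaup(-\alphaup)\,{=}\,\etaup(0)\,{=}\,1$; and $[\psiup_{\etaup}(X_{\alphaup}),\psiup_{\etaup}(X_{\betaup})]\,{=}\,\etaup(\alphaup)\etaup(\betaup)N_{\alphaup,\betaup}X_{\alphaup+\betaup}$ matches $\psiup_{\etaup}(N_{\alphaup,\betaup}X_{\alphaup+\betaup})$ exactly because $\etaup$ is a homomorphism on $\Z[\Rad]$, so $\etaup(\alphaup{+}\betaup)\,{=}\,\etaup(\alphaup)\etaup(\betaup)$. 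Invertibility is then clear since $\psiup_{\etaup}\,{\circ}\,\psiup_{\etaup^{-1}}\,{=}\,\id$.

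Third, the homomorphism property $\psiup_{\etaup_{1}\etaup_{2}}\,{=}\,\psiup_{\etaup_{1}}{\circ}\psiup_{\etaup_{2}}$ and injectivity of $\psiup$. Both are direct: on each $\gr^{\alphaup}$ the composition $\psiup_{\etaup_{1}}{\circ}\psiup_{\etaup_{2}}$ multiplies by $\etaup_{1}(\alphaup)\etaup_{2}(\alphaup)$, while the kernel of $\psiup$ consists of those $\etaup$ with $\etaup(\alphaup)\,{=}\,1$ for every $\alphaup\,{\in}\,\Rad$, hence for every element of $\Z[\Rad]$.

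Fourth, surjectivity, which I expect to be the main obstacle. Given $\ftt\,{\in}\,\Aut_{0}(\gr,\hr)$, the relation $[H,\ftt(X)]\,{=}\,\ftt([H,X])\,{=}\,\alphaup(H)\ftt(X)$ for $X\,{\in}\,\gr^{\alphaup}$ shows $\ftt(\gr^{\alphaup})\,{=}\,\gr^{\alphaup}$, so $\ftt(X_{\alphaup})\,{=}\,c_{\alphaup}X_{\alphaup}$ for some function $c\colon\Rad\,{\to}\,\R^{*}$. Applying $\ftt$ to the Chevalley relations yields $c_{\alphaup}c_{-\alphaup}\,{=}\,1$ from $[X_{\alphaup},X_{-\alphaup}]\,{=}\,{-}H_{\alphaup}$, and $c_{\alphaup+\betaup}\,{=}\,c_{\alphaup}c_{\betaup}$ whenever $\alphaup,\betaup,\alphaup{+}\betaup\,{\in}\,\Rad$, using $N_{\alphaup,\betaup}\,{\neq}\,0$. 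To extend $c$ to a homomorphism $\etaup\,{\in}\,\Hom(\Z[\Rad],\R^{*})$, I would fix a Weyl chamber $C$, set $\etaup(\betaup)\,{=}\,c_{\betaup}$ on the simple basis $\Bz(C)$, and extend $\Z$-linearly. The nontrivial point is that $\etaup(\alphaup)\,{=}\,c_{\alphaup}$ for every root $\alphaup\,{\in}\,\Rad$; this I would prove by induction on $\kq_{\;C}(\alphaup)$, invoking the standard fact that any positive root can be written as $\alphaup\,{=}\,\gammaup\,{+}\,\betaup$ with $\gammaup\,{\in}\,\Rad^{+}(C)$ of lower height and $\betaup\,{\in}\,\Bz(C)$, so that the additivity $c_{\alphaup}\,{=}\,c_{\gammaup}c_{\betaup}$ transports to $\etaup$. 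Then $\psiup_{\etaup}\,{=}\,\ftt$, concluding surjectivity.
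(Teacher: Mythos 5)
Your proof is correct. The paper states Proposition~\ref{p3.4} without proof (it is essentially the standard result cited later at Proposition~\ref{propaut} from Bourbaki), and your argument is exactly the standard one that fills this gap: the only genuinely nontrivial point is surjectivity, and your height-induction showing that the cocycle $c_{\alphaup}$ determined on $\Bz(C)$ extends multiplicatively to agree with $c$ on all of $\Rad$ is the right way to handle it.
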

\begin{rmk} Let $C\,{\in}\,\Cd(\Rad)$. The corresponding set $\Bz(C)$ of simple positive roots 
 is  
a basis of 
the free $\Z$-module
$\Z[\Rad]$. Hence an $\etaup\,{\in}\,
\Hom(\Z[\Rad],\R^{*})$ is completely determined by its values
on the elements of $\Bz(C),$
that can be arbitrarily assigned. 
\end{rmk}
Denote by
$\Z_{2}^{*}$ the multiplicative subgroup $\{\pm{1}\}$ \index{$\Z_{2}^{*}$}
of $\R^{*}.$ 
\begin{lem}
 The involutions of $\Hom(\Z[\Rad],\R^{*})$ form its subgroup 
\begin{equation}
 \Hom(\Z[\Rad],\Z^{*}_{2}).
\end{equation}
We have 
\begin{equation}
 \Aut_{0}^{\tauup}(\gr,\hr)=\{\sigmaup\,{\in}\,\Aut_{0}(\gr,\hr)\,{\mid}\,\sigmaup(\gu)\,{=}\,\gu\}=\psiup( \Hom(\Z[\Rad],\Z^{*}_{2})).
\end{equation}
\par
The subgroup $ \Hom(\Z[\Rad],\R_{>0})$ is  normal 
and 
\begin{equation}\label{e1.36}
 \Hom(\Z[\Rad],\R^{*})=\Hom(\Z[\Rad],\R_{>0})\ltimes \Hom(\Z[\Rad],\Z_{2}^{*}).
\end{equation}
\end{lem}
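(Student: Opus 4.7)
The plan is to handle the three assertions separately, all of which follow from elementary computations on the Chevalley system. The first and third are formal, while the nontrivial point is the characterization of those $\psiup_{\etaup}$ which commute with the compact conjugation.

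First I would verify the easy statements. An element $\etaup\in\Hom(\Z[\Rad],\R^{*})$ is an involution iff $\etaup(\alphaup)^{2}=1$ for every $\alphaup\in\Rad$, and since the only square roots of $1$ in $\R^{*}$ are $\pm 1$, this means exactly that $\etaup$ takes values in $\Z_{2}^{*}$. For the semidirect (actually direct, since the ambient group is abelian) product decomposition, I would observe that $\R^{*}\cong\R_{>0}\times\Z_{2}^{*}$ as topological groups via $x\mapsto(|x|,\sgn(x))$; applying $\Hom(\Z[\Rad],-)$ preserves the direct product, so $\Hom(\Z[\Rad],\R^{*})=\Hom(\Z[\Rad],\R_{>0})\times\Hom(\Z[\Rad],\Z_{2}^{*})$, and normality of the first factor is automatic from the abelian ambient group.

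The core of the proof is the middle identity. Given $\etaup\in\Hom(\Z[\Rad],\R^{*})$ and $\sigmaup\,{=}\,\psiup_{\etaup}$, I extend $\sigmaup$ $\C$-linearly to $\gt$ and observe that $\sigmaup(X_{\alphaup})=\etaup(\alphaup)X_{\alphaup}$ continues to hold on the complexification. Let $\Tilde{\tauup}$ denote the anti-$\C$-linear extension of $\tauup$ to $\gt$; from the description of $\gu$ in \eqref{e1.5} its fixed-point set is precisely $\gu$, so the condition $\sigmaup(\gu)=\gu$ is equivalent to $\sigmaup\circ\Tilde{\tauup}=\Tilde{\tauup}\circ\sigmaup$ on $\gt$. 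Both sides agree on $\hr$ trivially, so I only need to test on each $X_{\alphaup}$. A direct computation gives
\begin{equation*}
\Tilde{\tauup}(\sigmaup(X_{\alphaup}))=\overline{\etaup(\alphaup)}\,X_{-\alphaup}=\etaup(\alphaup)X_{-\alphaup},\qquad
\sigmaup(\Tilde{\tauup}(X_{\alphaup}))=\etaup({-}\alphaup)X_{-\alphaup},
\end{equation*}
using that $\etaup(\alphaup)\in\R$. Commutation is therefore equivalent to $\etaup(\alphaup)=\etaup({-}\alphaup)=\etaup(\alphaup)^{-1}$ for every $\alphaup$, i.e.\ $\etaup(\alphaup)^{2}=1$, which by the first part means $\etaup\in\Hom(\Z[\Rad],\Z_{2}^{*})$. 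This identifies $\Aut_{0}^{\tauup}(\gr,\hr)$ with $\psiup(\Hom(\Z[\Rad],\Z_{2}^{*}))$ via the isomorphism \eqref{e2.6}.

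The only delicate point, and what I would flag as the main obstacle, is the translation between the real condition ``$\sigmaup$ preserves the real subspace $\gu$ of $\gt$'' and the algebraic condition ``$\sigmaup$ commutes with $\Tilde{\tauup}$''; one must check that the $\C$-linear extension of $\sigmaup$ is the correct object to commute against the \emph{anti}-$\C$-linear extension of $\tauup$, using that $\etaup(\alphaup)$ is real so complex conjugation is trivial in the computation above. Once this is in place, everything else is a short calculation on the Chevalley basis.
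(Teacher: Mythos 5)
Your proof is correct and follows essentially the same route as the paper: both reduce $\psiup_{\etaup}(\gu)=\gu$ to the condition $\etaup(\alphaup)=\etaup(-\alphaup)=(\etaup(\alphaup))^{-1}$, hence $\etaup$ valued in $\Z_{2}^{*}$, and both obtain the splitting \eqref{e1.36} from the factorization $\R^{*}\cong\R_{>0}\times\Z_{2}^{*}$ (the paper via the sign-map exact sequence, you via functoriality of $\Hom(\Z[\Rad],-)$). Your reformulation of $\sigmaup(\gu)=\gu$ as commutation with the anti-$\C$-linear extension of $\tauup$ is a harmless repackaging of the same Chevalley-basis computation, and your remark that the product is in fact direct (the ambient group being abelian) is accurate.
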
 
\begin{proof} If $\etaup\,{\in}\,\Hom(\Z[\Rad],\R^{*})$ and $\psiup_{\etaup}(\gu)\,{=}\,\gu,$ then
$\etaup(\alphaup)\,{=}\,\etaup(-\alphaup)\,{=}\,(\etaup(\alphaup))^{-1}$ for all $\alphaup\,{\in}\,\Rad$.
This is equivalent to the fact that $\etaup$ takes values in $\Z_{2}^{*}$.
\par
The last claim follows from the exactness of the  sequence 
\begin{equation*} 
\begin{CD}
 0 \to \!\Hom(\Z[\Rad],\R_{>0}) \!@>>> \!\Hom(\Z[\Rad],\R) \! @>{\sgn{\circ}}>>\!\Hom(\Z[\Rad],\Z_{2}^{*})\! \to 0,
\end{CD}
\end{equation*}
where the first map is inclusion and the second composition with the signature\, 
$\sgn:\R^{*}{\to}\,\Z_{2}^{*}$  ($\sgn(\ttt){=}\ttt/|\ttt|$). 
\end{proof}
Let $(\Z[\Rad])^{*}$ be the \emph{dual lattice} of $\Z[\Rad]$: \index{$(\Z[\Rad])^{*}$}
\begin{equation}
 (\Z[\Rad])^{*}=\{\omegaup\in\hr^{*}\mid (\alphaup\,|\,\omegaup)\in\Z,\;\forall\alphaup\in\Rad\}.
\end{equation}
\begin{ntz}
For $\omegaup\,{\in}\,(\Z[\Rad])^{*}$ and $\vq\,{\in}\,\hr^{*}$, we define the homomorphism 
\index{$\etaup_{(\omegaup,\vq)}$}
\begin{equation} \Z[\Rad]\ni\xiup \to
 \etaup_{\omegaup,\vq}(\xiup)=\exp((\xiup|\vq+i\piup\omegaup))\,{\in}\,\R^{*}.
\end{equation}
To simplify notation, we will write $\etaup_{\omegaup}$ for $\etaup_{\omegaup,0}$. 
\end{ntz}
\begin{lem} The correspondence  
\begin{equation}
 (\Z[\Rad])^{*}\oplus\hr^{*}\ni(\omegaup,\vq)\longrightarrow\etaup_{(\omegaup,\vq)}\in\Hom(\Z[\Rad],\R^{*}) 
\end{equation}
is a surjective homomorphism of abelian groups.\par \index{$\etaup_{\omegaup}$}
The elements of $\Hom(\Z[\Rad],\Z^{*}_{2})$ are of the form 
\begin{equation}\label{e2.13}\vspace{-18pt}
 \etaup_{\omegaup}(\xiup)=(-1)^{(\xiup\,|\,\omegaup)},\;\;\forall\xiup\in\Z[\Rad],\;\;\;
 \text{for some $\omegaup\,{\in}\,(\Z[\Rad])^{*}$.}
\end{equation}
\qed
\end{lem}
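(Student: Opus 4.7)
The plan is to verify the homomorphism property by direct computation, then prove surjectivity using the splitting \eqref{e1.36} of $\Hom(\Z[\Rad],\R^{*})$ into its positive part and its $\Z_{2}^{*}$-part, handling each factor separately. The second assertion about $\Hom(\Z[\Rad],\Z^{*}_{2})$ will fall out from the proof of the sign piece.

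First I would unpack the definition: since $\omegaup\,{\in}\,(\Z[\Rad])^{*}$, the quantity $(\xiup|\omegaup)$ is an integer, and hence
\[
 \etaup_{\omegaup,\vq}(\xiup)=\exp((\xiup|\vq)){\cdot}\exp(i\piup(\xiup|\omegaup))=e^{(\xiup|\vq)}\cdot({-}1)^{(\xiup|\omegaup)}\,{\in}\,\R^{*}.
\]
The $\Z$-bilinearity of $(\,{\cdot}\,|\,{\cdot}\,)$ in the first slot ensures that $\etaup_{\omegaup,\vq}$ is a group homomorphism on $\Z[\Rad]$, and the same bilinearity together with the multiplicativity of $\exp$ yields the homomorphism property $\etaup_{\omegaup_{1}+\omegaup_{2},\,\vq_{1}+\vq_{2}}\,{=}\,\etaup_{\omegaup_{1},\vq_{1}}\,{\cdot}\,\etaup_{\omegaup_{2},\vq_{2}}$ in the pair $(\omegaup,\vq)$.

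For surjectivity I would fix a basis $\Bz(C)\,{=}\,\{\alphaup_{1},\hdots,\alphaup_{\ell}\}$ of simple roots, which is a $\Z$-basis of the lattice $\Z[\Rad]$, so that any $\etaup\,{\in}\,\Hom(\Z[\Rad],\R^{*})$ is uniquely determined by its values on the $\alphaup_{i}$'s. Using \eqref{e1.36}, I can factor $\etaup\,{=}\,\etaup_{+}{\cdot}\etaup_{\pm}$ with $\etaup_{+}\,{\in}\,\Hom(\Z[\Rad],\R_{>0})$ and $\etaup_{\pm}\,{\in}\,\Hom(\Z[\Rad],\Z_{2}^{*})$. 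For $\etaup_{+}$ I take logarithms to obtain a homomorphism $\Z[\Rad]\,{\to}\,\R$ which extends $\R$-linearly to $\hr^{*}$ (since the $\alphaup_{i}$ form an $\R$-basis of $\hr^{*}$); by nondegeneracy of $(\,{\cdot}\,|\,{\cdot}\,)$ this linear functional is represented as $\xiup\,{\mapsto}\,(\xiup|\vq)$ for a unique $\vq\,{\in}\,\hr^{*}$, so that $\etaup_{+}\,{=}\,\etaup_{0,\vq}$.

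For the $\Z_{2}^{*}$-part, writing $\etaup_{\pm}(\alphaup_{i})\,{=}\,({-}1)^{\varepsilon_{i}}$ with $\varepsilon_{i}\,{\in}\,\{0,1\}$, I would use the fact that the simple roots are an $\R$-basis of $\hr^{*}$ to find a unique $\omegaup\,{\in}\,\hr^{*}$ with $(\alphaup_{i}|\omegaup)\,{=}\,\varepsilon_{i}$ for every $i$; since the $\alphaup_{i}$ generate $\Z[\Rad]$ and each $(\alphaup_{i}|\omegaup)\,{\in}\,\Z$, it follows that $\omegaup\,{\in}\,(\Z[\Rad])^{*}$. Then $\etaup_{\pm}\,{=}\,\etaup_{\omegaup,0}\,{=}\,\etaup_{\omegaup}$, establishing \eqref{e2.13} at the same time. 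Combining, $\etaup\,{=}\,\etaup_{\omegaup,\vq}$, which gives surjectivity.

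No step poses real difficulty, as the verification is essentially linear algebra once the definitions are written out; the only point requiring a little care is the passage from the $\R$-linear extension of a character on the lattice to a representing vector $\vq\,{\in}\,\hr^{*}$ via the duality map, and analogously the identification of the $\R$-vector $\omegaup$ with the $\varepsilon_{i}$-prescribed inner products as actually lying in $(\Z[\Rad])^{*}$, which is immediate from integrality on a $\Z$-generating set.
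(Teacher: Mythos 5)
Your argument is correct and complete; the paper itself states this lemma with no proof (it is marked \qed), and your verification follows exactly the route the authors set up in the surrounding text, namely the remark that an element of $\Hom(\Z[\Rad],\R^{*})$ is freely determined by its values on a basis $\Bz(C)$ of $\Z[\Rad]$ together with the splitting \eqref{e1.36} into the positive and $\Z_{2}^{*}$ parts. The only points needing care --- that the representing vectors $\vq$ and $\omegaup$ exist by nondegeneracy of the scalar product, and that integrality of $(\alphaup_{i}\,|\,\omegaup)$ on the simple roots already forces $\omegaup\,{\in}\,(\Z[\Rad])^{*}$ --- are handled correctly.
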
 
\begin{prop} The isomorphism \eqref{e2.6} restricts to an isomorphism 
\begin{equation}
 \psiup:\Hom(\Z[\Rad],\Z_{2}^{*}) \to \Inv_{0}(\gr,\hr)
\end{equation}
with the subgroup $\Inv_{0}(\gr,\hr)$ of $\Aut(\gr,\hr)$  
consisting of the involutions of $\gr$ pointwise fixing $\hr$. If 
$\omegaup\,{\in}\,(\Z[\Rad])^{*}$, then 

\begin{equation} \label{e1.43}
\begin{cases}
 \phiup_{\omegaup}(H)=H,&\forall H\in\hr,\\
 \phiup_{\omegaup}(X_{\alphaup})=e^{i(\alphaup|\omegaup)}X_{\alphaup},&\forall\alphaup\in\Rad
\end{cases}
\end{equation}
defines an isomorphism $\phiup_{\omegaup}$ of $\gr$ onto $\gt_{\psiup_{\omegaup}}.$ \qed
\end{prop}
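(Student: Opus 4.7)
The plan is to treat the two parts of the proposition in turn, both building on Proposition~\ref{p3.4} and the preceding lemma that identifies involutions of $\Hom(\Z[\Rad],\R^{*})$ with $\Hom(\Z[\Rad],\Z_{2}^{*})$.

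For the restricted isomorphism, the group isomorphism $\psiup$ carries elements of order dividing~$2$ bijectively to involutions. An $\etaup\,{\in}\,\Hom(\Z[\Rad],\R^{*})$ has $\etaup^{2}\,{=}\,1$ if and only if $\etaup(\xiup)\,{\in}\,\{{\pm}1\}\,{=}\,\Z_{2}^{*}$ for every $\xiup$, so the restriction of $\psiup$ to $\Hom(\Z[\Rad],\Z_{2}^{*})$ provides the asserted isomorphism onto $\Inv_{0}(\gr,\hr)$.

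For the second part, I would first extend the assignment in~\eqref{e1.43} $\C$-linearly to an endomorphism of $\gt$ and verify that it is a complex Lie algebra automorphism by checking the brackets on the Chevalley basis. The identity $[H,\phiup_{\omegaup}(X_{\alphaup})]\,{=}\,\alphaup(H)\,\phiup_{\omegaup}(X_{\alphaup})$ is immediate from $\phiup_{\omegaup}(H)\,{=}\,H$; when $\alphaup{+}\betaup\,{\in}\,\Rad$, since $N_{\alphaup,\betaup}\,{\in}\,\R$, the required identity $[\phiup_{\omegaup}(X_{\alphaup}),\phiup_{\omegaup}(X_{\betaup})]\,{=}\,\phiup_{\omegaup}([X_{\alphaup},X_{\betaup}])$ reduces to the additive character relation $e^{i(\alphaup|\omegaup)}\,e^{i(\betaup|\omegaup)}\,{=}\,e^{i(\alphaup{+}\betaup|\omegaup)}$; and the relation $[\phiup_{\omegaup}(X_{\alphaup}),\phiup_{\omegaup}(X_{-\alphaup})]\,{=}\,{-}H_{\alphaup}$ follows from $e^{i(\alphaup|\omegaup)}\,e^{-i(\alphaup|\omegaup)}\,{=}\,1$ together with $\phiup_{\omegaup}(H_{\alphaup})\,{=}\,H_{\alphaup}$. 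Hence $\phiup_{\omegaup}\,{\in}\,\Aut_{\C}(\gt,\hg)$ and its restriction to $\gr$ is $\R$-linear and injective.

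Finally, I would identify $\phiup_{\omegaup}(\gr)$ with the real form $\gt_{\psiup_{\omegaup}}$ attached to $\sigmaup\,{=}\,\psiup_{\omegaup}$ by~\eqref{e1.28}--\eqref{e1.29}. Because $\sigmaup$ fixes $\hr$ pointwise, $\hs\,{=}\,\hr\,{=}\,\phiup_{\omegaup}(\hr)$. For each root $\alphaup$, $\sigmaup$ acts on $X_{\alphaup}$ by the sign $({-}1)^{(\alphaup|\omegaup)}$, so the $\alphaup$-summand of $\gt_{\psiup_{\omegaup}}\,{=}\,\gs^{+}{\oplus}\,i\gs^{-}$ is $\R X_{\alphaup}$ when $(\alphaup|\omegaup)$ is even and $i\R X_{\alphaup}$ when $(\alphaup|\omegaup)$ is odd. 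The main delicacy lies precisely in this parity check: one must verify, with the normalization of the scalar $e^{i(\alphaup|\omegaup)}$ used in~\eqref{e1.43} properly interpreted, that the value attached to $X_{\alphaup}$ lies in $\R$ in the even case and in $i\R$ in the odd case. Once this is confirmed, $\phiup_{\omegaup}$ carries the $\R$-basis $\hr\cup\{X_{\alphaup}\}_{\alphaup\in\Rad}$ of $\gr$ to an $\R$-basis of $\gt_{\psiup_{\omegaup}}$ and so furnishes the desired $\R$-linear Lie algebra isomorphism.
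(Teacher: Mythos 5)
Your argument is correct and supplies what the paper leaves to the reader: the proposition is printed with a \qed{} and no proof, being regarded as an immediate consequence of \eqref{e2.6} and of the preceding lemma identifying the involutions of $\Hom(\Z[\Rad],\R^{*})$ with $\Hom(\Z[\Rad],\Z_{2}^{*})$. Your first part is exactly the observation that a group isomorphism matches elements of order dividing two on both sides, and your verification that $\phiup_{\omegaup}$ is a complex automorphism via multiplicativity of $\alphaup\mapsto c_{\alphaup}$ on $\Z[\Rad]$ is the same computation the paper carries out for the analogous maps in the proofs of Theorem~\ref{t3.13} and Lemma~\ref{l3.3}. The one point you leave open --- whether the scalar attached to $X_{\alphaup}$ is real when $(\alphaup\,|\,\omegaup)$ is even and purely imaginary when it is odd --- is settled by reading the printed $e^{i(\alphaup|\omegaup)}$ as $i^{(\alphaup|\omegaup)}$, which is the normalization used for $\phiup_{\omegaup}$ everywhere else in the paper (proofs of Theorem~\ref{t3.13}, Theorem~\ref{t3.15}, Lemma~\ref{l3.3}): the map $\alphaup\mapsto i^{(\alphaup|\omegaup)}$ is multiplicative, takes values $\pm 1$ exactly on the roots with $(\alphaup|\omegaup)$ even and $\pm i$ exactly on those with $(\alphaup|\omegaup)$ odd, so your parity check closes and $\phiup_{\omegaup}(\gr)=\hr\oplus{\sum}_{\alphaup}\,i^{(\alphaup|\omegaup)}\R X_{\alphaup}=\gt_{\psiup_{\omegaup}}$. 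Your caution was warranted: taken literally, $e^{i(\alphaup|\omegaup)}$ with $(\alphaup|\omegaup)\in\Z$ is in general neither real nor purely imaginary, and the image of $\gr$ would then fail to be the real form $\gs^{+}\oplus i\gs^{-}$; the formula in \eqref{e1.43} should be regarded as a misprint for $i^{(\alphaup|\omegaup)}$.
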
 
In the next subsection we will construct for each involution $\stt$ of the root system 
an involution in 

$\Inv_{0}(\gr,\hr)$.
Involutions  are associated to  
systems of strongly orthogonal roots. The following Lemma will be useful. 
\begin{lem}\label{l1.19}
 If $\Stt\,{=}\,\{\betaup_{1},\hdots,\betaup_{r}\}$ is any system of 
 strongly orthogonal roots, then we can
 find $\omegaup\,{\in}\,(\Z[\Rad])^{*}$ such that 
\begin{equation}\label{e3.16}
 (\betaup_{i}|\omegaup)=1,\;\forall i=1,\hdots,r.
\end{equation}
\end{lem}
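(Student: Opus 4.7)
The plan is to reduce to the irreducible case and then construct $\omegaup$ explicitly for a representative of each equivalence class of strongly orthogonal systems, using the classification in \S\ref{s2.4}.

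First, I would reduce to the case where $\Rad$ is irreducible. If $\Rad = \bigsqcup_k \Rad^{(k)}$ is its decomposition into irreducible components, then by orthogonality the dual lattice splits as $(\Z[\Rad])^* = \bigoplus_k (\Z[\Rad^{(k)}])^*$, and each $\betaup_i$ lies in exactly one component. Solving the problem in each component independently and summing the resulting vectors gives the desired $\omegaup$ for all of $\Stt$.

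Now assume $\Rad$ is irreducible. Since $\Af(\Rad)$ consists of Killing-form isometries preserving $\Rad$, it also preserves $(\Z[\Rad])^*$, so by the classification in \S\ref{s2.4} it suffices to exhibit $\omegaup$ for one representative of each equivalence class of strongly orthogonal systems. For the classical types the construction is direct: in $\textsc{A}_\ell$ with $\Stt = \{\e_{2i-1}-\e_{2i}\}_{i=1}^{h}$, take $\omegaup = \sum_{i=1}^h \e_{2i-1}$, whose pairing with any root $\e_j-\e_k$ lies in $\{-1,0,1\}\subset\Z$ and with each $\betaup_i$ equals $1$. Analogous integer-coordinate choices work in $\textsc{B}_\ell$ and $\textsc{D}_\ell$, while in $\textsc{C}_\ell$ one needs half-integer coordinates when $r_2 > 0$ (for instance $\omegaup_k = 1/2$ for most indices $k$ and $\omegaup_{2i-1}=3/2$ for $1 \leq i \leq r_1$), the inequality $2r_1+r_2 \leq \ell$ ensuring no index collisions.

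For the exceptional types I would use the maximal systems from \S\ref{s2.4}. In $\textsc{E}_8$ with $\Mtt(\textsc{E}_8) = \{\e_1 \pm \e_2,\, \e_3 \pm \e_4,\, \e_5 \pm \e_6,\, \e_7 \pm \e_8\}$, the vector $\omegaup = \e_1 + \e_3 + \e_5 + \e_7$ lies in $\Z[\Rad(\textsc{E}_8)] = (\Z[\Rad(\textsc{E}_8)])^*$ by self-duality and pairs to $1$ with every element of $\Mtt(\textsc{E}_8)$; hence it works for every subset, which by the classification covers both the Klein and non-Klein equivalence classes of $4$-element systems. Restricting to $\textsc{E}_7$ and $\textsc{E}_6$ needs only a minor adjustment of the $\e_7,\e_8$-components (respectively $\e_6,\e_7,\e_8$) so that $\omegaup$ lies in the relevant subspace: $\omegaup = \e_1 + \e_3 + \e_5 + \tfrac{1}{2}(\e_7 + \e_8)$ works for $\Mtt(\textsc{E}_7)$ and $\omegaup = \e_1 + \e_3$ for $\Mtt(\textsc{E}_6)$. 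The types $\textsc{F}_4$ and $\textsc{G}_2$ have short enough lists of equivalence classes to verify by inspection. The main obstacle throughout is ensuring integrality of $(\omegaup, \alphaup)$ for \emph{every} root $\alphaup$, not only the $\betaup_i$; this rules out the naive guess $\omegaup = \sum_i \betaup_i/(\betaup_i|\betaup_i)$ (which lies only in $(1/2)(\Z[\Rad])^*$ in general) and forces careful bookkeeping of integer-vs.-half-integer parity of coordinates in the non-simply-laced types.
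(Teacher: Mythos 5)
Your proposal is correct and follows essentially the same route as the paper: reduce to the irreducible (and, in effect, maximal) case, invoke the classification of strongly orthogonal systems up to $\Af(\Rad)$-equivalence from \S\ref{s2.4}, and exhibit an explicit $\omegaup$ type by type — indeed your vectors for $\textsc{E}_{6},\textsc{E}_{7},\textsc{E}_{8},\textsc{F}_{4}$ coincide with (or are the orthogonal projections of) the entries in the paper's table. The only cosmetic caveat is that in types $\textsc{A}_{\ell}$ and $\textsc{G}_{2}$ one should replace $\sum\e_{2i-1}$ (resp.\ $\e_{1}$) by its projection onto the span of $\Rad$ so that $\omegaup$ literally lies in $\hr^{*}$, exactly as the paper does with the correction term $-\tfrac{[(\ell+1)/2]}{\ell+1}\sum\e_{i}$.
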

\begin{proof}
It suffices to consider the case where $\Rad$ is irreducible and 
$\Stt$ maximal. We employ  the notation of \S\ref{s2.4}. For the convenience of the reader, we list first the
maximal systems of strongly orthogonal roots for which we will produce a corresponding $\omegaup$. 
For type $\textsc{E}$ we chose the root systems $\Rad(\textsc{E}_{\ell})$, $\ell=6,7,8$ 
described therein.
\begin{align*}
& \Mtt(\textsc{A}_{\ell})=\{\e_{2i-1}{-}\e_{2i}\,{\mid}\, 1{\leq}i{\leq}[(\ell+1)/2]\},\\
& \Mtt(\textsc{B}_{\ell})= 
\begin{cases}
 \{\e_{2i-1}{\pm}\e_{2i}\mid 1{\leq}i{\leq}(\ell/2)\}, & \ell\,{\in}\,2\Z,\\
  \{\e_{2i-1}{\pm}\e_{2i}\mid 1{\leq}i{\leq}(\ell/2)\}\,{\cup}\,\{\e_{\ell}\}, & \ell\,{\notin}\,2\Z,
\end{cases}\\
&\Mtt_{r_{1},r_{2}}(\textsc{C}_{\ell})=\{\e_{2i-1}{+}\e_{2i}\,{\mid}\,1{\leq}i{\leq}r_{1}\}\cup\{\e_{2r_{1}+i}\,{\mid}\,1{\leq}i{\leq}r_{2}\},
& 2r_{1}{+}r_{2}{=}\ell,\\
&\Mtt(\textsc{D}_{\ell})=\{\e_{2i-1}{\pm}\e_{2i}\,{\mid}\,1{\leq}i{\leq}{[\ell/2]},\\
&\Mtt(\textsc{E}_{6})=\{\e_{1}{\pm}\e_{2},\,\e_{3}{\pm}\e_{4}\},\\
&\Mtt(\textsc{E}_{7})=\{\e_{2i-1}{\pm}\e_{2i}\,{\mid}i=1,2,3\}\cup\{\e_{7}{+}\e_{8}\},\\
&\Mtt(\textsc{E}_{8})=\{\e_{2i-1}{\pm}\e_{2i}\,{\mid}\, i=1,2,3,4\},\\
&
\begin{cases}\Mtt(\textsc{F}_{4})=
 \{\e_{1}{\pm}\e_{2},\, \e_{3}{\pm}\e_{4}\},\\
 \Mtt'(\textsc{F}_{4})=
 \{\e_{1}{\pm}\e_{2}\}\cup\{\e_{3}\},
\end{cases}\\
&\Mtt(\textsc{G}_{2})=\{\e_{1}-\e_{2}, \e_{1}{+}\e_{2}{-}2\e_{3}\}.
\end{align*}
\par 
Correspondingly, we may choose 
\begin{equation*} 
\begin{array}{| c | l |}
\hline
\text{type} & \qquad\qquad\omegaup \\[3pt]
\hline
\textsc{A}_{\ell}
& {\sum}_{i=1}^{[(\ell+1)/2}\e_{2i-1}{-}\tfrac{[(\ell+1)/2]}{\ell+1}{\sum}_{i=1}^{\ell+1}\e_{i}\\[3pt]
 \hline
\textsc{B}_{\ell}
& {\sum}_{i=1}^{[(\ell+1)/2]} \e_{2i-1} \\[3pt]
 \hline
 \textsc{C}_{\ell} 
 & \tfrac{1}{2}{\sum}_{i=1}^{\ell}\e_{i}\\[3pt]
 \hline  
 \textsc{D}_{\ell}
 & {\sum}_{i=1}^{[\ell/2]}\e_{2i-1}\\[3pt]
 \hline
\textsc{E}_{6} 
 & \e_{1}+\e_{3} \\[3pt]
 \hline
 \textsc{E}_{7}
 & \e_{1}+\e_{3}+\e_{5}+\e_{7}\\[3pt]
 \hline
  \textsc{E}_{8}
  & \e_{1}+\e_{3}+\e_{5}+\e_{7}\\[3pt]
 \hline
\textsc{F}_{4}
 & \e_{1}+\e_{3}\\[3pt]
 \hline
 \textsc{G}_{2}
 & \e_{1} \\[3pt]
 \hline
\end{array}
\end{equation*}
[For types admitting different maximal systems of strongly orthogonal roots, the same
 $\omegaup$ on the right is adapt for the different choices.]
\end{proof}

\subsection{Lifting involutions of $\Rad$ to involutions in $\Inv^{\tauup}(\gr,\hr)$}\label{s1.5}
 A map 
 $\phiup$ of $\Aut(\gr,\hr)$ 
acts on $\hr^{*}$ by $\phiup^{*}(\xiup)\,{=}\,\xiup\,{\circ}\,\phiup$; as 
$\phiup^{*}(\Rad)\,{=}\,\Rad$, its restriction to $\Rad$ defines an element of
$\Af(\Rad)$, because 
automorphisms of $\gr$ preserve the Killing form.  The map \index{$\rhoup$}
\begin{align}\label{eqrho}
 \rhoup\,{:}\,\Aut(\gr,\hr) \,{\to}\, \Af(\Rad),\;\text{with}\; \rhoup(\phiup)(\alphaup)=
 \alphaup\circ\phiup^{-1},\,\,\,\,\,\,\\
 \notag
\,\, \forall\phiup\,{\in}\,\Aut(\gr,\hr),
 \;\forall\alphaup\,{\in}\,\Rad,
\end{align}
is a group homomorphism.  \par
We show in this subsection that all  
involutions 
in $\Ib(\Rad)$  lift to involutive automorphisms of $(\gr,\hr)$, yielding pairs
$(\gs,\hs)$ with a quasi-split~$\gs$. 
\begin{ntz}
 Having fixed $\stt\,{\in}\,\Ib(\Rad)$, we will denote by $\Invs$ the set \index{$\Invs$}
 of involutions $\sigmaup$ in $\Inv^{\tauup}(\gr,\hr)$ with $\rhoup(\sigmaup)\,{=}\,\stt$.
\end{ntz}

\begin{exam}
  Let us explain a way of lifting 
an involution in $\Ib_{\Wf}(\Rad)$ to an involution of $\Inv^{\tauup}(\gr,\hr)$ 
in the simplest case where $\gr{=}\,\slt_{2}(\R)$ and $\hr$ consists
of its diagonal matrices. We have $\Rad\,{=}\,\{{\pm}(\e_{1}{-}\,\e_{2})\}$ and 
$\Af(\Rad)$ contains the identity and of the involution $\stt\,{=}\,{-}\id.$
\par 
The inner involution
\begin{equation*}
\sigmaup:\begin{pmatrix}
 a & b\\
 c & -a\;
\end{pmatrix}\longrightarrow 
\begin{pmatrix}
 0\; & 1\\
 {-}1& 0 
\end{pmatrix} 
\begin{pmatrix}
 a & b\\
 c & -a\;
\end{pmatrix}
 \begin{pmatrix}
0 & -1 \\
1 & \; 0
\end{pmatrix} =
 \begin{pmatrix}
 {-}a & c\\
 b\; & a\;
\end{pmatrix} \end{equation*}
lifts $\stt$ and yields the real form $(\gs,i\hr)$ with 
\begin{equation*}
 \gs=\left.\left\{ 
\begin{pmatrix}
 ia & z\\
 \bar{z} & {-}ia
\end{pmatrix}\,\right|\, a\in\R,\; z\in\C\right\}.
\end{equation*}
Setting $\lambdaup\,{=}\,\tfrac{1{+}i}{\sqrt{2}}$, we obtain 
\begin{equation*}
\begin{pmatrix}
 \lambdaup & {-}\bar{\,\lambdaup}\\
 \lambdaup & \;\; \bar{\,\lambdaup}
\end{pmatrix}
\begin{pmatrix}
 a & b\\
 c & -a\;
\end{pmatrix}
\begin{pmatrix}
 \lambdaup & {-}\bar{\,\lambdaup}\\
 \lambdaup & \;\; \bar{\,\lambdaup}
\end{pmatrix}= 
\begin{pmatrix}
 i(c{-}b) & 2a{+}i(b{+}c)\\
 2a{-}i(b{+}c) & i(b{-}c)
\end{pmatrix}
\end{equation*}
and thus  the map 
\begin{equation*}
 X \longrightarrow A\,X\,A^{-1},\;\;\;\text{with}\;\;
  A = 
\begin{pmatrix}
 \lambdaup & {-}\bar{\lambdaup}\;\\
\lambdaup & \; {\bar{\lambdaup}}
\end{pmatrix}
\end{equation*}
yields an isomorphism of $\gr$ onto $\gs$.
\end{exam}
We begin by considering involutions belonging to the Weyl group.
\begin{thm} \label{t3.13}
If $\stt\,{\in}\,\Ib_{\Wf}(\Rad),$ 
then we can find a 
$\sigmaup\,{\in}\,\Invs$ with 
$\gs$ isomorphic to $\gr$.
\end{thm}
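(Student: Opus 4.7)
The plan is to build $\sigmaup$ explicitly from inner rotations in the $\slt_2$-triples attached to a strongly orthogonal decomposition of $\stt$, then correct by a diagonal element so that the result is involutive, and finally verify that the associated real form is split by a Cayley transform argument.

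First I would apply Proposition~\ref{p2.7} to write $\stt = \epi \circ \sq_{\betaup_1} \circ \cdots \circ \sq_{\betaup_r}$ with $\epi \in \Ib^{*}(\Rad)$ and $\betaup_1, \ldots, \betaup_r$ strongly orthogonal. Since $\stt$ and all $\sq_{\betaup_i}$ lie in $\Wf(\Rad)$, so does $\epi$. Inspection of Theorem~\ref{t3.16} shows that every nontrivial special involution of an irreducible root system (and hence of any root system) lies in $\Af(\Rad) \setminus \Wf(\Rad)$, so necessarily $\epi = \id$ and $\stt = \sq_{\betaup_1} \cdots \sq_{\betaup_r}$. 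Strong orthogonality of the $\betaup_i$ forces $[K_{\betaup_i}, K_{\betaup_j}] = 0$, and each $K_{\betaup_i}$ lies in $\gr \cap \gu$, so
$$\sigma_0 := \exp\!\Big(\tfrac{\piup}{2}\ad\big(K_{\betaup_1} + \cdots + K_{\betaup_r}\big)\Big) = \prod_{i=1}^{r} \exp\!\big(\tfrac{\piup}{2}\ad K_{\betaup_i}\big)$$
is a well-defined element of $\Aut^{\tauup}(\gr,\hr)$, and Lemma~\ref{l1.2} shows it acts on $\hr$ as $\stt^{*}$, so $\rhoup(\sigma_0) = \stt$.

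Second, I would compute $\sigma_0^{2} = \exp(\piup\ad\sum K_{\betaup_i})$. The eigenvalue analysis carried out in the proof of Lemma~\ref{l1.2} shows that $\sigma_0^{2}$ fixes $\hr$ pointwise and multiplies $X_{\gammaup}$ by $(-1)^{\sum_i \langle\gammaup\mid\betaup_i\rangle}$, so $\sigma_0^{2} = \psiup_{\eta_0} \in \Inv_0^{\tauup}(\gr,\hr)$ with $\eta_0$ explicit. To correct it I would invoke Lemma~\ref{l1.19} to select $\omegaup \in (\Z[\Rad])^{*}$ with $(\omegaup\mid\betaup_i) = 1$ for all $i$, and then set
$$\sigmaup := \sigma_0 \circ \psiup_{\eta_{\omegaup}}.$$
Using the conjugation rule $\sigma_0\,\psiup_{\xi}\,\sigma_0^{-1} = \psiup_{\xi\circ\stt}$ (immediate from $\rhoup(\sigma_0)=\stt$), one computes
$$\sigmaup^{2} = \sigma_0^{2}\cdot\psiup_{\eta_{\omegaup}\circ\stt}\cdot\psiup_{\eta_{\omegaup}} = \psiup_{\eta_0 + (\eta_{\omegaup}\circ\stt)+\eta_{\omegaup}}.$$
Substituting $\stt(\gammaup) + \gammaup = 2\gammaup - \sum_i \langle\gammaup\mid\betaup_i\rangle\,\betaup_i$ and using $\eta_{\omegaup}(\betaup_i) = (-1)^{(\omegaup\mid\betaup_i)} = -1$ gives $(\eta_{\omegaup}\circ\stt)\cdot\eta_{\omegaup} = \eta_0$ modulo $2$, so $\sigmaup^{2} = \id$; since $\psiup_{\eta_{\omegaup}} \in \Inv_{0}^{\tauup}(\gr,\hr)$, one has $\sigmaup \in \Invs$.

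Finally, for $\gs \cong \gr$ I would argue that each imaginary root $\betaup_i$ is \emph{noncompact} for $\sigmaup$. A direct calculation using Lemma~\ref{l1.2} gives $\sigma_0(X_{\betaup_i}) = X_{-\betaup_i}$, and since $\eta_{\omegaup}(\betaup_i) = -1$ one obtains $\sigmaup(X_{\betaup_i}) = -X_{-\betaup_i}$; computing the $\taup$-decomposition of the $\sigmaup$-fixed $3$-dimensional subalgebra $\slt_2(\betaup_i) \cap \gs$ exhibits it as $\slt_2(\R)$, not $\su(2)$. Because the $\betaup_i$ are strongly orthogonal, the corresponding Cayley transforms commute and can be applied simultaneously to replace the Cartan subalgebra $\hs = \hs^{+} \oplus i\hs^{-}$ by one entirely contained in $\ps$; thus $\gs$ admits a totally split Cartan subalgebra and is isomorphic to $\gr$. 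The main technical obstacle is the involutivity check in the middle step, which depends on the right choice of $\omegaup$ from Lemma~\ref{l1.19}; all other ingredients are direct computations inside the commuting $\slt_2$-triples attached to the $\betaup_i$.
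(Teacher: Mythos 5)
Your proof is correct, and it is built from exactly the same ingredients as the paper's: the strongly orthogonal decomposition of Prop.\,\ref{p2.7} (with the reduction to $\epi\,{=}\,\id$, which you justify via Thm.\,\ref{t3.16} and which the paper leaves implicit), the functional $\omegaup$ of Lemma\,\ref{l1.19} with $(\betaup_{i}|\omegaup)\,{=}\,1$, and the rotations $\exp(\ttt\ad(K_{\betaup_{i}}))$ of Lemma\,\ref{l1.2}. The packaging differs in two places. The paper sets $\sigmaup\,{=}\,\exp(\tfrac{\piup}{4}\ad(K))\circ\psiup_{\omegaup}\circ\exp(-\tfrac{\piup}{4}\ad(K))$, so involutivity is free (a conjugate of an involution), and the isomorphism $\gr\,{\to}\,\gs$ is obtained by applying the same conjugation to the $\C$-linear automorphism $\phiup_{\omegaup}$; you instead take the half-turn $\exp(\tfrac{\piup}{2}\ad(K))$ composed with $\psiup_{\etaup_{\omegaup}}$, which costs you the explicit check $\sigmaup^{2}\,{=}\,\psiup_{\etaup_{0}\cdot(\etaup_{\omegaup}\circ\,\stt)\cdot\etaup_{\omegaup}}\,{=}\,\id$ (your computation is right, and the normalization $(\betaup_{i}|\omegaup)\,{=}\,1$ is precisely what makes it close up), and you then identify $\gs$ with $\gr$ by showing the $\betaup_{i}$ are noncompact and undoing them with Cayley transforms. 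That last step forward-references \S\ref{s3.2.2}, which creates no circularity since Prop.\,\ref{p4.10} does not rely on Thm.\,\ref{t3.13}; but note that with $c\,{=}\,\exp(\tfrac{\piup}{4}\ad(K))$ your involution satisfies $c^{-1}\circ\sigmaup\circ{c}\,{=}\,c\circ\psiup_{\etaup_{\omegaup}}\circ{c}\,{\in}\,\Inv_{0}(\gr,\hr)$ (it fixes $\hr$ pointwise by the same rotation formulas), so $\gs\,{=}\,c(\gt_{c^{-1}\sigmaup{c}})\,{\simeq}\,\gr$ follows in one line and your final paragraph collapses to the paper's argument.
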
 
\begin{proof} 
By Prop.\ref{p2.7}, we have $\stt\,{=}\,\sq_{\,\betaup_{1}}{\circ}\,\cdots\,{\circ}\,\sq_{\,\betaup_{r}}$ 
for a system $\betaup_{1},\hdots,\betaup_{r}$ of strongly orthogonal
roots. \par
By Lemma\,\ref{l1.19} there is  $\omegaup\,{\in}\,(\Z[\Rad])^{*}$ with
$(\betaup_{i}{|}\,\omegaup)\,{=}\,1$ for $1\,{\leq}\,i\,{\leq}r.$ Define an involution of $\gr$ by
\begin{equation*}\begin{cases}
 \psiup_{\omegaup}(H)=H, &\forall H\,{\in}\,\hr,\\
 \psiup_{\omegaup}(X)=(-1)^{(\alphaup|\omegaup)}X,&\forall X\in\gr^{\alphaup},\;\forall\alphaup\in\Rad.
 \end{cases}
\end{equation*}
Using the notation
of Lemma\,\ref{l1.2}, we set $K_{\betaup_{i}}{=}X_{\betaup_{i}}{+}X_{-\betaup_{i}}$ and
$T_{\betaup_{i}}{=}X_{\betaup_{i}}{-}X_{-\betaup_{i}}$. With
\begin{equation*}
 K=K_{\betaup_{1}}+\cdots+K_{\betaup_{r}},
\end{equation*}
we define
\begin{equation}\label{e1.64}
 \sigmaup=\exp(\tfrac{\piup}{4}\ad(K))\circ\psiup_{\omegaup}\circ\exp(-\tfrac{\piup}{4}\ad(K)).
\end{equation}
Being a conjugate of the involution $\psiup_{\omegaup},$ also this map is an involution of $\gr$,
which, since $K\,{\in}\,\gu,$ leaves $\gu$ invariant. 
Moreover we have 
\begin{equation*} 
\begin{cases}
 \sigmaup(H)=H,\;\;\forall H\in\hr \;\;\text{satisfying}\;\;\betaup_{i}(H)=0\;\;\forall i\,{=}\,1,\hdots,r,\\
 \sigmaup(H_{\betaup_{i}})=\exp(\tfrac{\piup}{4}\ad(K_{\betaup_{i}}))({-}T_{\betaup_{i}})=-H_{\betaup_{i}},
 \;\;\forall i{=}1,\hdots,r,\\
 \sigmaup(T_{\betaup_{i}})=\exp(\tfrac{\piup}{4}\ad(K_{\betaup_{i}}))({-}H_{\betaup_{i}})=T_{\betaup_{i}},
 \;\;\forall i{=}1,\hdots,r,\\
  \sigmaup(K_{\betaup_{i}})=(-1)^{(\betaup_{i}|\omegaup)}K_{\betaup_{i}}={-}K_{\betaup_{i}},
 \;\;\forall i{=}1,\hdots,r.
\end{cases}
\end{equation*} 
The first two lines show that $\sigmaup(\hr)\,{=}\,\hr$ and that 
$\sigmaup\,{\in}\,\Invs$. \par
Let $\phiup_{\omegaup}\,{\in}\,\Aut_{\C}(\gt,\hg)$ be characterized by
\begin{equation*}\begin{cases}
 \phiup_{\omegaup}(H)=H, & \forall H\in\hr,\\
 \phiup_{\omegaup}(X)=i^{(\alphaup|\omegaup)}X, &\forall X\in\gr^{\alphaup},\; \forall \alphaup\in\Rad.
 \end{cases}
\end{equation*}
The composition 
\begin{equation*}
 \phiup=\exp(\tfrac{\piup}{4}\ad(K))\,{\circ}\,\phiup_{\omegaup}\circ\exp(-\tfrac{\piup}{4}\ad(K))
\end{equation*}
belongs to 
$\Aut_{\C}(\gt,\hg)$ and restricts to an isomorphism of  $\gr$ onto $\gs.$
\end{proof}
\begin{ntz} \label{n3.4}
If $\Btt\,{=}\,\{\betaup_{1},\hdots,\betaup_{r}\}$ 
is a system of strongly orthogonal roots, we set 
\begin{equation*}
\begin{cases}
 \sq_{\,\Btt}\,{=}\,\sq_{\,\betaup_{1}}\,{\circ}\,\cdots\,{\circ}\,\sq_{\,\betaup_{r}},\\
 K_{\Btt}\,{=}\,K_{\betaup_{1}}\,{+}\,\cdots\,{+}\,K_{\betaup_{r}},\\
 \quad\text{and, 
having fixed an $\omegaup\,{\in}\, (\Z[\Rad])^{*}$ with $(\betaup_{i}|\,\omegaup)\,{=}\,1$
for $1{\leq}i{\leq}r$,}\\
  \sq_{\,\Btt}^{\sharp}\,{=}\,
  \exp(\tfrac{\pi}{4}\ad(K_{\Btt})\,{\circ}\,\psiup_{\omegaup}\,{\circ}\,
   \exp(-\tfrac{\pi}{4}\ad(K_{\Btt})
   \end{cases}
\end{equation*}
\end{ntz}\par\vspace{2pt}
\begin{thm}\label{t3.14} For $C\,{\in}\,\Cd(\Rad)$ and 
$\varepsilon\,{\in}\,\Af(\Rad,C)$ 
there is a unique automorphism 
${\epi^{\sharp}}$ in $\Aut^{\tauup}(\gr,\hr)$ with 
\begin{equation*}
 {\epi^{\sharp}}(\{X_{\alphaup}\mid \alphaup\in\Bz(C)\})=\{X_{\alphaup}\mid \alphaup\in\Bz(C)\}\quad
 \text{and $\rhoup({\epi^{\sharp}})\,{=}\,\varepsilon.$}
\end{equation*} \par 
 If  $\varepsilon$ is an involution of $\Rad\,$,
then 
${\epi^{\sharp}}$ is an involution in $\Inv^{\tauup}_{\!\epi}(\gr,\hr).$  \par
 If $\sigmaup_{1},\sigmaup_{2}\,{\in}\,\Inv^{\tauup}_{\!\epi}(\gr,\hr)$,
 then
 $\gt_{\sigmaup_{1}}$ and $\gt_{\sigmaup_{2}}$ are isomorphic. 
\end{thm}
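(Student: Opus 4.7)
The plan is to dispose of the four claims in sequence: construct $\epi^\sharp$ on the simple Chevalley generators attached to $C$; verify $\epi^\sharp\in\Aut^{\tauup}(\gr,\hr)$ and $\rhoup(\epi^\sharp)=\epi$ directly; conclude the involution property from uniqueness; and finally produce the isomorphism of real forms through a conjugation in $\Aut_\C(\gt,\hg)$.

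For \emph{existence and uniqueness}, since $\epi\in\Af(\Rad,C)$ permutes $\Bz(C)$ and, being an isometry, preserves its Cartan matrix, the assignment $X_{\pm\alphaup}\mapsto X_{\pm\epi(\alphaup)}$ and $H_\alphaup\mapsto H_{\epi(\alphaup)}$ for $\alphaup\in\Bz(C)$ is compatible with Serre's presentation of $\gr$, so it extends uniquely to a Lie-algebra automorphism $\epi^\sharp$ of $\gr$. Uniqueness of any automorphism satisfying the two conditions in the statement is immediate, because the vectors $\{X_{\pm\alphaup}:\alphaup\in\Bz(C)\}$ generate $\gr$.

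The commutation $\tauup\circ\epi^\sharp=\epi^\sharp\circ\tauup$ is obvious on simple generators and so holds throughout $\gr$, giving $\epi^\sharp(\gu)=\gu$. The identity $\rhoup(\epi^\sharp)=\epi$ follows from $\epi^\sharp(H_\alphaup)=H_{\epi(\alphaup)}$ on the basis $\{H_\alphaup:\alphaup\in\Bz(C)\}$ of $\hr$, since the duality $\hr\leftrightarrow\hr^{*}$ is equivariant for isometries. When $\epi^{2}=\id$, the automorphism $(\epi^\sharp)^{2}$ fixes each simple Chevalley generator, hence is the identity by uniqueness; so $\epi^\sharp\in\Inv^{\tauup}_{\!\epi}(\gr,\hr)$.

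For the \emph{isomorphism of real forms}, the composition $\sigmaup_{1}\sigmaup_{2}$ descends to $\epi^{2}=\id$ on $\Rad$, so it belongs to $\Aut^{\tauup}_{0}(\gr,\hr)$; by Proposition\,\ref{p3.4} and its subsequent lemmas, $\sigmaup_{1}\sigmaup_{2}=\psiup(\etaup_{\omegaup})$ for some $\omegaup\in(\Z[\Rad])^{*}$, with the additional constraint $\etaup_{\omegaup}\circ\epi=\etaup_{\omegaup}$ forced by $\sigmaup_{1}^{2}=\id$. I would then seek $\phiup\in\Aut_\C(\gt,\hg)$ with $\phiup\,\tilde\sigmaup_{2}\,\phiup^{-1}=\tilde\sigmaup_{1}$ on the anti-$\C$-linear extensions, which would produce the isomorphism $\phiup(\gt_{\sigmaup_{2}})=\gt_{\sigmaup_{1}}$. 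The natural first attempt $\phiup=\exp(\ad\,\mu)$ with $\mu=i\tfrac{\piup}{2}\htt_\omegaup\in\hg$ reduces, modulo $2\piup i\Z$, to the parity condition $(\alphaup\mid\omegaup^{-})\in 2\Z$ for every $\alphaup\in\Rad$; the involutivity constraint only guarantees $\omegaup^{-}\in(\Z[\Rad])^{*}$, and closing this gap is the main obstacle. The plan for overcoming it is to supplement the toral factor by a conjugation of the form $\exp(\tfrac{\piup}{4}\ad K_{\Btt})$ for a suitably chosen system $\Btt$ of strongly orthogonal roots, in the spirit of Notation\,\ref{n3.4} and the construction of Theorem\,\ref{t3.13}; alternatively, one invokes the transitive action of $\Aut_\C(\gt,\hg)$ on Chevalley systems (used in the proof of Theorem\,\ref{t3.1}) to transport a Chevalley system adapted to $\sigmaup_{2}$ onto one adapted to $\sigmaup_{1}$, and this transport carries $\gt_{\sigmaup_{2}}$ isomorphically onto $\gt_{\sigmaup_{1}}$.
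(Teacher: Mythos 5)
Your construction and uniqueness argument for $\epi^{\sharp}$, the commutation with $\tauup$, and the involutivity when $\epi^{2}\,{=}\,\id$ are all correct and essentially what the paper does (the paper simply cites Bourbaki, Ch.VIII, \S{5}, for the extension of a Dynkin-diagram symmetry to an automorphism fixing the Chevalley generators, rather than re-deriving it from Serre's presentation). The problem is the last claim, where your argument stops at an acknowledged obstacle. You correctly reduce to $\sigmaup_{1}\sigmaup_{2}\,{=}\,\psiup_{\etaup_{\omegaup}}$ with $\etaup_{\omegaup}\circ\epi\,{=}\,\etaup_{\omegaup}$, and you correctly compute that conjugating by $\phiup(X_{\alphaup})\,{=}\,i^{(\alphaup|\omegaup)}X_{\alphaup}$ requires $(\alphaup\,{-}\,\epi(\alphaup)\mid\omegaup)\,{\in}\,4\Z$, whereas involutivity only yields membership in $2\Z$. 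But you then leave the gap open, and neither of your two repair strategies is the right one: $\exp(\tfrac{\piup}{4}\ad K_{\Btt})$ is designed to lift Weyl reflections and moves the Cartan subalgebra (here $\sigmaup_{1},\sigmaup_{2}$ induce the \emph{same} $\epi$, so you need a conjugation preserving $\hg$), and transitivity of $\Aut_{\C}(\gt)$ on Chevalley systems says nothing by itself about carrying $\gt_{\sigmaup_{2}}$ onto $\gt_{\sigmaup_{1}}$.

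The missing idea is that the hypothesis $\epi\,{\in}\,\Af(\Rad,C)$ forces $\Rd{\bullet}{\epi}\,{=}\,\emptyset$, so $\epi$ \emph{permutes} $\Bz(C)$, and this lets you choose the representative $\omegaup$ so that the mod-$2$ condition upgrades itself to an exact identity. Concretely: define $\omegaup\,{\in}\,(\Z[\Rad])^{*}$ by $(\alphaup\mid\omegaup)\,{=}\,\tfrac{1}{2}(1{-}\etaup(\alphaup))\,{\in}\,\{0,1\}$ for $\alphaup\,{\in}\,\Bz(C)$. Since $\etaup(\epi(\alphaup))\,{=}\,\etaup(\alphaup)$ and $\epi(\alphaup)$ is again a simple root, both $(\alphaup\mid\omegaup)$ and $(\epi(\alphaup)\mid\omegaup)$ lie in $\{0,1\}$ and are congruent mod $2$, hence \emph{equal}; by linearity $(\alphaup\mid\omegaup)\,{=}\,(\epi(\alphaup)\mid\omegaup)$ for every $\alphaup\,{\in}\,\Rad$, so $(\alphaup\,{-}\,\epi(\alphaup)\mid\omegaup)\,{=}\,0\,{\in}\,4\Z$ and your toral conjugation $\phiup$ already works, with no supplementary factor. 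This is exactly how the paper proceeds: it observes that $\Rd{\bullet}{\epi}\,{=}\,\emptyset$ and invokes Lemma\,\ref{l3.3}, whose hypothesis \eqref{e2.11} (triviality of $\sigmaup_{2}/\sigmaup_{1}$ on $\Bz_{\bullet}^{\epi}(C)$) is vacuous here and whose proof is precisely the computation just described.
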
 
\begin{proof}
 By \cite[Ch.VIII,\S{5},n.$^{\mathrm{o}}$1,Prop.1]{Bou82}, for  $\epi\,{\in}\,\Af(\Rad,C)$
 there is  
 a unique ${\epi^{\sharp}}\,{\in}\,\Aut(\gr,\hr)$ such that 
\begin{equation} \label{e1.65}
\begin{cases}
 {\epi^{\sharp}}(H_{\alphaup})=H_{\varepsilon(\alphaup)},\\
 {\epi^{\sharp}}(X_{\alphaup})=X_{\varepsilon(\alphaup)},
\end{cases}\quad \forall\alphaup\,{\in}\,\Bz(C).
\end{equation}
Clearly ${\epi^{\sharp}}{\in}\,\Aut^{\tauup}(\gr,\hr)$ and, if $\epi\,{\in}\,\Ib(\Rad,C),$ then  
this ${\epi^{\sharp}}$ is an involution. Since 
$\Rad_{\,\;\bullet}^{\varepsilon}{=}\,\emptyset$,
the last claim is a special case  of Lemma\,\ref{l3.3},
that will be independently proved later~on.
\end{proof} 
\begin{defn}\label{d2.1}
 A real form $\gt_{0}$ of $\gt$ is called \emph{quasi-split} if there is a Borel subalgebra $\bt$ of $\gt$
 such that  
 $\sigmaup(\bt)\,{=}\,\bt,$ 
 for the $\sigmaup\,{\in}\,\Inv_{\C}^{\!\vee}(\gt)$ pointwise fixing $\gt_{0}$.
\end{defn}
\begin{prop}\label{p1.23}
 A real form of $\gt$ is 
{quasi-split} iff it is
isomorphic to
 $\gs$, for a $\sigmaup\,{\in}\,\Invs$ 
 with $\stt\,{\in}\,\Ib^{*}(\Rad)$.\qed\end{prop}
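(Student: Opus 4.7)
The plan is to prove each direction separately, reducing via Theorem \ref{t3.1} to the canonical presentation and invoking Lemma \ref{l2.18} as the key structural tool. The bridge between algebra and combinatorics is the identity $\sigmaup(\gt^{\alphaup})\,{=}\,\gt^{\stt(\alphaup)}$ for $\stt\,{=}\,\rhoup(\sigmaup)$, which follows at once from $\sigmaup$ being anti-$\C$-linear and preserving $\hr$.

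For the ``if'' direction, suppose that $\sigmaup\,{\in}\,\Invs$ has $\stt\,{=}\,\rhoup(\sigmaup)\,{\in}\,\Ib^{*}(\Rad)$. By Lemma \ref{l2.18} there is a Weyl chamber $C\,{\in}\,\Cd(\Rad)$ with $\stt(\Rad^{+}(C))\,{=}\,\Rad^{+}(C)$. I would then show that the Borel subalgebra
\begin{equation*}
 \bt_{C}\,{:=}\,\hg\,{\oplus}\,{\sum}_{\alphaup\,{\in}\,\Rad^{+}(C)}\gt^{\alphaup}
\end{equation*}
is $\sigmaup$-invariant: indeed $\sigmaup(\hg)\,{=}\,\hg$, and each summand $\gt^{\alphaup}$ with $\alphaup\,{\in}\,\Rad^{+}(C)$ is sent to $\gt^{\stt(\alphaup)}$, which by $\stt$-invariance of $\Rad^{+}(C)$ is again a summand of $\bt_{C}$. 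Hence $\gs$ is quasi-split.

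For the ``only if'' direction, let $\gs'$ be any quasi-split real form of $\gt$, with $\sigmaup'$-invariant Borel $\bt'$. First I would appeal to the standard fact that every $\sigmaup'$-invariant Borel contains a $\sigmaup'$-invariant Cartan subalgebra $\hg'$ of $\gt$ (proved via a cohomological vanishing argument inside the unipotent radical of $\bt'$, on which $\sigmaup'$ acts). The intersection $\hs'\,{:=}\,\hg'\,{\cap}\,\gs'$ is then a Cartan subalgebra of $\gs'$ with complexification $\hg'$. Applying Theorem \ref{t3.1} to $(\gs',\hs')$ furnishes $\phiup\,{\in}\,\Aut_{\C}(\gt)$ with $\phiup(\gs')\,{=}\,\gs$ and $\phiup(\hg')\,{=}\,\hg$ for some $\sigmaup\,{\in}\,\Inv^{\tauup}(\gr,\hr)$. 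Since the anti-$\C$-linear involution with fixed-point set $\gs$ is uniquely determined, $\phiup\,{\circ}\,\sigmaup'{\circ}\,\phiup^{-1}\,{=}\,\sigmaup$, whence $\bt\,{:=}\,\phiup(\bt')$ is a $\sigmaup$-invariant Borel containing $\hg$ and corresponds to a Weyl chamber $C$ with $\stt(\Rad^{+}(C))\,{=}\,\Rad^{+}(C)$.

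Finally, I would conclude that $\stt$ is special: if there were an imaginary root $\alphaup\,{\in}\,\Rd{\bullet}{\stt}$ with, say, $\alphaup\,{\in}\,\Rad^{+}(C)$, then $\stt(\alphaup)\,{=}\,{-}\alphaup\,{\in}\,\Rad^{-}(C)$ would contradict the $\stt$-invariance of $\Rad^{+}(C)$. Hence $\Rd{\bullet}{\stt}\,{=}\,\emptyset$, i.e.\ $\stt\,{\in}\,\Ib^{*}(\Rad)$. The one genuinely nontrivial step, and the main obstacle to navigate, is the existence of a $\sigmaup'$-invariant Cartan subalgebra inside $\bt'$; everything else is a direct combinatorial translation through the Chevalley framework set up earlier in the paper.
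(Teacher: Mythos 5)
Your argument is correct, and in fact the paper states Proposition~\ref{p1.23} with no proof at all (it carries a \qed{} in the statement), so there is no authorial route to compare against; your write-up supplies the justification the authors leave implicit. Both directions are sound: the identity $\sigmaup(\gt^{\alphaup})=\gt^{\stt(\alphaup)}$ does follow from anti-$\C$-linearity together with $\alphaup(\hr)\subset\R$, the ``if'' direction is exactly Lemma~\ref{l2.18} plus the observation that $\bt_{C}$ is then $\sigmaup$-invariant, and the ``only if'' direction correctly reduces via Theorem~\ref{t3.1} and the uniqueness of the anti-involution fixing a given real form. The one ingredient imported from outside the paper is the existence of a $\sigmaup'$-invariant Cartan subalgebra inside a $\sigmaup'$-invariant Borel; you rightly single this out as the only nontrivial step, and the vanishing of $H^{1}$ of the order-two group acting on the unipotent radical (a successive extension of $\Q$-vector spaces) does settle it. For a self-contained version one would either write out that cocycle-splitting argument or cite it, since nothing in the paper as written covers it; everything else in your proposal is a routine translation through the Chevalley framework of \S\ref{s1} and \S\ref{s2}.
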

 
\begin{exam} Split Lie algebras are also quasi-split.
 The Lie algebras $\so(n,n)$ and $\so(n,n{+}1)$ are split, while $\su(n,n)$ and $\su(n,n{+}1)$ are
 quasi-split, but not split. 
 Any semisimple complex Lie algebra, thought as a \textit{real Lie algebra}, is
 quasi-split,
 but not split: e.g. $\slt_{n}(\R)\,{\oplus}\,\slt_{n}(\R)$ and $\slt_{n}(\C)$ are real forms, 
 the first split and the second only quasi-split, 
 of $\slt_{n}(\C)\,{\oplus}\,\slt_{n}(\C)$.
\end{exam}
\begin{thm}\label{t3.15}
 For every $\stt\,{\in}\,\Ib(\Rad)$ we can find $\sigmaup\,{\in}\,\Invs$ with 
$\gs$ quasi-split.
\end{thm}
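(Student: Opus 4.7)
The plan is to combine the two lifting procedures of Theorems~\ref{t3.13} and~\ref{t3.14} by means of a Cayley transform, reducing the general case to the quasi-split case already handled by the special involution $\epi$.

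By Proposition~\ref{p2.7} I decompose $\stt = \epi\circ\sq_{\,\betaup_{1}}\circ\cdots\circ\sq_{\,\betaup_{r}}$ with $\epi\in\Ib^{*}(\Rad)$ and a strongly orthogonal set $\Btt=\{\betaup_{1},\ldots,\betaup_{r}\}\subset\Rd{\circ}{\epi}\cap\Rd{\bullet}{\stt}$. Since $\epi$ is special, Lemma~\ref{l2.18} provides a Weyl chamber $C$ with $\epi(\Rad^{+}(C))=\Rad^{+}(C)$; Theorem~\ref{t3.14} then yields a canonical lift $\epi^{\sharp}\in\Inv^{\tauup}_{\!\epi}(\gr,\hr)$, and by Proposition~\ref{p1.23} the real form $\gt_{\epi^{\sharp}}$ is quasi-split.

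By Lemma~\ref{l1.19} I pick $\omegaup\in(\Z[\Rad])^{*}$ with $(\betaup_{i}\mid\omegaup)=1$ for every $i$, set $K=K_{\betaup_{1}}+\cdots+K_{\betaup_{r}}\in\gu$, and consider the complex automorphism $\Phi=\exp(\tfrac{\piup}{4}\ad(K))\in\Aut_{\C}(\gt)$. Following the recipe of Theorem~\ref{t3.13}, let $\sigmaup_{0}=\epi^{\sharp}\circ\psiup_{\omegaup}$ and define
\[
\sigmaup = \Phi\circ\sigmaup_{0}\circ\Phi^{-1}.
\]
The normalization $(\betaup_{i}\mid\omegaup)=1$ gives $\psiup_{\omegaup}(K)=-K$, so by Lemma~\ref{l1.2} the Cayley sandwich preserves $\hr$ and, at the root-system level, converts the interior $\psiup_{\omegaup}$ into the product of reflections $\sq_{\,\Btt}$; combined with the $\epi$-part contributed by $\epi^{\sharp}$ this yields $\rhoup(\sigmaup)=\epi\circ\sq_{\,\Btt}=\stt$. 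Since $K\in\gu$ and both $\epi^{\sharp}$ and $\psiup_{\omegaup}$ preserve $\gu$, so does $\sigmaup$, hence $\sigmaup\in\Invs$. Finally, $\Phi$ intertwines $\sigmaup_{0}$ with $\sigmaup$, so it restricts to a $\C$-linear isomorphism $\gt_{\sigmaup_{0}}\overset{\sim}{\to}\gs$; but $\sigmaup_{0}\in\Inv^{\tauup}_{\!\epi}(\gr,\hr)$, so the last clause of Theorem~\ref{t3.14} gives $\gt_{\sigmaup_{0}}\simeq\gt_{\epi^{\sharp}}$, which is quasi-split. Hence $\gs$ is quasi-split as required.

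The main obstacle is verifying that $\sigmaup_{0}=\epi^{\sharp}\circ\psiup_{\omegaup}$ is genuinely an involution, i.e.\ that $\epi^{\sharp}$ and $\psiup_{\omegaup}$ commute. On a root vector $X_{\alphaup}$ the two compositions differ by the sign $(-1)^{(\alphaup-\epi(\alphaup)\mid\omegaup)}$, so commutativity reduces to the parity condition $(\alphaup-\epi(\alphaup)\mid\omegaup)\in 2\Z$ for every $\alphaup\in\Rad$; equivalently $\omegaup-\epi^{*}(\omegaup)\in 2(\Z[\Rad])^{*}$. Because $\epi(\betaup_{i})=\betaup_{i}$, the normalization $(\betaup_{i}\mid\omegaup)=1$ is preserved under any modification of $\omegaup$ by an element of the $(-1)$-eigenspace of $\epi^{*}$, and a valid $\omegaup$ can be produced case-by-case using the explicit description of $\Ib^{*}(\Rad)$ in Theorem~\ref{t3.16} (the only nontrivial possibilities being $\textsc{A}_{\ell}$, $\textsc{D}_{\ell}$, $\textsc{E}_{6}$), adjusting the representatives given in Lemma~\ref{l1.19} by a suitable element of $2(\Z[\Rad])^{*}$.
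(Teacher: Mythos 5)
Your overall architecture matches the paper's: decompose $\stt=\epi\circ\sq_{\,\Btt}$ by Proposition~\ref{p2.7}, lift $\epi$ through Theorem~\ref{t3.14}, lift $\sq_{\,\Btt}$ through the Cayley-conjugated $\psiup_{\omegaup}$ as in Theorem~\ref{t3.13}, and combine. The gap is in the combination step, and it is exactly where the paper does its real work. Since $\betaup_{i}\in\Rd{\circ}{\epi}$, we have $\epi^{\sharp}(X_{\betaup_{i}})=\epsilon_{i}X_{\betaup_{i}}$ for some sign $\epsilon_{i}=\pm 1$, and with $\Phi=\exp(\tfrac{\piup}{4}\ad(K))$ formula \eqref{eq1.7} gives
\[
\sigmaup(H_{\betaup_{i}})=\Phi\big(\epi^{\sharp}(\psiup_{\omegaup}(T_{\betaup_{i}}))\big)
=\Phi(-\epsilon_{i}\,T_{\betaup_{i}})=-\epsilon_{i}\,H_{\betaup_{i}},
\]
so $\rhoup(\sigmaup)$ sends $\betaup_{i}$ to $-\betaup_{i}$ (as $\stt$ must) only when $\epsilon_{i}=+1$. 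Equivalently, your implicit identification $\Phi\circ(\epi^{\sharp}\circ\psiup_{\omegaup})\circ\Phi^{-1}=\epi^{\sharp}\circ\sq_{\,\Btt}^{\sharp}$ requires $\epi^{\sharp}$ to commute with $\Phi$, i.e.\ $\epi^{\sharp}(K_{\Btt})=K_{\Btt}$. You never check these signs, and they are not automatic: determining them is the case-by-case computation $(*)$ that occupies most of the paper's proof, and the answer is negative for $\Rad(\textsc{A}_{\ell})$ with $\ell$ even, where $\epi^{\sharp}(X_{\betaup})=-X_{\betaup}$ for every $\betaup\in\Rd{\circ}{\epi}$. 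In that case your $\sigmaup$ satisfies $\sigmaup(H_{\betaup_{i}})=+H_{\betaup_{i}}$, hence $\rhoup(\sigmaup)=\epi\neq\stt$ and $\sigmaup\notin\Invs$; already for $\ell=2$, $\Btt=\{\e_{1}{-}\e_{3}\}$, $\stt=\att$, your construction produces $\su(1,2)$ with its maximally split Cartan subalgebra instead of the compact one. The paper repairs this case by replacing $\epi^{\sharp}$ with the twisted lift $\epi^{\sharp\prime}$, which does fix the $X_{\betaup_{i}}$.

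A secondary, lesser issue: you correctly isolate the parity condition $(\alphaup-\epi(\alphaup)\mid\omegaup)\in 2\Z$ needed for $\epi^{\sharp}\circ\psiup_{\omegaup}$ to be an involution, but you only assert that an $\omegaup$ satisfying it together with $(\betaup_{i}\mid\omegaup)=1$ exists. This does need to be exhibited for each of $\textsc{A}_{\ell}$, $\textsc{D}_{\ell}$, $\textsc{E}_{6}$ (the natural candidate $\tfrac{1}{2}(\omegaup_{0}+\epi(\omegaup_{0}))$ keeps the normalization but need not lie in $(\Z[\Rad])^{*}$). So the proposal has the right skeleton, but the two verifications it omits --- the sign of $\epi^{\sharp}$ on the $X_{\betaup_{i}}$ and the parity of $\omegaup$ on $\alphaup-\epi(\alphaup)$ --- are precisely the substance of the theorem, and the first one, as stated, makes the argument fail in type $\textsc{A}_{\ell}$ with $\ell$ even.
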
 
\begin{proof} 
By Thm.\ref{t3.13} and Thm.\ref{t3.14}
we can restrain to the case where $\Rad$ is irreducible and  where, according to Prop.\ref{p2.7} and Prop.\ref{p2.34},
\begin{equation*}
 \stt=\varepsilon\circ\sq_{\,\betaup_{1}}\circ\cdots\circ\sq_{\,\betaup_{r}}=\varepsilon\circ\sq_{\,\Btt},
\end{equation*}
with $\id{\neq}\epi\,{\in}\,\Ib(\Rad,C),$ for a Weyl chamber $C$ and a system 
$\Btt\,{=}\,\{\betaup_{1},\hdots,\betaup_{r}\}$, with $r{\geq}1$, 
of strongly orthogonal roots in $\Rd{\circ}{\varepsilon}\,{\cap}\,\Rd{\bullet}{\stt}.$
Since we assumed that $\varepsilon\,{\neq}\,\id$,\, the irreducible 
root system $\Rad$ is of one of the types $\textsc{A}_{\ell},\textsc{D}_{\ell},\textsc{E}_{6}$. \par 
With the ${\epi^{\sharp}}$ of \eqref{e1.65} 
we obtain ${\epi^{\sharp}}(X_{\betaup}){=}{\pm}X_{\betaup}$ for all
$\betaup\,{\in}\,\Rad_{\,\;\circ}^{\epi}.$ Let us check the signs in the different cases.

\smallskip\noindent
\textsc{Type $\textsc{A}_{\ell}$.}\quad We can assume that 
\begin{equation*}
 \Bz(C)=\{\alphaup_{i}{=}\,\e_{i}{-}\e_{i+1}\mid 1\leq{i}\leq\ell\}
\end{equation*}
and 
$\epi$ is defined by $\epi(\e_{i}){=}{-}\e_{\ell{+}2-i}$ ($1{\leq}i{\leq}\ell{+}1$),
 so that $\epi(\alphaup_{i}){=}\alphaup_{\ell{+}1{-}i},$
 for $1{\leq}i{\leq}\ell$
and 
\begin{equation*}\Rad_{\,\;\circ}^{\epi}{=}\{{\pm}(\e_{i}{-}\e_{\ell{+}2-i})\,{\mid}\,1{\leq}i{\leq}\ell{+}1\}.
\end{equation*}
We claim that
\begin{equation*} \tag{$*$}
\begin{cases}
 {\epi^{\sharp}}(X_{\betaup})=X_{\betaup},&\forall\betaup\,{\in}\,\Rad_{\,\;\circ}^{\epi},\;\;
 \text{if $\ell$ is odd,}\\
{\epi^{\sharp}}(X_{\betaup})=-X_{\betaup},&\forall\betaup\,{\in}\,\Rad_{\,\;\circ}^{\epi},\;\;
 \text{if $\ell$ is even.} 
\end{cases}
\end{equation*} 
We prove $(*)$ by writing each positive root in $\Rad_{\,\;\circ}^{\epi}$
as a sum of roots in $\Bz(C)$ and arguing by recurrence 
on the number of summands. \par
For $2i{<}\ell,$  
$\betaup_{i}{\coloneqq}\,\e_{i}{-}\e_{\ell+2-i}{=}\,\alphaup_{i}{+}\cdots{+}\alphaup_{\ell{+}1{-}i}$ and\footnote{We set by
recurrence $[Y_{1}]=Y_{1}$, $\hdots$,
$[Y_{1},Y_{2},\hdots,Y_{h}]=[[Y_{1},[Y_{2},\hdots,Y_{h}]]$ for
$Y_{i}{\in}\gt.$} 
$X_{\betaup_{i}}$ is a multiple of 
$$[X_{\alphaup_{i}},X_{\alphaup_{i+1}},{\hdots},X_{\alphaup_{\ell-i}},X_{\alphaup_{\ell+1-i}}].$$
\par If $\ell\,{=}\,2m{+}1$ is odd, then $\betaup_{m+1}{=}\,\alphaup_{m+1}$ and
$X_{\betaup_{m+1}}{=}\,X_{\alphaup_{m+1}}$ is fixed by $\epi$.\par
If $\ell=2m$ is even, then $\betaup_{m}{=}\,\alphaup_{m}{+}\,\alphaup_{m+1},$ 
$X_{\betaup_{m}}{=}{\pm}[X_{\alphaup_{m}},X_{\alphaup_{m+1}}]$ 
and therefore
$\epi(X_{\betaup_{m}})\,{=}{-}X_{\betaup_{m}}$ follows from
$\epi([X_{\alphaup_{m}},X_{\alphaup_{m+1}}]){=}[X_{\alphaup_{m+1}},X_{\alphaup_{m}}]
{=}{-}[X_{\alphaup_{m}},X_{\alphaup_{m+1}}]$. \par
For smaller $i$, we get
\begin{equation*}
 {\epi^{\sharp}}([X_{\alphaup_{i}},X_{\alphaup_{i+1}},{\hdots},X_{\alphaup_{\ell-i}},X_{\alphaup_{\ell+1-i}}]
){=}[X_{\alphaup_{\ell+1-i}},X_{\alphaup_{\ell-i}},{\hdots},X_{\alphaup_{i+1}},X_{\alphaup_{i}}].
\end{equation*}
Since $[X_{\alphaup_{j}},X_{\alphaup_{h}}]\,{=}\,0$ for $|j{-}h|\,{\neq}\,1,$ we obtain 
\begin{align*}
 [X_{\alphaup_{i}},X_{\alphaup_{i+1}},{\hdots},X_{\alphaup_{\ell-i}},X_{\alphaup_{\ell+1-i}}]=
 [X_{\alphaup_{i}},[[X_{\alphaup_{i+1}},{\hdots},X_{\alphaup_{\ell-i}}],X_{\alphaup_{\ell+1-i}}]] \\
 =[X_{\alphaup_{\ell+1-i}},[[X_{\alphaup_{i+1}},{\hdots},X_{\alphaup_{\ell-i}}],X_{\alphaup_{i}}]]
\end{align*}
by Jacobi's identity. This allows to argue recursively, noting that 
when $\ell$ is odd (resp. even)
the smaller number of $X_{\alphaup_{j}}$'s in the inner bracket 
is odd (resp. even).\par\smallskip\noindent
\textsc{Type $\textsc{D}_{\ell}$.}\quad We can assume that 
\begin{equation*}
 \Bz(C)=\{\alphaup_{i}{=}\e_{i}{-}\e_{i+1}\,{\mid}\,1{\leq}{i}{<}\ell\}\cup\{\alphaup_{\ell}{=}\e_{\ell-1}{+}\e_{\ell}\}
\end{equation*}
and that $\epi(\e_{i}){=}\e_{i}$ for $1{\leq}i{<}\ell$ and $\epi(\e_{\ell}){=}{-}\e_{\ell}$,
so that $\epi(\alphaup_{i}){=}\alphaup_{i}$ for $1{\leq}i{\leq}(\ell{-}2)$, $\epi(\alphaup_{\ell-1}){=}\alphaup_{\ell},$
$\epi(\alphaup_{\ell}){=}\alphaup_{\ell-1}.$ Then 
\begin{equation*}
 \Rad_{\,\;\circ}^{\epi}=\{{\pm}(\e_{i}{\pm}\e_{j})\,{\mid}\, 1{\leq}i{<}j{<}\ell\}.
\end{equation*} 
We obtain 
\begin{equation*} \tag{$*$}
{\epi^{\sharp}}(X_{\betaup})=X_{\betaup},\;\;\forall\betaup\,{\in}\,\Rad^{\epi}_{\,\;\circ}.
\end{equation*}
This is trivial for the roots of the form $\e_{i}{-}\e_{j}$, with $1{\leq}i{<}j{<}\ell$.
We have 
\begin{equation*}
 [[X_{\alphaup_{\ell-1}},X_{\alphaup_{\ell-2}}],X_{\alphaup_{\ell}}]=N_{\alphaup_{\ell-1},\alphaup_{\ell-{2}}}
 N_{\alphaup_{\ell-1}+\alphaup_{\ell-{2}},\alphaup_{\ell}}X_{\e_{\ell-2}+\e_{\ell-1}},
\end{equation*}
with $N_{\alphaup_{\ell-1},\alphaup_{\ell-{2}}}
 N_{\alphaup_{\ell-1}+\alphaup_{\ell-{2}},\alphaup_{\ell}}\,{=}\,\pm{1}\,{\neq}\,{0}$ 
and then 
\begin{equation*}
 {\epi^{\sharp}}( [[X_{\alphaup_{\ell-1}},X_{\alphaup_{\ell-2}}],X_{\alphaup_{\ell}}]=
  [[X_{\alphaup_{\ell}},X_{\alphaup_{\ell-2}}],X_{\alphaup_{\ell-1}}]= [[X_{\alphaup_{\ell-1}},X_{\alphaup_{\ell-2}}],X_{\alphaup_{\ell}}],
\end{equation*}
the last equality being a consequence of the Jacobi identity, shows that 
\begin{equation*}
 {\epi^{\sharp}}(X_{\e_{\ell-2}+\e_{\ell-1}})=X_{\e_{\ell-2}+\e_{\ell-1}}.
\end{equation*}
For $1{\leq}i{<}j{\leq}{\ell{-}2}$, 
the Lie product $[[X_{\e_{i}-\e_{\ell-2}},X_{\e_{\ell-2}+\e_{\ell-1}}],X_{\e_{\ell_{j}}-\e_{\ell-1}}]$
is a nonzero multiple of $X_{\e_{i}{+}\e_{j}}$. Thus 
\begin{equation*}
 {\epi^{\sharp}}([[X_{\e_{i}-\e_{\ell-2}},X_{\e_{\ell-2}+\e_{\ell-1}}],X_{\e_{\ell_{j}}-\e_{\ell-1}}])=
 [[X_{\e_{i}-\e_{\ell-2}},X_{\e_{\ell-2}+\e_{\ell-1}}],X_{\e_{\ell_{j}}-\e_{\ell-1}}]
\end{equation*}
shows that ${\epi^{\sharp}}(X_{\e_{i}+\e_{j}})\,{=}\,X_{\e_{i}+\e_{j}}$ also for $1{\leq}i{<}j{\leq}{\ell{-}2}$,
completing the proof of~$(*)$.
\par\smallskip\noindent
\textsc{Type $\textsc{E}_{6}$}\quad Let us take the root system $\Rad'(\textsc{E}_{6})$ 
and the canonical basis of simple roots $\Bz'(\textsc{E})$ of \eqref{rE6a}, \eqref{bE6a}.
Then we can take $\epi$ defined by $\epi(\e_{i}){=}{-}\e_{7-i}$ for $1{\leq}i{\leq}6$
and $\epi(\e_{7}){=}{-}\e_{8},$ $\epi(\e_{8})\,{=}\,{-}\e_{7},$ so that 
\begin{equation*} \epi(\alphaup_{i})=
\begin{cases}
 \alphaup_{6-i}, &\text{if $i=1,2,4,5$},\\
 \alphaup_{i}, & \text{if $i=3,6$.}
\end{cases}
\end{equation*}
We have 
\begin{gather*} 
 \Rad_{\,\;\circ}^{\epi}=\Rad_{\,\;\circ}^{\epi \prime}\cup\Rad_{\,\;\circ}^{\epi\prime\prime}\cup
\{{\pm}(\e_{7}{-}\e_{8})\}\\
 \text{with}\;\begin{cases}
\Rad_{\,\;\circ}^{\epi\prime}=\{{\pm}(\e_{1}{-}\e_{6}),\;{\pm}(\e_{2}{-}\e_{5}),\;{\pm}(\e_{3}{-}\e_{4})\},\\
\Rad_{\,\;\circ}^{\epi\prime\prime}
 \{\tfrac{1}{2}({\pm}(\e_{1}{-}\e_{6}){\pm}(\e_{2}{-}\e_{5}){\pm}(\e_{3}{-}\e_{4}){\pm}(\e_{7}{-}\e_{8}))\}.
 \end{cases}
\end{gather*}
We claim that also in this case 
\begin{equation*}\tag{$*$}
 {\epi^{\sharp}}(X_{\betaup})=X_{\betaup},\;\;\forall\betaup\in\Rad_{\,\;\circ}^{\epi}.
\end{equation*}

For the roots of $\Rad_{\,\;\circ}^{\epi\prime}$ we reduce to the case of $A_{5}$. \par
Next we note that $X_{(\e_{7}{-}\e_{8})}$ is a multiple of
$[X_{\alphaup_{6}},X_{\e_{1}-\e_{6}},X_{\e_{2}-\e_{5}},X_{\e_{3}-\e_{4}},X_{\alphaup_{6}}],$
 which is left invariant by ${\epi^{\sharp}}$. \par
 All $X_{\alphaup}$ with $\alphaup\,{\in}\,\Rad_{\,\;\circ}^{\epi\prime\prime}$
  are nonzero multiples of Lie products $[Y_{1},\hdots,Y_{h}]$ with
 $Y_{i}{\in}\,\{X_{\alphaup}\,{\mid}\,\alphaup\,{\in}\,\Rad_{\,\;\circ}^{\epi\prime}\,{\cup}\,{\{\pm}\alphaup_{6}\}
 \,{\cup}\,\{{\pm}(\e_{7}{-}\e_{8})\}\}$, 
 which are invariant
 by ${\epi^{\sharp}}$.  This proves $(*)$ also in the case of $\textsc{E}_{6}$.
\par \smallskip
In the cases $\textsc{A}_{\ell}$ with odd $\ell$,  $\textsc{D}_{\ell}$ for all $\ell$ and  $\textsc{E}_{6}$, 
the map ${\epi^{\sharp}}$ of \eqref{e1.65} and $\sq_{\,\Btt}^{\sharp}$ of Notation\,\ref{n3.4} commute and hence 
\begin{equation*}
 \sigmaup={\epi^{\sharp}}\circ\sq_{\,\Btt}^{\sharp}
 \in\Inv(\gr,\hr) \;\;\text{satisfies}\;\; \rhoup(\sigmaup)=\stt.
\end{equation*}
\par
If $\Rad$ is of type $\textsc{A}_{\ell}$ with an odd $\ell\,{\geq}\,3$, we define the involution ${\epi^{\sharp}}'$
by 
\begin{equation*} 
\begin{cases}
 {\epi^{\sharp}}'(H_{\alphaup_{i}})=H_{\alphaup_{\ell+1-i}}, &\text{for $i=1,\hdots,\ell$},\\
 {\epi^{\sharp}}'(X_{\alphaup_{i}})=(-1)^{(\alphaup_{i}|\omegaup)}X_{\alphaup_{\ell+1-i}},&
 \text{for ${i}=1,\hdots,\ell$},
\end{cases}
\end{equation*}
where $\omegaup{\in}(\Z[\Rad])^{*}$ and $(\betaup_{i}|\omegaup) {=} 1$
for $i=1,\hdots,r$. Then ${\epi^{\sharp}}'$ and $\sq_{\,\Btt}^{\sharp}$  commute and 
\begin{equation*}
  \sigmaup={\epi^{\sharp}}'\circ\sq_{\,\Btt}^{\sharp}\in\Inv^{\tauup}(\gr,\hr) \;\;\text{satisfies}\;\; \rhoup(\sigmaup)=\stt.
\end{equation*}
\par
Let $\phiup\,{\in}\,\Aut_{\C}(\gt,\hg)$ be characterized by
\begin{equation*} 
\begin{cases}
 \phiup(H)=H, & \forall H\in\hg,\\
 \phiup(X_{\alphaup})=i^{(\alphaup|\omegaup)}X_{\alphaup}, & \forall\alphaup\in\Rad.
\end{cases}
\end{equation*}
Define $\gt_{0}$ equal to $\gt_{{\epi^{\sharp}}}$ in the case $\textsc{A}_{\ell}$ with $\ell$ even
and to $\phiup(\gt_{{\epi^{\sharp}}})$ in the other cases.
Using the notation in the proof of Thm.\ref{t3.13}, we find that 
\begin{equation*}
 \exp(\tfrac{\pi}{4}\ad(K_{\Btt}))(\gt_{0})=\gs.
\end{equation*}
By construction, $\gs$ is isomorphic to~$\gt_{{\epi^{\sharp}}}$ and hence
quasi-split.
\end{proof} 

\begin{thm}
 Let $\sigmaup\,{\in}\,\Inv^{\tau}$, $\stt\,{=}\,\rhoup(\sigmaup)\,{\in}\,\Ib(\Rad)$. \par
 If $\Btt\,{=}\,\{\betaup_{1},\hdots,\betaup_{r}\}$ is  a system of strongly orthogonal roots in $\Rd{\circ}{\stt}$,
 then we can find $\sigmaup'\,{\in}\,\Invs$, with 
\begin{equation*}\begin{cases}
 \rhoup(\sigmaup')=\sq_{\,\Btt}\,{\circ}\,\stt,\\
 \gt_{\sigmaup'}\;{\simeq}\; \gs.
 \end{cases}
\end{equation*}
\end{thm}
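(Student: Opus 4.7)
The plan is to construct $\sigmaup'$ by composing $\sigmaup$ with a suitable lift of $\sq_{\Btt}$ of the kind built in Notation~\ref{n3.4}. By Lemma~\ref{l1.19}, choose $\omegaup\in(\Z[\Rad])^{*}$ with $(\betaup_{i}\,|\,\omegaup)=1$ for every $i$, and form the involution
\[
\sq_{\Btt}^{\sharp}=\exp(\tfrac{\piup}{4}\ad(K_{\Btt}))\circ\psiup_{\omegaup}\circ\exp(-\tfrac{\piup}{4}\ad(K_{\Btt}))\in\Inv^{\tauup}(\gr,\hr)
\]
of Theorem~\ref{t3.13}, which lifts $\sq_{\Btt}$. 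Since $\rhoup$ is a homomorphism, the composition $\sigmaup':=\sq_{\Btt}^{\sharp}\circ\sigmaup$ automatically satisfies $\rhoup(\sigmaup')=\sq_{\Btt}\circ\stt$, so only the involutivity of $\sigmaup'$ and the isomorphism $\gt_{\sigmaup'}\simeq\gs$ remain.

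The identity $(\sigmaup')^{2}=\id$ is equivalent to the commutation of $\sigmaup$ and $\sq_{\Btt}^{\sharp}$, and here the hypothesis $\Btt\subset\Rd{\circ}{\stt}$ is crucial. From $\stt(\betaup_{i})=\betaup_{i}$ together with $\sigmaup|_{\hr}=\stt^{*}$ one reads off $\sigmaup(H_{\betaup_{i}})=H_{\betaup_{i}}$, and since $\gr^{\betaup_{i}}=\langle X_{\betaup_{i}}\rangle_{\R}$ is one-dimensional and $\sigmaup$-stable, one has $\sigmaup(X_{\betaup_{i}})=c_{i}X_{\betaup_{i}}$ with $c_{i}\in\{\pm 1\}$, hence $\sigmaup(K_{\betaup_{i}})=c_{i}K_{\betaup_{i}}$. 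If some $c_{i}=-1$, I would first replace $\sigmaup$ by $\psiup_{\omegaup''}\circ\sigmaup$, where $\omegaup''\in(\Z[\Rad])^{*}$ is chosen in the $\stt$-fixed sublattice with parities $(\betaup_{i}\,|\,\omegaup'')$ prescribed so as to flip exactly the offending signs $c_{i}$; the $\stt$-invariance of $\omegaup''$ ensures that $\psiup_{\omegaup''}\circ\sigmaup$ is still an involution (indeed $(\psiup_{\omegaup''}\sigmaup)^{2}=\psiup_{\omegaup''+\stt\omegaup''}=\psiup_{2\omegaup''}=\id$), and it affects neither $\rhoup(\sigmaup)=\stt$ nor the isomorphism class of $\gs$. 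After this adjustment $\sigmaup(K_{\Btt})=K_{\Btt}$, so $\sigmaup$ commutes with $\exp(\pm\tfrac{\piup}{4}\ad(K_{\Btt}))$. Commutation with $\psiup_{\omegaup}$ reduces, through the direct calculation $\sigmaup\circ\psiup_{\omegaup}\circ\sigmaup=\psiup_{\stt\omegaup}$, to the parity condition $(\alphaup\,|\,\stt\omegaup-\omegaup)\in 2\Z$ for every $\alphaup\in\Rad$; this vanishes automatically on $\Rd{\circ}{\stt}\cup\Rd{\bullet}{\stt}$, and I would arrange it on $\Rd{\star}{\stt}$ by shifting $\omegaup$ inside the coset preserving the normalization $(\betaup_{i}\,|\,\omegaup)=1$.

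Once commutation is in place, the isomorphism $\gt_{\sigmaup'}\simeq\gs$ is obtained as in the last paragraph of the proof of Theorem~\ref{t3.13}: the complex automorphism $\phiup=\exp(\tfrac{i\piup}{4}\ad(K_{\Btt}))\circ\phiup_{\omegaup}\in\Aut_{\C}(\gt)$, with $\phiup_{\omegaup}$ the complex lift defined by \eqref{e1.43}, carries $\gs$ onto $\gt_{\sigmaup'}$ inside $\gt$ and hence exhibits the two real forms as conjugate, \textit{a fortiori} isomorphic.

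The principal obstacle is the simultaneous enforcement of the two commutation conditions in the second paragraph; in particular, securing the parity $(\alphaup\,|\,\stt\omegaup-\omegaup)\in 2\Z$ on complex roots is delicate and may demand a finer choice of $\omegaup$ than the one provided by Lemma~\ref{l1.19} alone. Should the parity obstruction resist a uniform treatment in some irreducible type, a robust fallback is to drop the explicit factorization and define $\sigmaup'$ directly by conjugating the anti-$\C$-linear extension of $\sigmaup$ by the complex Cayley transform $\exp(\tfrac{i\piup}{4}\ad(K_{\Btt}))\in\Aut_{\C}(\gt)$, and then verifying by inspection that the conjugated map preserves $\gr$, squares to the identity, and induces $\sq_{\Btt}\circ\stt$ on $\Rad$.
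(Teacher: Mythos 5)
Your construction coincides with the paper's: both set $\sigmaup'=\sq_{\,\Btt}^{\sharp}\circ\sigmaup$ with $\sq_{\,\Btt}^{\sharp}$ as in Notation~\ref{n3.4}, and both obtain the isomorphism $\gt_{\sigmaup'}\simeq\gs$ by conjugating with the complex automorphism from the proof of Theorem~\ref{t3.13}. You are also right that the one point the paper leaves implicit is the involutivity of the composition, which amounts to a commutation statement. The problem is that your repair of that point has a genuine gap. When some $c_{i}=-1$ you replace $\sigmaup$ by $\tilde{\sigmaup}=\psiup_{\omegaup''}\circ\sigmaup$ and assert that this ``affects neither $\rhoup(\sigmaup)=\stt$ nor the isomorphism class of $\gs$''. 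The second claim is unjustified and false in general: distinct elements of $\Invs$ routinely have non-isomorphic fixed real forms --- that is exactly what the partition of $\Rd{\bullet}{\stt}$ into compact and noncompact roots detects --- and $\stt$-invariance of $\omegaup''$ only guarantees that $\tilde{\sigmaup}$ is again an involution lying over $\stt$, not that $\gt_{\tilde{\sigmaup}}\simeq\gt_{\sigmaup}$. By Lemma~\ref{l3.3} you would in addition need $(\alphaup\,|\,\omegaup'')\in 2\Z$ for every $\alphaup\in\Bz_{\bullet}^{\stt}(C)$; because the $\betaup_{i}$ lie in $\Rd{\circ}{\stt}$, which is orthogonal to $\Rd{\bullet}{\stt}$, such an $\omegaup''$ should exist, but you must exhibit it in $(\Z[\Rad])^{*}$ with the prescribed odd pairings against the $\betaup_{i}$, and the conclusion $\gt_{\sigmaup'}\simeq\gs$ must then be chained through $\gt_{\tilde{\sigmaup}}$.

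The second obstruction --- the parity $(\alphaup\,|\,\stt\omegaup-\omegaup)\in 2\Z$ on $\Rd{\star}{\stt}$, i.e. $\etaup_{\omegaup}\in\Homs$ --- you explicitly leave unresolved, so the argument is not closed as written. Your fallback does not escape the difficulty either: conjugating $\sigmaup$ by the Cayley element makes involutivity and the isomorphism automatic, but the induced involution on $\Rad$ is then $\sq_{\,\Btt'}\circ\stt$ only for the subset $\Btt'$ of those $\betaup_{i}$ with $\sigmaup(X_{\betaup_{i}})=-X_{\betaup_{i}}$; to reach the full $\sq_{\,\Btt}\circ\stt$ you must still compose with a $\psiup$-factor, which reintroduces the same two verifications. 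What is missing in every variant is one and the same lemma: that $\omegaup$ can be chosen in $(\Z[\Rad])^{*}$ satisfying simultaneously $(\betaup_{i}\,|\,\omegaup)=1$, the parity condition on the complex roots, and triviality modulo $2$ on $\Bz_{\bullet}^{\stt}(C)$.
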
 
\begin{proof}
Using Notation\,\ref{n3.4}, we set $\sigmaup'\,{=}\,\sq_{\,\Btt}^{\sharp}{\circ}\,
\sigmaup$. The argument in the proof of Thm.\ref{t3.13} yields the statement. 
\end{proof}

\subsection{Structure of the groups $\Aut(\gr,\hr)$ and $\Aut^{\tauup}(\gr,\hr)$}\quad\par
We have (see e.g. \cite[VIII,\S{5.2},Prop.2]{Bou82}):
\begin{prop} \label{propaut}
With the maps described above,
we get exact sequences 
\begin{equation} \label{e1.24}
\begin{CD}
 0 \to 
 \Hom(\Z[\Rad],\R^{*})
 @>{\psiup}>>\Aut(\gr,\hr) @>{\rhoup}>> \Af(\Rad) \to 0.
\end{CD}
\end{equation} 
\begin{equation}
 \begin{CD}
 0 \to 
 \Hom(\Z[\Rad],\Z_{2}^{*})
 @>{\psiup}>>\Aut^{\tauup}(\gr,\hr) @>{\rhoup}>> \Af(\Rad) \to 0.
\end{CD}
\end{equation}
\end{prop}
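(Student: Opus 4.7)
The plan is to verify exactness of the first sequence position by position, then deduce the second by restriction. Throughout I will rely on Proposition \ref{p3.4} (which identifies $\Aut_{0}(\gr,\hr)$ with $\Hom(\Z[\Rad],\R^{*})$ via $\psiup$), the decomposition $\Af(\Rad)=\Wf(\Rad)\ltimes\Af(\Rad,C)$ from \eqref{e2.35}, and the explicit lifts built earlier.

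First I would treat injectivity of $\psiup$ and the identification $\ker\rhoup=\mathrm{im}\,\psiup$. The injectivity is immediate from \eqref{eq4.9a}: if $\psiup_{\etaup}=\id$ then $\etaup(\alphaup)=1$ for every $\alphaup\in\Rad$, and since $\Rad$ generates $\Z[\Rad]$ the homomorphism $\etaup$ is trivial. For the kernel condition, an element $\phiup\in\Aut(\gr,\hr)$ lies in $\ker\rhoup$ iff $\phiup^{*}|_{\Rad}=\id_{\Rad}$; but $\Rad$ spans $\hr^{*}$ since $\gt$ is semisimple, so $\phiup$ restricts to the identity on $\hr$, i.e.\ $\phiup\in\Aut_{0}(\gr,\hr)$. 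Proposition \ref{p3.4} then realizes $\phiup$ as $\psiup_{\etaup}$ for a unique $\etaup$.

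Next I would prove that $\rhoup$ is surjective, which is the main obstacle. By \eqref{e2.35} it suffices to lift arbitrary Weyl reflections $\sq_{\,\alphaup}$ and elements $\epi\in\Af(\Rad,C)$. For $\epi\in\Af(\Rad,C)$, Bourbaki (Ch.\,VIII, \S5, no.1, Prop.1) — already invoked in Theorem \ref{t3.14} — yields a unique ${\epi^{\sharp}}\in\Aut(\gr,\hr)$ with ${\epi^{\sharp}}(X_{\alphaup})=X_{\varepsilon(\alphaup)}$ and ${\epi^{\sharp}}(H_{\alphaup})=H_{\varepsilon(\alphaup)}$ for $\alphaup\in\Bz(C)$, and clearly $\rhoup({\epi^{\sharp}})=\varepsilon$; since ${\epi^{\sharp}}$ permutes the Chevalley generators, it automatically lies in $\Aut^{\tauup}(\gr,\hr)$. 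For a reflection $\sq_{\,\alphaup}$, Lemma \ref{l1.2} shows that the inner automorphism $\exp(\tfrac{\pi}{4}\ad(K_{\alphaup}))$ rotates the plane $\langle H_{\alphaup},T_{\alphaup}\rangle_{\R}$ by $\pi/2$, so its square $\exp(\tfrac{\pi}{2}\ad(K_{\alphaup}))$ sends $H_{\alphaup}\mapsto{-}H_{\alphaup}$ while fixing $\{H\in\hr\mid\alphaup(H)=0\}$; hence it belongs to $\Aut^{\tauup}(\gr,\hr)$ (as $K_{\alphaup}\in\gu$) and has $\rhoup$-image $\sq_{\,\alphaup}$. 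Since reflections generate $\Wf(\Rad)$, surjectivity of $\rhoup$ and of its restriction to $\Aut^{\tauup}(\gr,\hr)$ both follow at once.

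Finally I would deduce the second sequence by restriction. The Lemma following \eqref{eqrho} already identifies $\Aut_{0}^{\tauup}(\gr,\hr)=\psiup(\Hom(\Z[\Rad],\Z^{*}_{2}))$, so the same injectivity/kernel computation (now restricted to $\Aut^{\tauup}(\gr,\hr)$) gives exactness at the left and middle. Since the lifts ${\epi^{\sharp}}$ and $\exp(\tfrac{\pi}{2}\ad(K_{\alphaup}))$ constructed above both lie in $\Aut^{\tauup}(\gr,\hr)$, surjectivity of $\rhoup\colon\Aut^{\tauup}(\gr,\hr)\to\Af(\Rad)$ also holds, completing the proof. The only genuinely nontrivial input is the existence of the canonical lift ${\epi^{\sharp}}$, which is precisely the Bourbaki structure theorem invoked for Theorem \ref{t3.14}; everything else is assembled from constructions already developed in \S\ref{s1.5}.
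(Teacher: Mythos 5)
Your proof is correct, and exactness at the left and middle positions is handled exactly as in the paper (injectivity of $\psiup$ from the definition, and $\ker\rhoup=\Aut_{0}(\gr,\hr)=\mathrm{im}\,\psiup$ via Prop.\,\ref{p3.4} and the Lemma identifying $\Aut_{0}^{\tauup}(\gr,\hr)$ with $\psiup(\Hom(\Z[\Rad],\Z_{2}^{*}))$). Where you diverge is the surjectivity of $\rhoup$: the paper disposes of it in one line by citing Thm.\,\ref{t3.15}, which produces a lift in $\Invs$ for every involution $\stt$ of $\Rad$ (this suffices because $\Af(\Rad)$ is generated by involutions), whereas you lift the generators of $\Af(\Rad)=\Wf(\Rad)\ltimes\Af(\Rad,C)$ directly — diagram automorphisms via the Bourbaki lift ${\epi^{\sharp}}$ of Thm.\,\ref{t3.14}, and each reflection $\sq_{\,\alphaup}$ via $\exp(\tfrac{\pi}{2}\ad(K_{\alphaup}))$, whose action on $\hr$ is read off from Lemma\,\ref{l1.2} and which preserves $\gu$ because $K_{\alphaup}\in\gu\cap\gr$. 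Your route is noticeably lighter: since you only need \emph{some} lift of each reflection, not an involutive one, you avoid the twist by $\psiup_{\omegaup}$ and the conjugation $\exp(\tfrac{\pi}{4}\ad(K))\circ\psiup_{\omegaup}\circ\exp(-\tfrac{\pi}{4}\ad(K))$ of Thm.\,\ref{t3.13}, and you avoid entirely the case analysis behind Thm.\,\ref{t3.15}. The trade-off is that the paper's citation gives surjectivity already at the level of involutions lifting to involutions (which is what the rest of the paper actually needs), while your argument establishes only the group-level surjectivity asserted in the Proposition — which is all that is claimed here, so the proof stands.
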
 \vspace{5pt}
\begin{proof}
By Thm.\ref{t3.15}, $\rhoup$ is onto.  
It is clear from the definition that in both cases 
$\psiup$ is injective and 
that its image is the kernel of $\rhoup$. 
\end{proof}
\subsection{Cartan subalgebras of quasi-split semisimple real Lie algebras}
We keep the notation and the assumptions above.
From Thm.\ref{t3.1}, Thm.\ref{t3.13}, Thm.\ref{t3.14}, Thm.\ref{t3.15} we obtain 
\begin{thm} \label{t3.20}
Let $\gs$ be a quasi-split real form of $\gt,$ with $\sigmaup\,{\in}\,\Inv(\gr,\hr)$
and $\epi\,{=}\,\rhoup(\sigmaup)$. 
Then
the conjugacy classes of Cartan subalgebras of $\gs$
 are in one-to-one correspondence with the conjugacy classes of sets of strongly orthogonal
 roots  in $\Rad_{\,\;\circ}^{\epi}$.
 \end{thm}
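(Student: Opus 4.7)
The plan is to assemble the preceding results into a bijection between equivalence classes of systems of strongly orthogonal roots in $\Rd{\circ}{\epi}$ and inner conjugacy classes of Cartan subalgebras of $\gs$. The forward map is furnished by the theorem immediately preceding: to a strongly orthogonal $\Btt = \{\betaup_{1},\hdots,\betaup_{r}\} \subset \Rd{\circ}{\epi}$ I would associate $\sigmaup_{\Btt} := \sq_{\,\Btt}^{\sharp} \circ \sigmaup \in \Inv^{\tauup}(\gr,\hr)$, together with the isomorphism $\Phi_{\Btt} \colon \gt_{\sigmaup_{\Btt}} \to \gs$ implicit in that proof (built from $\exp(\tfrac{\piup}{4}\ad(K_{\Btt}))$ as in Notation 3.4). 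The Cartan subalgebra $\Phi_{\Btt}(\mathfrak{h}_{\sigmaup_{\Btt}})$ of $\gs$ depends, up to inner conjugation, only on the equivalence class of $\Btt$, because elements of the Weyl group of $\Rd{\circ}{\epi}$ lift through the $\sq^{\sharp}$-construction to inner automorphisms of $\gs$ that stabilize $\hs$ and permute the corresponding strongly orthogonal systems.

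For surjectivity, I would start with an arbitrary Cartan subalgebra $\ho'$ of $\gs$. Theorem \ref{t3.1} produces $\Psi \in \Aut_{\C}(\gt)$ carrying $(\gs,\ho')$ to a standard pair $(\gt_{\sigmaup'},\mathfrak{h}_{\sigmaup'})$ with $\sigmaup' \in \Inv^{\tauup}(\gr,\hr)$. Since $\gt_{\sigmaup'} \simeq \gs$ is quasi-split, Proposition \ref{p1.23} together with Theorem \ref{t3.16} and Proposition \ref{p2.7} force $\stt' := \rhoup(\sigmaup')$ to admit a decomposition $\stt' = \epi' \circ \sq_{\,\Btt}$ with $\epi' \in \Ib^{*}(\Rad)$ equivalent to $\epi$ and $\Btt$ strongly orthogonal in $\Rd{\circ}{\epi'}$. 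A further composition with an element of $\Aut^{\tauup}(\gr,\hr)$ supplied by Theorem \ref{t3.14} transports $\epi'$ onto $\epi$, exhibiting $\ho'$ as inner conjugate to some $\Phi_{\Btt}(\mathfrak{h}_{\sigmaup_{\Btt}})$.

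The main obstacle is injectivity: if the Cartan subalgebras associated with $\Btt$ and $\Btt'$ are inner conjugate in $\gs$, then $\Btt$ and $\Btt'$ must be equivalent in $\Rd{\circ}{\epi}$. My plan is to translate a conjugacy $\Ad(g)\Phi_{\Btt}(\mathfrak{h}_{\sigmaup_{\Btt}}) = \Phi_{\Btt'}(\mathfrak{h}_{\sigmaup_{\Btt'}})$ into an element $\Theta \in \Aut_{\C}(\gt)$ intertwining the pairs $(\sigmaup_{\Btt},\mathfrak{h}_{\sigmaup_{\Btt}})$ and $(\sigmaup_{\Btt'},\mathfrak{h}_{\sigmaup_{\Btt'}})$ inside the fixed complex $(\gt,\hg)$. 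Conjugating $\Theta$ with the Cayley-type transforms $\exp(\pm\tfrac{\piup}{4}\ad K_{\Btt})$ and $\exp(\pm\tfrac{\piup}{4}\ad K_{\Btt'})$ yields an element of $\Aut^{\tauup}(\gr,\hr)$ whose image under $\rhoup$ commutes with $\epi$ and sends $\sq_{\,\Btt} \circ \epi$ to $\sq_{\,\Btt'} \circ \epi$, hence carries $\Btt$ onto $\Btt'$ in $\Rd{\circ}{\epi}$. This is exactly the equivalence relation used in the classifications of \S\ref{s2.4}. The delicate bookkeeping is in controlling the ambiguity introduced by $\Inv_{0}^{\tauup}(\gr,\hr) = \psiup(\Hom(\Z[\Rad],\Z_{2}^{*}))$, but since this subgroup acts trivially on $\Rad$ it contributes nothing to the resulting element of $\Af(\Rad)$, so the induced map on strongly orthogonal systems is well defined and injectivity follows.
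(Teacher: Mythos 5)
Your overall architecture --- the Cayley-type lifts $\sq_{\,\Btt}^{\sharp}$ for the forward map, Theorem \ref{t3.1} for surjectivity, and a transported automorphism of $(\gt,\hg)$ for injectivity --- is exactly the assembly the paper intends (the paper offers no argument beyond citing Theorems \ref{t3.1}, \ref{t3.13}, \ref{t3.14}, \ref{t3.15}), and the forward and surjectivity halves are acceptable as sketches. The problem is the final step of your injectivity argument, which contains a genuine non sequitur.

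From the conjugation you correctly extract a $w\,{\in}\,\Af(\Rad)$ with $w\circ(\sq_{\,\Btt}\circ\epi)\circ w^{-1}=\sq_{\,\Btt'}\circ\epi$. Two issues arise. First, you assert without justification that $w$ commutes with $\epi$; the decomposition $\stt=\epi\circ\sq_{\,\Btt}$ of Proposition \ref{p2.7} is not canonical (it depends on a choice of maximal orthogonal system in $\Rd{\bullet}{\stt}$), so separating the two factors under conjugation by $w$ requires an argument. Second, and more seriously, even granting $w\,\sq_{\,\Btt}\,w^{-1}=\sq_{\,\Btt'}$, i.e.\ $\sq_{\,w(\Btt)}=\sq_{\,\Btt'}$, you conclude that $w$ ``carries $\Btt$ onto $\Btt'$''. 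But a product of reflections in an orthogonal set of roots remembers only the span of that set, namely its $({-}1)$-eigenspace: the identity $\sq_{\,w(\Btt)}=\sq_{\,\Btt'}$ yields $\langle w(\Btt)\rangle_{\R}=\langle\Btt'\rangle_{\R}$ and nothing more. In $\Rad(\textsc{B}_{2})$ one has $\sq_{\,\e_{1}}\circ\sq_{\,\e_{2}}=\sq_{\,\e_{1}+\e_{2}}\circ\sq_{\,\e_{1}-\e_{2}}$, so distinct orthogonal systems can define the same involution. What rescues the theorem is the fact --- which must be proved, essentially by the case analysis of \S\ref{s2.4} --- that two \emph{strongly} orthogonal systems of roots spanning $\Wf(\Rad)$-conjugate subspaces are themselves conjugate; equivalently, that the inequivalent representatives of \S\ref{s2.4}, distinguished for instance by their number of short roots in types $\textsc{B}_{\ell}$, $\textsc{C}_{\ell}$, $\textsc{F}_{4}$ or by the Klein-group criterion in types $\textsc{E}_{7}$, $\textsc{E}_{8}$, have pairwise non-conjugate spans, hence pairwise non-conjugate involutions $\sq_{\,\Btt}$. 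Your proof never establishes this, and it is precisely the content separating ``conjugacy classes of strongly orthogonal systems'' from ``conjugacy classes of the involutions $\sq_{\,\Btt}$''; without it the claimed correspondence is not shown to be injective.
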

 \begin{proof}
Theorem\,\ref{t3.1} established a correspondence between conjugacy classes
 of Cartan subalgebras and involutions $\sigmaup$ in $\Inv^{\tauup}(\gr,\hr)$.
 By Theorem,\ref{t3.15} every involution $\stt$ of the root system $\Rad$ 
 lifts to an involution 
$\sigmaup\,{\in}\,\Invs$ for which
$\gs$ is quasi-split. By using Theorems\,\ref{t3.13} and \ref{t3.14} , and  
Propositions\,\ref{p2.7} and \ref{p2.34}, we get 
\begin{equation*}
 \stt=\varepsilon\circ\sq_{\,\betaup_{1}}\circ\cdots\circ\sq_{\,\betaup_{r}},
\end{equation*}
 with $\epi\,{\in}\,\Ib(\Rad,C)$ and a system $\{\betaup_{1},\hdots,\betaup_{r}\}$ of strongly orthogonal
 roots in $\Rad^{\epi}_{\,\;\circ}$. 
 Since real forms of $\hg$ are in one-to-one correspondence with the involutions of $\Rad$,
 this correspondence yields the statement of the Theorem.
 \par

 \end{proof}
If $\epi$ is the identity, then $\gt_{{\epi^{\sharp}}}{=}\,\gr$; thus the theorem above characterizes
in particular
the conjugacy classes of Cartan subalgebras of the split real form.

\section{Real forms and their Cartan subalgebras} \label{s3}
Let $\gt$ be a semisimple complex Lie algebra. 
By Thm.\ref{t3.1}
any 
pair $(\gt_{0},\hg_{0})$, consisting of a real form $\gt_{0}$
of 
$\gt$ and its Cartan subalgebra $\hg_{0}$, is conjugate with a pair
 $(\gs,\hs)$, for some $\sigmaup\,{\in}\,\Inv^{\tauup}(\gr,\hr)$. \par
 By Thm.\ref{t3.15}, for every $\stt\,{\in}\,\Ib(\Rad)$ the set $\Invs$
 is not empty and contains a $\sigmaup_{0}$ for which $\gt_{\sigmaup_{0}}$
 is quasi-split.
 \par
All Cartan subalgebras $\hs$ with $\sigmaup\,{\in}\,\Inv^{\tau}_{\!\stt}(\gr,\hr)$ coincide 
as real abelian Lie subalgebras of $\gt$  and (cf. \eqref{e2.7}) are equal~to 
\begin{equation}\label{eq4.1}
\hsi=\hsi^{+}\oplus{i}\hsi^{-},\;\; \text{with}\;\; 
\begin{cases}
 \hsi^{+}=\{H\,{\in}\,\hr\,{\mid}\, \stt^{*}(H)\,{=}\,H\},\\
  \hsi^{-}=\{H\,{\in}\,\hr\,{\mid}\, \stt^{*}(H)\,{=}\,{-}H\}.
\end{cases}
\end{equation} 
\par
 \subsection{Involutions pointwise fixing $\hst$}  
Fix a Chevalley system $\{(X_{\alphaup},H_{\alphaup})\}_{\alphaup\in\Rad}$, and,  
for $\alphaup,\betaup,\alphaup{+}\betaup\,{\in}\Rad$, let 
$N_{\alphaup,\betaup}$
be  the coefficients defined in \eqref{e1.6}. Given an involution $\stt\,{\in}\,\Ib(\Rad)$,
we introduce the notation
\begin{equation}\label{eq4.2} 
 \Fq_{\stt}(\Rad)=\left\{f:\Rad\,{\to}\,\Z_{2}^{*}\left| 
\begin{aligned}
 f(\alphaup){\cdot}f(\stt(\alphaup))=1,\quad\forall\alphaup\,{\in}\,\Rad,\quad\;\\
 N_{\alphaup,\betaup}f(\alphaup{+}\betaup)\,{=}\,N_{\stt(\alphaup),\stt(\betaup)}f(\alphaup)f(\betaup),\\
 \text{if $\alphaup,\betaup,\alphaup{+}\betaup\,{\in}\,\Rad$}
\end{aligned}\right\},\right.
 \end{equation}

\begin{rmk}
 Since $\stt$ is an automorphism of $\Rad,$ we have $N_{\stt(\alphaup),\stt(\betaup)}{=}{\pm}N_{\alphaup,\betaup}$
 whenever  $\alphaup,\betaup,\alphaup{+}\betaup\,{\in}\,\Rad.$ 
\end{rmk}
In the Proposition below we show that  
\index{$\Homs$}
\begin{equation}\label{eq4.3}
\left\{
 \begin{aligned}\Homs=\{\etaup\,{\in}\,\Homz\,{\mid}\,\psiup_{\etaup}{\circ}\,\sigmaup\,{\in}\,\Inv^{\tau}_{\!\stt}(\gr,\hr),\\
 \;\forall \sigmaup\,{\in}\,\Inv^{\tau}_{\!\stt}(\gr,\hr)\}\end{aligned}\right.
\end{equation} 
is a subgroup of $\Homz$, acting 
transitively on $\Invs$. 
\begin{prop} \label{p4.2} Given  $\stt$ in $\Ib(\Rad)$, 
there is a one-to-one correspondence $\sigmaup\,{\leftrightarrow}\,{f}_{\sigmaup}$ 
between involutions in
$\Invs$ 
and elements of $\Fq_{\stt}(\Rad)$, via  
\begin{equation} \label{eq4.4}
\begin{cases}
 \sigmaup(H_{\alphaup})=H_{\stt(\alphaup)},\\
  \sigmaup(X_{\alphaup})=f_{\sigmaup}(\alphaup)X_{\stt(\alphaup)},
\end{cases}\quad \forall\alphaup\in\Rad.
\end{equation}\par
We have 
\begin{align}\label{eq4.5} 
&\Hom_{\stt}(\Z[\Rad],\Z_{2}^{*}){=}\big\{\etaup\,{\in}\,\Hom(\Z[\Rad],\Z_{2}^{*}){\mid} \,
 \etaup(\alphaup){\cdot}\etaup(\stt(\alphaup)){=}1,\;\forall\alphaup{\in}\Rad\big\},
 \intertext{and, if $\sigmaup_{0}$ is any element of $\Invs$, then}
 \label{eq4.6}
&\Invs
 =\{\psiup_{\etaup}\circ\sigmaup_{\sigmaup_{0}}\mid \etaup\in \Hom_{\stt}(\Z[\Rad],\Z_{2}^{*})\}.
\end{align}
\end{prop}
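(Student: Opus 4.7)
The plan is to set up the bijection $\sigmaup \leftrightarrow f_\sigmaup$ by reading off how $\sigmaup$ acts on a Chevalley basis, matching each condition defining $\Fq_{\stt}(\Rad)$ against one of the algebraic constraints on $\sigmaup \in \Invs$, and then to derive \eqref{eq4.5} and \eqref{eq4.6} by computing the effect of pre-composing $\sigmaup_0$ with $\psiup_\etaup$.

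For the forward direction, fix $\sigmaup \in \Invs$. Since $\rhoup(\sigmaup) = \stt$, the restriction $\sigmaup|_{\hr}$ coincides with $\stt^*$, and one checks $\stt^*(H_\alphaup) = H_{\stt(\alphaup)}$ using that $\stt$ is an isometry preserving coroot lengths. The relation $[H, X_\alphaup] = \alphaup(H) X_\alphaup$ then forces $\sigmaup(\gr^\alphaup) = \gr^{\stt(\alphaup)}$, so $\sigmaup(X_\alphaup) = f_\sigmaup(\alphaup) X_{\stt(\alphaup)}$ for a unique $f_\sigmaup(\alphaup) \in \R^*$, which is exactly \eqref{eq4.4}. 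The condition $\sigmaup^2 = \id$ yields $f_\sigmaup(\alphaup) f_\sigmaup(\stt(\alphaup)) = 1$; the automorphism identity applied to $[X_\alphaup, X_\betaup] = N_{\alphaup,\betaup} X_{\alphaup+\betaup}$ yields the $N$-relation of \eqref{eq4.2}; the Chevalley relation $[X_\alphaup, X_{-\alphaup}] = -H_\alphaup$ gives $f_\sigmaup(\alphaup) f_\sigmaup(-\alphaup) = 1$; and $\sigmaup(\gu) = \gu$, equivalent to $\sigmaup \circ \tauup = \tauup \circ \sigmaup$, translates to $f_\sigmaup(-\alphaup) = f_\sigmaup(\alphaup)$. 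The last two together force $f_\sigmaup(\alphaup)\in\Z_2^*$, so $f_\sigmaup \in \Fq_{\stt}(\Rad)$. The reverse direction is the dual construction: given $f \in \Fq_{\stt}(\Rad)$, define $\sigmaup$ by $\sigmaup|_{\hr} = \stt^*$ and $\sigmaup(X_\alphaup) = f(\alphaup) X_{\stt(\alphaup)}$, and verify the same list of bracket relations in turn using the $N$-relation and the duality between $\stt$ and $\stt^*$.

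For \eqref{eq4.5} and \eqref{eq4.6}, I would compute from \eqref{eq4.9a} that $\psiup_\etaup \circ \sigmaup_0$ acts on $X_\alphaup$ by the scalar $\etaup(\stt(\alphaup)) f_{\sigmaup_0}(\alphaup)$, so that the associated function $f_{\psiup_\etaup \circ \sigmaup_0}$ equals $(\etaup \circ \stt) \cdot f_{\sigmaup_0}$. The requirement that this product lie in $\Fq_{\stt}(\Rad)$ reduces, using that $f_{\sigmaup_0}$ already satisfies the two defining conditions, to the single extra constraint $\etaup(\alphaup) \etaup(\stt(\alphaup)) = 1$, yielding \eqref{eq4.5}. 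For transitivity, given $\sigmaup_1 \in \Invs$ the pointwise product $\etaup(\alphaup) := f_{\sigmaup_1}(\alphaup) \cdot f_{\sigmaup_0}(\alphaup)$, taken in $\Z_2^*$, is $\stt$-invariant, satisfies $\etaup(-\alphaup) = \etaup(\alphaup)$, and is multiplicative on every triple $\alphaup, \betaup, \alphaup+\betaup \in \Rad$ because the sign ratios $N_{\stt(\alphaup),\stt(\betaup)}/N_{\alphaup,\betaup}$ arising from the two $N$-relations cancel in the product; hence $\etaup$ extends to a homomorphism $\Z[\Rad] \to \Z_2^*$ in $\Homs$, and by construction $\psiup_\etaup \circ \sigmaup_0 = \sigmaup_1$.

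The step I expect to be the main obstacle is this extension of $\etaup$ from $\Rad$ to a homomorphism of the free abelian group $\Z[\Rad]$: one must verify that a function defined on $\Rad$ and multiplicative only on pairs summing to a root descends consistently through all $\Z$-linear relations in the root lattice. The clean way is to fix a basis $\Bz(C)$ of simple positive roots, extend $\etaup$ to $\Z[\Rad]$ by the unique $\Z$-linear formula on $\Bz(C)$, and then check inductively on the height of positive roots---using that each positive root of height $\geq 2$ is a sum of a simple root with a positive root of smaller height still lying in $\Rad$---that this linear extension agrees with the original $\etaup$ on all of $\Rad$. The $\Z_2^*$-valuedness, together with $\etaup(-\alphaup) = \etaup(\alphaup)$, then handles the negative roots automatically.
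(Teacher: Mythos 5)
Your proof is correct and follows essentially the same route as the paper's: read $f_{\sigmaup}$ off the action of $\sigmaup$ on a Chevalley system, match the two defining conditions of $\Fq_{\stt}(\Rad)$ against $\sigmaup^{2}\,{=}\,\id$ and the automorphism identity on $[X_{\alphaup},X_{\betaup}]$, and obtain \eqref{eq4.5}--\eqref{eq4.6} by noting that the ratio of two elements of $\Fq_{\stt}(\Rad)$ is a $\stt$-invariant character of $\Z[\Rad]$. The only differences are ones of detail, in your favour: you derive the $\Z_{2}^{*}$-valuedness of $f_{\sigmaup}$ explicitly from $[X_{\alphaup},X_{-\alphaup}]\,{=}\,{-}H_{\alphaup}$ together with $\sigmaup\,{\circ}\,\tauup\,{=}\,\tauup\,{\circ}\,\sigmaup$, and you spell out the extension of $\etaup$ from $\Rad$ to a homomorphism on the root lattice, a step the paper compresses into the single sentence that elements of $\Homs$ are quotients of pairs of functions in $\Fq_{\stt}(\Rad)$.
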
\begin{proof} If $\sigmaup\,{\in}\,\Invs,$ then 
$\sigmaup(H_{\alphaup})\,{=}\,H_{\stt(\alphaup)}$  and 
$\sigmaup(X_{\alphaup})\,{=}\,{\pm}X_{\stt(\alphaup)},$ uniquely determining 
a map $f_{\sigmaup}{:}\Rad\,{\to}\,\Z_{2}^{*}$ for which 
\eqref{eq4.4} holds. We have
\begin{equation*}\tag{$*$}
 X_{\alphaup}=\sigmaup^{2}(X_{\alphaup})=
f_{\sigmaup}(\alphaup)f_{\sigmaup}(\stt(\alphaup))X_{\alphaup},\;\;\forall\alphaup\in\Rad,
\end{equation*}
and, for  $\alphaup,\betaup,\alphaup{+}\betaup\in\Rad$, we obtain
\begin{align*}\tag{$**$}
 \sigmaup([X_{\alphaup},X_{\betaup}])&=N_{\alphaup,\betaup}\sigmaup(X_{\alphaup\,{+}\,\betaup})
 =N_{\alphaup,\betaup} f_{\sigmaup}(\alphaup\,{+}\,\betaup) X_{\stt(\alphaup\,{+}\,\betaup)}\\
 &\begin{aligned}
 =[\sigmaup(X_{\alphaup}),\sigmaup(X_{\betaup})]=f_{\sigmaup}(\alphaup)f_{\sigmaup}({\betaup})
 [X_{\stt(\alphaup)},X_{\stt(\betaup)}]  \qquad\\
 =N_{\stt(\alphaup),\stt(\betaup)}f_{\sigmaup}(\alphaup)f_{\sigmaup}({\betaup})X_{\stt(\alphaup+\betaup)}.
 \end{aligned}
\end{align*}
This shows that $f_{\sigmaup}{\in}\,\Fq_{\stt}(\Rad)$. 
Vice versa, if $f_{\sigmaup}{\in}\,\Fq_{\stt}(\Rad),$ then 
$(*)$ and $(**)$ hold true, showing that 
\eqref{eq4.4} defines an element of
$\Invs$.   \par
Finally, \eqref{eq4.5} follows because the elements of $\Hom_{\stt}(\Z[\Rad],\R^{*})$
are quotients of pairs of functions in $\Fq_{\stt}(\Rad).$ 
 \end{proof}
 \begin{ntz}
 If $\sigmaup_{1},\sigmaup_{2}{\in}\,\Invs,$  then we set 
\begin{equation}
 (\sigmaup_{2}/\sigmaup_{1})=\etaup
\end{equation}
 for the unique
 $\etaup\,{\in}\,\Homs$ such that 
 $\sigmaup_{2}{=}\psiup_{\etaup}\,{\circ}\,\sigmaup_{1}$.
 \end{ntz}
 \begin{lem}\label{l3.3} Let $\sigmaup_{1},\sigmaup_{2}{\in}\,\Invs$
 and $C$ be an $S$-chamber for $\stt$. If 
\begin{equation}\label{e2.11}
(\sigmaup_{2}/\sigmaup_{1})(\alphaup)=1, \;\;\forall\alphaup\in\Bz_{\bullet}^{\stt}(C),
\end{equation}
then 
$(\gt_{\sigmaup_{1}},\hst)$ and $(\gt_{\sigmaup_{2}},\hst)$
are 
isomorphic in $\gt$. 
\end{lem}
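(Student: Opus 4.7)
The strategy is to produce $\phi\in\Aut_{\C}(\gt,\hg)$ pointwise fixing $\hg$ such that $\phi\circ\sigmaup_{1}=\sigmaup_{2}\circ\phi$. Once such $\phi$ is built one has $\phi(\gt_{\sigmaup_{1}})=\gt_{\sigmaup_{2}}$, and $\phi$ restricts to the identity on $\hst\subset\hg$, delivering the desired isomorphism of pairs. An automorphism fixing $\hg$ pointwise is of the form $\phi_{c}(X_{\alphaup})=c_{\alphaup}X_{\alphaup}$ for a group homomorphism $c\in\Hom(\Z[\Rad],\C^{*})$ (by the proof of Prop.\,\ref{p3.4}). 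Writing $\etaup=(\sigmaup_{2}/\sigmaup_{1})\in\Homs$, the intertwining identity applied to $X_{\alphaup}$ via \eqref{eq4.4} and the anti-$\C$-linearity of $\sigmaup_{1}$ becomes the cocycle condition
\begin{equation*}
c_{\stt(\alphaup)}\,=\,\etaup(\alphaup)\,\overline{c_{\alphaup}},\qquad\forall\alphaup\in\Rad.
\end{equation*}
Since both sides are group homomorphisms in $\alphaup$, it suffices to enforce this equation on the basis $\Bz(C)$.

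Specialising the cocycle identity to an imaginary root $\alphaup\in\Rd{\bullet}{\stt}$, where $c_{\stt(\alphaup)}=c_{-\alphaup}=c_{\alphaup}^{-1}$, yields $|c_{\alphaup}|^{2}=\etaup(\alphaup)$. Since $\etaup$ takes values in $\{\pm 1\}$, this is solvable if and only if $\etaup\equiv 1$ on $\Rd{\bullet}{\stt}$. This is the only genuine obstruction in the argument, and it is precisely what the hypothesis $\etaup\equiv 1$ on $\Bz_{\bullet}^{\stt}(C)$ neutralises: by Proposition~\ref{p2.32} the set $\Bz_{\bullet}^{\stt}(C)$ is a basis of simple roots for $\Rd{\bullet}{\stt}$, so triviality of $\etaup$ on this basis propagates through the homomorphism to all of $\Z[\Rd{\bullet}{\stt}]$. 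Overcoming this imaginary-root obstruction is the conceptual heart of the proof; everything else is formal.

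With the obstruction removed, I construct $c$ by prescribing it on $\Bz(C)$, which by \eqref{e2.40} decomposes as $\Bz(C)=\Bz_{\bullet}^{\stt}(C)\sqcup\Bz_{\circ}^{\stt}(C)\sqcup\Bz_{\oplus}^{\stt}(C)$. Set $c_{\alphaup}=1$ on $\Bz_{\bullet}^{\stt}(C)$; for $\alphaup\in\Bz_{\circ}^{\stt}(C)$ (real) choose $c_{\alphaup}\in\{1,i\}$ so that $c_{\alphaup}=\etaup(\alphaup)\overline{c_{\alphaup}}$ holds; for $\alphaup\in\Bz_{\oplus}^{\stt}(C)$ invoke Proposition~\ref{p1.5} to write $\stt(\alphaup)=\alphaup'+\alphaup''$ with $\alphaup'\in\Bz(C)$ and $\alphaup''\in\Z[\Bz_{\bullet}^{\stt}(C)]$, and either treat $\alphaup$ as real when $\alphaup'=\alphaup$, or otherwise pair $\{\alphaup,\alphaup'\}$ and set $c_{\alphaup}=1$, $c_{\alphaup'}=\etaup(\alphaup)$. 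The verification of the cocycle identity on $\Bz(C)$ is then a direct case check (on imaginary roots both sides equal $1$; on real roots the equation holds by construction; on a complex pair $\{\alphaup,\alphaup'\}$ the equation at $\alphaup$ reads $c_{\alphaup'}\cdot c_{\alphaup''}=\etaup(\alphaup)\overline{c_{\alphaup}}$, true by choice, and the symmetric equation at $\alphaup'$ requires $\etaup(\alphaup)\etaup(\alphaup')=1$, which follows from $\etaup(\alphaup)\etaup(\stt(\alphaup))=1$ of \eqref{eq4.5} combined with $\etaup(\alphaup'')=1$). The resulting $\phi=\phi_{c}\in\Aut_{\C}(\gt,\hg)$ implements the required conjugation.
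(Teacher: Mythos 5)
Your proof is correct and follows essentially the same route as the paper's: both arguments construct a diagonal automorphism $\phiup_{c}$ of $\gt$ fixing $\hg$ pointwise that intertwines $\sigmaup_{1}$ and $\sigmaup_{2}$, with the hypothesis on $\Bz_{\bullet}^{\stt}(C)$ removing the only obstruction (at imaginary roots) and Proposition~\ref{p1.5} handling the complex simple roots. The paper simply packages one particular solution of your cocycle equation as $c_{\alphaup}=i^{(\alphaup|\omegaup)}$ for a single $\omegaup\,{\in}\,(\Z[\Rad])^{*}$ with $(\alphaup|\omegaup)=(\stt(\alphaup)|\omegaup)$.
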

\begin{proof} 
Define $\omegaup\,{\in}\,(\Z[\Rad])^{*}$ by requiring that 
\begin{equation*}
 (\alphaup|\omegaup)= 
\begin{cases}
 0, & \text{if $\alphaup\in\Bz(C)$ and $\sigmaup_{1}(X_{\alphaup})=\sigmaup_{2}(X_{\alphaup})$},\\
 1, & \text{if $\alphaup\in\Bz(C)$ and $\sigmaup_{1}(X_{\alphaup})=-\sigmaup_{2}(X_{\alphaup})$}.
\end{cases}
\end{equation*}
Since $\sigmaup_{1}(X_{\alphaup})\,{=}\,{-}\sigmaup_{2}(X_{\alphaup})$ iff 
$\sigmaup_{1}(X_{\stt(\alphaup)})\,{=}\,{-}\sigmaup_{2}(X_{\stt(\alphaup)})$, because $\sigmaup_{1}$ and $\sigmaup_{2}$
are involutions of $\Invs$,
the condition that $(\alphaup|\omegaup)\,{=}\,0$ for all $\alphaup\,{\in}\,\Bz_{\bullet}^{\stt}(C)$
implies, by \eqref{eq1.21},  
that 
\begin{equation*}
 (\alphaup|\omegaup)=(\stt(\alphaup)|\omegaup),\quad\forall\alphaup\in\Rad.
\end{equation*}
This is a sufficient condition in order that  the $\C$-linear $\phiup$ defined by
\begin{equation*} 
\begin{cases}
 \phiup(H)=H, &\forall H\in\hg,\\
 \phiup(X_{\alphaup})=i^{(\alphaup|\omegaup)}X_{\alphaup},&\forall\alphaup\in\Rad,
\end{cases}
\end{equation*}
be an element of $\Aut_{\C}(\gt,\hg)$, yielding  an
isomorphism of $\gt_{\sigmaup_{2}}$ onto $\gt_{\sigmaup_{1}}$. 
\end{proof}
 \begin{prop} \label{p4.4}
 Let $\stt\,{\in}\,\Ib(\Rad)$, 
$\omegaup\,{\in}\,(\Z[\Rad])^{*}$. Then 
\begin{itemize}
 \item[(1)] $\etaup_{\omegaup}\,{\in}\,\Homs$
if and only if there is $C\,{\in}\,\Cd(\Rad)$ such that
\begin{equation}\label{eq4.9}
 (\alphaup\,{-}\,\stt(\alphaup)\mid\omegaup)\in{2}\Z,\;\;\forall\alphaup\in\Bz^{\stt}_{\,\star}(C);
\end{equation}
\item[(2)] $\etaup_{\omegaup}\,{\in}\,\Homs$
if and only if 
\begin{equation}\label{eq4.9'}\tag{\ref{eq4.9}$'$}
 (\alphaup\,{-}\,\stt(\alphaup)\mid\omegaup)\in{2}\Z,\;\;\forall\alphaup\in\Rad ;
 \end{equation}
 \item[(3)] if $\omegaup\,{\in}\,(\Z[\Rad])^{*}$ and $\stt(\omegaup)\,{=}\,{-}\omegaup$, then
$\etaup_{\omegaup}{\in}\,\Hom_{\stt}(\Z[\Rad],\Z_{2}^{*})$;
\item[(4)] If $\etaup_{\omegaup}\,{\in}\,\Hom_{\stt}(\Z[\Rad],\Z_{2}^{*})$, then also
\begin{equation}\label{e3.9}
 \omegaup'=\tfrac{1}{2}(\omegaup-\stt(\omegaup)).
\end{equation}
belongs to $\Hom_{\stt}(\Z[\Rad],\Z_{2}^{*})$ 
and, for every $\sigmaup\,{\in}\,\Invs$, the real forms 
$\gt_{\etaup_{\omegaup}\circ\sigmaup}$ and $\gt_{\etaup_{\omegaup'}\circ\sigmaup}$
are isomorphic. 
 \end{itemize}
\end{prop}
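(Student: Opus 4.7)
The condition $\etaup_{\omegaup}\,{\in}\,\Homs$ unpacks directly: since $\etaup_\omegaup(\xiup)=(-1)^{(\xiup|\omegaup)}$, requiring $\etaup_\omegaup(\alphaup)\,\etaup_\omegaup(\stt(\alphaup))=1$ for every $\alphaup\,{\in}\,\Rad$ is the same as $(\alphaup+\stt(\alphaup)|\omegaup)\,{\in}\,2\Z$ for all $\alphaup$. Because $\omegaup\,{\in}\,(\Z[\Rad])^{*}$ forces $2(\alphaup|\omegaup)\,{\in}\,2\Z$ automatically, adding $(\alphaup-\stt(\alphaup)|\omegaup)$ to $(\alphaup+\stt(\alphaup)|\omegaup)$ shows the two divisibility conditions are equivalent modulo~$2\Z$. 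This is exactly (2). So the first task is to reduce to this reformulation; the second is to show that the bilinear function $\alphaup\mapsto(\alphaup-\stt(\alphaup)|\omegaup)\bmod{2}$ is additive, so that its vanishing on any $\Z$-generating set of $\Z[\Rad]$ implies vanishing everywhere. Any basis $\Bz(C)$ of an $S\!$-chamber splits as $\Bz_{\circ}^{\stt}(C)\,{\sqcup}\,\Bz_{\bullet}^{\stt}(C)\,{\sqcup}\,\Bz_{\star}^{\stt}(C)$; on the first set $\alphaup-\stt(\alphaup)=0$, on the second set $\alphaup-\stt(\alphaup)=2\alphaup$ and $(2\alphaup|\omegaup)\,{\in}\,2\Z$ is automatic. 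Thus (1) follows: only the test on $\Bz_{\star}^{\stt}(C)$ is non-trivial, and its validity on some $C$ promotes to all of~$\Rad$.

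Statement (3) is a one-line consequence: if $\stt(\omegaup)\,{=}\,{-}\omegaup$, then for any~$\alphaup$ the isometry property of~$\stt$ gives $(\stt(\alphaup)|\omegaup)\,{=}\,(\alphaup|\stt(\omegaup))\,{=}\,{-}(\alphaup|\omegaup)$, so $(\alphaup+\stt(\alphaup)|\omegaup)\,{=}\,0\,{\in}\,2\Z$, confirming $\etaup_{\omegaup}\,{\in}\,\Homs$.

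For (4) the plan is to decompose
\begin{equation*}
\omegaup=\omegaup'+\omegaup'',\qquad
\omegaup'=\tfrac12(\omegaup-\stt(\omegaup)),\quad
\omegaup''=\tfrac12(\omegaup+\stt(\omegaup)),
\end{equation*}
with $\stt(\omegaup')={-}\omegaup'$ and $\stt(\omegaup'')=\omegaup''$. I first verify both $\omegaup',\omegaup''\,{\in}\,(\Z[\Rad])^{*}$: for~$\omegaup'$, $(\alphaup|\omegaup')=\tfrac12(\alphaup-\stt(\alphaup)|\omegaup)\,{\in}\,\Z$ by (2), and then $(\alphaup|\omegaup'')=(\alphaup|\omegaup)-(\alphaup|\omegaup')\,{\in}\,\Z$. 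By (3), $\etaup_{\omegaup'}\,{\in}\,\Homs$. Now $\etaup_{\omegaup}=\etaup_{\omegaup'}\cdot\etaup_{\omegaup''}$, so $\psiup_{\etaup_{\omegaup}}\,{\circ}\,\sigmaup=\psiup_{\etaup_{\omegaup''}}\,{\circ}\,(\psiup_{\etaup_{\omegaup'}}\,{\circ}\,\sigmaup)$ and the quotient of these two involutions in $\Invs$ is precisely $\etaup_{\omegaup''}$.

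To finish I invoke Lemma~\ref{l3.3}: fix an $S$-chamber $C$ for~$\stt$ (which exists by Theorem~\ref{t3.10}) and check that $\etaup_{\omegaup''}(\alphaup)=1$ for every $\alphaup\,{\in}\,\Bz_{\bullet}^{\stt}(C)$. But for such $\alphaup$, $\stt(\alphaup)={-}\alphaup$ while $\stt(\omegaup'')=\omegaup''$, so
\begin{equation*}
(\alphaup|\omegaup'')=(\stt(\alphaup)|\stt(\omegaup''))=(-\alphaup|\omegaup'')=-(\alphaup|\omegaup''),
\end{equation*}
forcing $(\alphaup|\omegaup'')=0$ and hence $\etaup_{\omegaup''}(\alphaup)=1$. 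Lemma~\ref{l3.3} then gives the desired isomorphism $\gt_{\etaup_{\omegaup}\circ\sigmaup}\simeq\gt_{\etaup_{\omegaup'}\circ\sigmaup}$. The main obstacle is the bookkeeping in (4): making sure that $\omegaup'$ really lies in the dual lattice (so that $\etaup_{\omegaup'}$ is defined), which requires the $2\Z$-integrality supplied by (2), and then matching the hypothesis of Lemma~\ref{l3.3} by using the $\stt$-symmetry of~$\omegaup''$ to kill it on the imaginary simple roots.
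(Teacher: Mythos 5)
Your proof is correct and follows essentially the same route as the paper's: reduce membership in $\Homs$ to the condition $(\alphaup\,{-}\,\stt(\alphaup)\mid\omegaup)\,{\in}\,2\Z$, observe that this is automatic on real and imaginary roots and additive in $\alphaup$ (giving (1) and (2)), deduce (3) immediately, and for (4) check that the quotient homomorphism $\etaup_{\omegaup''}$ is trivial on $\Bz_{\bullet}^{\stt}(C)$ so that Lemma~\ref{l3.3} applies. The only (welcome) addition is your explicit verification that $\omegaup'$ lies in the dual lattice $(\Z[\Rad])^{*}$, a point the paper leaves implicit.
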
 
\begin{proof}
Condition \eqref{eq4.9'} 
translates the one characterizing  $\Homs$ in \eqref{eq4.5} and
is equivalent to 
\begin{equation*}
  (\alphaup\,{-}\,\stt(\alphaup)\mid\omegaup)\in{2}\Z,\;\;\forall\alphaup\in\Rad^{\stt}_{\,\;\star},
\end{equation*}
since \eqref{eq4.9'} is trivially satisfied by the roots in
 $\Rad_{\,\;\circ}^{\stt}{\cup}\,\Rad_{\,\;\bullet}^{\stt}$ .
 This yields (2) and also (1), because \eqref{eq4.9'} is valid by linearity if it holds true for the
 element of a basis $\Bz(C)$ of positive simple roots. \par 
If $\omegaup\,{\in}\,\Hom(\Z[\Rad],\Z_{2}^{*})$ and
$\stt(\omegaup)\,{=}\,{-}\omegaup,$ then condtion \eqref{eq4.9'} is trivially satisfied 
and hence $\omegaup\,{\in}\,\Hom_{\stt}(\Z[\Rad],\Z_{2}^{*}).$\par 
\par
Assume next that $\omegaup\,{\in}\,\Hom_{\stt}(\Z[\Rad],\Z_{2}^{*}).$
If $\omegaup'$ is defined by \eqref{e3.9}, then 
\begin{equation*}
 (\alphaup|\omegaup)=(\alphaup|\omegaup'),\;\;\forall \alphaup\in\Rad_{\,\;\bullet}^{\stt},
\end{equation*}
and thus 
$\gt_{\etaup_{\omegaup}\circ\sigmaup}$ and $\gt_{\etaup_{\omegaup'}\circ\sigmaup}$
are isomorphic by Lemma\,\ref{l3.3}.
\end{proof}
\begin{cor}\label{c4.5}
Let $\att$ be the involution $\att(\alphaup){=}{-}\alphaup$ of $\Rad$. Then\par
\centerline{$\etaup_{\omegaup}\,{\in}\,\Homs$  
for every $\omegaup\,{\in}\,(\Z[\Rad])^{*}$.}
\end{cor}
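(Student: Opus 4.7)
The plan is to deduce the corollary directly from Proposition \ref{p4.4}, for which two of the three criteria apply immediately when the involution in question is the antipodal one.

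First I would use Proposition \ref{p4.4}(2), which states that $\etaup_{\omegaup}\in\Hom_{\stt}(\Z[\Rad],\Z_{2}^{*})$ if and only if $(\alphaup-\stt(\alphaup)\mid\omegaup)\in 2\Z$ for every $\alphaup\in\Rad$. Applied to $\stt=\att$, the quantity $\alphaup-\att(\alphaup)$ is simply $\alphaup-(-\alphaup)=2\alphaup$, so the condition to check reduces to
\begin{equation*}
(2\alphaup\mid\omegaup)=2(\alphaup\mid\omegaup)\in 2\Z,\qquad\forall\alphaup\in\Rad,
\end{equation*}
which is equivalent to $(\alphaup\mid\omegaup)\in\Z$ for all $\alphaup\in\Rad$. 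This is precisely the definition of the dual lattice $(\Z[\Rad])^{*}$, which $\omegaup$ is assumed to belong to, so the required condition is satisfied automatically.

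Alternatively, one may cite Proposition \ref{p4.4}(3) directly: since $\att(\omegaup)=-\omegaup$ for every $\omegaup\in\hr^{*}$, the hypothesis $\stt(\omegaup)=-\omegaup$ of that item holds trivially for $\stt=\att$, and the conclusion $\etaup_{\omegaup}\in\Hom_{\att}(\Z[\Rad],\Z_{2}^{*})$ follows.

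There is essentially no obstacle here, as this is a corollary whose content is to record the special behavior of the antipodal involution: under $\att$ every dual lattice element automatically satisfies the compatibility condition defining $\Hom_{\stt}(\Z[\Rad],\Z_{2}^{*})$. The only thing to verify is that one of the two equivalent formulations of Proposition \ref{p4.4} is genuinely triggered, and both do so immediately.
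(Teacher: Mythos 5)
Your proof is correct and takes essentially the same route as the paper: the paper's one-line argument also invokes Proposition \ref{p4.4}, observing that condition \eqref{eq4.9} is trivially satisfied for $\att$ (there the set $\Bz^{\att}_{\,\star}(C)$ of complex simple roots is empty, so the condition is vacuous). Your verification via item (2), computing $\alphaup-\att(\alphaup)=2\alphaup$ so that $(2\alphaup\mid\omegaup)=2(\alphaup\mid\omegaup)\in 2\Z$ by the definition of the dual lattice, and your alternative via item (3), are equally immediate instances of the same proposition.
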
 
\begin{proof}
 Indeed condition \eqref{eq4.9} is trivially satisfied in this case.
\end{proof}
\begin{ntz}
 Given $\stt\,{\in}\,\Ib(\Rad),$ we denote by $(\Z[\Rad])^{*}_{\stt}$ the set of $\omegaup$ in
 $(\Z[\Rad])^{*}$ satisfying the equivalent conditions 
 \eqref{eq4.9}, \eqref{eq4.9'} of Prop.\ref{p4.4}. \index{$(\Z[\Rad])^{*}_{\stt}$}
Note that $(\Z[\Rad])^{*}_{\stt}$ is a subgroup of the abelian group $(\Z[\Rad])^{*}$.
\end{ntz}
\subsection{Imaginary compact and noncompact roots}
We recall from Prop.\ref{p3.2} that, if $\sigmaup\,{\in}\,\Invs,$ then the restriction  of
$\tauup$ to $\gs$ is a Cartan involution, yielding the Cartan decomposition $\gs\,{=}\,\ks\,{\oplus}\,\ps$. 
\begin{lem}
 Let $\stt\,{\in}\,\Ib(\Rad)$,  $\sigmaup\,{\in}\,\Invs$ and $\alphaup\,{\in}\,\Rad_{\,\;\bullet}^{\stt}.$ Then $ X_{\alphaup}{+}\sigmaup(X_{\alphaup})$, and $i(X_{\alphaup}{-}\sigmaup(X_{\alphaup}))$
 belong either both to $\ks$ or both to $\ps.$ 
The intersection of $\gs$ with $\lt_{\alphaup}{=}\langle{H}_{\alphaup},{X}_{\alphaup},X_{-\alphaup}\rangle_{\C}$
is isomorphic in the first case to $\su(2),$ in the second to $\slt_{2}(\R).$ 
\end{lem}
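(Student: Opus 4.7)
Since $\alphaup\,{\in}\,\Rd{\bullet}{\stt}$ means $\stt(\alphaup)\,{=}\,{-}\alphaup$, Proposition~\ref{p4.2} gives $\sigmaup(X_{\alphaup})\,{=}\,\epi\,X_{-\alphaup}$ with $\epi\,{=}\,f_{\sigmaup}(\alphaup)\,{\in}\,\{{\pm}1\}$; moreover the relation $f_{\sigmaup}(\alphaup)f_{\sigmaup}(\stt(\alphaup))\,{=}\,1$ forces $f_{\sigmaup}({-}\alphaup)\,{=}\,\epi$, so $\sigmaup(X_{-\alphaup})\,{=}\,\epi\,X_{\alphaup}$. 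The first step is to verify that $X_{\alphaup}{+}\sigmaup(X_{\alphaup})$ and $i(X_{\alphaup}{-}\sigmaup(X_{\alphaup}))$ lie in $\gs$: they are obtained by symmetrizing/antisymmetrizing $X_{\alphaup}\,{\in}\,\gr$ under $\sigmaup$, hence belong to $\gs^{+}\,{\subset}\,\gs$ and $i\gs^{-}\,{\subset}\,\gs$ respectively (see~\eqref{e1.28}--\eqref{e1.29}).

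Next I would apply the compact involution $\tauup$ to both vectors. Using $\tauup(X_{\pm\alphaup}){=}X_{\mp\alphaup}$ one finds
\begin{equation*}
\tauup(X_{\alphaup}{+}\epi X_{-\alphaup})=X_{-\alphaup}{+}\epi X_{\alphaup}=\epi(X_{\alphaup}{+}\epi X_{-\alphaup}),
\end{equation*}
and similarly $\tauup(i(X_{\alphaup}{-}\epi X_{-\alphaup}))\,{=}\,\epi\cdot i(X_{\alphaup}{-}\epi X_{-\alphaup})$. Thus both vectors are $\tauup$-eigenvectors with the same eigenvalue $\epi$, so Proposition~\ref{p3.2} places them both in $\ks$ when $\epi\,{=}\,1$ and both in $\ps$ when $\epi\,{=}\,{-}1$.

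For the second assertion, the complex subalgebra $\lt_{\alphaup}\,{=}\,\langle H_{\alphaup},X_{\alphaup},X_{-\alphaup}\rangle_{\C}\,{\simeq}\,\slt_{2}(\C)$ is stable under the anti-$\C$-linear extension of $\sigmaup$, since $\sigmaup(H_{\alphaup})\,{=}\,H_{\stt(\alphaup)}\,{=}\,{-}H_{\alphaup}$ and the formulas for $\sigmaup(X_{\pm\alphaup})$ above. Parametrizing $Z\,{=}\,aH_{\alphaup}{+}bX_{\alphaup}{+}cX_{-\alphaup}\,{\in}\,\lt_{\alphaup}$ and imposing $\sigmaup(Z)\,{=}\,Z$ (here $\sigmaup$ means the anti-linear extension, whose fixed points are $\gs$) yields $a\,{\in}\,i\R$ and $c\,{=}\,\epi\bar{b}$. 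Writing $b\,{=}\,x{+}iy$ and using $K_{\alphaup}{=}X_{\alphaup}{+}X_{-\alphaup}$, $T_{\alphaup}{=}X_{\alphaup}{-}X_{-\alphaup}$, one obtains the real bases
\begin{equation*}
\gs\cap\lt_{\alphaup}=\begin{cases} \langle iH_{\alphaup},\,K_{\alphaup},\,iT_{\alphaup}\rangle_{\R}, & \text{if }\epi=1,\\[3pt]
\langle iH_{\alphaup},\,T_{\alphaup},\,iK_{\alphaup}\rangle_{\R}, & \text{if }\epi=-1.\end{cases}
\end{equation*}

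To conclude I would identify each of these three-dimensional real forms of $\slt_{2}(\C)$. In the case $\epi\,{=}\,1$ all three basis vectors lie in the compact form $\gu$ described in~\eqref{e1.5}, so $\gs\,{\cap}\,\lt_{\alphaup}\,{\subset}\,\gu\,{\cap}\,\lt_{\alphaup}$ is a compact real form of $\slt_{2}(\C)$, hence isomorphic to $\su(2)$. In the case $\epi\,{=}\,{-}1$, the element $T_{\alphaup}$ is $\R$-regular semisimple in $\lt_{\alphaup}$: a direct computation (or Lemma~\ref{l1.2} applied to $K_{\alphaup}$ and the obvious symmetry $H_{\alphaup}\,{\leftrightarrow}\,T_{\alphaup}$, $K_{\alphaup}\,{\leftrightarrow}\,K_{\alphaup}$) gives $\ad(T_{\alphaup})$ eigenvalues $\{0,\pm 2\}$ on $\lt_{\alphaup}$, so $\gs\,{\cap}\,\lt_{\alphaup}$ is a non-compact real form of $\slt_{2}(\C)$ and must be $\slt_{2}(\R)$. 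The only real bookkeeping point to watch is the consistency of the sign $\epi$ in the passage from $\sigmaup(X_{\alphaup})$ to $\sigmaup(X_{-\alphaup})$, which is handled by the first step above.
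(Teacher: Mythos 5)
Your proof is correct and follows essentially the same route as the paper's: you compute $\sigmaup(X_{\pm\alphaup})\,{=}\,\epi X_{\mp\alphaup}$, place the two symmetrized vectors in $\ks$ or $\ps$ according to the sign $\epi$ (the paper does this by direct inspection of membership in $\gu$, you via the common $\tauup$-eigenvalue, which amounts to the same thing under the paper's convention that $\tauup$ denotes the anti-$\C$-linear extension), and you exhibit the same real bases $\langle iH_{\alphaup},K_{\alphaup},iT_{\alphaup}\rangle_{\R}$ and $\langle iH_{\alphaup},T_{\alphaup},iK_{\alphaup}\rangle_{\R}$ of $\gs\cap\lt_{\alphaup}$. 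The only divergence is the final identification, where the paper writes out explicit structure constants (the $\so(3)$ relations, resp.\ the $\slt_{2}$-triple $(T_{\alphaup},Z_{\alphaup},Z_{-\alphaup})$) while you invoke that $\su(2)$ and $\slt_{2}(\R)$ are the only real forms of $\slt_{2}(\C)$ and distinguish them by compactness, resp.\ the real nonzero eigenvalues of $\ad(T_{\alphaup})$; both arguments are sound.
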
 
\begin{proof} If $\alphaup\,{\in}\,\Rad_{\,\;\bullet}^{\stt}$,
then  $\sigmaup(X_{\alphaup}){=}{\pm}X_{-\alphaup}.$ Then 
\begin{align*}
\sigmaup(X_{\alphaup})\,{=}\;X_{{-}\alphaup} \;\;\Longrightarrow X_{\alphaup}{+}\sigmaup(X_{\alphaup})=
X_{\alphaup}
{+}X_{{-}\alphaup},\,i(X_{\alphaup}{-}\,\sigmaup(X_{\alphaup}))=i(X_{\alphaup}{-}X_{{-}\alphaup})
\in \ks,\\
\sigmaup(X_{\alphaup})\,{=}-X_{{-}\alphaup} \Longrightarrow 
X_{\alphaup}{+}\sigmaup(X_{\alphaup})=X_{\alphaup}{-}X_{-\alphaup},\,
i(X_{\alphaup}{-}\,\sigmaup(X_{\alphaup}))=i(X_{\alphaup}{+}X_{-\alphaup})
\in \ps.
\end{align*}
\par
Setting $K_{\alphaup}{=}X_{\alphaup}{+}\,X_{-\alphaup},$\;
$T_{\alphaup}{=}X_{\alphaup}{-}\,X_{-\alphaup}$, we have 
\begin{equation*} 
 [H_{\alphaup},K_{\alphaup}]=2T_{\alphaup},\;\;
  [H_{\alphaup},T_{\alphaup}]=2K_{\alphaup},\;\;
  [T_{\alphaup},K_{\alphaup}]=-2H_{\alphaup}.
\end{equation*}

When $\sigmaup(X_{\alphaup})\,{=}\,X_{-\alphaup}$, we have 
$\lt_{\alphaup}{\cap}\gs\,{=}\, \langle{}K_{\alphaup},\, iH_{\alphaup},\, iT_{\alphaup}\rangle_{\R}$ 
and in this basis 
the adjoint action is represented by the matrices 
\begin{equation*}
 \ad(K_{\alphaup})= 
\begin{pmatrix}
 0 & 0 & 0\\
 0 & 0 & -2\\
 0 & 2 & 0
\end{pmatrix},\;\;\ad(iH_{\alphaup})= 
\begin{pmatrix}
 0 & 0 & -2\\
 0 & 0 & 0\\
 2 & 0 & 0
\end{pmatrix},\;\; \ad(iT_{\alphaup}) = 
\begin{pmatrix}
 0 & 2 & 0\\
 -2 & 0 & 0\\
 0 & 0 & 0
\end{pmatrix},
\end{equation*}
showing that
$\lt_{\alphaup}{\cap}\gs\,{\simeq}\,\so(3)\,{\simeq}\,\su(2)$. 
When $\sigmaup(X_{\alphaup})\,{=}{-}X_{-\alphaup}$, setting
$Z_{\alphaup}{=}\,\tfrac{i}{2}(K_{\alphaup}{+}\,H_{\alphaup}),$  
$Z_{-\alphaup}{=}\,\tfrac{i}{2}(K_{\alphaup}{-}\,H_{\alphaup}),$ we obtain
\begin{equation*}
 [T_{\alphaup},Z_{\alphaup}]=2Z_{\alphaup},\;\;  [T_{\alphaup},Z_{-\alphaup}]=-2Z_{-\alphaup},\;\;
 [Z_{-\alphaup},Z_{\alphaup}]=T_{\alphaup},
\end{equation*}
proving that $\lt_{\alphaup}{\cap}\gs\,{=}\,\langle Z_{-\alphaup},\,Z_{\alphaup},\, T_{\alphaup}\rangle_{\R}$ 
is isomorphic to $\slt_{2}(\R)$.
 \end{proof}
  \begin{defn}
Having fixed $\stt$ in $\Ib(\Rad),$ the choice of a $\sigmaup\,{\in}\,\Invs$ 
yields a 
partition of $\Rad_{\,\;\bullet}^{\stt}$: \index{$\Rad_{\,\;\bullet}^{\sigmaup}$} \index{$\Rad_{\,\;\oast}^{\sigmaup}$}
\begin{equation}
 \Rad_{\,\;\bullet}^{\stt}=\Rad_{\,\;\bullet}^{\sigmaup}\cup \Rad_{\,\;\oast}^{\sigmaup},\;\;
 \text{with}\;\;
\begin{cases}
 \Rad_{\;\;\bullet}^{\sigmaup}=\{\alphaup\in\Rad_{\,\;\bullet}^{\stt}\mid \sigmaup(X_{\alphaup})
 =X_{-\alphaup}\},\\
\Rad_{\;\;\oast}^{\sigmaup}=\{\alphaup\in\Rad_{\,\;\bullet}^{\stt}\mid \sigmaup(X_{\alphaup})
 =-X_{-\alphaup}\}. 
\end{cases}
\end{equation}
We call \emph{compact} the roots in $\Rad_{\,\;\bullet}^{\sigmaup}$ and
\emph{noncompact} those in $\Rad_{\,\;\oast}^{\sigmaup}$.
 \end{defn}
 We also introduce, for $C\,{\in}\,\Cd(\Rad),$ the notation 
\begin{equation}
 \Bz^{\sigmaup}_{\bullet}(C)=\Bz(C)\,{\cap}\,\Rad^{\sigmaup}_{\,\;\bullet},\;\;\; 
  \Bz^{\sigmaup}_{\oast}(C)=\Bz(C)\,{\cap}\,\Rad^{\sigmaup}_{\,\;\oast}.
 \end{equation}
\begin{rmk}\label{rm4.7}
 For the $\sigmaup$ obtained in Thms.\ref{t3.13},\ref{t3.15}
 the real form $\gs$ contains $T_{\betaup_{i}}{=}X_{\betaup_{i}}{-}
 X_{-\betaup_{i}}$ and $iK_{\betaup_{i}}{=}i(X_{\betaup_{i}}{+}
 X_{-\betaup_{i}}).$ Therefore 
\begin{equation*}
 \sigmaup(X_{\betaup_{i}}{-}X_{-\betaup_{i}})=X_{\betaup_{i}}{-}X_{-\betaup_{i}},\;\;
  \sigmaup(X_{\betaup_{i}}{+}X_{-\betaup_{i}})={-}X_{\betaup_{i}}{-}X_{-\betaup_{i}}
  \Rightarrow \sigmaup(X_{\betaup_{i}})=-X_{-\betaup_{i}},
\end{equation*}
showing that $\betaup_{1},\hdots,\betaup_{r}\,{\in}\,\Rad_{\,\;\oast}^{\sigmaup}$. \par 
Note that the actual composition of $\Rd{\bullet}{\sigmaup}$ and $\Rd{\oast}{\sigmaup}$
depends on the choice of the Chevalley basis, although, as we show in the subsection below,
their cardinality only depends on $\sigmaup$.
\end{rmk} 
\begin{exam} \label{ex4.8}
In the root system $\Rad(\textsc{B}_{2})$, to the real form corresponding to
the involution $\att\,{=}\,\sq_{\e_{1}}{\circ}\sq_{\e_{2}}$ and isomorphic to the split real form we
can associate two anti-involutions $\sigmaup_{1},\sigmaup_{2},$ with 
\begin{equation*} \begin{cases}
 \Rd{\bullet}{\sigmaup_{1}}\,{=}\,\{{\pm}\e_{1}\},\\
 \Rd{\oast}{\sigmaup_{1}}\,{=}\,\{{\pm}\e_{2}\}\,{\cup}\,\{{\pm}\e_{1}{\pm}\e_{2}\},
 \end{cases}\,\;\;
  \begin{cases}
 \Rd{\bullet}{\sigmaup_{2}}\,{=}\,\{{\pm}\e_{2}\},\\
 \Rd{\oast}{\sigmaup_{2}}\,{=}\,\{{\pm}\e_{1}\}\,{\cup}\,\{{\pm}\e_{1}{\pm}\e_{2}\}.
 \end{cases}
\end{equation*}
 
\end{exam}
\subsection{Imaginary roots and Cartan decompositions}
Let $\sigmaup\,{\in}\,\Invs$. 
We associate to $\stt$ and $\sigmaup$ the nonnegative integers:
\begin{equation} \quad\left\{
\begin{aligned}
 n_{1}&{=}\tfrac{1}{2}\#\Rad_{\,\;\circ}^{\stt},\\
 n_{2}&{=}\tfrac{1}{4}
 \#\Rad_{\,\;\star}^{\stt},\\
 n_{3}&{=}\tfrac{1}{2}\#\Rad_{\,\;\bullet}^{\stt},\end{aligned}\right.
 \qquad 
\left\{  \begin{aligned}
& \ell{=}\dim_{\R}\hr,\\
 &\ell^{\kt}{=}\dim_{\R}\hsi^{-},\\
 &\ell^{\pt}{=}\dim_{\R}\hsi^{+}.
\end{aligned}\right.\qquad
\left\{ \begin{aligned}
 n_{3}^{\kt}&{=}\tfrac{1}{2}\#\Rad_{\,\;\bullet}^{\sigmaup},\\
 n_{3}^{\pt}&{=}\tfrac{1}{2}\#\Rad_{\,\;\oast}^{\sigmaup},\end{aligned}\right.
\end{equation}
 \begin{prop} With the notation above we have 
\begin{equation} \label{e4.14}
\begin{cases}
 \dim_{\R}\ks=\ell^{\kt}+n_{1}+2n_{2}+2n_{3}^{\kt},\\
 \dim_{\R}\ps=\ell^{\pt}+n_{1}+2n_{2}+2n_{3}^{\pt}.
\end{cases}
\end{equation}
 \end{prop}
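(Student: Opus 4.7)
\smallskip
The plan is to compute $\dim_{\R}\ks$ and $\dim_{\R}\ps$ by slicing $\gs$ along the root-space decomposition $\gt=\hg\oplus\bigoplus_{\alphaup\in\Rad}\gt^{\alphaup}$. Because $\sigmaup\in\Inv^{\tauup}(\gr,\hr)$ preserves $\gu$, the anti-$\C$-linear extensions of $\sigmaup$ and $\tauup$ commute on $\gt$, so $\gs$ is $\tauup$-stable and each $\stt$-orbit of root spaces contributes independently to $\ks$ and $\ps$. I would handle the Cartan part first: from \eqref{eq4.1}, $\hst=\hst^{+}\oplus i\hst^{-}$, and since $\tauup|_{\hr}={-}\id$ while $\tauup$ is anti-$\C$-linear, one has $\tauup|_{\hst^{+}}={-}\id$ and $\tauup|_{i\hst^{-}}=\id$; hence $\hst\cap\ps=\hst^{+}$ and $\hst\cap\ks=i\hst^{-}$, supplying the terms $\ell^{\pt}$ and $\ell^{\kt}$.

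Next I would partition $\Rad$ into $\stt$-orbits $\{\pm\alphaup\}$ for $\alphaup\in\Rd{\circ}{\stt}\cup\Rd{\bullet}{\stt}$ and $\{\pm\alphaup,\pm\stt(\alphaup)\}$ for $\alphaup\in\Rd{\star}{\stt}$, and analyze each case using \eqref{eq4.4}. For a real-root pair ($\alphaup\in\Rd{\circ}{\stt}$), the identity $\sigmaup([X_{\alphaup},X_{-\alphaup}])=[\sigmaup(X_{\alphaup}),\sigmaup(X_{-\alphaup})]$ applied to $[X_{\alphaup},X_{-\alphaup}]={-}H_{\alphaup}$ forces $f_{\sigmaup}(-\alphaup)=f_{\sigmaup}(\alphaup)$; then $\gs\cap(\gt^{\alphaup}\oplus\gt^{-\alphaup})$ is $2$-real-dimensional and $\tauup(X_{\alphaup})=X_{-\alphaup}$ splits it as $1+1$ into $\ks$ and $\ps$, giving $n_{1}$ to each side. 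For an imaginary-root pair ($\alphaup\in\Rd{\bullet}{\stt}$), the lemma preceding Definition~\ref{d2.1}'s analogue (the $\su(2)$ vs.\ $\slt_{2}(\R)$ dichotomy) shows that the $2$-real-dimensional slice $\gs\cap(\gt^{\alphaup}\oplus\gt^{-\alphaup})$ lies entirely in $\ks$ when $\alphaup\in\Rd{\bullet}{\sigmaup}$ and entirely in $\ps$ when $\alphaup\in\Rd{\oast}{\sigmaup}$, contributing $2n_{3}^{\kt}$ and $2n_{3}^{\pt}$ respectively.

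For a complex $\stt$-orbit $\{\pm\alphaup,\pm\stt(\alphaup)\}$, the $4$-complex-dimensional subspace $W$ is both $\sigmaup$- and $\tauup$-invariant. Writing a general element as $aX_{\alphaup}+bX_{-\alphaup}+cX_{\stt(\alphaup)}+dX_{-\stt(\alphaup)}$, imposing $\sigmaup$-fixity (using $f_{\sigmaup}(\stt(\alphaup))=f_{\sigmaup}(\alphaup)$ and $f_{\sigmaup}(-\alphaup)=f_{\sigmaup}(\alphaup)$, both forced by $\sigmaup^{2}=\id$ together with $\sigmaup[X_{\alphaup},X_{-\alphaup}]=[\sigmaup X_{\alphaup},\sigmaup X_{-\alphaup}]$) identifies $W\cap\gs$ as a $4$-real-dimensional subspace parametrised by $(a,b)\in\C^{2}$, and the further $\tauup$-conditions $b={\pm}\bar{a}$ split it evenly as $2+2$ between $\ks$ and $\ps$; summed over the $n_{2}$ orbits this yields $2n_{2}$ on each side. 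Assembling the four contributions gives \eqref{e4.14}.

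The one subtle point—the main obstacle—is checking that in the $\stt$-imaginary case the sign $f_{\sigmaup}(\alphaup)$ really does decide which of $\ks$ or $\ps$ absorbs the whole two-dimensional slice, independently of how the Chevalley system was normalized. This reduces to the explicit $\su(2)$/$\slt_{2}(\R)$ computation in the lemma preceding Definition~\ref{d2.1}, which identifies the real structure on $\langle H_{\alphaup},X_{\alphaup},X_{-\alphaup}\rangle_{\C}\cap\gs$ according to the sign of $f_{\sigmaup}(\alphaup)$; once this identification is made, the dimension count is routine. As a consistency check, summing gives $\dim_{\R}\gs=\ell+2n_{1}+4n_{2}+2n_{3}=\dim_{\R}\hr+\#\Rad$, as it must.
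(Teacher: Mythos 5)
Your proof is correct and follows essentially the same route as the paper: both decompose $\gs$ along the $\stt$-orbits of root spaces, peel off the Cartan contribution $\hst^{+}\subset\ps$, $i\hst^{-}\subset\ks$, and then count $1{+}1$ per real pair, $2{+}2$ per complex quadruple, and an all-or-nothing $2$ per imaginary pair according to the compact/noncompact ($\su(2)$ vs.\ $\slt_{2}(\R)$) dichotomy. The paper simply exhibits the explicit $\tauup$-eigenvectors in each case where you parametrise the slices by $(a,b)$ and impose $b=\pm\bar{a}$; the counts agree.
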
 
\begin{proof}
 If $\alphaup\,{\in}\,\Rad_{\,\;\circ}^{\stt},$ then $\sigmaup(X_{\alphaup})\,{=}\,\ttt_{\alphaup}X_{\alphaup}$
 and $\sigmaup(X_{-\alphaup})\,{=}\,\ttt_{\alphaup}X_{-\alphaup}$ for some $\ttt_{\alphaup}{\in}\Z_{2}^{*}$
 and $\gs$ contains either $X_{\alphaup},\,X_{-\alphaup}$, if $\ttt_{\alphaup}\,{>}\,0$, 
 or $iX_{\alphaup},\,iX_{-\alphaup}$
if $\ttt_{\alphaup}\,{<}\,0.$ 
If $\ttt_{\alphaup}\,{=}\,1$, then $X_{\alphaup}{+}X_{-\alphaup}{\in}\,\ks$,
$X_{\alphaup}{-}X_{-\alphaup}{\in}\,\ps$; if $\ttt_{\alphaup}\,{=}\,{-}1$, 
then $i(X_{\alphaup}{-}X_{-\alphaup})\,{\in}\,\ks$,
$i(X_{\alphaup}{+}X_{-\alphaup})\,{\in}\,\ps,$ respectively. Therefore every pair of roots ${\pm}\alphaup$
in $\Rad_{\,\;\circ}^{\stt}$ contribute by one unit to the dimension of both $\ks$ and $\ps.$ 
If $\alphaup\,{\in}\,\Rad_{\,\;\star}^{\stt},$ then $X_{\alphaup},\, X_{\stt(\alphaup)},\, X_{-\alphaup},\, X_{-\stt(\alphaup)}$
are linearly independent and, for some $\ttt_{\alphaup}{\in}\,\Z_{2}^{*}$ we have 
$\sigmaup(X_{\pm\alphaup})\,{=}\,\ttt_{\alphaup}X_{\pm\stt(\alphaup)}$.
Then 
\begin{equation*} 
\begin{cases}
 X_{\alphaup}{+}X_{-\alphaup}{+}\ttt_{\alphaup}(X_{\stt(\alphaup)}{+}X_{-\stt(\alphaup)}),\;
 i( X_{\alphaup}{-}X_{-\alphaup}){+}i\ttt_{\alphaup}(X_{\stt(\alphaup)}{-}X_{-\stt(\alphaup)})\,\in\ks,\\
  X_{\alphaup}{-}X_{-\alphaup}{+}\ttt_{\alphaup}(X_{\stt(\alphaup)}{-}X_{-\stt(\alphaup)}),\;
 i( X_{\alphaup}{+}X_{-\alphaup}){+}i\ttt_{\alphaup}(X_{\stt(\alphaup)}{+}X_{-\stt(\alphaup)})\,\in\ps
\end{cases}
\end{equation*}
shows that every quadruple ${\pm}\alphaup,\,{\pm}\stt(\alphaup)$ contained in
$\Rad_{\,\;\star}^{\stt}$ adds two units both to the dimension of $\ks$ and to that of $\ps.$
We already noted that any pair ${\pm}\alphaup$ contained in $\Rad_{\,\;\bullet}^{\stt}$ either adds
two to the dimension of $\ks,$ or adds two to the dimension of $\ps.$ \par
Finally, we already noted that 
the elements of $\hs^{+}$ belong to $\ps,$ those in $i\hs^{-}$ to $\ks.$ 
\end{proof}
 
\subsection{Cayley transforms} \label{s3.2.2}
The construction  in  Prop.\ref{p4.10} below is also called 
\emph{Cayley transform} (see  \cite{HC56}, or \cite[Ch.VI,\S{7}]{Kn:2002}).
\begin{prop}\label{p4.10}
 Let $\stt\,{\in}\,\Ib(\Rad)$, $\betaup\,{\in}\,\Rd{\bullet}{\stt}$, $\stt'\,{=}\,\stt{\circ}\sq_{\,\betaup}$. Then 
 $\stt'\,{\in}\,\Ib(\Rad)$ and
 \begin{equation}\label{e4.15} \left\{
 \begin{aligned}
 & \Rd{\bullet}{\stt'}\,{=}\,\Rd{\bullet}{\stt}\,{\cap}\,\betaup^{\perp},\;\; \Rd{\circ}{\stt}\,{=}\,\Rd{\circ}{\stt'}
 \,{\cap}\,\betaup^{\perp},
 \\
& \hg_{\stt'}=\hg_{\stt'}^{+}\oplus{i}\hg_{\stt'}^{-},\;\;\;\text{with}\;\;
\begin{cases}
 \hg_{\stt'}^{+}=\hst^{+}\oplus\langle{H}_{\betaup}\rangle_{\R},\\
 \hg_{\stt'}^{-}=\{H\in\hst^{-}\mid \betaup(H)=0\}.
\end{cases}
\end{aligned}
\right.
\end{equation}

Assume that
 $\sigmaup\,{\in}\,\Invs$, and  $\betaup\,{\in}\,\Rad_{\,\;\oast}^{\sigmaup}$.
 Set $K_{\betaup}{=}\,X_{\betaup}{+}X_{-\betaup}$. 
Then,  for a $\sigmaup'{\in}\,\Inv^{\tauup}_{\!\stt'}(\gr,\hr)$,   $\exp(\tfrac{\pi}{4}\ad(K_{\betaup}))(\gs)\,{=}\,\gt_{\sigmaup'}$ 
and  
\begin{equation}\label{e4.16}\begin{cases}
 \Rd{\bullet}{\sigmaup'}{=} \big(\Rd{\bullet}{\sigmaup}\cap\betaup^{\Perp}\big)
 \,{\cup}\,\big(\Rd{\oast}{\sigmaup}\cap\betaup^{\perp}\big),\\
  \Rd{\oast}{\sigmaup'}{=} \big(\Rd{\oast}{\sigmaup}\cap\betaup^{\Perp}\big)
  \,{\cup}\,\big(\Rd{\bullet}{\sigmaup}\cap\betaup^{\perp}\big),
  \end{cases}
\end{equation}
where \,\textquotedblleft{$\Perp$}\textquotedblright \, means strong 
orthogonality (cf. Notation\,\ref{perp}).
\end{prop}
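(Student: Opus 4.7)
My plan is to address the statement in three stages: the combinatorial claim about $\stt'$ and its Cartan decomposition, the analytic construction of $\sigmaup'$ via the Cayley transform, and the partition \eqref{e4.16} of its imaginary roots.

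For the first stage, the commutation lemma of \S2.3 (involutions commute with $\sq_\betaup$ iff $\betaup$ is real or imaginary for $\stt$) shows that since $\stt(\betaup) = -\betaup$, the reflection $\sq_\betaup$ commutes with $\stt$, so $\stt' = \stt \circ \sq_\betaup$ is an involution in $\Ib(\Rad)$. The formula $\stt'(\alpha) = \stt(\alpha) + \langle\alpha\mid\betaup\rangle\betaup$ is the only tool needed: if $\stt'(\alpha) = \pm\alpha$ then $\stt(\alpha) \mp \alpha = -\langle\alpha\mid\betaup\rangle\betaup$, and since $\stt(\alpha) \mp \alpha$ lies in the $(\pm1)$-eigenspace of $\stt$ on $\hr^*$ while $\betaup$ lies in the $(-1)$-eigenspace, equality forces $\langle\alpha\mid\betaup\rangle = 0$. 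This yields $\Rd{\bullet}{\stt'} = \Rd{\bullet}{\stt}\cap\betaup^\perp$ and $\Rd{\circ}{\stt} = \Rd{\circ}{\stt'}\cap\betaup^\perp$. For the Cartan decomposition, $(\stt')^*(H) = \stt^*(H) + \betaup(H)H_\betaup$ fixes $\hst^+$ pointwise (since $H_\betaup\in\hst^-$ forces $\betaup|_{\hst^+} = 0$), sends $H_\betaup$ to $-H_\betaup + 2H_\betaup = H_\betaup$, and acts as $-\id$ on $\hst^- \cap \ker\betaup$, which is \eqref{e4.15}.

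For the Cayley transform stage, set $\phi = \exp(\tfrac{\pi}{4}\ad(K_\betaup))$. Because $K_\betaup\in\gr\cap\gu$, $\phi$ is a $\C$-linear automorphism of $\gt$ preserving both $\gr$ and $\gu$, so $\sigmaup' := \phi\circ\sigmaup\circ\phi^{-1}$ is an anti-$\C$-linear involution of $\gt$ preserving $\gr$ and $\gu$, with fixed-point set $\phi(\gs)$. To place $\sigmaup'$ in $\Inv^\tauup_{\stt'}(\gr,\hr)$ I compute $\sigmaup'|_\hr$ via Lemma~1.2: on $\hr\cap\ker\betaup$ the operator $\ad(K_\betaup)$ annihilates, so $\sigmaup' = \sigmaup = \stt^*$ there; on $H_\betaup$, formula \eqref{eq1.7} at $t = -\pi/4$ gives $\phi^{-1}(H_\betaup) = T_\betaup$, then the noncompactness condition $\sigmaup(X_\betaup) = -X_{-\betaup}$ gives $\sigmaup(T_\betaup) = T_\betaup$, and \eqref{eq1.7} at $t = \pi/4$ gives $\phi(T_\betaup) = H_\betaup$, so $\sigmaup'(H_\betaup) = H_\betaup$. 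Comparing with the formula for $(\stt')^*$ derived above shows $\sigmaup'|_\hr = (\stt')^*$, hence $\rhoup(\sigmaup') = \stt'$.

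For the partition \eqref{e4.16}, fix $\alpha\in\Rd{\bullet}{\stt'} = \Rd{\bullet}{\stt}\cap\betaup^\perp$ and write $\eta_\gammaup = f_\sigmaup(\gammaup) \in \{\pm 1\}$ for $\gammaup\in\Rd{\bullet}{\stt}$, so that $\eta_\gammaup = +1$ iff $\gammaup\in\Rd{\bullet}{\sigmaup}$. When $\alpha\Perp\betaup$, Lemma~1.2 gives $V_\alpha = \langle X_\alpha\rangle$, so $\ad(K_\betaup)X_{\pm\alpha} = 0$, $\phi$ fixes $X_{\pm\alpha}$, and $\sigmaup'(X_\alpha) = \sigmaup(X_\alpha) = \eta_\alpha X_{-\alpha}$, preserving the compact/noncompact character. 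When $\alpha\uperp\betaup$ but not strongly, the $\betaup$-string through $\alpha$ is $\{\alpha-\betaup, \alpha, \alpha+\betaup\}\subset\Rd{\bullet}{\stt}$, and the $3\times 3$ matrix of $\ad(K_\betaup)$ from the proof of Lemma~1.2 yields $\phi^{-1}(X_\alpha) = -\tfrac12(aX_{\alpha-\betaup} + dX_{\alpha+\betaup})$ with $a = N_{-\betaup,\alpha}$ and $d = N_{\betaup,\alpha}$. The multiplicativity identity $\eta_{\alpha-\betaup} = \eta_{\alpha+\betaup} = \eta_\alpha\eta_\betaup = -\eta_\alpha$ (obtained by applying $\sigmaup$ to $[X_{\pm\betaup}, X_{\alpha\mp\betaup}] = \mathrm{const}\cdot X_\alpha$) together with the Chevalley identities $ab = cd = -2$ from \eqref{e1.5a} make the coefficients of $X_{-(\alpha\pm\betaup)}$ in $\sigmaup'(X_\alpha)$ cancel, leaving $\sigmaup'(X_\alpha) = -\eta_\alpha X_{-\alpha}$; so $\alpha$ flips compact/noncompact, which combined with the strongly orthogonal case gives \eqref{e4.16}. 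The main obstacle will be organizing this $3\times 3$ calculation so that the two off-diagonal cancellations are seen to follow from the Chevalley constraints alone, rather than from any ad hoc sign coincidence.
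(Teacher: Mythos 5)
Your proposal is correct, and the first two stages coincide with the paper's argument: the eigenvector-orthogonality trick for \eqref{e4.15} (a sum of a $(+1)$- and a $({-}1)$-eigenvector of $\stt$ vanishes only if both do), and the observation that $K_{\betaup}\,{\in}\,\gr\,{\cap}\,\gu$ makes $\exp(\tfrac{\piup}{4}\ad(K_{\betaup}))$ carry $\gs$ to another real form compatible with $\gr$ and $\gu$, followed by the computation $T_{\betaup}\mapsto H_{\betaup}$ and the pointwise fixing of $\hr\cap\ker\betaup$. The only genuine divergence is in the last stage. The paper never touches the root vectors $X_{\alphaup}$ there: for $\alphaup$ weakly but not strongly orthogonal to $\betaup$ it computes the matrix of $\ad(K_{\betaup})$ on $\langle K_{\alphaup-\betaup},K_{\alphaup},K_{\alphaup+\betaup}\rangle_{\C}$, uses that $iK_{\alphaup\pm\betaup}\,{\in}\,\gs$ when $\alphaup\,{\in}\,\Rd{\bullet}{\sigmaup}$, and reads off that $iK_{\alphaup}$ lands in $\gt_{\sigmaup'}$, so membership of $K_{\alphaup}$ versus $iK_{\alphaup}$ in the transformed algebra decides compactness directly. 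You instead track the sign $f_{\sigmaup'}(\alphaup)$ by pushing $X_{\alphaup}$ through $\phiup^{-1}$, $\sigmaup$, $\phiup$ and invoking the multiplicativity $f_{\sigmaup}(\alphaup{\pm}\betaup)\,{=}\,f_{\sigmaup}(\alphaup)f_{\sigmaup}(\betaup)$. Both routes rest on the same $3{\times}3$ string computation and the same Chevalley constraints \eqref{e1.5a}, so neither is more general; the paper's is marginally cleaner because it avoids the cross-term bookkeeping you flag as the main obstacle.

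On that obstacle: it is not actually an issue, and you can dispense with it. Once stage two has placed $\sigmaup'$ in $\Inv^{\tauup}_{\!\stt'}(\gr,\hr)$, Proposition~\ref{p4.2} already forces $\sigmaup'(X_{\alphaup})\,{=}\,f_{\sigmaup'}(\alphaup)X_{\stt'(\alphaup)}\,{=}\,{\pm}X_{-\alphaup}$ for $\alphaup\,{\in}\,\Rd{\bullet}{\stt'}$, so the coefficients of $X_{-\alphaup\pm\betaup}$ must cancel a priori and you only need the $X_{-\alphaup}$-component, which your formula $\phiup^{-1}(X_{\alphaup})\,{=}\,{-}\tfrac{1}{2}\big(N_{-\betaup,\alphaup}X_{\alphaup-\betaup}{+}N_{\betaup,\alphaup}X_{\alphaup+\betaup}\big)$ together with $f_{\sigmaup}(\alphaup{\pm}\betaup)\,{=}\,{-}f_{\sigmaup}(\alphaup)$ and one more application of \eqref{e1.5a} delivers. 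With that remark your argument closes with no ad hoc sign verification.
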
 
We will say that \textsl{$\sigmaup'$ was obtained from $\sigmaup$ by the Cayley transform
with respect to $\betaup$.}
\begin{proof} If $\alphaup\,{\in}\,\Rd{\bullet}{\stt'}$, then 
\begin{align*}\big(
  {-}\alphaup\,{=}\,\stt'(\alphaup)\,{=}\,\stt(\alphaup\,{-}\,\langle\alphaup\,|\,\betaup\rangle\betaup)=
 \stt(\alphaup)\,{+}\,\langle\alphaup\,|\,\betaup\rangle\betaup\,\big)\;\Rightarrow\; (\alphaup{+}\stt(\alphaup)){+}
 \langle\alphaup\,|\,\betaup\rangle\betaup\,{=}\,0.
\end{align*}
Since $(\alphaup{+}\stt(\alphaup))$ and $\betaup$ are orthogonal (the first is an eigenvector with eigenvalue $1$
and the second with eigenvalue $({-}1)$ of $\stt$) we obtain that $\alphaup\,{\in}\,\Rd{\bullet}{\stt}\,{\cap}\,\betaup^{\perp}$.
The inclusion $\Rd{\bullet}{\stt}\cap\betaup^{\perp}\,{\subseteq}\Rd{\bullet}{\stt'}$ is trivial and thus we have
equality.
Analogously, if $\alphaup\,{\in}\,\Rd{\circ}{\stt}$, we have 
\begin{equation*}\big(
 \alphaup\,{=}\,\stt(\alphaup)\,{=}\,\stt'\,{\circ}\,\sq_{\,\betaup}(\alphaup)\,{=}\,\stt'(\alphaup)-\langle\alphaup\,|\,
 \betaup\rangle
 \betaup\,\big)\;{\Rightarrow}\; \stt'(\alphaup){-}\alphaup\,{=}\,\langle\alphaup\,|\,\betaup\rangle\betaup
\end{equation*}
and this implies that $\alphaup\,{\in}\,\Rd{\circ}{\stt'}$ and $\langle\alphaup\,|\,\betaup\rangle\,{=}\,0$
because $\stt'(\alphaup){-}\alphaup$ and $\betaup$ are orthogonal, being eigenvectors of $\stt'$
for $({-}1)$ and $1$, respectively.  For every $\xiup\,{\in}\,\hr^{*}$ the identity
\begin{align*}
 \xiup{-}\tfrac{1}{2}\langle{\xiup}\,|\,{\betaup}\rangle\betaup\,{=}\,\stt'(\xiup{-}\tfrac{1}{2}\langle\xiup\,|\betaup
 \rangle\betaup)=\stt(\xiup{-}\tfrac{1}{2}\langle\xiup\,|\betaup
 \rangle\betaup)
\end{align*}
yields by duality the equality $\hg_{\stt'}^{+}\,{=}\,\hst^{+}{\oplus}\langle{H}_{\betaup}\rangle_{\R}$ and
by passing to the orthogonal, also $\hg_{\stt'}^{-}\,{=}\,\hst^{-}\,{\cap}\,H_{\betaup}^{\perp}\,{=}\,\hst^{-}\,{\cap}\,
\ker\betaup$.
This completes the proof of \eqref{e4.15}.
Assume now that, for a given $\sigmaup\,{\in}\,\Invs$, $\betaup\,{\in}\,\Rd{\oast}{\sigmaup}.$ 
Since $K_{\betaup}\,{\in}\,\gu\,{\cap}\,\gr$, then    
 $\exp(\tfrac{\pi}{4}\ad(K_{\betaup})(\gs)$  
is invariant under conjugation and under $\tauup$ and
therefore equal to $\gt_{\sigmaup'}$
for some $\sigmaup'{\in}\,\Inv^{\tauup}(\gr,\hr)$. \par
Being $\betaup\,{\in}\,\Rad_{\,\;\oast}^{\sigmaup},$ we have
$T_{\betaup}{=}X_{\alphaup}{-}
 \,X_{-\betaup}\in\gs$ and therefore
\begin{align*}\begin{cases}
 \exp(\tfrac{\pi}{4}\ad(K_{\betaup}))(T_{\betaup})=H_{\betaup},\\ 
  \exp(\tfrac{\pi}{4}\ad(K_{\betaup}))(H)=H,\;\; \text{if $H\in\hr$ and $\betaup(H)=0$.}
  \end{cases}
\end{align*}
Thus $\sigmaup'\,{\in}\,\Inv^{\tauup}_{\stt'}(\gr,\hr)$ and 
\eqref{e4.15} is a decomposition of 
the canonical Cartan subalgebra $\hg_{\sigmaup'}\,{=}\,\hg_{\stt'}$~of~$\gt_{\sigmaup'}$.
It is clear that $\Rad^{\sigmaup}_{\,\;\bullet}\,{\cap}\,\betaup^{\Perp}\,{\subseteq}\,\Rad_{\,\;\bullet}^{\sigmaup'}$
and $\Rad^{\sigmaup}_{\,\;\oast}\,{\cap}\,\betaup^{\Perp}\,{\subseteq}\,\Rad_{\,\;\oast}^{\sigmaup'}$.
If $\alphaup$ is orthogonal, but not strongly orthogonal to $\betaup,$ then the $\betaup$-string through
$\alphaup$ is $(\alphaup{-}\betaup,\,\alphaup,\,\alphaup{+}\betaup).$ 
If $\alphaup\,{\in}\,\Rad_{\,\;\bullet}^{\sigmaup},$
then $\alphaup{\pm}\betaup\,{\in}\,\Rad_{\,\;\oast}^{\sigmaup}$ and hence $iK_{\alphaup{\pm}\betaup}\,{\in}\,\gs$. \par
 The matrix representing the action of 
$\ad(K_{\betaup})$ on 
the invariant subspace $\langle{K}_{\alphaup{-}\betaup},K_{\alphaup},K_{\alphaup{+}\betaup}\rangle_{\C}$\, is 
\begin{equation*} 
\begin{pmatrix}
 0 & N_{\betaup,{-}\alphaup}&0\\
 N_{\betaup,\alphaup{-}\betaup} & 0 & N_{\betaup,-(\alphaup{+}\betaup)}\\
 0 & N_{\betaup,\alphaup} & 0
\end{pmatrix},
\end{equation*}
which equals one of the matrices 
\begin{equation*} 
\pm\begin{pmatrix}
0 & 2 & 0 \\
{-}1\; & 0 & {-}1\;\\
0 & 2 & 0 
\end{pmatrix},\quad\text{or}\quad \pm\begin{pmatrix}
0 & {-}2\; & 0 \\
1 & 0 & {-}1\;\\
0 & 2 & 0 
\end{pmatrix}.
\end{equation*}
In the first case $K_{\alphaup{-}\betaup}{-}K_{\alphaup{+}\betaup}$ is in the kernel, while the restriction to 
the subspace generated by the basis $K_{\alphaup{-}\betaup}{+}K_{\alphaup{+}\betaup},K_{\alphaup}$
has matrix 
\begin{equation*}
 A=\pm 
\begin{pmatrix}
 0 & 2\\
 {-}2\; & 0
\end{pmatrix};
\end{equation*}
in the second case the kernel is generated by $K_{\alphaup{-}\betaup}{+}K_{\alphaup{+}\betaup}$,
while the restriction to 
the subspace generated by the basis $K_{\alphaup{-}\betaup}{-}K_{\alphaup{+}\betaup},K_{\alphaup}$
has still matrix~$A$.  Then $\exp(\tfrac{\piup}{4}\ad(K_{\betaup}))$ has matrix 
\begin{equation*}
 {\pm} 
\begin{pmatrix}
 0 & 1\\
 {-}1\;&0
\end{pmatrix}
\end{equation*}
on a subspace $\langle{K}_{\alphaup{\pm}\betaup},K_{\alphaup}\rangle.$ By applying
$\exp(\tfrac{\piup}{4}\ad(K_{\betaup})$ to $i(K_{\alphaup+\betaup}{\pm}K_{\alphaup-\betaup})$,
we obtain 
that $iK_{\alphaup}\,{\in}\,\gt_{\sigmaup'}$, proving that $\alphaup\,{\in}\,\Rad_{\,\;\oast}^{\sigmaup'}.$ \par
Analogously, starting from an $\alphaup\,{\in}\,\Rad_{\,\;\oast}^{\sigmaup}$ which is orthogonal
but not strongly orthogonal to $\betaup$, we show  that $K_{\alphaup}$ belongs to the image by
$\exp(\tfrac{\piup}{4}\ad(K_{\betaup}))$ of $\langle{K}_{\alphaup+\betaup}{\pm}K_{\alphaup-\betaup}\rangle_{\R}\,
{\subseteq}\,\gs$, and hence to
$\gt_{\sigmaup'}$: this implies that $\alphaup\,{\in}\,\Rad_{\,\;\bullet}^{\sigmaup'}.$ \par
The proof is complete.
\end{proof} 
\begin{rmk}
 If $\sigmaup\,{\in}\,\Invs$ and 
 $\Rd{\bullet}{\sigmaup}\,{=}\,\Rd{\bullet}{\stt},$
  then $\hst$ is a maximally vectorial Cartan subalgebra of $\gs,$ i.e. with
 $\hst^{+}\,{=}\,\hst\,{\cap}\,\ps$, $i\hst^{-}\,{=}\,\hst\,{\cap}\ks$.
\end{rmk} 
\begin{thm}\label{t4.12}
 Let $\sigmaup\,{\in}\,\Invs$ and $\Mtt_{\,\oast}^{\,\sigmaup}$ a maximal system of strongly orthogonal
 roots in $\Rd{\oast}{\sigmaup}$, containing the largest number of long roots. 
 Then 
 we can find 
 $\sigmaup'\,{\in}\,\Inv^{\tauup}(\gr,\hr)$
 such that 
\begin{equation}\label{eq4.18}
 \gt_{\sigmaup'}\simeq\gs\;\;\text{and}\;\; \Rd{\bullet}{\sigmaup'}\,{=}\,\Rd{\bullet}{\stt}\,{\cap}\,
(\Mtt^{\,\sigmaup}_{\,\oast})^{\perp},\; \Rd{\oast}{\sigmaup'}\,{=}\,\emptyset.
\end{equation}
\end{thm}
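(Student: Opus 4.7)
The plan is to construct $\sigmaup'$ by iterated Cayley transforms (Proposition\,\ref{p4.10}) along the roots $\betaup_1,\hdots,\betaup_r$ of $\Mtt\,{=}\,\Mtt^{\,\sigmaup}_{\,\oast}$. Setting $\sigmaup_0\,{=}\,\sigmaup$, I would define $\sigmaup_i$ as the Cayley transform of $\sigmaup_{i-1}$ with respect to $\betaup_i$. To see this is well defined, I would show inductively that $\betaup_i\,{\in}\,\Rd{\oast}{\sigmaup_{i-1}}$: since the $\betaup_j$ are pairwise strongly orthogonal, at every previous step $\betaup_i$ sits in $\Rd{\oast}{\sigmaup_{j-1}}$ and is strongly orthogonal to $\betaup_j$, so by \eqref{e4.16} it remains in $\Rd{\oast}{\sigmaup_j}$. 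Iterating \eqref{e4.15} would then yield $\stt'\,{=}\,\rhoup(\sigmaup')\,{=}\,\stt\,{\circ}\,\sq_{\,\betaup_1}\,{\circ}\,{\cdots}\,{\circ}\,\sq_{\,\betaup_r}$ and $\Rd{\bullet}{\stt'}\,{=}\,\Rd{\bullet}{\stt}\,{\cap}\,\Mtt^{\perp}$. Since each Cayley step is a conjugation by the inner automorphism $\exp(\tfrac{\pi}{4}\ad(K_{\betaup_i}))\,{\in}\,\Aut_{\C}(\gt)$, we also get $\gt_{\sigmaup'}\,{\simeq}\,\gs$.

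The heart of the proof is to show $\Rd{\oast}{\sigmaup'}\,{=}\,\emptyset$. I would argue by contradiction: given $\alphaup\,{\in}\,\Rd{\oast}{\sigmaup'}$, one has $\alphaup\,{\in}\,\Rd{\bullet}{\stt'}$, so $\alphaup$ is (weakly) orthogonal to every $\betaup_i$. If $\alphaup$ were strongly orthogonal to all of $\Mtt$, iterating \eqref{e4.16} would show that its compact/noncompact type is unchanged through the transforms, giving $\alphaup\,{\in}\,\Rd{\oast}{\sigmaup}$. Since $\alphaup\,{\notin}\,\Mtt$ (else $\alphaup\,{\in}\,\Rd{\circ}{\stt'}$), the system $\Mtt\,{\cup}\,\{\alphaup\}$ would strictly enlarge $\Mtt$, contradicting maximality.

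In the remaining case, $\alphaup$ is weakly but not strongly orthogonal to some $\betaup_{i_0}\,{\in}\,\Mtt$; then $\alphaup$ and $\betaup_{i_0}$ are short roots in a non-simply-laced component, and $\alphaup\,{\pm}\,\betaup_{i_0}$ are long. Inspection of the classification of strongly orthogonal systems in $\textsc{B}_{\ell}$, $\textsc{C}_{\ell}$, $\textsc{F}_{4}$ from \S\ref{s2.4} (with $\textsc{G}_{2}$ excluded since its orthogonal root pairs are automatically strongly orthogonal) shows that such a $\betaup_{i_0}$ is unique, so only one flip occurs along the sequence of Cayley transforms and tracking \eqref{e4.16} yields $\alphaup\,{\in}\,\Rd{\bullet}{\sigmaup}$ (compact). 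From $N_{-\alphaup,-\betaup_{i_0}}\,{=}\,N_{\alphaup,\betaup_{i_0}}$ (cf.~\eqref{e1.5a}), a direct computation shows that for imaginary roots one has $\epsilon_{\alphaup+\betaup_{i_0}}\,{=}\,\epsilon_{\alphaup}\epsilon_{\betaup_{i_0}}$, with $\epsilon\,{=}\,{+}1$ for compact and $\epsilon\,{=}\,{-}1$ for noncompact roots. Hence the long root $\alphaup\,{+}\,\betaup_{i_0}$ is noncompact, and $\Mtt''\,{=}\,(\Mtt\,{\setminus}\,\{\betaup_{i_0}\})\,{\cup}\,\{\alphaup\,{+}\,\betaup_{i_0}\}$ is still a strongly orthogonal system (verified geometrically via the concrete root realizations) contained in $\Rd{\oast}{\sigmaup}$, with one more long root than $\Mtt$. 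Extending $\Mtt''$ to a maximal strongly orthogonal system in $\Rd{\oast}{\sigmaup}$ would yield a maximal system with strictly more long roots than $\Mtt$, contradicting the hypothesis on $\Mtt$. The main obstacle is this last case: confirming uniqueness of $\betaup_{i_0}$ and strong orthogonality of $\Mtt''$ requires a careful case analysis drawing on the structure of short-root subsystems in each non-simply-laced irreducible type.
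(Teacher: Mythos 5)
Your proof is correct and follows essentially the same route as the paper's: iterated Cayley transforms along $\Mtt_{\,\oast}^{\,\sigmaup}$, with the contradiction to the largest-number-of-long-roots hypothesis obtained by trading a short $\betaup_{i_0}$ weakly-but-not-strongly orthogonal to an imaginary $\alphaup$ for the long noncompact root $\alphaup\,{+}\,\betaup_{i_0}$. The only difference is organizational: the paper orders the $\betaup_{i}$ by decreasing length and establishes the invariant $\Rd{\bullet}{\sigmaup_{i-1}}\,{\cap}\,\betaup_{i}^{\uperp}\,{=}\,\emptyset$ at each step, whereas you perform all the transforms first and trace a hypothetical surviving noncompact root backwards through \eqref{e4.16}, but both arguments rest on the same two facts (weakly orthogonal short roots sum to a long root, and the sign of an imaginary root is multiplicative).
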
 
\begin{proof} If $\Rd{\oast}{\sigmaup}\,{=}\,\emptyset$, then there is nothing to prove.\par
Assume that $\emptyset\,{\neq}\,\Mtt^{\,\sigmaup}_{\,\oast}\,{=}\,\{\betaup_{1},\hdots,\betaup_{r}\}$,
with $\|\betaup_{i}\|{\geq}\|\betaup_{i+1}\|$ if $1{\leq}i{\leq}r{-}1$. We consider the sequence 
$\sigmaup_{0},\sigmaup_{1},\hdots,\sigmaup_{r}$ in $\Inv^{\tauup}(\gr,\hr)$ in which $\sigmaup_{0}\,{=}\,
\sigmaup$ and, 
 for $1{\leq}i{\leq}{r{-}1}$,  $\sigmaup_{i}$ is obtained from $\sigmaup_{i-1}$ by the Cayley transform with
respect to~$\betaup_{i}$. Note that 
\begin{equation*}\tag{$*$}
 \Rd{\bullet}{\rhoup(\sigmaup_{i})}=\Rd{\bullet}{\stt}\cap{\bigcap}_{1{\leq}j{\leq}i}\betaup_{j}^{\perp},\;\;
 \text{for $i=1,\hdots,r$}.
\end{equation*}
We claim that $\Rd{\bullet}{\sigmaup_{i-1}}\,{\cap}\,\betaup_{i}^{\perp}\,{=}\,\emptyset$.
We argue by contradiction. In fact, if there is an $\alphaup$ in $\Rd{\bullet}{\sigmaup_{i-1}}$ which is 
orthogonal to $\betaup_{i}$, then $\betaup_{i}$ is short and $\betaup'_{i}\,{=}\,\betaup_{i}{+}\alphaup$,
by $(*)$, 
is a long root in $\Rd{\oast}{\sigmaup}$ which is orthogonal to all $\betaup_{j}$ with $1{\leq}j{\leq}i{-}1$ 
and we could complete $\{\betaup_{1},\hdots,\betaup_{i-1},\betaup'_{i}\}$ to a strongly orthogonal system
in $\Rd{\oast}{\sigmaup}$ containing a larger number of long roots than $\Mtt_{\,\oast}^{\,\sigmaup}.$ 
\par
Since $\Rd{\bullet}{\sigmaup_{i-1}}\,{\cap}\,\betaup_{i}^{\perp}\,{=}\,\emptyset$, we obtain by \eqref{e4.16}
\begin{equation*}\tag{$**$}
 \Rd{\oast}{\sigmaup_{i}}=\Rd{\oast}{\sigmaup}\cap{\bigcap}_{1{\leq}j{\leq}i}\betaup_{j}^{\Perp},
 \;\;\text{for $i=1,\hdots,r$}.
\end{equation*}
Since $\Mtt_{\,\oast}^{\,\sigmaup}$ was maximal, $\sigmaup'{=}\sigmaup_{r}$ satisfies \eqref{eq4.18}. 
\end{proof}

\subsection{An isomorphism theorem}
We give now a refinement of Lemma\,\ref{l3.3}, yielding a criterion to classify pairs $(\gt_{0},\hg_{0})$
modulo the action of  $\Aut_{\C}(\gt,\hg).$ 
\begin{thm}\label{t4.13}
 Let $\stt\,{\in}\,\Ib(\Rad)$ and $\sigmaup_{1},\sigmaup_{2}\,{\in}\,\Inv^{\tau}_{\!\stt}(\gr,\hr)$. If 
\begin{equation}\label{e4.18}
 \Rad_{\,\;\bullet}^{\sigmaup_{1}}= \Rad_{\,\;\bullet}^{\sigmaup_{2}}
\end{equation}
then $(\gt_{\sigmaup_{1}},\hst)$ and $(\gt_{\sigmaup_{2}},\hst)$ are conjugated by
an automorphism of
$\Aut_{\C}(\gt,\hg)$.
\end{thm}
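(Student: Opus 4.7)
The plan is to show that the hypothesis forces $\sigma_{1}$ and $\sigma_{2}$ to differ by an element of $\Homs$ that is trivial on all imaginary simple roots in some $S\!$-chamber for $\stt$, and then invoke Lemma\,\ref{l3.3} to produce the desired automorphism in $\Aut_{\C}(\gt,\hg)$.

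First I would use Prop.\ref{p4.2}\, (specifically \eqref{eq4.6}) to write $\sigmaup_{2}\,{=}\,\psiup_{\etaup}{\circ}\,\sigmaup_{1}$ for a unique $\etaup\,{=}\,(\sigmaup_{2}/\sigmaup_{1})\,{\in}\,\Homs$. Next, I would translate the hypothesis \eqref{e4.18} into information about $\etaup$ on $\Rd{\bullet}{\stt}$: for every $\alphaup\,{\in}\,\Rd{\bullet}{\stt}$ we have $\sigmaup_{i}(X_{\alphaup})\,{=}\,\varepsilon_{i}(\alphaup)X_{-\alphaup}$ with $\varepsilon_{i}(\alphaup)\,{=}\,{+}1$ precisely when $\alphaup\,{\in}\,\Rd{\bullet}{\sigmaup_{i}}$; thus
\begin{equation*}
\etaup(\alphaup)\,{=}\,\frac{\sigmaup_{2}(X_{\alphaup})}{\sigmaup_{1}(X_{\alphaup})}\,{=}\,\varepsilon_{2}(\alphaup)\,\varepsilon_{1}(\alphaup),
\end{equation*}
and the equality $\Rd{\bullet}{\sigmaup_{1}}\,{=}\,\Rd{\bullet}{\sigmaup_{2}}$ forces $\varepsilon_{1}(\alphaup)\,{=}\,\varepsilon_{2}(\alphaup),$ hence $\etaup(\alphaup)\,{=}\,1$ for every $\alphaup\,{\in}\,\Rd{\bullet}{\stt}$.

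Then I would fix an $S\!$-chamber $C$ for $\stt$, whose existence is guaranteed by Thm.\ref{thmSch}. Since $\Bz_{\bullet}^{\stt}(C)\,{\subseteq}\,\Rd{\bullet}{\stt}$, the previous step yields $\etaup(\alphaup)\,{=}\,1$ for all $\alphaup\,{\in}\,\Bz_{\bullet}^{\stt}(C)$, i.e.\ the condition \eqref{e2.11} is satisfied. Applying Lemma\,\ref{l3.3} directly produces a $\phiup\,{\in}\,\Aut_{\C}(\gt,\hg)$ that carries $(\gt_{\sigmaup_{2}},\hst)$ isomorphically onto $(\gt_{\sigmaup_{1}},\hst)$, which is exactly the required conclusion.

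There is no real obstacle to overcome: all the heavy lifting---the parametrisation of $\Invs$ by $\Homs$, the existence of $S\!$-chambers, and the construction of the conjugating $\phiup$ from a suitable $\omegaup\,{\in}\,(\Z[\Rad])^{*}$ via \eqref{e1.43}---has already been established. The only point to keep in mind is that Lemma\,\ref{l3.3} only requires the cocycle $\etaup$ to be trivial on the \emph{simple} imaginary roots of an $S\!$-chamber, and the hypothesis gives us triviality on all of $\Rd{\bullet}{\stt}$, which is strictly stronger. If one wished a self-contained argument, the subtlest point would be re-deriving the identity $(\alphaup\,|\,\omegaup)\,{=}\,(\stt(\alphaup)\,|\,\omegaup)$ on the whole of $\Rad$ (and not merely on $\Bz(C)$), but this is already done in the proof of Lemma\,\ref{l3.3} via \eqref{eq1.21}, so we may invoke it directly.
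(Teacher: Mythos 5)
Your proposal is correct and follows essentially the same route as the paper's own proof: both reduce the hypothesis $\Rad_{\,\;\bullet}^{\sigmaup_{1}}\,{=}\,\Rad_{\,\;\bullet}^{\sigmaup_{2}}$ to the statement that the cocycle $(\sigmaup_{2}/\sigmaup_{1})\,{\in}\,\Homs$ equals $1$ on the imaginary simple roots of an $S\!$-chamber, and then invoke Lemma\,\ref{l3.3}. Your write-up merely spells out the sign computation $\etaup(\alphaup)\,{=}\,\varepsilon_{1}(\alphaup)\varepsilon_{2}(\alphaup)$ that the paper leaves implicit.
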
 
\begin{proof} By \eqref{e4.18} we also have $ \Rad_{\,\;\oast}^{\sigmaup_{1}}= \Rad_{\,\;\oast}^{\sigmaup_{2}}$.
By the assumption, we can find a $C\,{\in}\,\Cd_{\stt}(\Rad)$ with 
\begin{equation*}
 \Bz_{\bullet}^{\sigmaup_{1}}(C)= \Bz_{\bullet}^{\sigmaup_{2}}(C),\;\;\;
 \Bz_{\oast}^{\sigmaup_{1}}(C)=\Bz_{\oast}^{\sigmaup_{2}}(C).
\end{equation*} 
Then $(\sigmaup_{1}{/}\sigmaup_{2})$ is positive on $ \Bz_{\bullet}^{\stt}$ and the statement 
follows by Lemma\,\ref{l3.3}.
\end{proof}
As a corollaries, we obtain
\begin{thm}\label{t4.14}
 Let $\sigmaup_{1},\sigmaup_{2}\,{\in}\,\Inv^{\tau}(\gr,\hr)$. A necessary and sufficient condition for
 $(\gt_{\sigmaup_{1}},\hg_{\sigmaup_{1}})$ and $(\gt_{\sigmaup_{2}},\hg_{\sigmaup_{2}})$ being
 conjugated in $\Aut_{\C}(\gt,\hg)$ is that the pairs 
 $(\Rad_{\,\;\bullet}^{\sigmaup_{1}},\Rad_{\,\;\oast}^{\sigmaup_{1}})$
 and $(\Rad_{\,\;\bullet}^{\sigmaup_{2}},\Rad_{\,\;\oast}^{\sigmaup_{2}})$
 are conjugated in $\Wf(\Rad)$.
\end{thm}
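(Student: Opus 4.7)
The plan is to reduce to Theorem~\ref{t4.13} by exploiting the lift of Weyl group elements, combined with the semidirect decomposition $\Af(\Rad)=\Wf(\Rad)\ltimes\Af(\Rad,C)$ of \eqref{e2.35} and the Chevalley lifts $\epi^{\sharp}$ of Theorem~\ref{t3.14}. The essential tool throughout is the surjection $\rhoup:\Aut^{\tauup}(\gr,\hr)\twoheadrightarrow\Af(\Rad)$ of Proposition~\ref{propaut}.

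For sufficiency, pick $w\in\Wf(\Rad)$ conjugating $(\Rd{\bullet}{\sigmaup_{1}},\Rd{\oast}{\sigmaup_{1}})$ onto $(\Rd{\bullet}{\sigmaup_{2}},\Rd{\oast}{\sigmaup_{2}})$, lift it to $\Phi_{w}\in\Aut^{\tauup}(\gr,\hr)$, and set $\sigmaup_{1}':=\Phi_{w}\circ\sigmaup_{1}\circ\Phi_{w}^{-1}\in\Inv^{\tauup}(\gr,\hr)$, so that $\rhoup(\sigmaup_{1}')=w\stt_{1}w^{-1}$. The hypothesis forces $w\stt_{1}w^{-1}=\stt_{2}$: the conjugated set $\Rd{\bullet}{\stt_{1}}=\Rd{\bullet}{\sigmaup_{1}}\sqcup\Rd{\oast}{\sigmaup_{1}}$ must coincide with $\Rd{\bullet}{\stt_{2}}$, and the matching of the real Cartan subalgebras $\hg_{\sigmaup_{1}'}$ and $\hg_{\sigmaup_{2}}$ fixes the involution. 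Then $\sigmaup_{1}'$ and $\sigmaup_{2}$ are two elements of $\Inv^{\tauup}_{\!\stt_{2}}(\gr,\hr)$ sharing the common compact root set $\Rd{\bullet}{\sigmaup_{2}}$; Theorem~\ref{t4.13} supplies $\Psi\in\Aut_{\C}(\gt,\hg)$ with $\Psi(\gt_{\sigmaup_{1}'},\hg_{\sigmaup_{2}})=(\gt_{\sigmaup_{2}},\hg_{\sigmaup_{2}})$, and the composite $\Psi\circ\Phi_{w}$ realizes the conjugacy.

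For necessity, suppose $\Phi\in\Aut_{\C}(\gt,\hg)$ realizes the conjugation. Since $\Phi$ preserves $\hg$ and hence $\hr$, its dual induces $w_{0}=\rhoup(\Phi)\in\Af(\Rad)$ with $w_{0}\stt_{1}w_{0}^{-1}=\stt_{2}$. The intertwining $\Phi\sigmaup_{1}\Phi^{-1}=\sigmaup_{2}$ carries the Cartan decomposition of $\gt_{\sigmaup_{1}}$ onto that of $\gt_{\sigmaup_{2}}$; by the lemma preceding Remark~\ref{rm4.7}, which characterises $\Rd{\bullet}{\sigmaup_{1}}$ and $\Rd{\oast}{\sigmaup_{1}}$ via the behaviour of $X_{\alphaup}\pm\sigmaup_{1}(X_{\alphaup})$ under $\tauup$, this gives $w_{0}(\Rd{\bullet}{\sigmaup_{1}})=\Rd{\bullet}{\sigmaup_{2}}$ and $w_{0}(\Rd{\oast}{\sigmaup_{1}})=\Rd{\oast}{\sigmaup_{2}}$. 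To promote $w_{0}$ to an element of $\Wf(\Rad)$, decompose $w_{0}=w\cdot\epi$ via \eqref{e2.35}, lift $\epi$ to $\epi^{\sharp}\in\Aut^{\tauup}(\gr,\hr)$ by Theorem~\ref{t3.14}, and replace $\Phi$ by $\Phi\circ(\epi^{\sharp})^{-1}$: since $\epi^{\sharp}$ sends $X_{\alphaup}$ to $X_{\epi(\alphaup)}$ on Chevalley generators without sign twist, it preserves the compact/noncompact trichotomy up to the permutation $\epi$, while $\rhoup$ of the modified automorphism is $w\in\Wf(\Rad)$.

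The main obstacle I expect is the last step of the necessity direction, verifying that conjugation by $\epi^{\sharp}$ transports $(\Rd{\bullet}{\sigmaup_{1}},\Rd{\oast}{\sigmaup_{1}})$ precisely to $(\epi(\Rd{\bullet}{\sigmaup_{1}}),\epi(\Rd{\oast}{\sigmaup_{1}}))$ with no extra sign correction. Via Proposition~\ref{p4.2} this amounts to checking a compatibility for the sign function $f_{\sigmaup_{1}}$ under $\epi$, a delicate bookkeeping with the structure constants $N_{\alphaup,\betaup}$. Should this prove awkward, an alternative route is to invoke Theorem~\ref{t4.12} first, passing to equivalent representatives with $\Rd{\oast}{\sigmaup_{i}}=\emptyset$ (maximally compact Cartan subalgebras), where the classification data collapses to $\Rd{\bullet}{\stt_{i}}$ alone and the $\Wf(\Rad)$-conjugacy claim becomes a purely combinatorial statement about root-system involutions.
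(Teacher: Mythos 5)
The paper offers no argument for this theorem: it is stated as an immediate corollary of Theorem~\ref{t4.13} and closed without proof, so your write-up is necessarily supplying material the authors left implicit. Unfortunately the key step of your sufficiency argument is a genuine gap. You claim that once $w\,{\in}\,\Wf(\Rad)$ carries $(\Rd{\bullet}{\sigmaup_{1}},\Rd{\oast}{\sigmaup_{1}})$ onto $(\Rd{\bullet}{\sigmaup_{2}},\Rd{\oast}{\sigmaup_{2}})$, ``the hypothesis forces $w\stt_{1}w^{-1}\,{=}\,\stt_{2}$''. It does not: the set $\Rd{\bullet}{\stt}$ of imaginary roots does not determine the involution $\stt$. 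Every special involution $\epi\,{\in}\,\Ib^{*}(\Rad)$ has $\Rd{\bullet}{\epi}\,{=}\,\emptyset$, so in type $\textsc{A}_{2}$ the identity and the nontrivial diagram involution both produce the data $(\emptyset,\emptyset)$, while the corresponding pairs --- $(\slt_{3}(\R),\hr)$ and $(\su(1,2),\hg_{\epi})$ --- are not conjugate. This shows both that your inference fails and that the statement as literally printed is false unless one additionally requires $w$ to conjugate $\rhoup(\sigmaup_{1})$ to $\rhoup(\sigmaup_{2})$; your appeal to ``the matching of the real Cartan subalgebras'' is circular, since $\hg_{\sigmaup_{1}'}\,{=}\,\hg_{\sigmaup_{2}}$ is equivalent to the very identity of involutions you are trying to deduce. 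Once $w\stt_{1}w^{-1}\,{=}\,\stt_{2}$ is available as part of the hypothesis, your reduction to Theorem~\ref{t4.13} is the right move and is surely what the authors intend.

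The necessity direction has a second, smaller defect. After writing $\rhoup(\Phi)\,{=}\,w_{0}\,{=}\,w\cdot\epi$ with $w\,{\in}\,\Wf(\Rad)$ and $\epi\,{\in}\,\Af(\Rad,C)$, and replacing $\Phi$ by $\Phi\circ(\epi^{\sharp})^{-1}$, what you obtain is that $w$ conjugates the data of $\sigmaup_{1}''\,{=}\,\epi^{\sharp}\circ\sigmaup_{1}\circ(\epi^{\sharp})^{-1}$ onto that of $\sigmaup_{2}$. Since the data of $\sigmaup_{1}''$ is $\epi$ applied to that of $\sigmaup_{1}$, the original pairs end up conjugated only by $w_{0}\,{=}\,w\epi\,{\in}\,\Af(\Rad)$, not by an element of $\Wf(\Rad)$; you would still have to show that the data of $\sigmaup_{1}$ and of $\sigmaup_{1}''$ are themselves $\Wf(\Rad)$-conjugate, which is precisely the $\Af$-versus-$\Wf$ issue you set out to resolve. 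Your proposed fallback through Theorem~\ref{t4.12} --- normalizing to representatives with $\Rd{\oast}{\sigmaup_{i}}\,{=}\,\emptyset$ --- does not rescue the argument either, because the Cayley transforms change the Cartan subalgebra and so yield a statement about real forms alone rather than about the pairs $(\gt_{\sigmaup_{i}},\hg_{\sigmaup_{i}})$.
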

\begin{proof} Necessity follows from the induced symmetry on the root space
and sufficiency from Theorem\,\ref{t4.13}.\par
\end{proof}
\begin{thm}
 Let $\sigmaup\,{\in}\,\Invs$. A necessary and sufficient condition for $\gs$ being quasi-split
 is that $\Rd{\oast}{\sigmaup}$ contains a maximal system of strongly orthogonal roots
 of $\Rd{\bullet}{\stt}$. 
\end{thm}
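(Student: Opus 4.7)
The plan is to handle both implications through the Cayley transform of Proposition~\ref{p4.10} and the reduction of Theorem~\ref{t4.12}, combined with the characterisation of quasi-split forms (Proposition~\ref{p1.23}) and the Cartan classification (Theorem~\ref{t3.20}).

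Sufficient direction. Let $\Stt=\{\betaup_{1},\hdots,\betaup_{r}\}\,{\subseteq}\,\Rd{\oast}{\sigmaup}$ be a strongly orthogonal system of maximal cardinality in $\Rd{\bullet}{\stt}$; by Lemma~\ref{l2.4} its $\R$-span equals $\langle\Rd{\bullet}{\stt}\rangle_{\R}$. Set $\sigmaup_{0}\,{=}\,\sigmaup$ and let $\sigmaup_{i}$ be the Cayley transform of $\sigmaup_{i-1}$ at $\betaup_{i}$. Strong orthogonality together with \eqref{e4.16} keeps $\betaup_{i}$ in $\Rd{\oast}{\sigmaup_{i-1}}$, so every step is legitimate. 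After $r$ steps, $\rhoup(\sigmaup_{r})\,{=}\,\stt\,{\circ}\,\sq_{\,\Stt}$ and iterating \eqref{e4.15} yields
\begin{equation*}
\Rd{\bullet}{\rhoup(\sigmaup_{r})}\,{=}\,\Rd{\bullet}{\stt}\,{\cap}\,\Stt^{\perp}\,{=}\,\emptyset,
\end{equation*}
since no root can be orthogonal to a spanning set. Hence $\rhoup(\sigmaup_{r})\,{\in}\,\Ib^{*}(\Rad)$; Proposition~\ref{p1.23} gives $\gt_{\sigmaup_{r}}$ quasi-split, and as each Cayley transform is implemented in $\Aut_{\C}(\gt)$, the isomorphism $\gs\,{\simeq}\,\gt_{\sigmaup_{r}}$ transfers this property to $\gs$.

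Necessary direction. Suppose $\gs$ is quasi-split. Apply Theorem~\ref{t4.12}: pick a maximal strongly orthogonal $\Mtt\,{\subseteq}\,\Rd{\oast}{\sigmaup}$ with the largest number of long roots to obtain $\sigmaup'\,{\in}\,\Inv^{\tauup}(\gr,\hr)$ with $\gt_{\sigmaup'}\,{\simeq}\,\gs$, $\Rd{\oast}{\sigmaup'}\,{=}\,\emptyset$, and $\Rd{\bullet}{\sigmaup'}\,{=}\,\Rd{\bullet}{\stt}\,{\cap}\,\Mtt^{\perp}$. The key claim is that $\Rd{\bullet}{\sigmaup'}\,{=}\,\emptyset$; granted it, $\Mtt$ spans $\langle\Rd{\bullet}{\stt}\rangle_{\R}$ and hence is a maximal strongly orthogonal system of $\Rd{\bullet}{\stt}$ entirely contained in $\Rd{\oast}{\sigmaup}$.

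To prove the claim, fix via Proposition~\ref{p1.23} a quasi-split presentation $\sigmaup_{0}\,{\in}\,\Inv^{\tauup}_{\epi}(\gr,\hr)$ with $\epi\,{\in}\,\Ib^{*}(\Rad)$ and $\gt_{\sigmaup_{0}}\,{\simeq}\,\gt_{\sigmaup'}$. By Theorem~\ref{t3.20}, the Cartan conjugacy classes of $\gt_{\sigmaup_{0}}$ are parametrised by strongly orthogonal systems $\Btt\,{\subseteq}\,\Rd{\circ}{\epi}$: the empty class corresponds to $\hg_{\sigmaup_{0}}$ (which has $\Rd{\bullet}{\epi}\,{=}\,\emptyset$), while a nonempty $\Btt$ corresponds to a Cartan obtained from $\hg_{\sigmaup_{0}}$ by inverse Cayley transform along $\Btt$, whose set of noncompact imaginary roots contains $\pm\Btt$. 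Transporting $\hg_{\sigmaup'}$ to $\gt_{\sigmaup_{0}}$ through the isomorphism and using $\Rd{\oast}{\sigmaup'}\,{=}\,\emptyset$ forces the associated $\Btt$ to be empty. Hence $\hg_{\sigmaup'}$ is conjugate in $\gt_{\sigmaup_{0}}$ to $\hg_{\sigmaup_{0}}$, so that $\rhoup(\sigmaup')$ is conjugate in $\Wf(\Rad)$ to $\epi$ and $\Rd{\bullet}{\sigmaup'}\,{\subseteq}\,\Rd{\bullet}{\rhoup(\sigmaup')}\,{=}\,\emptyset$.

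The hard part is the final paragraph: one needs to unwind the bijection of Theorem~\ref{t3.20} concretely, realising the inverse Cayley transform along real roots as an explicit operation on $\Inv^{\tauup}(\gr,\hr)$ and verifying that the roots of the corresponding $\Btt$ do appear as \emph{noncompact} imaginary roots in the new Cartan, so that the vanishing of $\Rd{\oast}{\sigmaup'}$ rigidly forces $\Btt\,{=}\,\emptyset$. The rest of the argument is a mechanical bookkeeping of the Cayley dynamics.
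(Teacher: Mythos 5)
The paper states this theorem with no proof at all, presenting it as an immediate consequence of Thm.~\ref{t4.12} and Prop.~\ref{p1.23}, so your Cayley-transform route is indeed the intended one; the problem is that neither direction of your argument is actually complete. In the sufficiency direction, your claim that a maximal strongly orthogonal system $\Stt$ of $\Rd{\bullet}{\stt}$ spans $\langle\Rd{\bullet}{\stt}\rangle_{\R}$ is false: an irreducible component of type $\textsc{A}_{2}$ has rank $2$ while its maximal systems are singletons, so ``no root is orthogonal to a spanning set'' does not give $\Rd{\bullet}{\stt}\cap\Stt^{\perp}=\emptyset$, and that emptiness is the whole point of the step. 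Worse, the conclusion is sensitive to what ``maximal'' means. With the inclusion-maximality used in \S\ref{s2.4} (where, e.g., $\textsc{C}_{\ell}$ and $\textsc{F}_{4}$ have maximal systems of different cardinalities) it can genuinely fail: in $\Rd{\bullet}{\stt}$ of type $\textsc{C}_{2}$ the singleton $\{\e_{1}{-}\e_{2}\}$ is a maximal system ($2r_{1}{+}r_{2}{=}2$) whose orthogonal complement is $\{{\pm}(\e_{1}{+}\e_{2})\}$, and the non-quasi-split form $\spt(1,1)\,{\simeq}\,\so(1,4)$ has $\Rd{\oast}{\sigmaup}\,{=}\,\{{\pm}\e_{1}{\pm}\e_{2}\}$ containing exactly such a system. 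You need to take $\Stt$ of maximal \emph{cardinality} and then argue via Lemma~\ref{l2.4}: a root of $\Rd{\bullet}{\stt}$ orthogonal to every element of $\Stt$ would produce $r{+}1$ pairwise orthogonal roots in $\Rd{\bullet}{\stt}$, hence (since $\Rd{\bullet}{\stt}$ is a closed subsystem) a strongly orthogonal system of $r{+}1$ roots there, contradicting maximality of $r$. The remainder of that direction --- each $\betaup_{i}$ staying noncompact by \eqref{e4.16}, and Prop.~\ref{p1.23} at the end --- is fine.

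In the necessity direction you have correctly reduced everything to the ``key claim'' that $\Rd{\bullet}{\sigmaup'}=\emptyset$, i.e.\ that a Cartan subalgebra of a quasi-split form carrying no noncompact imaginary roots carries no imaginary roots at all; but you then declare its verification to be ``the hard part'' and do not carry it out, so as written this direction is a plan rather than a proof. The claim can be closed entirely with tools already in the paper: by Thm.~\ref{t3.1} and Thm.~\ref{t3.20} the pair $(\gt_{\sigmaup'},\hst)$ is conjugate to a standard pair of the quasi-split form associated with some strongly orthogonal $\Btt\subseteq\Rd{\circ}{\epi}$; Remark~\ref{rm4.7} shows that the standard lift of $\epi\circ\sq_{\,\Btt}$ constructed in Thm.~\ref{t3.15} has all of $\Btt$ among its \emph{noncompact} imaginary roots; and Thm.~\ref{t4.14} then forces the pair $(\Rd{\bullet}{\sigmaup'},\Rd{\oast}{\sigmaup'})$ to be $\Wf$-conjugate to that of the standard model, so $\Rd{\oast}{\sigmaup'}=\emptyset$ compels $\Btt=\emptyset$, whence $\rhoup(\sigmaup')$ is special and $\Rd{\bullet}{\sigmaup'}=\emptyset$. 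Until you assemble that chain (or an equivalent one), the necessity direction is not established.
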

\begin{proof}
Using the previous Theorem and  Prop.\ref{p1.23} the thesis follows.
\end{proof}
\subsection{$\Sigma$-diagrams}
Let $\stt\,{\in}\,\Ib(\Rad)$ and $C\,{\in}\,\Cd_{\stt}(\Rad)$. \par \index{$\Sigma$-diagram}
We can attach to a $\sigmaup\,{\in}\,\Invs$ a new diagram, which is obtained from an 
$S\!$-diagram for $\stt$ 
by \begin{itemize}
\item
changing from ``$\medbullet$'' to ``$\circledast$'' the 
nodes corresponding
to roots in $\Bz_{\oast}^{\sigmaup}(C)$.
\end{itemize}
By Cor.\ref{c4.5}, when $\Rd{\bullet}{\stt}\,{=}\,\Rad$, 
all $\Sigma$-diagrams obtained by arbitrarily choosing  either $\medbullet$ or $\circledast$ for 
the nodes of any basis of simple roots are admissible.   
The subdiagrams containing only nodes corresponding to roots in $\Bz_{\bullet}^{\stt}(C)$ are special
\textit{Vogan diagrams}. 
By a theorem of Borel and de Siebenthal (see \cite[Thm.6.96]{Kn:2002}, or Prop.\ref{p4.17} below)
we may add the requirement 
\begin{itemize}
 \item Each connected component of $\Bz_{\bullet}^{\stt}(C)$ in the $\Sigma$-diagram contains at most
 one ``$\circledast$''.
\end{itemize}
 A \emph{restricted} $\Sigma$-diagram is one satisfying this extra condition.
 By  Lemmas \ref{l4.15}, \ref{l4.16} below,  we obtain:
\begin{prop}\label{p4.17}
 Every $\sigmaup\,{\in}\Invs$ admits a restricted $\Sigma$-diagram. \qed
\end{prop}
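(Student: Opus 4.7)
The plan is to reduce Proposition~\ref{p4.17} to the classical Borel--de Siebenthal theorem, applied inside the root subsystem $\Rd{\bullet}{\stt}$ of imaginary roots, using Lemmas~\ref{l4.15} and~\ref{l4.16} (referenced just before the statement) as the local ``label-flip'' tool. The key observation is that, thanks to Proposition~\ref{p2.32}, for every $S$-chamber $C\in\Cd_{\stt}(\Rad)$ the set $\Bz_{\bullet}^{\stt}(C)$ is a basis of simple positive roots for $\Rd{\bullet}{\stt}$, and the splitting
\[
\Bz_{\bullet}^{\stt}(C)=\Bz_{\bullet}^{\sigmaup}(C)\sqcup\Bz_{\oast}^{\sigmaup}(C)
\]
induced by $\sigmaup$ equips each connected component of the black subdiagram with the data of a \textit{Vogan diagram} for the compact/real form combinatorics carried by the imaginary subsystem. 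So the assertion to prove --- at most one ``$\circledast$'' per connected component of $\Bz_{\bullet}^{\stt}(C)$ --- is exactly Borel--de Siebenthal in this generalized context.

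The main iterative step runs as follows. Fix $\sigmaup\in\Invs$ and $C\in\Cd_{\stt}(\Rad)$, and suppose some connected component of $\Bz_{\bullet}^{\stt}(C)$ contains two adjacent roots $\alphaup,\alphaup'\in\Bz_{\oast}^{\sigmaup}(C)$. I apply the reflection $\sq_{\,\alphaup}$ to pass to the chamber $C'=\sq_{\,\alphaup}(C)$; since $\alphaup\in\Rd{\bullet}{\stt}$, we have $\sq_{\,\alphaup}{\circ}\stt=\stt{\circ}\sq_{\,\alphaup}$ (the Lemma preceding Prop.~\ref{p2.7}), and the proof of Prop.~\ref{p2.32} shows $\sq_{\,\alphaup}^{*}(\Cd_{\stt}(\Rad))=\Cd_{\stt}(\Rad)$, so $C'\in\Cd_{\stt}(\Rad)$. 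The new basis $\Bz_{\bullet}^{\stt}(C')$ differs from $\Bz_{\bullet}^{\stt}(C)$ only at the edge containing $\alphaup$: the root $\alphaup'$ is replaced by $\alphaup+\alphaup'$ (with $\alphaup$ and $-\alphaup$ flipped). Lemmas~\ref{l4.15}--\ref{l4.16} supply the compactness-of-sum rule, from which $\alphaup+\alphaup'\in\Rd{\bullet}{\sigmaup}$ --- i.e.\ the new simple root carries the label ``$\medbullet$''. Hence the number of ``$\circledast$'' nodes in that component of the black subdiagram for the chamber $C'$ is strictly less than for $C$. Throughout, $\sigmaup$ itself is unchanged, so we remain inside $\Invs$ and the associated pair $(\gs,\hst)$ is fixed.

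Since the total number of ``$\circledast$'' labels is a nonnegative integer, iterating the above move finitely many times yields an $S$-chamber $C^{\star}\in\Cd_{\stt}(\Rad)$ whose $\Sigma$-diagram for $\sigmaup$ is restricted, proving the proposition. The main obstacle --- and the reason Lemmas~\ref{l4.15}, \ref{l4.16} are invoked rather than derived on the spot --- is the label-flip calculation: given $\alphaup,\alphaup'\in\Bz_{\oast}^{\sigmaup}(C)$ adjacent, show $\alphaup+\alphaup'\in\Rd{\bullet}{\sigmaup}$. This is a direct but delicate structure-constant verification. Using \eqref{eq4.4} and \eqref{eq4.2},
\[
\sigmaup(X_{\alphaup+\alphaup'})=N_{\alphaup,\alphaup'}^{-1}\,[\sigmaup(X_{\alphaup}),\sigmaup(X_{\alphaup'})]
=f_{\sigmaup}(\alphaup)f_{\sigmaup}(\alphaup')\,\frac{N_{-\alphaup,-\alphaup'}}{N_{\alphaup,\alphaup'}}\,X_{-(\alphaup+\alphaup')},
\]
and $N_{-\alphaup,-\alphaup'}=N_{\alphaup,\alphaup'}$ by \eqref{e1.5a}, while $f_{\sigmaup}(\alphaup)=f_{\sigmaup}(\alphaup')=-1$ by hypothesis. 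Thus $f_{\sigmaup}(\alphaup+\alphaup')=+1$, giving exactly the compactness we need. Packaging this computation uniformly (across all ranks, including length differences in the imaginary subsystem) is what the two lemmas presumably do.
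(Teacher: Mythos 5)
Your local computation is correct as far as it goes: for adjacent simple imaginary roots $\alphaup,\alphaup'$ with $f_{\sigmaup}(\alphaup)=f_{\sigmaup}(\alphaup')={-}1$ and $\langle\alphaup'\mid\alphaup\rangle={-}1$, the relation $N_{-\alphaup,-\alphaup'}=N_{\alphaup,\alphaup'}$ from \eqref{e1.5a} and the cocycle condition \eqref{eq4.2} give $f_{\sigmaup}(\alphaup{+}\alphaup')={+}1$. The gap is in the global iteration: the number of ``$\circledast$'' nodes does \emph{not} strictly decrease under $\sq_{\,\alphaup}$, and can even increase. The reflection $\sq_{\,\alphaup}$ replaces \emph{every} simple neighbour $\betaup$ of $\alphaup$ by $\betaup-\langle\betaup\mid\alphaup\rangle\alphaup$, whose label is $f_{\sigmaup}(\betaup)\,f_{\sigmaup}(\alphaup)^{c}$ with $c={-}\langle\betaup\mid\alphaup\rangle\,{\in}\,\{1,2,3\}$; whenever $c$ is odd this flips the label of $\betaup$ too. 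Concretely, in type $\textsc{A}_{3}$ with $\stt\,{=}\,\att$ and $f_{\sigmaup}(\alphaup_{1})={+}1$, $f_{\sigmaup}(\alphaup_{2})=f_{\sigmaup}(\alphaup_{3})={-}1$, reflecting in $\alphaup_{2}$ yields the simple system $\{\alphaup_{1}{+}\alphaup_{2},\,{-}\alphaup_{2},\,\alphaup_{2}{+}\alphaup_{3}\}$ with labels $({-}1,{-}1,{+}1)$: still two noncompact nodes. In type $\textsc{D}_{4}$ with the central node and one outer node noncompact, reflecting in the central node raises the count from two to three. You give no rule for which of the two adjacent noncompact roots to reflect in and no invariant that decreases, so the procedure as written need not terminate.

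This is exactly the difficulty the paper's proof is built to avoid, and your reading of the two cited lemmas is off: they are not a ``label-flip'' or ``compactness-of-sum'' tool. After the reduction to $\stt\,{=}\,\att$ on each irreducible component of $\Rd{\bullet}{\stt}$ via Prop.\,\ref{p2.32}, Lemma \ref{l4.16} \emph{is} the statement for that case, proved by the Borel--de Siebenthal minimization: one forms the affine lattice $\Lambda$ of $H\,{\in}\,\hr$ with $\alphaup(H)$ an even integer on $\Rd{\bullet}{\sigmaup}$ and odd on $\Rd{\oast}{\sigmaup}$, takes $H_{0}\,{\in}\,\Lambda$ of minimal norm, and uses the positivity of the scalar product on a closed Weyl chamber (Lemma \ref{l4.15}) to show that $H_{0}$ is a fundamental coweight, so that exactly one simple root of the corresponding chamber is noncompact. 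If you want to keep your algorithmic flavour you must either specify the reflections so that a genuine invariant (in effect, the norm of the associated lattice vector) strictly decreases, or simply import the minimization argument; as it stands the termination claim is false.
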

 
\begin{lem}\label{l4.15}
 Let $C$ be a Weyl chamber of an irreducible root system $\Rad$. Then 
\begin{equation}
 (H_{1}\,|\, H_{2})>0, \; \; \forall H_{1},H_{2}\,{\in}\,\overline{C}{\setminus}\{0\}.
\end{equation}
\end{lem}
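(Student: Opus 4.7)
The plan is to reduce the statement to entrywise positivity of the inverse of a Gram matrix, and then to exploit irreducibility of $\Rad$ through a Perron-Frobenius style argument.

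First I would fix a basis $\Bz(C) = \{\alphaup_{1},\hdots,\alphaup_{\ell}\}$ of simple positive roots so that $\overline{C} = \{H \in \hr \mid \alphaup_{i}(H) \geq 0,\; \forall\, i\}$ is a simplicial cone. Introducing the dual basis $v_{1},\hdots,v_{\ell} \in \hr$ characterized by $\alphaup_{i}(v_{j}) = \delta_{ij}$, every $H \in \overline{C}$ has the form $H = \sum_{i} c_{i} v_{i}$ with $c_{i} \geq 0$. For $H_{1} = \sum c_{i} v_{i}$ and $H_{2} = \sum c'_{j} v_{j}$ in $\overline{C} \setminus \{0\}$ one gets $(H_{1} | H_{2}) = \sum_{i,j} c_{i} c'_{j} (v_{i} | v_{j})$; since the nonnegative coefficients are not all zero on either side, the claim reduces to proving $(v_{i} | v_{j}) > 0$ for every pair $(i,j)$.

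Next I would translate this into a matrix statement. The condition $\alphaup_{i}(v_{j}) = (\htt_{\alphaup_{i}} | v_{j}) = \delta_{ij}$ says that $\{v_{j}\}$ is the dual basis of $\{\htt_{\alphaup_{i}}\}$ with respect to the scalar product, so letting $G$ denote the Gram matrix $G_{ij} = (\htt_{\alphaup_{i}} | \htt_{\alphaup_{j}})$, a short calculation gives $(v_{i} | v_{j}) = (G^{-1})_{ij}$. Thus it suffices to show that every entry of $G^{-1}$ is strictly positive. The matrix $G$ is symmetric positive definite, with $G_{ii} > 0$ and $G_{ij} \leq 0$ for $i \neq j$ (the familiar obtuse-angle property of distinct simple roots). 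Because $\Rad$ is irreducible, its Dynkin diagram is connected, so for every pair of indices $i, j$ there is a chain $i = i_{0}, i_{1},\hdots, i_{k} = j$ with $G_{i_{s}, i_{s+1}} < 0$.

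The main obstacle will be extracting entrywise positivity of $G^{-1}$ from these ingredients. I would write $G = dI - B$ with $d > \max_{i} G_{ii}$, so that $B$ has nonnegative entries and its off-diagonal support coincides with the edges of the Dynkin diagram. Positive definiteness of $G$ forces $d$ to exceed the spectral radius of $B$, so the Neumann series $G^{-1} = d^{-1}\sum_{k \geq 0}(B/d)^{k}$ converges; connectedness of the diagram ensures that for each pair $(i,j)$ a sufficiently high power $B^{k}$ has a strictly positive $(i,j)$-entry, whence $(G^{-1})_{ij} > 0$. Irreducibility of $\Rad$ is essential here: were the Dynkin diagram disconnected, $G$ and $G^{-1}$ would be block diagonal and the $v_{i}$'s from different components would be orthogonal. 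Combining this with the first reduction yields $(H_{1} | H_{2}) > 0$, completing the proof.
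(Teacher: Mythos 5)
Your argument is correct, but it takes a genuinely different route from the paper's. The paper works directly with the basis $\htt_{\alphaup_{1}},\hdots,\htt_{\alphaup_{\ell}}$ attached to $\Bz(C)$: it writes $H\in\overline{C}$ as $H=\sum_{i}t_{i}\htt_{\alphaup_{i}}$, splits $H=H_{+}+H_{-}$ according to the signs of the $t_{i}$, and shows $\|H_{-}\|^{2}\leq 0$ using only $\alphaup_{j}(H)\geq 0$ and the obtuseness $(\alphaup_{i}\,|\,\alphaup_{j})\leq 0$; irreducibility then enters to rule out any $t_{i}=0$ by propagating along the connected Dynkin diagram, so every nonzero $H\in\overline{C}$ has all $t_{i}>0$ and $(H\,|\,H')=2\sum_{i}t_{i}\alphaup_{i}(H')>0$. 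You instead reduce the lemma to entrywise positivity of $G^{-1}$ for the Gram matrix $G_{ij}=(\alphaup_{i}\,|\,\alphaup_{j})$ and obtain it from the M-matrix mechanism: $G=dI-B$ with $B\geq 0$, positive definiteness forcing $d>\rho(B)$, the Neumann series, and connectivity of the diagram to make some power of $B$ positive in each entry. The two intermediate claims are in fact equivalent --- the paper's ``all $t_{i}>0$'' statement is precisely the assertion that $G^{-1}$ maps the nonnegative orthant minus the origin into the open positive orthant --- but the mechanisms differ. The paper's norm computation is elementary and self-contained; your version isolates a clean general fact (an irreducible symmetric M-matrix has entrywise positive inverse) at the price of invoking spectral-radius estimates and Perron--Frobenius. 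Both proofs use irreducibility at the same point, namely connectivity of the Dynkin diagram, and both are complete.
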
 
\begin{proof}
For each $\alphaup\,{\in}\,\Rad$ let  
$\htt_{\alphaup}$ be the element of $\hr$ with  $(\htt_{\alphaup}\,|\,H)\,{=}\,\alphaup(H)$ for
all $H\,{\in}\,\hr$ (cf. 
\S\ref{s1.1}).
Let $\Bz(C)\,{=}\,\{\alphaup_{1},\hdots,\alphaup_{\ell}\}$ be the basis of simple positive roots for $C.$
Then $\htt_{\alphaup_{1}},\hdots,\htt_{\alphaup_{\ell}}$ is a basis of $\hr$.
Fix $H\,{\in}\,\overline{C}$.  
We claim that the real coefficients $t_{i}$ of the decomposition 
 $H\,{=}\,{\sum}_{i=1}^{\ell}t_{i}\,\htt_{\alphaup_{i}}$ are all positive. 
Let indeed $H_{+}\,{=}\,{\sum}_{t_{i}\geq{0}}t_{i}\htt_{\alphaup_{i}}$ and
$H_{-}\,{=}\,H{-}H_{+}\,{=}\,{\sum}_{t_{i}<0}t_{i}\htt_{\alphaup_{i}}$. Then 
\begin{align*}
 \|H_{-}\|^{2}=(H\,|\,H_{-})-(H_{+}\,|\,H_{-})\,{=}\,{\sum}_{t_{j}<0} t_{j}(H\,|\,\htt_{\alphaup_{j}})\,{-}\,
 {\sum}_{\begin{smallmatrix}
 t_{i}\geq{0}\\
 t_{j}<{0}
\end{smallmatrix}
} t_{i}t_{j} (\htt_{\alphaup_{i}}\,|\,\htt_{\alphaup_{j}})\qquad\\
= 2{\sum}_{t_{j}<0} t_{j}\alphaup_{j}(H)-{\sum}_{
\begin{smallmatrix}
 t_{i}{\geq}0\\
 t_{j}<0
\end{smallmatrix}
} t_{i}t_{j}(\alphaup_{i}|\alphaup_{j})\leq 0.
\end{align*}
Indeed, $H\,{\in}\,C\,{\Rightarrow}\,\alphaup_{j}(H){\leq}0$ and $(\alphaup_{i}\,|\,\alphaup_{j}){\leq}0$ for $i{\neq}j$.
This shows that  $H_{-}{=}0,$ i.e. that $t_{i}{\geq}0$ for all $i\,{=}\,1,\hdots,\ell$. 
Assume by contradiction that $t_{i}{=}0$ for some~$i$. Then  
\begin{equation*}
 0\,{\leq}\, \alphaup_{i}(H)\,{=}\,{\sum}_{j\neq{i}}t_{j}\alphaup_{i}(\htt_{\alphaup_{j}})=\,2{\sum}_{j\neq{i}}t_{j}
 (\alphaup_{i}\,|\,\alphaup_{j})
\end{equation*}
implies that $t_{j}\,{=}\,0$ also for all $j$'s with $(\alphaup_{i}\,|\,\alphaup_{j})\,{\neq}\,0$.
By the assumption that $\Rad$ is irreducible, this would imply that $H\,{=}\,0$.\par
Being $H\,{=}\,{\sum}_{i=1}^{\ell}t_{i}\htt_{\alphaup_{i}}$
with all $t_{i}$ positive, if $H'$ is 
another element of $\overline{C}$, then 
\begin{equation*}
 (H\,|\,H')\,{=}\,{\sum}_{i=1}^{\ell}t_{i}(\htt_{\alphaup_{i}}\,|\,H')\,{=}\,2{\sum}_{i=1}^{\ell}t_{i}\alphaup_{i}(H')
\end{equation*}
implies that $(H\,|\,H')\,{>}\,0$ 
if $H'\,{\neq}\,0$, as  at least one $\alphaup_{i}(H')$ is positive.
\end{proof}
\begin{lem}\label{l4.16}
 Let $\Rad$ be irreducible,
 $\att$ the antipodal involution of $\Rad$ and $\sigmaup\,{\in}\,\Inv_{\att}^{\tauup}(\gr,\hr)$.
 Then we can find a Weyl chamber $C$ such that $\#(\Bz_{\oast}^{\sigmaup}(C)){\leq}1$.
\end{lem}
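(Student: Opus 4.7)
The plan is to encode the datum of $\sigmaup$ by a character on the root lattice and then realise the desired Weyl chamber by minimising a suitable norm of a representative, inside the Weyl orbit modulo the obvious $2$-torsion freedom.

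First, since $\stt=\att$, every root of $\Rad$ is $\stt$-imaginary, so by Prop.\,\ref{p4.2} the involution $\sigmaup$ is completely determined by the function $f_{\sigmaup}\colon\Rad\to\Z_{2}^{*}$ defined by $\sigmaup(X_{\alphaup})=f_{\sigmaup}(\alphaup)\,X_{-\alphaup}$. Its compatibility with the Chevalley brackets and with $\sigmaup^{2}=\id$ makes $f_{\sigmaup}$ an element of $\Hom(\Z[\Rad],\Z_{2}^{*})$, hence, by the lemma yielding \eqref{e2.13}, of the form $\etaup_{\omegaup}$ for some $\omegaup\in(\Z[\Rad])^{*}$: namely $f_{\sigmaup}(\alphaup)=(-1)^{(\alphaup\mid\omegaup)}$. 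A simple root $\alphaup\in\Bz(C)$ then belongs to $\Bz_{\oast}^{\sigmaup}(C)$ precisely when $(\alphaup\mid\omegaup)$ is odd. Because $\Wf(\Rad)$ acts simply transitively on $\Cd(\Rad)$, it suffices to exhibit $w\in\Wf(\Rad)$ such that $(\alphaup_{i}\mid w^{-1}\omegaup)$ is odd for at most one $\alphaup_{i}\in\Bz(C_{0})$, and then to set $C=wC_{0}$.

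Second, the homomorphism $f_{\sigmaup}$ is unchanged if $\omegaup$ is replaced by $\omegaup+2\xiup$ with $\xiup\in(\Z[\Rad])^{*}$, so among the elements $\omegaup'$ of $\Wf(\Rad){\cdot}\omegaup+2(\Z[\Rad])^{*}$ I would pick one of minimal Killing-norm $(\omegaup'\mid\omegaup')$ and, exploiting $\Wf$-invariance of the norm, assume further that $\omegaup'\in\overline{C_{0}}$. This last step is afforded by the fact that the closures of Weyl chambers cover $\hr^{*}$, and Lemma \ref{l4.15} then guarantees $(\omegaup'\mid\alphaup_{i})\geq 0$ for every $\alphaup_{i}\in\Bz(C_{0})$. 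The minimality of $\omegaup'$ with respect to the shifts $\omegaup'\mapsto\omegaup'-2\xiup$ translates into the inequality $(\omegaup'\mid\xiup)\leq(\xiup\mid\xiup)$ for all $\xiup\in(\Z[\Rad])^{*}$, which uniformly bounds each $(\omegaup'\mid\alphaup_{i})$ from above.

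Third, combining these facts I expect to force $(\omegaup'\mid\alphaup_{i})\in\{0,1\}$ for every simple root and to conclude that at most one value equals $1$. The positivity part of Lemma \ref{l4.15}, applied to distinct elements of $\overline{C_{0}}$, together with the non-positivity $(\alphaup_{i}\mid\alphaup_{j})\leq 0$ for $i\neq j$, constrains the admissible patterns of pairings; specialising the minimality inequality at $\xiup$ running through elements of $(\Z[\Rad])^{*}$ adapted to individual simple roots excludes the presence of two or more odd pairings.

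The hard part will be to carry out this last step uniformly. A fully type-free argument does not seem to be within easy reach, and the natural completion of the proof appears to be a case-by-case verification along the lines of \S\ref{s2.4}, using the explicit fundamental weights for each irreducible $\Rad$ of type $\textsc{A}_{\ell},\textsc{B}_{\ell},\textsc{C}_{\ell},\textsc{D}_{\ell},\textsc{E}_{6,7,8},\textsc{F}_{4},\textsc{G}_{2}$; alternatively one can appeal to Borel--de Siebenthal's classification of order-two conjugacy classes in the compact simply-connected group, which is the underlying reason the combinatorial reduction works. The contribution of the first three paragraphs is to turn the geometric statement about Weyl chambers into the finite combinatorial question on $(\Z[\Rad])^{*}/2(\Z[\Rad])^{*}$: every $\Wf(\Rad)$-orbit contains an element whose pairings with $\Bz(C_{0})$ are odd in at most one coordinate.
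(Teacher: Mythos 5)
Your reduction is sound and is in fact the same framework the paper uses (translated to the dual side): encode $\sigmaup$ by $\omegaup$, pass to a minimal-norm representative of the relevant coset of $2(\Z[\Rad])^{*}$, place it in the closure of a chamber, and invoke Lemma~\ref{l4.15}. But the proof stops exactly where it needs to deliver: you assert that minimality ``excludes the presence of two or more odd pairings'' and then concede that you would finish by a type-by-type check or by citing Borel--de Siebenthal. The first option is not carried out, and the second is circular here, since the paper proves this lemma precisely in order to obtain the Borel--de Siebenthal statement (Prop.~\ref{p4.17}) independently. As written, the proposal is an incomplete proof.

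The missing step is short and type-free. In the paper's notation, let $H_{0}\in\Lambda$ be of minimal norm and $C$ a chamber with $H_{0}\in\overline{C}$. Pick a simple root $\alphaup$ with $\alphaup(H_{0})>0$ and let $L$ be the corresponding fundamental coweight, i.e.\ $\alphaup(L)=1$ and $\betaup(L)=0$ for the other simple roots. Since $\alphaup(H_{0})$ is a positive integer, $H_{0}-L$ still lies in $\overline{C}$; if it were nonzero, Lemma~\ref{l4.15} would give $(H_{0}-L\mid L)>0$, and since $H_{0}-2L\in\Lambda$ (all roots pair evenly with $2L$),
\begin{equation*}
\|H_{0}-2L\|^{2}=\|H_{0}\|^{2}-4(H_{0}-L\mid L)<\|H_{0}\|^{2}
\end{equation*}
would contradict minimality. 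Hence $H_{0}=L$, a fundamental coweight, which is odd on exactly one simple root and even on the rest. Your generic inequality $(\omegaup'\mid\xiup)\leq(\xiup\mid\xiup)$ for all $\xiup$ is too weak on its own; the decisive point is to test minimality against the specific shift by twice a fundamental coweight \emph{and} to use Lemma~\ref{l4.15} to show that the resulting decrement is strictly positive unless $H_{0}$ already equals that coweight. With that insertion your argument closes without any case analysis.
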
 
\begin{proof} If $\Rad\,{=}\,\Rd{\bullet}{\sigmaup}$, then there is nothing to prove. \par
Suppose that $\Rd{\oast}{\sigmaup}\,{\neq}\,\emptyset$ and 
 let $\Lambda$ be the subset of $\hr$ consisting of the elements $H$ for which 
 $\alphaup(H)$ is an integer for all $\alphaup\,{\in}\,\Rad$ and is even when $\alphaup\,{\in}\,\Rd{\bullet}{\sigmaup}$,
 odd when $\alphaup\,{\in}\,\Rd{\oast}{\sigmaup}$. This set is not empty: if $\Bz(C')$ is any basis of simple
 roots, $\Bz_{\oast}^{\sigmaup}(C')\,{\neq}\,\emptyset$ and 
the unique $H\,{\in}\,\hr$ defined by 
\begin{equation*}
 \alphaup(H)=
\begin{cases}
 0, &\text{if $\alphaup\in\Bz_{\bullet}^{\sigmaup}(C')$},\\
  1, &\text{if $\alphaup\in\Bz_{\oast}^{\sigmaup}(C')$},
\end{cases}
\end{equation*}
belongs to $\Lambda$. 
Take $H_{0}$ in $\Lambda$ of minimal norm and fix a Weyl chamber $C$ with $H_{0}\,{\in}\,\overline{C}$. 
Since $H_{0}\,{\neq}\,0$, there is at least a root $\alphaup$ in $\Bz(C)$ for which $\alphaup(H_{0})\,{>}\,0$.
Let $L$ be the element of $\hr$ with $\alphaup(L)\,{=}\,1,$ $\betaup(L)\,{=}\,0$ for $\betaup\,{\in}\,\Bz(C){\setminus}\{\alphaup\}$.
Then either $L\,{=}\,H_{0}$, or $H_{0}\,{-}\,L$ is a nonzero element of $\overline{C}$. Assume by contradiction that
$L\,{\neq}\,H_{0}$. By Lemma\,\ref{l4.15}
we have $(H_{0}{-}L|L){>0}$ and, since
$H_{0}{-}2L$ belongs to $\Lambda$, 
\begin{equation*}
 \| H_{0}{-}2L\|^{2}=\| H_{0}\|^{2}-4(H_{0}{-}L\,|\,L)<\| H_{0}\|^{2}
\end{equation*}
yields a contradiction. This shows that $H_{0}\,{=}\,L$ and therefore that 
$\Bz_{\oast}^{\sigmaup}(C)\,{=}\,\{\alphaup\},$ $\Bz_{\bullet}^{\sigmaup}(C)\,{=}\,\Bz(C){\setminus}\{\alphaup\}.$
\end{proof}

\begin{rmk}  
 A $\Sigma$-diagram with $\Bz_{\oast}^{\sigmaup}(C){=}\emptyset$ is a \emph{Satake diagram}
 (see e.g. \cite{Ara62, Hel78, OV93}) and vice versa.
\par
By Thm.\ref{t4.12} Satake diagrams classify real forms.
\end{rmk}

\begin{exam} Consider on $\Rad(\textsc{A}_{5})$ the involution $\stt\,{=}\,\sq_{\e_{1}{+}\e_{6}}{\circ}\sq_{\e_{2}}{\circ}
\sq_{\e_{3}}{\circ}\sq_{\e_{4}}{\circ}\sq_{\e_{5}}$. Then
\vspace{12pt}
\begin{gather*}
 \xymatrix @M=0pt @R=2pt @!C=22pt{
\medcirc \ar@/^24pt/@{<->}[rrrr] \ar@{-}[r]
&\medbullet\ar@{-}[r]
&\circledast \ar@{-}[r]
&\medbullet\ar@{-}[r]
&\medcirc\\
\alphaup_1&\alphaup_2&\alphaup_{3}&\alphaup_{4}&\alphaup_{5}
}
\end{gather*} 
is a restricted $\Sigma$-diagram
associated to $\gs\,{\simeq}\,\su(3,3)$, on which a Cartan subalgebra with $\dim_{\R}\hsi^{-}\,{=}\,3$
has been chosen. 
\par
Likewise
 \vspace{30pt}
\begin{gather*}
 \xymatrix @M=0pt @R=2pt @!C=22pt{
\medcirc \ar@/^24pt/@{<->}[rrrr] \ar@{-}[r]
&\circledast\ar@{-}[r]
&\medbullet \ar@{-}[r]
&\medbullet\ar@{-}[r]
&\medcirc\\
\alphaup_1&\alphaup_2&\alphaup_{3}&\alphaup_{4}&\alphaup_{5}
}
\end{gather*} 
is a restricted $\Sigma$-diagram
associated to $\gs\,{\simeq}\,\su(2,4)$, on which a Cartan subalgebra with $\dim_{\R}\hsi^{-}\,{=}\,3$
has been chosen. \par
In both cases $\Rad_{\,\;\bullet}^{\stt}\,{=}\,\{{\pm}(\e_{i}{-}\e_{j})\,{\mid}\,2{\leq}i{<}j{\leq}5\}$.\par
In the first, 
\begin{equation*} 
\begin{cases}
\Rad_{\,\;\bullet}^{\sigmaup}\,{=}\,\{{\pm}(\e_{2}{-}\e_{3}),\,{\pm}(\e_{4}{-}\e_{5})\},\\
\Rad_{\,\;\oast}^{\sigmaup}\,{=}\,\{{\pm}(\e_{2}{-}\e_{4}),\,{\pm}(\e_{2}{-}\e_{5}),\,
 {\pm}(\e_{3}{-}\e_{4}),\,{\pm}(\e_{3}{-}\e_{5})\}.
 \end{cases}
 \end{equation*}
 \par 
 The set of noncompact imaginary roots contains the system $$\Btt\,{=}\,\{\e_{2}{-}\e_{4},\, \e_{3}{-}\e_{5}\}$$
 of strongly orthogonal roots and the successive Cayley transforms with respect to the roots of $\Btt$
 yields a $\sigmaup'\,{\in}\,\Inv_{\stt}^{\tauup}(\gr,\hr)$ with $\Rad_{\,\;\oast}^{\sigmaup'}\,{=}\,\Rad_{\,\;\bullet}^{\sigmaup'}\,{=}\,
 \emptyset$ and therefore $\gt_{\sigmaup'}$ is isomorphic to the quasi-split real form $\su(3,3)$ of
 $\gl_{6}(\C)$.
\par\smallskip
In the second case, 
\begin{equation*} 
\begin{cases}
\Rad_{\,\;\bullet}^{\sigmaup}\,{=}\,\{{\pm}(\e_{3}{-}\e_{4}),\,{\pm}(\e_{3}{-}\e_{5}),\,\,{\pm}(\e_{4}{-}\e_{5})\},\\
\Rad_{\,\;\oast}^{\sigmaup}\,{=}\,\{{\pm}(\e_{2}{-}\e_{3}),\,{\pm}(\e_{2}{-}\e_{4}),\,
{\pm}(\e_{2}{-}\e_{5})\},
 \end{cases}
 \end{equation*}
  and
 the Cayley transform with respect to $\betaup\,{=}\,(\e_{2}{-}\e_{5})$ yields a $\sigmaup'$ with 
 $\Rad_{\,\;\oast}^{\sigmaup'}\,{=}\,\emptyset$,  $\Rad_{\,\;\bullet}^{\sigmaup'}\,{=}\,\{{\pm}\,(\e_{3}{-}\e_{4})\}.$
 \end{exam}
\subsection{Involutions and $\Sigma$-chambers} By \eqref{eq4.1} and
Thm.\ref{t3.1}
the  Cartan subalgebras of the real forms of $\gt$ are classified, modulo conjugation,
by the equivalence classes of  orthogonal involutions $\stt$ of $\Rad$.
By Thm.\ref{t4.13}, the real forms containing a given $\hst$ 
are described by
the group
$\Hom_{\stt}(\Z[\Rad],\Z_{2}^{*})$ or, 
by Prop.\ref{p4.4}, the subgroup
$(\Z[\Rad])^{*}_{\stt}$ of $(\Z[\Rad])^{*}$. Moreover, by Lemma\,\ref{l3.3},
pairs $(\gt_{\sigmaup_{i}},\hst)$, obtained from a given $\sigmaup\,{\in}\,\Invs$
by $\etaup_{\omegaup_{1}},\etaup_{\omegaup_{2}}$, are \textit{equivalent  }
modulo conjugations 
pointwise fixing $\hst$, 
if 
\begin{equation}\label{e4.19}
 (\omegaup_{1}{-}\omegaup_{2}|\alphaup)\in{2}\Z,\;\;\forall \alphaup\,{\in}\,\Rad_{\,\;\bullet}^{\stt}. 
\end{equation}
This reduces to the simpler condition
\begin{equation}\label{e4.19'} \tag{\ref{e4.19}$'$}
 (\omegaup_{1}{-}\omegaup_{2}|\alphaup)\in{2}\Z,\;\;\forall \alphaup\,{\in}\,\Bz_{\bullet}^{\stt}(C),\;\;
 \text{for an $S\!$-chamber $C$ for $\stt$}.
\end{equation}
\par
By Thm.\ref{t3.15}  
every $\stt\,{\in}\,\Ib(\Rad)$ lifts to an $\stt^{\sharp}\,{\in}\,\Invs$
for which $\gt_{\stt^{\sharp}}$ is quasi-split.  All $\sigmaup$ in
$\Invs$ can be obtained from this $\stt^{\sharp}$ 
by the action of $\Homs$. Thus equivalence classes of $(\gs,\hst)$ can
be conveniently described
by the action of $\Homs$, modulo \eqref{e4.19'},
on the basis of simple roots of a $\Sigma$-chamber for $\stt^{\sharp}$.
\par\smallskip
\par
We can restrain our consideration to irreducible root systems.
Thus in the following subsections we will, 
for each irreducible root system $\Rad$ and $\stt\,{\in}\,\Ib(\Rad)$,
\begin{enumerate}
\item  characterize a 
$\Sigma$-diagram of its quasi-split lift $\stt^{\sharp}$,
\item describe the group $\Homs$.
\end{enumerate}
By Thm.\ref{t4.12} we obtain a classification of the real forms 
by finding  
\begin{enumerate}
\item[(3)] the $\stt\,{\in}\,\Ib(\Rad)$ admitting
a lift $\sigmaup\,{\in}\,\Invs$ with $\Rd{\oast}{\sigmaup}\,{=}\,\emptyset$.
\end{enumerate}
\par

Involutions $\stt$ 
in $(3)$ are those for which we can find 
$\etaup\,{\in}\,\Homs$  
which equals $1$ on $\Rd{\bullet}{\stt^{\sharp}}$
and $({-}1)$ on $\Rd{\oast}{\stt^{\sharp}}$. 
\par\medskip
\subsection{Real forms admitting compact Cartan subalgebras}\label{s4.5.1}
Finding $\Sigma$-di\-a\-grams representing pairs $(\gs,\hst)$ with $\gs$ quasi-split
and $\hst$ compact 
is preliminary to the discussion of general pairs, since $\Rd{\bullet}{\stt}$ is a root system 
and, by Prop.\ref{p2.32},  
we can take a $C$ for which the imaginary simple roots 
form any required basis of simple roots for $\Rd{\bullet}{\stt}$.
\par
\begin{prop} There is a
$\sigmaup\,{\in}\,\Inv^{\tauup}_{\att}(\gr,\hr)$ for which $\gs$ is quasi-split and
$\Bz(C)\,{\subset}\,\Rd{\oast}{\sigmaup}$.
\end{prop}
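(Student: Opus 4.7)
The plan is to construct $\sigmaup$ by modifying a given quasi-split lift of $\att$ so that all simple roots become noncompact, and then to verify that the modified $\sigmaup$ remains quasi-split by exhibiting a maximal strongly orthogonal system of noncompact roots.

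Since $\stt = \att$ we have $\Rd{\bullet}{\att} = \Rad$, and Corollary~\ref{c4.5} gives $\Hom_\att(\Z[\Rad],\Z_2^*) = \Hom(\Z[\Rad],\Z_2^*)$. By Proposition~\ref{p4.2}, $\Inv^\tauup_\att(\gr,\hr)$ is a torsor under $\Hom(\Z[\Rad],\Z_2^*)$, and every such $\etaup$ is freely determined by its values on the $\Z$-basis $\Bz(C)$ of $\Z[\Rad]$. Starting from any quasi-split $\sigmaup_0\in\Inv^\tauup_\att(\gr,\hr)$, which exists by Theorem~\ref{t3.15}, I would define $\etaup$ by $\etaup(\alphaup) := -f_{\sigmaup_0}(\alphaup)$ for each $\alphaup\in\Bz(C)$ and extend multiplicatively. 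Setting $\sigmaup := \psiup_\etaup\circ\sigmaup_0$ yields $\sigmaup\in\Inv^\tauup_\att(\gr,\hr)$ with $f_\sigmaup(\alphaup) = -1$ for every $\alphaup\in\Bz(C)$, so $\Bz(C)\subset\Rd{\oast}{\sigmaup}$.

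To check that $\gs$ is still quasi-split, I first describe $f_\sigmaup$ on all of $\Rad$. For $\stt = \att$ the relations defining $\Fq_\att(\Rad)$ in \eqref{eq4.2} simplify, via $N_{-\alphaup,-\betaup} = N_{\alphaup,\betaup}$, to the multiplicativity $f_\sigmaup(\alphaup + \betaup) = f_\sigmaup(\alphaup)\,f_\sigmaup(\betaup)$ whenever $\alphaup,\betaup,\alphaup+\betaup\in\Rad$. An easy induction on $\kq_C$ (using that every positive root of height $>1$ can be written as the sum of a positive root of smaller height and a simple root) then shows that $f_\sigmaup$ agrees on $\Rad$ with the unique character of $\Z[\Rad]$ taking the value $-1$ on $\Bz(C)$, so that $f_\sigmaup(\alphaup) = (-1)^{\kq_C(\alphaup)}$. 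Consequently $\Rd{\oast}{\sigmaup}$ is precisely the set of odd-height roots with respect to~$C$.

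By the characterization theorem stated just before Subsection~\ref{s4.5.1}, $\gs$ is quasi-split iff $\Rd{\oast}{\sigmaup}$ contains a maximal system of strongly orthogonal roots of $\Rd{\bullet}{\att} = \Rad$. Thus the argument reduces to exhibiting, in each irreducible root system, a maximal strongly orthogonal system whose members all have odd height relative to $C$. This is a case-by-case verification using the explicit lists of Lemma~\ref{l1.19}: in type $\textsc{A}_\ell$ the system $\{\e_{2i-1}-\e_{2i}\}$ consists of simple roots; in types $\textsc{B}_\ell,\textsc{C}_\ell,\textsc{D}_\ell$ one expands the listed roots $\e_{2i-1}\pm\e_{2i}$, $\e_\ell$, and $2\e_j$ in the standard simple-root basis and checks directly that all heights are odd; the same verification works for $\textsc{E}_6,\textsc{E}_7,\textsc{E}_8,\textsc{F}_4,\textsc{G}_2$ using the maximal systems given in Lemma~\ref{l1.19} (for instance, for $\textsc{F}_4$ the roots $\e_1\pm\e_2, \e_3\pm\e_4$ expand as $2\alphaup_1{+}3\alphaup_2{+}4\alphaup_3{+}2\alphaup_4$, $\alphaup_2{+}2\alphaup_3{+}2\alphaup_4$, $\alphaup_2{+}2\alphaup_3$, $\alphaup_2$, all of odd height). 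This final step is the principal obstacle; it is routine but mildly tedious in the exceptional cases, where each listed root must be expanded in the chosen simple basis to compute its height.
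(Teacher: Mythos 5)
Your proposal is correct and follows essentially the same route as the paper: the existence of a $\sigmaup$ with $\Bz(C)\,{\subset}\,\Rd{\oast}{\sigmaup}$ is immediate because condition \eqref{eq4.9} is vacuous for $\stt\,{=}\,\att$, and quasi-splitness is then reduced, case by case over the irreducible types, to exhibiting a maximal strongly orthogonal system lying in $\Rd{\oast}{\sigmaup}$ by expanding its members in the chosen simple basis. The only organizational differences are that the paper concludes by running the successive Cayley transforms of Prop.~\ref{p4.10} explicitly instead of citing the (unproved) criterion stated after Thm.~\ref{t4.13}, and that it leaves implicit the multiplicativity of $f_{\sigmaup}$ underlying your clean reformulation $f_{\sigmaup}(\alphaup)\,{=}\,({-}1)^{\kq_{\;C}(\alphaup)}$, which turns the case-by-case check into a parity-of-height computation.
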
 
\begin{proof}  By Proposition\,\ref{p4.4}, having fixed any Weyl chamber $C$,  we can find
a $\sigmaup\,{\in}\,\Hom_{\att}(\Z[\Rad],\Z_{2}^{*})$ such that $\Bz_{\oast}^{\sigmaup}(C)\,{=}\,
\Bz(C)$. Indeed condition \eqref{eq4.9} is trivially satisfied because there are no complex roots for $\att$.
\par 
Thus we need only to prove that, if 
$\Bz(C)\,{\subset}\,\Rd{\oast}{\sigmaup}$ for some $C$, then $\gs$ is quasi-split. \par
In the proof, we can and will restrain
to the case where $\Rad$ is irreducible. Assuming, for each root system, 
 that a given canonical basis
consists of roots of $\Rd{\oast}{\sigmaup}$, we 
show that,
 by a sequence of Cayley transforms, we pass from $\gs$ to its quasi-split equivalent,
 with maximally vector Cartan subalgebra. \smallskip\par
 \noindent 
 $\textsc{A}_{\ell}$. The canonical basis is 
\begin{equation}
 \label{bsA} \Bz(\textsc{A}_{\ell})\,{=}\,\{\alphaup_{i}{=}\e_{i}{-}\e_{i+1}\,{\mid} 
\begin{smallmatrix}
 1{\leq}i{\leq}\ell
\end{smallmatrix}\;\}.
\end{equation}
 The set $\Mtt\,{=}\,\{\betaup_{i}\,{=}\,\e_{2i-1}{-}\e_{2i}\,{\mid}\,1{\leq}i{\leq}r\}$, with $r\,{=}\,[(\ell{+}1)/2]$, 
 is a maximal system of strongly orthogonal roots of $\Rad(\textsc{A}_{\ell})$ and the successive
 Cayley transforms with respect to $\betaup_{1},\hdots,\betaup_{[(\ell{+}1)/2]}$ reduces to
 a $\sigmaup'\,{\in}\,\Inv^{\tauup}(\gr,\hr)$ with $\Rd{\bullet}{\rhoup(\sigmaup')}\,{=}\,\emptyset$.\par

 \smallskip
\par
 \noindent 
 $\textsc{B}_{\ell}$. The canonical basis is 
\begin{equation}
 \label{bsB} \Bz(\textsc{B}_{\ell})\,{=}\,\{\alphaup_{i}{=}\e_{i}{-}\e_{i+1}\,{\mid} 
\begin{smallmatrix}
 1{\leq}i{\leq}\ell{-}1
\end{smallmatrix}\;\}\,{\cup}\,\{\alphaup_{\ell}{=}\e_{\ell}\}.
\end{equation}
 Let $\Mtt\,{=}\,\{\betaup_{2i-1}\,{=}\,\e_{2i-1}{-}\e_{2i},\,\betaup_{2i}\,{=}\,\e_{2i-1}{+}\e_{2i}\,{\mid}
 \, 1{\leq}i{\leq}[\ell/2]\}$. Since \begin{equation*}
 \betaup_{2i}\,{=}\,\alphaup_{2i-1}{+}2\left({\sum}_{j=2i}^{\ell}\alphaup_{j}\right),\;\;\forall j=1,\hdots,[\ell/2],\end{equation*}
 we obtain that $\Mtt\,{\subset}\,\Rd{\oast}{\sigmaup}$. Let $\sigmaup'\,{\in}\,\Inv^{\tauup}(\gr,\hr)$
 be obtained by
the successive Cayley transforms with
 respect to $\betaup_{1},\hdots,\betaup_{\ell}$.  It $\ell$ is even, then 
$\Rd{\bullet}{\rhoup(\sigmaup')}=\emptyset$ and $\gt_{\sigmaup'}$ is the split form.
 If $\ell$ is odd, then $\e_{1}\,{=}\,{\sum}_{i=1}^{\ell}\alphaup_{i}$ is an element of $\Rd{\oast}{\sigmaup}$
 and hence $\Rd{\oast}{\sigmaup'}\,{=}\,\{{\pm}\e_{\ell}\}$ and by the Cayley transform with respect to $\e_{\ell}$
we get from $\sigmaup'$ a $\sigmaup''\,{\in}\,\Inv^{\tauup}(\gr,\hr)$ with 
 $\Rd{\bullet}{\rhoup(\sigmaup'')}=\emptyset$ and $\gt_{\sigmaup''}$ is the split form.
 \smallskip
 \par
 \noindent
 $\textsc{C}_{\ell}$. The canonical basis is 
\begin{equation}
 \label{bsC} \Bz(\textsc{C}_{\ell})\,{=}\,\{\alphaup_{i}{=}\e_{i}{-}\e_{i+1}\,{\mid} 
\begin{smallmatrix}
 1{\leq}i{\leq}\ell{-}1
\end{smallmatrix}\;\}\,{\cup}\,\{\alphaup_{\ell}{=}2\e_{\ell}\}.
\end{equation}
Let $\Mtt\,{=}\,\{\betaup_{i}{=}\,2\e_{i}\,{\mid}\, 1{\leq}i{\leq}\ell\}$. Since 
\begin{equation*}\betaup_{i}\,{=}\,
 2\e_{i}\,{=}\,2\left({\sum}_{j=i}^{\ell-1}\alphaup_{i}\right)\,{+}\,\alphaup_{\ell},\;\;\forall\,1{\leq}i{\leq}\ell{-}1,
\end{equation*}
we have $\Mtt\,{\subset}\,\Rd{\oast}{\sigmaup}$ and the successive Cayley transforms with respect
to the $\betaup_{i}$'s yield a split $\gt_{\sigmaup'}$.
\smallskip
 \par
 \noindent
 $\textsc{D}_{\ell}$. The canonical basis is 
\begin{equation}
 \label{bsD} \Bz(\textsc{C}_{\ell})\,{=}\,\{\alphaup_{i}{=}\e_{i}{-}\e_{i+1}\,{\mid} 
\begin{smallmatrix}
 1{\leq}i{\leq}\ell{-}1
\end{smallmatrix}\;\}\,{\cup}\,\{\alphaup_{\ell}{=}\e_{\ell-1}{+}\e_{\ell}\}.
\end{equation}
Let $\Mtt\,{=}\,\{\betaup_{2i-1}{=}\e_{2i-1}{-}\e_{2i},\, \betaup_{2i}{=}\e_{2i-1}{+}\e_{2i}\,{\mid}\,
1{\leq}i{\leq}\ell/2\}$. Since 
\begin{equation*}
 \betaup_{2i}=\alphaup_{i}{+}2\left({\sum}_{j=i+1}^{\ell-2}\alphaup_{i}\right)+\alphaup_{\ell-1}{+}\alphaup_{\ell},
 \;\; \text{if $i{\geq}1$, $2i{\leq}\ell{-1}$},
\end{equation*}
we have $\Mtt\,{\subset}\,\Rd{\oast}{\sigmaup}$ and hence the successive Cayley transforms with
respect to the $\betaup_{i}$'s yield a $\sigmaup'$ with $\Rd{\bullet}{\rhoup(\sigmaup')}{=}\,\emptyset$,
which is split for $\ell$ even and quasi-split for odd $\ell$.
\smallskip
 \par
 \noindent
 $\textsc{E}_{6}$. We take the canonical basis of $\Rad(\textsc{E}_{6})$ 
\begin{equation}\label{bsE6}
 \Bz(\textsc{E}_{6})\,{=}\,\{\alphaup_{1}{=}\zetaup_{\emptyset},\, \alphaup_{2}{=}\e_{1}{+}\e_{2}\}\,{\cup}\,
 \{\alphaup_{i}=\e_{i-1}{-}\e_{i-2}\,{\mid} 
\begin{smallmatrix}
 3{\leq}i{\leq}6
\end{smallmatrix}\!\}.
\end{equation}
Then $\Mtt\,{=}\,\{\betaup_{2i-1}{=}\e_{2i}{-}\e_{2i-1},\,\betaup_{2i}{=}\e_{2i-1}{+}\e_{2i}\,{\mid}\,i{=}1,2\}$ 
is a maximal system of orthogonal roots 
\begin{equation*}
 \betaup_{4}\,{=}\,\e_{3}{+}\e_{4}\,{=}\, \alphaup_{2}{+}\alphaup_{3}{+}2\alphaup_{4}{+}\alphaup_{5}
\end{equation*}
shows that $\Mtt\,{\subset}\,\Rd{\oast}{\sigmaup}$. Then the successive applications of the 
Cayley transforms with respect to the $\betaup_{i}$'s yield a $\sigmaup'\,{\in}\,\Inv^{\tauup}(\gr,\hr)$,
which is quasi-split because
$\Rd{\bullet}{\rhoup(\sigmaup')}\,{=}\,\emptyset$.
\smallskip
 \par
 \noindent
 $\textsc{E}_{7}$. We take the canonical basis of $\Rad(\textsc{E}_{7})$ 
\begin{equation}\label{bsE7}
 \Bz(\textsc{E}_{6})\,{=}\,\{\alphaup_{1}{=}\zetaup_{\emptyset},\, \alphaup_{2}{=}\e_{1}{+}\e_{2}\}\,{\cup}\,
 \{\alphaup_{i}=\e_{i-1}{-}\e_{i-2}\,{\mid} 
\begin{smallmatrix}
 3{\leq}i{\leq}7
\end{smallmatrix}\!\}.
\end{equation}
Then $\Mtt\,{=}\,\{\betaup_{2i-1}{=}\e_{2i}{-}\e_{2i-1},\,\betaup_{2i}{=}\e_{2i-1}{+}\e_{2i}\,{\mid}\,i=1,2,3\}
\,{\cup}\,\{\betaup_{7}{=}{-}\e_{7}{-}\e_{8}\}$ 
is a maximal system of orthogonal roots of $\Rad(\textsc{E}_{7})$ and 
\begin{equation*}\begin{cases}
 \betaup_{4}\,{=}\,\e_{3}{+}\e_{4}\,{=}\, \alphaup_{2}{+}\alphaup_{3}{+}2\alphaup_{4}{+}\alphaup_{5},\\
 \betaup_{6}\,{=}\,\e_{5}{+}\e_{6}\,{=}\, \alphaup_{2}{+}\alphaup_{3}{+}2(\alphaup_{4}{+}\alphaup_{5}{+}\alphaup_{6})
 {+}\alphaup_{7},\\
 \betaup_{7}\,{=}\,{-}\e_{7}{-}\e_{8}=2\zetaup_{\emptyset}{+}\betaup_{2}{+}\betaup_{4}{+}\betaup_{6}
 \end{cases}
\end{equation*}
shows that $\Mtt\,{\subset}\,\Rd{\oast}{\sigmaup}$. Then the successive  
Cayley transforms with respect to the $\betaup_{i}$'s yield a $\sigmaup'\,{\in}\,\Inv^{\tauup}(\gr,\hr)$,
which is split because
$\Rd{\bullet}{\rhoup(\sigmaup')}\,{=}\,\emptyset$.
\smallskip
 \par
 \noindent
 $\textsc{E}_{8}$. We take the canonical basis of $\Rad(\textsc{E}_{8})$ 
\begin{equation}\label{bsE8}
 \Bz(\textsc{E}_{6})\,{=}\,\{\alphaup_{1}{=}\zetaup_{\emptyset},\, \alphaup_{2}{=}\e_{1}{+}\e_{2}\}\,{\cup}\,
 \{\alphaup_{i}=\e_{i-1}{-}\e_{i-2}\,{\mid} 
\begin{smallmatrix}
 3{\leq}i{\leq}8
\end{smallmatrix}\!\}.
\end{equation}
Then $\Mtt\,{=}\,\{\betaup_{2i-1}{=}\e_{2i}{-}\e_{2i-1},\,\betaup_{2i}{=}\e_{2i-1}{+}\e_{2i}\,{\mid}\,i=1,2,3.4\}$ 
is a maximal system of orthogonal roots of $\Rad(\textsc{E}_{8})$ and 
\begin{equation*}\begin{cases}
 \betaup_{4}\,{=}\,\e_{3}{+}\e_{4}\,{=}\, \alphaup_{2}{+}\alphaup_{3}{+}2\alphaup_{4}{+}\alphaup_{5},\\
 \betaup_{6}\,{=}\,\e_{5}{+}\e_{6}\,{=}\, \alphaup_{2}{+}\alphaup_{3}{+}2(\alphaup_{4}{+}\alphaup_{5}{+}\alphaup_{6})
 {+}\alphaup_{7},\\
 \betaup_{8}\,{=}\e_{6}{+}\e_{7}=\alphaup_{2}{+}\alphaup_{3}{+}2(\alphaup_{4}{+}\alphaup_{5}{+}\alphaup_{6}{+}
 \alphaup_{7}){+}\alphaup_{8}
 \end{cases}
\end{equation*}
shows that $\Mtt\,{\subset}\,\Rd{\oast}{\sigmaup}$. Then the successive  
Cayley transforms with respect to the $\betaup_{i}$'s yield a $\sigmaup'\,{\in}\,\Inv^{\tauup}(\gr,\hr)$,
which is split because
$\Rd{\bullet}{\rhoup(\sigmaup')}\,{=}\,\emptyset$.
\smallskip
 \par
 \noindent
 $\textsc{F}_{4}$. We take the canonical basis of $\Rad(\textsc{F}_{4})$ 
\begin{equation}\label{bsF4}
 \Bz(\textsc{F}_{4})\,{=}\,\{\alphaup_{1}{=}\e_{1}{-}\e_{2},\, \alphaup_{2}{=}\e_{2}{-}\e_{3},\,
 \alphaup_{3}{=}\e_{3},\,\alphaup_{4}{=}\tfrac{1}{2}(\e_{4}{-}\e_{1}{-}\e_{2}{-}\e_{3})\}.
\end{equation}
Then $\Mtt\,{=}\,\{\betaup_{2i-1}{=}\e_{2i-1}{-}\e_{2i},\,\betaup_{2i}{=}\e_{2i-1}{+}\e_{2i}\,{\mid}\,i=1,2\}$ 
is a maximal system of strongly orthogonal roots of $\textsc{F}_{4}$. Since 
\begin{equation*} 
\begin{cases}
 \betaup_{2}\,{=}\,\e_{1}{+}\e_{2}=\alphaup_{1}+2(\alphaup_{2}{+}\alphaup_{3}),\\
 \betaup_{3}\,{=}\,\e_{3}{-}\e_{4}=-\betaup_{2}-2\alphaup_{4},\\
 \betaup_{4}\,{=}\, \betaup_{2}{+}2\alphaup_{3}{+}2\alphaup_{4},
\end{cases}
\end{equation*}
shows that $\Mtt\,{\subset}\,\Rd{\oast}{\sigmaup}$, the successive 
Cayley transforms with respect to the $\betaup_{i}$'s yield a $\sigmaup'\,{\in}\,\Inv^{\tauup}(\gr,\hr)$,
which is split because
$\Rd{\bullet}{\rhoup(\sigmaup')}\,{=}\,\emptyset$.
\smallskip
 \par
 \noindent
 $\textsc{G}_{2}$. We take the canonical basis of $\Rad(\textsc{G}_{2})$ 
\begin{equation}\label{bsG2}
 \Bz(\textsc{G}_{2})\,{=}\,\{\alphaup_{1}{=}\e_{1}{-}\e_{2},\, \alphaup_{2}{=}\e_{2}{+}\e_{3}{-}2\e_{1}\}.
\end{equation}
Then $\Mtt\,{=}\,\{\betaup_{1}{=}\e_{2}{+}\e_{3}{-}2\e_{1},\,\betaup_{2}{=}\e_{3}{-}\e_{2}\}$ 
 is a maximal system of strongly orthogonal roots of $\textsc{G}_{2}$. Since 
\begin{equation*} 
\betaup_{2}=\betaup_{1}+2\alphaup_{1},
\end{equation*}
$\Mtt\,{\subset}\,\Rd{\oast}{\sigmaup}$, and the successive 
Cayley transforms with respect to  $\betaup_{1}$
and $\betaup_{2}$ yield a $\sigmaup'\,{\in}\,\Inv^{\tauup}(\gr,\hr)$,
which is split because
$\Rd{\bullet}{\rhoup(\sigmaup')}\,{=}\,\emptyset$.
\end{proof}
We have also a vice versa: 
\begin{thm}
 If $\sigmaup\,{\in}\,\Inv_{\att}^{\tauup}(\gr,\hr)$ and $\gs$ is quasi-split, then we can find
 a Weyl chamber $C$ with $\Bz_{\oast}^{\sigmaup}(C)\,{=}\,\Bz(C)$.
\end{thm}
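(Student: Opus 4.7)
The plan is to reduce, by Weyl-group conjugation, to one of the explicit quasi-split models produced in the proof of the preceding proposition. That proof constructs, for each irreducible type of root system, a $\sigmaup_0 \in \Inv_{\att}^{\tauup}(\gr,\hr)$ and the canonical Weyl chamber $C_0$ (with basis $\Bz(C_0)$ as in one of \eqref{bsA}--\eqref{bsG2}) such that $\Bz(C_0) \subset \Rd{\oast}{\sigmaup_0}$ and $\gt_{\sigmaup_0}$ is quasi-split. Working simple-factor by simple-factor, I may assume $\Rad$ is irreducible. Since for each irreducible $\Rad$ there is, up to isomorphism, a unique quasi-split real form of $\gt$ admitting a compact Cartan subalgebra, one has $\gt_{\sigmaup_0} \simeq \gs$ as abstract real Lie algebras.

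The next step is to promote this abstract isomorphism to a conjugation of pairs in $\Aut_\C(\gt,\hg)$. Because $\stt = \att$ forces $\hst = \hg_{\sigmaup_0} = i\hr$ via \eqref{eq4.1}, the two pairs $(\gs,i\hr)$ and $(\gt_{\sigmaup_0}, i\hr)$ share exactly the same Cartan subalgebra. The classical fact that any two compact Cartan subalgebras of a semisimple real Lie algebra are conjugate under its inner automorphism group permits me to choose an abstract isomorphism $\gs \to \gt_{\sigmaup_0}$ which sends $i\hr$ onto $i\hr$; complexifying, I obtain a $\phi \in \Aut_\C(\gt,\hg)$ that conjugates $(\gs, i\hr)$ to $(\gt_{\sigmaup_0}, i\hr)$. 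Applying Theorem \ref{t4.14} to this conjugacy of pairs then produces $w \in \Wf(\Rad)$ with $w(\Rd{\bullet}{\sigmaup_0}) = \Rd{\bullet}{\sigmaup}$ and $w(\Rd{\oast}{\sigmaup_0}) = \Rd{\oast}{\sigmaup}$. Setting $C := w(C_0)$, one computes
\[
\Bz(C) \;=\; w(\Bz(C_0)) \;\subset\; w(\Rd{\oast}{\sigmaup_0}) \;=\; \Rd{\oast}{\sigmaup},
\]
which is exactly the required $\Bz_{\oast}^{\sigmaup}(C) = \Bz(C)$.

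The main obstacle is the classical input on the conjugacy of compact Cartan subalgebras of a real semisimple Lie algebra: this is not proved in the excerpt but is standard (Sugiura--Kostant), and is what allows the abstract isomorphism of real forms to be adjusted to respect the fixed Cartan $i\hr$. A self-contained alternative would replace this step with a case-by-case verification, for each irreducible type, that the Weyl-orbit of the partition $(\Rd{\bullet}{\sigmaup}, \Rd{\oast}{\sigmaup})$ is uniquely determined by the isomorphism class of the quasi-split form containing it, so that $\sigmaup$ is forced to be Weyl-conjugate to the standard $\sigmaup_0$ of the preceding proposition and one can then transport $C_0$ by that Weyl element directly.
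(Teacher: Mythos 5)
Your route is genuinely different from the paper's. The paper argues combinatorially: from the theorem at the end of the isomorphism subsection it extracts a maximal strongly orthogonal system $\Mtt\,{\subset}\,\Rd{\oast}{\sigmaup}$ and then, type by type, manufactures a basis of simple roots lying entirely in $\Rd{\oast}{\sigmaup}$, using the multiplicativity rule for $f_{\sigmaup}$ (with $\stt\,{=}\,\att$ one gets $f_{\sigmaup}(\alphaup{+}\betaup)\,{=}\,f_{\sigmaup}(\alphaup)f_{\sigmaup}(\betaup)$, so of two roots summing to an element of $\Mtt$ exactly one is noncompact). You instead reduce to the standard model $\sigmaup_{0}$ of the preceding proposition by a conjugation argument. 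Your steps after that are sound: conjugacy of compact Cartan subalgebras is genuinely classical and citable, the complexification of a real isomorphism fixing $i\hr$ does land in $\Aut_{\C}(\gt,\hg)$, and Theorem~\ref{t4.14} (even just its weaker $\Af(\Rad)$-version, which suffices because every element of $\Af(\Rad)$ carries bases of simple roots to bases of simple roots) transports $C_{0}$ correctly.

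The weak point is the step you dispatch in half a sentence: ``for each irreducible $\Rad$ there is, up to isomorphism, a unique quasi-split real form of $\gt$ admitting a compact Cartan subalgebra.'' This assertion is true, but it carries the entire weight of your argument, it is not proved in the paper, and it is not a single off-the-shelf theorem: verifying it amounts to inspecting Cartan's list (that $\slt_{\ell+1}(\R)$ has no compact Cartan subalgebra for $\ell\,{\geq}\,2$ while $\su(p,q)$ does; that exactly one of $\so(\ell,\ell)$ and $\so(\ell{-}1,\ell{+}1)$ does, according to the parity of $\ell$; that $\mathrm{E\,I}$ does not while $\mathrm{E\,II}$ does), i.e.\ a case-by-case check of roughly the same weight as the paper's own proof, and one that sits uneasily inside a paper whose purpose is to rebuild that classification. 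It can be derived from the paper's own machinery --- quasi-split forms are governed by $\epi\,{\in}\,\Ib^{*}(\Rad)$ modulo conjugacy, and the existence of a compact Cartan subalgebra forces $\epi$ to be conjugate to the special involution occurring in a decomposition $\att\,{=}\,\epi_{0}\,{\circ}\,\sq_{\,\Btt_{0}}$ as in Prop.~\ref{p2.7} --- but that argument needs to be written out. Note also that you identify the wrong step as the ``main obstacle'': the conjugacy of compact Cartan subalgebras is the unproblematic classical input; the uniqueness claim is the one that actually needs work.
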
 
\begin{proof}
 When $\sigmaup\,{\in}\,\Inv_{\att}^{\tauup}(\gr,\hr)$ and $\gs$ is quasi-split, 
 by the previous theorem we know that $\Rd{\oast}{\sigmaup}$ contains a maximal system
 of strongly orthogonal roots of $\Rad$. In the proof we can and will reduce to the case where
 $\Rad$ is irreducible. \par\smallskip
 $\textsc{A}_{\ell}$. Let $r\,{=}\,[(\ell{+}1)/2]$ and $\Mtt\,{=}\,\{\betaup_{i}\,{=}\,\e_{2i-1}{-}\e_{2i}\,{\mid}\,
 1{\leq}i{\leq}r\}$ be a maximal orthogonal system contained in $\Rd{\oast}{\sigmaup}$. 
 We consider the family $\mathpzc{A}$ of Weyl chambers $C$ such that, for
$\Bz(C)\,{=}\,\{\alphaup_{1},\hdots,\alphaup_{\ell}\}$,  the set $\Mtt$ contains either $\alphaup_{2i-1}$
or $({-}\,\alphaup_{2i-1})$, 
 for $1{\leq}i{\leq}r$.
  Let $\muup(C)\,{=}\,\#\Bz_{\oast}^{\sigmaup}(C)$.
 We claim that a $C$ with $\muup(C)$ maximal has $\Bz_{\oast}^{\sigmaup}(C)\,{=}\,\Bz(C)$.
 Indeed, if this is not the case, there is a smallest $q$ such that $2q{\leq}\ell$ and $\alphaup_{2q}{\in}
 \Rd{\bullet}{\sigmaup}$. But then 
\begin{equation*} \alphaup'_{i}= 
\begin{cases}
 {-}\alphaup_{2q-i}, & \text{if $1{\leq}i{\leq}2q{-}1$},\\
 {\sum}_{j=1}^{2q}\alphaup_{j}, & \text{if $i=2q$},\\
 \alphaup_{i}, & \text{if $2q{+}1{\leq}i{\leq}\ell$},
\end{cases}
\end{equation*}
is the set of simple roots of a Weyl chamber $C'$ of $\mathpzc{A}$ with $\muup(C')\,{=}\,\muup(C){+}1.$ 
The contradiction proves the claim.
\par\smallskip
\noindent
$\textsc{B}_{\ell}.$ We distinguish the cases where $\ell$ is even or odd.
If $\ell\,{=}\,2m$ is even, then we can assume that $\Rd{\oast}{\sigmaup}$ contains the
maximal set $\Mtt\,{=}\,\{\e_{2i-1}{\pm}\e_{2i}\,{\mid}\,1{\leq}i{\leq}m\}$ of strongly orthogonal roots.
Then, for each $1{\leq}i{\leq}m$,
either $\e_{2i-1}{\in}\,\Rd{\oast}{\sigmaup},\, \e_{2i}{\in}\Rd{\bullet}{\sigmaup}$,
or  $\e_{2i-1}{\in}\,\Rd{\bullet}{\sigmaup},\, \e_{2i}{\in}\Rd{\oast}{\sigmaup}$.
By choosing a permutation $(i_{1},\hdots,i_{2m})$ such that $\e_{i_{2h-1}}\,{\in}\,\Rd{\bullet}{\sigmaup},\;
\e_{i_{2h}}\,{\in}\,\Rd{\oast}{\sigmaup}$, we obtain that
\begin{equation*}
 \{\e_{i_{h}}{-}\e_{i_{h+1}}\mid 
\begin{smallmatrix}
 1{\leq}i{\leq}2m{-}1
\end{smallmatrix}\,\}\,{\cup}\,\{\e_{2m}\}
\end{equation*}
is the basis of simple roots for a Weyl chamber $C'$ with $\Bz(C')\,{\subset}\,\Rd{\oast}{\sigmaup}.$ \par
When $\ell{=}2m{+}1$ is odd, we can assume that $\Mtt{=}\{\e_{2i-1}{\pm}\e_{2i}\,{\mid}\,1{\leq}i{\leq}m\}
{\cup}\{\e_{2m+1}\}$ is contained in $\Rd{\oast}{\sigmaup}$ and, by repeating the argument above, find
a permutation of indices $(i_{1},\hdots,i_{2m},\ell)$ 
such that $\e_{i_{2h-1}}\,{\in}\,\Rd{\oast}{\sigmaup}$ and $\e_{i_{2h}}\,{\in}\,\Rd{\bullet}{\sigmaup}$
for $1{\leq}h{\leq}m$. Then 
\begin{equation*}
 \{\e_{i_{h}}{-}\e_{i_{h+1}}\mid 
\begin{smallmatrix}
 1{\leq}i{\leq}2m
\end{smallmatrix}\,\}\,{\cup}\,\{\e_{2m+1}\}
\end{equation*}
is the basis of simple roots for a Weyl chamber $C'$ with $\Bz(C')\,{\subset}\,\Rd{\oast}{\sigmaup}.$ \par
\smallskip
\noindent
$\textsc{C}_{\ell}$. We can assume that $\Mtt\,{=}\,\{2\e_{i}\,{\mid}\,1{\leq}i{\leq}\ell\}\,{\subset}\,\Rd{\oast}{\sigmaup}$.\par
From $(\e_{i}{-}\e_{j}){+}(\e_{i}{+}\e_{j})\,{=}\,2\e_{i}\,{\in}\,\Rd{\oast}{\sigmaup}$ it follows that
one of the roots $(\e_{i}{-}\e_{j}),$ $(\e_{i}{+}\e_{j})$ belongs to $\Rd{\oast}{\sigmaup}$, the other to 
$\Rd{\bullet}{\sigmaup}$. Hence, by setting $\epi_{1}\,{=}\,\e_{1}$, we can recursively choose
$\epi_{i}\,{\in}\,\{{\pm}\e_{i} \}$, for $2{\leq}i{\leq}\ell$, in such a way that $\epi_{i}{-}\epi_{i+1}\,{\in}\,\Rd{\oast}{\sigmaup}$
for all $1{\leq}i{\leq}\ell{-}1$. Then 
\begin{equation*}
 \{\epi_{i}{-}\epi_{i+1}\,{\mid}\,
\begin{smallmatrix}
 1{\leq}i{\leq}\ell{-}1\end{smallmatrix}\!\}\cup\{2\epi_{\ell}\}
\end{equation*}
are the simple roots for a Weyl chamber $C'$ with $\Bz(C')\,{\subset}\,\Rd{\oast}{\sigmaup}$.\par\smallskip
\noindent
$\textsc{D}_{\ell}$. For $1{\leq}i{<}j{\leq}\ell$, the roots $\e_{i}{\pm}\e_{j}$ cannot 
both belong to $\Rd{\bullet}{\sigmaup}$
because $\Rd{\oast}{\sigmaup}$ contains a maximal system of orthogonal roots. 
We can assume that $\e_{\ell-1}{\pm}\e_{\ell}$ both belong to $\Rd{\oast}{\sigmaup}$. 
We set $\epi_{j}{=}\e_{j}$ for $j{=}\ell{-}1,\ell$. By recurrence we can take $\epi_{\ell-j}$, for $2{\leq}j{\leq}{\ell{-}1}$,
in such a way that $\epi_{i}{-}\epi_{i+1}\,{\in}\,\Rd{\oast}{\sigmaup}$
for all $i{=}1,\hdots,\ell{-}2$. Then 
\begin{equation*}
 \{\epi_{i}{-}\epi_{i+1}\,{\mid} 
\begin{smallmatrix}
 1{\leq}i{\leq}\ell{-}1
\end{smallmatrix}\!\}\cup\{\epi_{\ell-1}{+}\epi_{\ell}\}
\end{equation*}
are the simple roots for a Weyl chamber $C'$ with $\Bz(C')\,{\subset}\,\Rd{\oast}{\sigmaup}$. 
\par\smallskip
\noindent
$\textsc{E}_{\ell}$, $\ell=6,7,8$. We can take a maximal system of orthogonal roots in $\Rad(\textsc{E}_{\ell})$
which is contained in the root subsystem $\Rad(\textsc{D}_{\ell-1})$. By the previous part of the proof,
we can assume then that, for the canonical basis $\Bz(C)$, defined by \eqref{bsE6}, \eqref{bsE7},
\eqref{bsE8}, we have $\{\alphaup_{i}\,{\mid}\,2{\leq}i{\leq}\ell\}\,{\subset}\,\Rd{\oast}{\sigmaup}$. 
Then  
\begin{equation*}\begin{aligned}
 \Bz(C')=\{\alphaup'_{1}{=}\zetaup_{1,2},\,\alphaup'_{2}{=}{-}(\e_{1}{+}\e_{2}),\,\alphaup'_{3}{=}\e_{1}{-}\e_{2},\,
 \alphaup'_{4}{=}\e_{2}{+}\e_{3}\}\qquad \\
 \cup\,\{\alphaup'_{i}{=}\alphaup_{i}{=}\e_{i-1}{-}\e_{i-2}\,{\mid}
\begin{smallmatrix}
 4{\leq}i{\leq}\ell
\end{smallmatrix}\!\}
\end{aligned}
\end{equation*}
are the simple roots for a Weyl chamber $C'$.
We note that $\alphaup'_{2}{=}{-}\alphaup_{2}$, $\alphaup'_{3}{=}{-}\alphaup_{3}$,
$\alphaup'_{4}{=}\alphaup_{2}{+}\alphaup_{3}{+}\alphaup_{4}$ all belong to $\Rd{\oast}{\sigmaup}$,
while either $\zetaup_{\emptyset}$ or $\zetaup_{1,2}{=}\zetaup_{\emptyset}{+}\alphaup_{2}$
belongs to~$\Rd{\oast}{\sigmaup}$. Hence
either $\Bz(C)$ or $\Bz(C')$ is contained in $\Rd{\oast}{\sigmaup}.$ \par\smallskip
\noindent
$\textsc{F}_{4}$. We can assume that $\Mtt\,{=}\,\{\e_{1}{\pm}\e_{2},\,\e_{3}{\pm}\e_{4}\}\,{\subset}\,
\Rd{\oast}{\sigmaup}$. Then, for each $i{=}1,2$, on of the short roots $\e_{2i-1},\,\e_{2i}$ belongs
to $\Rd{\oast}{\sigmaup}$, the other to $\Rd{\bullet}{\sigmaup}$. By changing the names of indices,
we can assume that $\e_{1},\e_{3}\,{\in}\,\Rd{\oast}{\sigmaup},$ $\e_{2},\e_{4}\,{\in}\,\Rd{\bullet}{\sigmaup}.$
Let $\Bz(C)$ be the canonical basis \eqref{bsF4}. Then either $\alphaup_{4}{=}\tfrac{1}{2}(\e_{4}{-}\e_{1}{-}\e_{2}{-}\e_{3})$
or $\alphaup'_{4}{=}\alphaup_{4}{+}(\e_{1}{+}\e_{2})$ belongs to $\Rd{\oast}{\sigmaup}$.
Hence either the canonical basis $\Bz(C)$ or 
\begin{equation*}
 \Bz(C')=\{\alphaup_{1}'=\e_{2}{-}\e_{1},\,\alphaup_{2}'{=}{-}\e_{2}{-}\e_{3},\,\alphaup'_{3}{=}\e_{3},\,
 \alphaup'_{4}{=}\tfrac{1}{2}(\e_{1}{+}\e_{2}{+}\e_{4}{-}\e_{3})\}
\end{equation*}
is contained in $\Rd{\oast}{\sigmaup}$.\par\smallskip
\noindent
$\textsc{G}_{2}$. We can assume that $\e_{1}{-}\e_{2}\,{\in}\,
\Rd{\oast}{\sigmaup}$. \par
The canonical basis is $\Bz(C)\,{=}\,\{\alphaup_{1}{=}\e_{1}{-}\e_{2},\,\alphaup_{2}
{=}\e_{2}{+}\e_{3}{-}2\e_{1}\}$.  
Either $\alphaup_{2}$ or $\alphaup'_{2}{=}2\e_{1}{+}\e_{3}{-}2\e_{2}\,{=}\, \alphaup_{2}{+}3\alphaup_{1}$
belongs to $\Rd{\oast}{\sigmaup}$ and therefore either the canonical basis $\Bz(C)$ or
$\Bz(C')\,{=}\,\{\alphaup_{1}'{=}{-}\alphaup_{1},\,\alphaup_{2}'\}$ is contained in $\Rd{\oast}{\sigmaup}.$
\end{proof}

\subsection{Irreducibe real forms with an assigned Cartan subalgebra}
\subsubsection*{Type $\textsc{A}_{\ell}$} We take the root system $\Rad(\textsc{A}_{\ell})$ of \eqref{rA}
and choose representatives of $\stt\,{\in}\,\Ib(\Rad(\textsc{A}_{\ell}))$ for which the canonical basis
\begin{equation}\label{bA}
 \Bz(\textsc{A}_{\ell})=\{\e_{i}{-}\e_{i+1}\,{\mid}\, 
\begin{smallmatrix}
 1{\leq}i{\leq}\ell
\end{smallmatrix}\!\}
\end{equation}
consists of the simple positive roots of an $S$-chamber $C$ for $\stt$.

\begin{lem}[Type $\textsc{A}_{\ell}$] \label{l2.12} 
 Let $\Rad$ be an irreducible root system of type $\textsc{A}_{\ell}$ and $\stt\,{\in}\,\Ib(\Rad).$ Then:
\begin{itemize}
 \item[1.] If $\stt\,{\in}\,\Ib_{\Wf}(\Rad)$ and either $\ell$ is even or $\stt$ has not maximal lenght,
 then $\etaup(\betaup)\,{=}\,1$ for all $\etaup\,{\in}\,\Hom_{\stt}(\Z[\Rad],\Z_{2}^{*})$ and all 
 $\betaup\,{\in}\,\Rad_{\,\;\bullet}^{\stt}$. 
\item[2.] If $\ell$ is odd, $\stt$ has maximal lenght and $\etaup\,{\in}\Homs$, then 
 either $\etaup(\betaup)\,{=}\,1$ for all $\betaup\,{\in}\,\Rad_{\,\;\bullet}^{\stt}$,
 or 
 $\etaup(\betaup)\,{=}\,{-}1$ for all $\betaup\,{\in}\,\Rad_{\,\;\bullet}^{\stt}$ 
 and both cases occur.

 \item[3.] If $\stt\,{\in}\,\Ib(\Rad){\setminus}\Ib_{\Wf}(\Rad)$ and $C$ is an $S$-chamber for $\stt,$
 then for every subset $\Phi$ of $\Bz_{\bullet}^{\stt}(C)$ we can find $\etaup_{\Phi}\,{\in}\,\Hom_{\stt}(\Z[\Rad],\Z_{2}^{*})$
 such that 
\begin{equation}\label{e2.19}
 \etaup_{\Phi}(\alphaup)= 
\begin{cases}
 {-}1,&\forall\alphaup\in\Phi,\\
 {+}1, & \forall \alphaup\,{\in}\,\Bz_{\bullet}^{\stt}(C){\setminus}\Phi.
\end{cases}
\end{equation}
\end{itemize}
\end{lem}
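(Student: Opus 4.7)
The plan is to transcribe the problem into parity arithmetic over $\Z_{2}$ via Proposition \ref{p4.4}(2): every $\etaup\in\Homs$ has the form $\etaup_{\omegaup}$ for some $\omegaup\in(\Z[\Rad])^{*}$, lies in $\Homs$ precisely when $(\alphaup-\stt(\alphaup)\mid\omegaup)\in 2\Z$ for all $\alphaup\in\Rad$, and satisfies $\etaup_{\omegaup}(\betaup)=(-1)^{(\betaup\mid\omegaup)}$. Each of the three clauses then becomes a computation of the $\Z_{2}$-solutions of a finite system of linear congruences on the residues $(\betaup\mid\omegaup)\bmod 2$.

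For parts (1) and (2), I would use the Weyl-group representative of Theorem \ref{t2.26}, $\stt=\sq_{\betaup_{1}}\circ\cdots\circ\sq_{\betaup_{h}}$ with $\betaup_{i}=\e_{2i-1}-\e_{2i}$, so that $\Rad_{\,\;\bullet}^{\stt}=\{\pm\betaup_{1},\hdots,\pm\betaup_{h}\}$. Strong orthogonality of the $\betaup_{i}$'s gives $\alphaup-\stt(\alphaup)=\sum_{i}\langle\alphaup\mid\betaup_{i}\rangle\betaup_{i}$, so setting $x_{i}=(\betaup_{i}\mid\omegaup)\bmod 2$ the condition becomes
\[
\sum_{i=1}^{h}\langle\alphaup\mid\betaup_{i}\rangle\,x_{i}\equiv 0\pmod{2},\qquad\forall\alphaup\in\Rad.
\]
For $\alphaup=\e_{s}-\e_{t}$ the coefficient $\langle\alphaup\mid\betaup_{i}\rangle$ takes the value $\pm 1$ exactly when precisely one of $s,t$ lies in $\{2i{-}1,2i\}$; testing against roots $\alphaup=\e_{2k-1}-\e_{t}$ with $t>2h$ forces $x_{k}\equiv 0$ for every $k$, and such a $t$ exists iff $2h\leq\ell$, which by $h\leq[(\ell+1)/2]$ is exactly the hypothesis of clause (1). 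When instead $\ell=2m+1$ is odd and $h=m+1$ is maximal, every index is consumed, no single-variable constraint survives, and only the pair relations $x_{k}+x_{k'}\equiv 0\pmod 2$ (from $\alphaup=\e_{2k-1}-\e_{2k'-1}$) remain, forcing all $x_{i}$ to share a common parity; both classes are realized by $\omegaup=0$ and by $\omegaup=\sum_{i=1}^{m+1}\e_{2i-1}\in(\Z[\Rad])^{*}$, yielding clause (2).

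For clause (3) the representative of Theorem \ref{t2.26} acts by $\stt(\e_{k})=-\e_{\ell+2-k}$ on the outer indices ($k\leq p$ or $k\geq\ell+2-p$) and by $\stt(\e_{k})=-\e_{k}$ on the middle range $p+1\leq k\leq\ell+1-p$; consequently $\Rad_{\,\;\bullet}^{\stt}$ is the $\textsc{A}_{\ell-2p}$-subsystem on the middle indices, and the canonical chamber is readily checked to be an $S$-chamber with $\Bz_{\bullet}^{\stt}(C)=\{\e_{i}-\e_{i+1}:p+1\leq i\leq\ell-p\}$. Writing $\omegaup=\sum_{k}\omega_{k}\e_{k}$ with $\omega_{k}\in\Z$ and running a short case analysis on complex $\alphaup=\e_{i}-\e_{j}$, the congruences $(\alphaup-\stt(\alphaup)\mid\omegaup)\in 2\Z$ all collapse to $\omega_{k}+\omega_{\ell+2-k}\equiv 0\pmod 2$ for each outer $k\leq p$, leaving the $\ell+1-2p$ middle parities entirely unconstrained. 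Given $\Phi\subseteq\Bz_{\bullet}^{\stt}(C)$ I would pick middle parities recursively so that $\omega_{i}-\omega_{i+1}\equiv 1\pmod 2$ iff $\e_{i}-\e_{i+1}\in\Phi$, set the outer parities compatibly (for instance all zero), and read off $\etaup_{\Phi}=\etaup_{\omegaup}$.

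The main obstacle I foresee is purely bookkeeping in parts (1)--(2): verifying that the inequality $2h\leq\ell$ is exactly the threshold separating full rigidity from the two-class regime, by exhibiting a \textquotedblleft free index\textquotedblright\ $t>2h$ in the first case and observing its disappearance in the second. The case analysis in part (3) is more voluminous but structurally trivial, since the non-imaginary constraints always involve \emph{pairs} $\{k,\ell+2-k\}$ of outer indices and never single middle parities.
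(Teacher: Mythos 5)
Your proposal is correct and follows essentially the same route as the paper: reduce to the explicit representatives of Theorem \ref{t2.26}, write $\etaup=\etaup_{\omegaup}$, and turn membership in $\Homs$ into the parity condition \eqref{eq4.9'} on $(\alphaup-\stt(\alphaup)\mid\omegaup)$. The only (harmless) mechanical difference is that you test the congruences against all roots via $\alphaup-\stt(\alphaup)=\sum_{i}\langle\alphaup\mid\betaup_{i}\rangle\betaup_{i}$ and isolate each $x_{k}$ with a ``free index'' $t>2h$, whereas the paper evaluates $\stt$ only on the complex simple roots of the $S$-chamber and chains the resulting relations $(\alphaup_{2i-1}{+}\alphaup_{2i+1}\mid\omegaup)\in 2\Z$.
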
 
\begin{proof} {[1]-[2]} \;\; If $\stt\,{\in}\,\Ib_{\Wf}(\Rad),$ then 
$\stt\,{=}\,\sq_{\,\Btt}$
for a system $\Btt\,{=}\,\{\betaup_{1},\hdots,\betaup_{r}\}$ of strongly orthogonal roots of $\Rad(\textsc{A}_{\ell})$.
 Then $r{\leq}\tfrac{\ell{+}1}{2}$ and,
by reordering the indices we can assume that
$\betaup_{i}{=}\e_{2i-1}{-}\e_{2i},$ for $1{\leq}i{\leq}r$. 
The canonical basis 
\begin{equation}\label{bA}
 \Bz(\textsc{A}_{\ell})\,{=}\,\{\alphaup_{i}{=}\e_{i}{-}\e_{i+1}\,{\mid}\,1{\leq}i{<}j{\leq}\ell{+}1\}
\end{equation}
is the basis of simple positive roots of an $S$-chamber $C$
for $\stt$, with 
\begin{equation*}
 \Bz_{\bullet}^{\stt}(C)=\big\{\alphaup_{2i-1}\,{\mid}\,1{\leq}i{\leq}r\big\},\;\; \Bz_{\star}^{\stt}(C)=
 \big\{\alphaup_{2i}\,{\mid}\,
 1{\leq}i{\leq}\min\big(r,\tfrac{\ell{+}1}{2}\big)\big\}
\end{equation*}
 We have (drop the second line when $\ell$ is odd and $r{=}\tfrac{\ell{+}1}{2}$)
\begin{equation*} \tag{$*$} \begin{cases}
 \stt(\alphaup_{2i})\,{=}\,\alphaup_{2i-1}{+}\alphaup_{2i}{+}\alphaup_{2i{+}1}, &\text{if $1{\leq}i{\leq}{r{-}1}$},\\
 \stt(\alphaup_{2r})\,{=}\,\alphaup_{2r-1}{+}\alphaup_{2r}, &\text{if $r{<}\tfrac{\ell{+}1}{2}$.}
 \end{cases}
\end{equation*}
If $\omegaup\,{\in}\,(\Z[\Rad])^{*}_{\stt}$, then, by applying \eqref{eq4.9} to the first line of $(*)$ we obtain
that 
\begin{equation*}
 (\alphaup_{2i-1}{+}\alphaup_{2i{+}1}|\omegaup)\in 2\Z,\;\forall 1{\leq}i{\leq}r{-}1.
\end{equation*}
If $r{<}\tfrac{\ell{+}1}{2}$, by applying \eqref{eq4.9} to the second line of $(*)$ we obtain that 
$(\alphaup_{2r-1}|\omegaup)$ is even
and hence $(\alphaup_{2i-1}|\omegaup)\,{\in}\,2\Z$ 
for $1{\leq}i{\leq}r.$ If $\ell$ is odd and $r{=}\tfrac{\ell{+}1}{2}$, then we can take e.g. 
$\omegaup\,{=}\,{\sum}_{i=1}^{{(\ell{+}1)}/{2}}\e_{2i-1}\,{\in}\,(\Z[\Rad])^{*}_{\stt}.$ 
 \par\smallskip
[3]\quad  If $\stt\,{\in}\,\Ib^{*}(\Rad)$, then $\Rad_{\,\;\bullet}^{\stt}{=}\emptyset$
and
there is nothing to prove. Thus we will discuss in the rest of the proof the case
$\stt\,{=}\,\epi\,{\circ}\,\sq_{\,\Btt}$, where
$\id{\neq}\epi\,{\in}\Ib^{*}(\Rad)$ and $\Btt\,{=}\,\{\betaup_{1},\hdots,\betaup_{r}\}\,{\subset}\,\Rad_{\,\;\circ}^{\epi}$
is a system of orthogonal roots. 
We can assume that $\epi$ is defined by 
\begin{equation*}
 \epi(\e_{i})={-}\e_{\ell+2-i},\;\;\text{for $i\,{=}\,1,\hdots,\ell,\ell{+}1$}
\end{equation*}
and, for a pair of positive integers $p,q$ with $p{+}q{=}\ell{+}1$, $r\,{=}\,[(q{-}p)/2],$ 
\begin{equation*}
 \Btt=\{\betaup_{i}\,{=}\,\e_{p+i}{-}\e_{q-i}\mid 1{\leq}i{\leq}r\}.
\end{equation*}
Then $\Bz(\textsc{A}_{\ell})$ is the basis of an $S$-chamber $C$ for $\stt$ with 
\begin{gather*} \Bz_{\;\circ}^{\stt}(C)\,{=}\,\emptyset, \;\; 
 \Bz_{\star}^{\stt}(C)=\{\alphaup_{i}\mid (1{\leq}i{\leq}p)\,{\vee}\,( q{\leq}i{\leq}\ell)\},\;\;
 \Bz_{\bullet}^{\stt}(C)=\{\alphaup_{i}\,{\mid}\,
 p{<}i{<}q\},\\
 \stt(\alphaup_{i})= 
\begin{cases}
 \alphaup_{\ell{+}1-i}, & 1{\leq}i{<}p,\; q{<}i{\leq}\ell,\\
 \alphaup_{p+1}+\cdots+\alphaup_{q}, & i=p,\\
 \alphaup_{p}+\cdots+\alphaup_{q-1}, & i=q,\\
 {-}\alphaup_{i}, & p<i<q.
\end{cases}
\end{gather*}
Then $\omegaup\,{\in}\,(\Z[\Rad])^{*}$ belongs to $(\Z[\Rad])_{\;\stt}^{*}$
if and only if 
\begin{equation*} \tag{$*$}
\begin{cases}
 (\alphaup_{i}{-}\alphaup_{\ell{+}1{-}i}|\omegaup)\in 2\Z, & \forall 1\,{\leq}\,i\,{<}p,\\
 (\alphaup_{p}{+}\alphaup_{q}|\omegaup)+\#\Phi\in 2\Z, \\
 (\alphaup_{i}|\omegaup)\notin 2\Z, & \forall \alphaup_{i}\in\Phi,\\
 (\alphaup_{i}|\omegaup)\in 2\Z, & \forall \alphaup_{i}\,{\in}\,\Bz_{\bullet}^{\stt}(C){\setminus}\Phi.
\end{cases}
\end{equation*}
This is possible for any choice of $\Phi\,{\subseteq}\,\Bz_{\bullet}^{\stt}(C).$ The proof is complete.
 \end{proof} 
\begin{cor}
 The non-compact and non-split real forms of type $\textsl{A}_{\ell}$ are 
\par\noindent
 \item[] \textsc{[A\,II]} \; $\ell{=}2m-1$ odd, 
 $\Btt\,{=}\,\{\e_{2i-1}{-}\e_{2i}\,{\mid}\, 
\begin{smallmatrix}
 1{\leq}i{\leq}m
\end{smallmatrix}\!\}$
 $\stt\,{=}\sq_{\Btt}$\,,  
 $\Rd{\bullet}{\sigmaup}\,{=}\,\{{\pm}(\e_{2i-1}{-}\e_{2i}\,{\mid}\,\begin{smallmatrix}
 1{\leq}i{\leq}m
\end{smallmatrix}\!\}$, 
$\gs\,{\simeq}\,\slt_{m}(\Hb)\,{\simeq}\,\su^{*}(2m)$;
\par\noindent \textsc{[A\,III,IV]} \; for a pair of integers $1{\leq}p{\leq}q$ with $p{+}q\,{=}\,\ell{+}1$ 
\begin{equation*} 
\begin{array}{| c | c | c | c |} 
\hline 
p,q & \stt & \Rd{\bullet}{\sigmaup} & \gs \\
\hline 
 q{=}p &  \sq_{\e_{1}{+}\e_{2p}}{\circ}\cdots {\circ}\sq_{\e_{p} +\e_{p+1}} & \emptyset & \su(p,p)\\
 \hline
 q{=}p{+}1 & \sq_{\e_{1}{+}\e_{2q-1}}{\circ}\cdots {\circ}\sq_{\e_{q-1} +\e_{q+1}}
  {\circ}\sq_{\e_{q}} & \emptyset & \su(p,p{+}1)\\
  \hline 
  q{-}p{\geq}2 & \sq_{\e_{1}{+}\e_{\ell+1}}{\circ}\cdots {\circ}\sq_{\e_{p} +\e_{q+1}}{\circ}\sq_{p+1}{\circ}\cdots{\circ}\sq_{q} &
  \{{\pm}(\e_{i}{-}\e_{j})\,{\mid}\! 
\begin{smallmatrix}
 p{<}i{<}j{\leq}q
\end{smallmatrix}\!\}& \su(p,q) \\
\hline
\end{array}
\end{equation*}
\end{cor}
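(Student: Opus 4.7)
The plan is to combine the classification of involutions of $\Rad(\textsc{A}_{\ell})$ given in Thm.\ref{t2.26} with the parametrization of the fibre of $\rhoup$ over each $\stt$ by the group $\Homs$ (Prop.\ref{p4.2}), whose behaviour on $\Rd{\bullet}{\stt}$ is exactly what Lemma\,\ref{l2.12} computes. By Thm.\ref{t4.13} two elements of $\Invs$ whose quotient is trivial on $\Bz^{\stt}_{\bullet}(C)$ yield conjugate pairs $(\gs,\hst)$, so it will suffice, for each $\stt$, to enumerate the orbits of $\Homs$ on $\Rd{\bullet}{\stt^{\sharp}}$, determine $(\Rd{\bullet}{\sigmaup},\Rd{\oast}{\sigmaup})$ for a representative of each orbit, and match the resulting real form against the standard list by dimension counting via \eqref{e4.14}.

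First I would dispose of the involutions $\stt\in\Ib_{\Wf}(\Rad)$: these have the form $\stt=\sq_{\,\Btt}$ with $\Btt$ a set of $h\leq[(\ell{+}1)/2]$ strongly orthogonal roots, and $\Rd{\bullet}{\stt}$ equals $\{\pm\betaup_{1},\dots,\pm\betaup_{h}\}$ (an $h\cdot\textsc{A}_{1}$). By Rmk.\ref{rm4.7} the quasi-split lift $\stt^{\sharp}$ satisfies $\Rd{\oast}{\stt^{\sharp}}=\Btt$, $\Rd{\bullet}{\stt^{\sharp}}=\emptyset$, which over $\textsc{A}_{\ell}$ is precisely the split form $\slt_{\ell{+}1}(\R)$. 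By Lemma\,\ref{l2.12}(1), when either $\ell$ is even or $h<[(\ell{+}1)/2]$, every $\etaup\in\Homs$ is trivial on $\Rd{\bullet}{\stt}$, so the only real form lifting $\stt$ is the split one and does not appear in the table. When $\ell=2m{-}1$ and $h=m$ (the maximal case), Lemma\,\ref{l2.12}(2) provides exactly two orbits: the trivial one (giving split $\slt_{2m}(\R)$ again) and the one with $\etaup\equiv{-}1$ on $\Btt$, which yields $\Rd{\bullet}{\sigmaup}=\Btt$ and $\Rd{\oast}{\sigmaup}=\emptyset$; a dimension check from \eqref{e4.14}, with $n_{1}=0$, $n_{2}=m(m{-}1)$, $n_{3}^{\kt}=m$, $\ell^{\kt}=m$, gives $\dim_{\R}\ks=2m^{2}{+}m=\dim\mathfrak{sp}(m)$, identifying $\gs$ with $\su^{*}(2m)=\slt_{m}(\Hb)$. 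This is case [A\,II].

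Next I would handle $\stt\in\Ib(\Rad)\setminus\Ib_{\Wf}(\Rad)$, written as $\stt=\epi\circ\sq_{\,\Btt}$ with $\epi$ the unique nontrivial special involution $\e_{i}\mapsto{-}\e_{\ell{+}2{-}i}$ of Thm.\ref{t3.16} and $\Btt\subset\Rd{\circ}{\epi}$. Such $\stt$ is parametrized by the pair of integers $(p,q)$ with $p{+}q=\ell{+}1$ appearing in the second row of the $\textsc{A}_{\ell}$ entry of Table\,II: the inner subsystem corresponding to indices $p{+}1,\dots,q$ is fixed by $\stt$ and is of type $\textsc{A}_{q-p-1}$, so $\Rd{\bullet}{\stt}=\{\pm(\e_{i}{-}\e_{j})\mid p{<}i{<}j{\leq}q\}$. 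By Lemma\,\ref{l2.12}(3), for each subset $\Phi\subseteq\Bz^{\stt}_{\bullet}(C)$ there is an $\etaup_{\Phi}\in\Homs$ realizing an arbitrary $\pm 1$ pattern on the simple imaginary roots, so the $\Homs$-orbits on real forms reduce to $\Wf(\Rd{\bullet}{\stt})$-orbits of subsets of $\Bz^{\stt}_{\bullet}(C)$; since $\Rd{\bullet}{\stt}$ is $\textsc{A}_{q-p-1}$ and its Weyl group is transitive on connected subdiagrams of a given size, the classes are indexed by a single integer, and the corresponding Cayley transforms land in the unique quasi-split form $\su(p,q)$. The three rows $q{=}p$, $q{=}p{+}1$, $q{-}p{\geq}2$ then correspond respectively to $\stt$ already special (so $\Rd{\bullet}{\stt}=\emptyset$ and $\gs$ is already maximally vector), to the case where the inner $\textsc{A}_{0}$ is empty, and to the generic case where a nonempty $\textsc{A}_{q-p-1}$ of compact imaginary roots survives.

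The main obstacle is not the enumeration itself, which is combinatorial, but two verification steps: (i) checking that the specific representatives $\sq_{\e_{i}+\e_{\ell{+}2{-}i}}\circ\cdots$ in the table do lie in $\Af(\Rad,C)\setminus\Wf(\Rad)$ and match $\epi\circ\sq_{\,\Btt}$ for the stated $\Btt$ under the normalization chosen in \S\ref{s1.5}, and (ii) identifying the resulting abstract real form as $\su(p,q)$ rather than as one of its isogenous cousins. Both can be settled by the dimension count: using \eqref{e4.14} with $n_{1}=\binom{q-p}{2}$ for the $\stt$-fixed roots of the inner $\textsc{A}$-block, $n_{2}=pq{-}\binom{q-p}{2}$ (or its correct analogue) for the complex roots, and the chosen $n_{3}^{\kt},n_{3}^{\pt}$ coming from $\Phi$, one recovers $\dim_{\R}\ks=p^{2}{+}q^{2}{-}1$ and $\dim_{\R}\ps=2pq$, pinning down $\su(p,q)$. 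Finally, Thm.\ref{t4.14} guarantees that distinct values of $(p,q)$ give nonisomorphic pairs, so the list is complete and irredundant.
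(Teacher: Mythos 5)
Your proposal is correct and follows essentially the route the paper intends: the corollary is a direct consequence of Lemma\,\ref{l2.12} (which pins down the possible values of $\etaup\,{\in}\,\Homs$ on $\Rd{\bullet}{\stt}$ in each of the three regimes) combined with Thm.\,\ref{t4.13}/Thm.\,\ref{t4.14} to collapse the fibre of $\rhoup$ over $\stt$ to the listed real forms, and your [A\,II] dimension check $\dim_{\R}\ks\,{=}\,2m^{2}{+}m\,{=}\,\dim\spt(m)$ is right. The only slip is in the [A\,III,IV] bookkeeping: the inner $\textsc{A}_{q-p-1}$-block consists of \emph{imaginary} roots, so $\binom{q-p}{2}$ is $n_{3}$, not $n_{1}$; the real roots are the $\pm(\e_{i}{-}\e_{\ell+2-i})$ with $i\,{\leq}\,p$, giving $n_{1}\,{=}\,p$ — with that correction \eqref{e4.14} does return $\dim_{\R}\ks\,{=}\,p^{2}{+}q^{2}{-}1$ as you intend.
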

 \subsubsection*{Type $\textsc{B}_{\ell}$}
 \begin{lem}[Type $\textsc{B}_{\ell}$]
 Assume that $\Rad$ is an irreducible root system of type $\textsc{B}_{\ell}$ and let
 $\stt$ be a nontrivial involution in $\Ib(\Rad).$ Then 
\begin{itemize}
 \item[1.]  there is at most one 
 irreducible component $\Rad_{\,\;\bullet}^{\stt,\sharp}$ of $\Rad_{\,\;\bullet}^{\stt}$ containing short  roots;
 \item[2.] its complement 
 $\Rad_{\,\;\bullet}^{\stt,\flat}{\coloneqq}(\Rad_{\,\;\bullet}^{\stt}{\setminus}\Rad_{\,\;\bullet}^{\stt,\sharp})$
 in $\Rad_{\,\;\bullet}^{\stt}$ 
 is a union of strongly orthogonal root subsystems of type $\textsc{A}_{1}$;
  \item[3.] 
 $\etaup(\betaup)\,{=}\,1,$ for all $\betaup\,{\in}\,\Rad_{\,\;\bullet}^{\stt,\flat}$ and all $\etaup\,{\in}\,\Homs$;
\end{itemize}\par
 Let $C$ be an $S$-chamber for $\stt$ and set \par\centerline{
 $ \Bz_{\bullet}^{\stt,\sharp}(C)\,{=}\,\Bz(C)\,{\cap}\,\Rad_{\,\;\bullet}^{\stt,\sharp}$,\;\;
  $ \Bz_{\bullet}^{\stt,\flat}(C)\,{=}\,\Bz(C)\,{\cap}\,\Rad_{\,\;\bullet}^{\stt,\flat}$.} Then 
\begin{itemize}
\item[4.] for every $\Phi\,{\subseteq}\, \Bz_{\bullet}^{\stt,\sharp}(C)$ we can find $\etaup\,{\in}\,\Homs$
 such that 
\begin{equation*} 
\begin{cases}
 \etaup(\alphaup)={-}1, &\text{if $\alphaup\,{\in}\,\Phi$,}\\
 \etaup(\alphaup)={+}1, &\text{if $\alphaup\,{\in}\, \Bz_{\bullet}(C){\setminus}\Phi.$}
\end{cases}
\end{equation*}
\end{itemize}
\end{lem}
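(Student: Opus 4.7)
By Theorem~\ref{t2.26} I may, up to equivalence, take $\stt$ in the normal form
\[
\stt=\sq_{\e_1-\e_2}\circ\cdots\circ\sq_{\e_{2r_1-1}-\e_{2r_1}}\circ\sq_{\e_{\ell-r_2+1}}\circ\cdots\circ\sq_{\e_\ell},
\]
with $r_1+r_2\ge 1$ and $2r_1+r_2\le\ell$, so that $\stt$ swaps $\e_{2i-1}\leftrightarrow\e_{2i}$ for $i\le r_1$, fixes $\e_j$ for $2r_1<j\le\ell-r_2$, and negates $\e_j$ for $j>\ell-r_2$. A direct inspection then shows that $\Rd{\bullet}{\stt}$ splits as the $r_1$ long $\textsc{A}_1$ pairs $\{\pm(\e_{2i-1}-\e_{2i})\}_{1\le i\le r_1}$---pairwise strongly orthogonal, because any $\pm\alphaup\pm\betaup$ from distinct pairs would have four nonzero coordinates while every $\textsc{B}_\ell$-root has at most two---together with the $\textsc{B}_{r_2}$ subsystem on $\{\e_{\ell-r_2+1},\ldots,\e_\ell\}$, the unique irreducible component carrying short roots. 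This settles 1 and 2, with $\Rd{\bullet}{\stt,\flat}$ the long $\textsc{A}_1$ pairs and $\Rd{\bullet}{\stt,\sharp}$ the $\textsc{B}_{r_2}$ tail.

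For 3, the key observation is that for $\betaup=\e_{2k-1}-\e_{2k}\in\Rd{\bullet}{\stt,\flat}$ the short root $\alphaup=\e_{2k-1}$ is complex, with $\stt(\alphaup)=\e_{2k}$ and $\alphaup-\stt(\alphaup)=\betaup$. Proposition~\ref{p4.4}(2) applied to any $\etaup=\etaup_\omegaup\in\Homs$ then forces $(\betaup\mid\omegaup)=(\alphaup-\stt(\alphaup)\mid\omegaup)\in 2\Z$, hence $\etaup(\betaup)=(-1)^{(\betaup\mid\omegaup)}=1$.

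For 4, the strategy is to parameterise $(\Z[\Rad])^*_\stt$ explicitly. Using the identity $(\alphaup-\stt(\alphaup)\mid\omegaup)=(\alphaup\mid\omegaup-\stt(\omegaup))$ (valid because $\stt$ is a self-adjoint isometry) together with $(\Z[\Rad])^*=\bigoplus_j\Z\e_j$---forced by the short roots $\e_j\in\Rad$---the condition of Proposition~\ref{p4.4}(2) becomes $\omegaup-\stt(\omegaup)\in 2(\Z[\Rad])^*$. Writing $\omegaup=\sum_j c_j\e_j$, the computation
\[
\omegaup-\stt(\omegaup)=\sum_{i=1}^{r_1}(c_{2i-1}-c_{2i})(\e_{2i-1}-\e_{2i})+2\!\!\sum_{j=\ell-r_2+1}^{\ell}c_j\,\e_j
\]
reduces the defining condition to the congruences $c_{2i-1}\equiv c_{2i}\pmod 2$ for $1\le i\le r_1$, while the tail coefficients $c_{\ell-r_2+1},\ldots,c_\ell$ remain entirely free. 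Since by Proposition~\ref{p2.32} $\Bz_\bullet^{\stt,\sharp}(C)$ is a basis of simple roots of the $\textsc{B}_{r_2}$-subsystem, restricting such tail-supported $\omegaup$'s realises every homomorphism $\Z[\Rd{\bullet}{\stt,\sharp}]\to\Z_2^*$, hence any prescribed sign pattern on $\Phi$; the resulting $\etaup_\omegaup$ is automatically trivial on $\Rd{\bullet}{\stt,\flat}$, in agreement with 3. The main obstacle is the explicit computation of $\omegaup-\stt(\omegaup)$ above, which isolates the short-root tail as the locus of freedom; everything else follows formally.
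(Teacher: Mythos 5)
Your proof is correct. It reaches the same conclusions as the paper's, and both arguments ultimately rest on the parity condition of Proposition~\ref{p4.4}, but the computational route is genuinely different. The paper fixes an $S$-chamber, writes out $\stt(\alphaup)$ for each complex simple root $\alphaup$ (e.g.\ $\stt(\alphaup_{2i})=\alphaup_{2i-1}{+}\alphaup_{2i}{+}\alphaup_{2i+1}$, $\stt(\alphaup_{k-1})=\alphaup_{k-1}{+}2\sum_{i\geq k}\alphaup_{i}$), and reads off item 3 from a recursion along the chain of conditions $(\alphaup_{2i-1}{+}\alphaup_{2i+1}\,|\,\omegaup)\in 2\Z$ and item 4 from the fact that the short-root tail only enters these expansions with even coefficients. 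You instead pass to the coordinate normal form of $\stt$, compute $\omegaup-\stt(\omegaup)$ directly in the $\e_{j}$-basis of the dual lattice, and thereby determine $(\Z[\Rad])^{*}_{\stt}$ in closed form ($c_{2i-1}\equiv c_{2i} \pmod 2$ on the $\textsc{A}_{1}$-pairs, no condition on the tail). This buys two things the paper's argument does not make explicit: item 3 falls out in one line by testing the single complex short root $\e_{2k-1}$ (whose difference with its image is exactly the flat root $\betaup$), and you obtain a description of the entire group $\Homs$ rather than only the sign patterns asserted in the lemma. The only points worth flagging are minor and shared with the paper: the reduction to the normal form implicitly invokes the classification of Theorem~\ref{t2.26} and the equivalence-invariance of the statements, and the identification $(\Z[\Rad])^{*}=\bigoplus_{j}\Z\e_{j}$ uses the standard Euclidean normalization of the scalar product, which is the convention the paper itself adopts in Lemma~\ref{l1.19} and throughout \S 4.
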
 
\begin{proof} Roots belonging to distinct irreducible components of $\Rad_{\,\;\bullet}^{\stt}$ are
strongly orthogonal. Since distinct short roots of $\Rad$ are not strongly orthogonal, this yields [1] and
[2]. 
\par
We can assume that  the canonical basis 
\begin{equation}\label{bB}
 \Bz(\textsc{B}_{\ell})\,{=}\,\{\alphaup_{i}{=}\e_{i}{-}\e_{i+1}\,{\mid}\,1{\leq}i{<}\ell\}\,{\cup}\,\{\alphaup_{\ell}{=}\e_{\ell}\}
\end{equation}
consists of the simple positive roots of an $S$-chamber $C$ for $\stt$ 
and that, for integers $h,k$ with $0{\leq}h{<}2h{\leq}k{-}1{\leq}\ell,$ 
\begin{equation*}
 \Bz_{\bullet}^{\stt,\flat}(C)\,{=}\,\{\alphaup_{2i-1}\,{\mid}\,1{\leq}i{\leq}h\},\quad
 \Bz_{\bullet}^{\stt,\sharp}(C)\,{=}\,\{\alphaup_{i}\,{\mid}\, k{\leq}{i}{\leq}\ell\}
\end{equation*}
[if $h{=}0$ the first, if $k{=}\ell{+}1$ the second set are empty.]
Then 
\begin{equation*}
 \Bz_{\star}^{\stt}(C)=\{\alphaup_{2i}\,{\mid}\,1{\leq}i{\leq}h\}\,{\cup}\,\{\alphaup_{k-1}\}
\end{equation*}
and we get 
\begin{equation*} 
\begin{cases}
 \stt(\alphaup_{2i})=\alphaup_{2i-1}{+}\alphaup_{2i}{+}\alphaup_{2i{+}1}, \;\;\text{if $1{\leq}i{<}h$,}\\
 \stt(\alphaup_{2h})= 
\begin{cases}
 \alphaup_{2h-1}{+}\alphaup_{2h},\; \text{if $2h{<}k{-}1$,}\\
 \alphaup_{2h-1}{+}\alphaup_{2h}{+}2{\sum}_{i=k}^{\ell}\alphaup_{i},\;\text{if $2h{=}k{-}1$},
\end{cases}\\
\stt(\alphaup_{k-1}){=}\alphaup_{k-1}{+}2{\sum}_{i=k}^{\ell}\alphaup_{i},\;\text{if $2h{<}k{-}1$.}
\end{cases}
\end{equation*}
The statement is then a consequence of \eqref{eq4.9}.
\end{proof}
\begin{cor}
 The non-compact and non-split real forms of type $\textsl{B}_{\ell}$ ($\ell{\geq}2$)
 are parametrized by an integer $p$, with $1{\leq}p{<}\ell$ and correspond to \par\noindent
 $\stt\,{=}\,\sq_{\e_{p}}{\circ}\cdots{\circ}\sq_{\e_{\ell}}$, with $\Rd{\bullet}{\sigmaup}\,{=}\,
 \{{\pm}\e_{i}\,{\mid}\! \begin{smallmatrix}
 p{<}i{\leq}\ell\end{smallmatrix}\!\}\,{\cup}\,\{{\pm}\e_{i}{\pm}\e_{j}\,{\mid}\!
\begin{smallmatrix}
 p{<}i{<}j{\leq}\ell
\end{smallmatrix}\!\}$, $\gs\,{\simeq}\,\so(p,2\ell{+}1{-}p)$. 
\end{cor}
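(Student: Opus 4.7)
My plan follows the program at the start of \S\ref{s4.5.1}: apply the preceding $\textsc{B}_\ell$ lemma together with Theorems~\ref{t4.12}, \ref{t4.13} and~\ref{t4.14} to enumerate equivalence classes of pairs $(\gs,\hst)$ and to identify those corresponding to non-compact non-split real forms. For each $p$ with $1 \leq p < \ell$, the involution $\stt = \sq_{\e_p}\circ\cdots\circ\sq_{\e_\ell}$ has $(r_1,r_2)$-type $(0,\ell-p+1)$ in Theorem~\ref{t2.26}; I would first check that the canonical basis~\eqref{bsB} is the set of simple positive roots of an $S$-chamber $C$ for $\stt$: the roots $\alphaup_1,\ldots,\alphaup_{p-2}$ are real, $\alphaup_{p-1}$ is complex (mapped to $\e_{p-1}{+}\e_p$), and $\{\alphaup_p,\ldots,\alphaup_\ell\}$ are imaginary, forming a connected $\textsc{B}_{\ell-p+1}$ subdiagram containing the short root $\alphaup_\ell$. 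In the notation of the lemma this is entirely of \textquotedblleft sharp\textquotedblright{} type, with $\Bz_\bullet^{\stt,\flat}(C) = \emptyset$.

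Part~4 of the lemma with $\Phi = \{\alphaup_p\}$ then produces an $\etaup \in \Homs$ with $\etaup(\alphaup_p) = -1$ and $\etaup(\alphaup_i) = +1$ for $p < i \leq \ell$; composing with any lift from Theorem~\ref{t3.15} yields the required $\sigmaup \in \Invs$. To verify the stated description of $\Rd{\bullet}{\sigmaup}$, I would expand each imaginary root $\betaup$ uniquely as $\sum_{i=p}^{\ell} k_i\alphaup_i$ and use multiplicativity of $\etaup$: the compact roots are precisely those with $k_p = 0$, namely the $\textsc{B}_{\ell-p}$ subsystem on $\e_{p+1},\ldots,\e_\ell$, while the noncompact ones (those with $k_p = 1$) are $\pm\e_p$ and $\pm\e_p\pm\e_j$ for $p < j \leq \ell$, as asserted.

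The identification $\gs \simeq \so(p, 2\ell+1-p)$ follows by matching invariants via the dimension formulas~\eqref{e4.14}: a direct calculation yields $\ell^\kt = \ell-p+1$, $\ell^\pt = p-1$, $n_1 = (p-1)^2$, $n_2 = (p-1)(\ell-p+1)$, $n_3^\kt = (\ell-p)^2$, $n_3^\pt = 2(\ell-p)+1$, and the resulting pair $(\dim_\R\ks,\dim_\R\ps)$ singles out a unique real form among the $\ell+1$ candidates $\so(p',2\ell+1-p')$.

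Completeness and distinctness rest on Theorem~\ref{t2.26}, parts~1--3 of the lemma (forcing the signs on flat $\textsc{A}_1$-components to $+1$), and Theorem~\ref{t4.12}: Cayley transforms with respect to a maximal noncompact strongly orthogonal system reduce every case with $r_1 > 0$ to one with $r_1 = 0$, while the split ($\stt = \id$) and compact ($\stt = \att$ with the trivial sign choice) forms are excluded, leaving exactly the $\ell - 1$ cases enumerated by $1 \leq p < \ell$. Theorem~\ref{t4.14} then guarantees that these $\ell-1$ classes are pairwise non-isomorphic, since their $\Rd{\bullet}{\sigmaup}$ have different Weyl-orbit types (distinguished by the rank $\ell-p$ of the compact imaginary subsystem). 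The main obstacle is this last reduction: carefully tracking how each Cayley transform alters both $\stt$ and the sign distribution of $\etaup$ while verifying, via the Weyl-orbit criterion of Theorem~\ref{t4.14}, that the classification lands on exactly the listed forms without double-counting or omission.
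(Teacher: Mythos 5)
Your setup and all the numerical invariants in the third paragraph are correct, but the proof breaks at precisely the step you assert without performing: matching $(\dim_{\R}\ks,\dim_{\R}\ps)$ against the candidates $\so(p',2\ell{+}1{-}p')$. Substituting your values into \eqref{e4.14} gives
\begin{equation*}
\dim_{\R}\ks=(\ell{-}p{+}1)+(p{-}1)^2+2(p{-}1)(\ell{-}p{+}1)+2(\ell{-}p)^2=\binom{p{+}1}{2}+\binom{2\ell{-}p}{2},
\end{equation*}
so the $\sigmaup$ you construct satisfies $\gs\simeq\so(p{+}1,2\ell{-}p)$, \emph{not} $\so(p,2\ell{+}1{-}p)$. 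This is already visible for $\ell=2$, $p=1$: there $\stt=\sq_{\e_1}{\circ}\sq_{\e_2}=\att$ and $\Rd{\bullet}{\sigmaup}=\{\pm\e_2\}$, which is exactly the $\sigmaup_2$ of Example~\ref{ex4.8}, identified there as the \emph{split} form $\so(2,3)$; the form $\so(1,4)$ instead requires the long roots $\pm\e_1{\pm}\e_2$ to be the compact ones, as in the subsequent $\sot(5,\C)$ example. Consequently your closing completeness-and-distinctness argument also collapses: the family you actually produce is $\so(2,2\ell{-}1),\dots,\so(\ell,\ell{+}1)$, which contains the split form and omits $\so(1,2\ell)$.

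The root of the trouble is an off-by-one inconsistency in the statement itself, which your argument inherits rather than detects. The displayed set $\Rd{\bullet}{\sigmaup}=\{\pm\e_i\mid p{<}i{\leq}\ell\}\cup\{\pm\e_i{\pm}\e_j\mid p{<}i{<}j{\leq}\ell\}$ is the \emph{full} imaginary system of $\stt'=\sq_{\e_{p+1}}{\circ}\cdots{\circ}\sq_{\e_\ell}$ (so $\Rd{\oast}{\sigmaup}=\emptyset$, the Satake situation of Thm.~\ref{t4.12}), and with that $\stt'$ the same computation yields $\dim_{\R}\ks=\binom{p}{2}+\binom{2\ell{+}1{-}p}{2}$, i.e.\ $\so(p,2\ell{+}1{-}p)$ as claimed. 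With $\stt=\sq_{\e_{p}}{\circ}\cdots{\circ}\sq_{\e_\ell}$ as written, the displayed set is a proper subset of $\Rd{\bullet}{\stt}$, the roots $\pm\e_p$, $\pm\e_p{\pm}\e_j$ become noncompact, and the Cayley transforms with respect to $\e_p{\pm}\e_{p+1}$ land on the Satake datum of $\so(p{+}1,2\ell{-}p)$. A correct proof must either replace the involution by $\sq_{\e_{p+1}}{\circ}\cdots{\circ}\sq_{\e_\ell}$ or reindex the target; as it stands, the claim that the dimension formulas ``single out'' $\so(p,2\ell{+}1{-}p)$ is the gap. (A lesser point: passing from $\etaup(\alphaup_p)={-}1$ to the stated compactness pattern presupposes the values of $f_{\sigmaup_0}$ on $\Bz_{\bullet}^{\stt}(C)$ for your chosen lift $\sigmaup_0$; this is repairable because $f_{\sigmaup}$ is multiplicative on the imaginary subsystem and part~4 of the lemma realizes every sign pattern, but $\Phi=\{\alphaup_p\}$ is not automatically the right choice of $\Phi$.)
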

\subsubsection*{Type $\textsc{C}_{\ell}$}
\begin{lem}[type $\textsc{C}_{\ell}$] \label{l3.20}
Let $\Rad$ be an irreducible root system of type $\textsc{C}_{\ell}$ and let
$\stt\,{=}\,\sq_{\Btt}$ 
for a system $\Btt\,{=}\,\{\betaup_{1},\hdots,\betaup_{r}\}$ of strongly orthogonal roots. Then
\begin{itemize}
 \item[1.]
 there is at most one irreducible subsystem $\Rad_{\,\;\bullet}^{\stt,\sharp}$ of
$\Rad_{\,\;\bullet}^{\stt}$ containing a long root;
\item[2.] its complement 
 $\Rad_{\,\;\bullet}^{\stt,\flat}{\coloneqq}(\Rad_{\,\;\bullet}^{\stt}{\setminus}\Rad_{\,\;\bullet}^{\stt,\sharp})$
 in $\Rad_{\,\;\bullet}^{\stt}$ 
 is a union of strongly orthogonal root subsystems of type $\textsc{A}_{1}$.
 \end{itemize}
 Denote by $\Rad_{\,\;\bullet}^{\stt,\natural}$ the union of $\Rad_{\,\;\bullet}^{\stt,\flat}$ and of
 the set of long roots of $\Rad_{\,\;\bullet}^{\stt,\sharp}$.
 \begin{itemize}
  \item[3.] if $\Btt$ is not maximal, then 
 $\etaup(\betaup)\,{=}\,1$,  $\forall\betaup\,{\in}\,\Rad_{\,\;\bullet}^{\stt,\natural}$ 
 and $\forall\etaup\,{\in}\,\Homs$;
\end{itemize}\par
 Let $C$ be an $S$-chamber for $\stt$ and set 
\begin{equation*}
 \Bz_{\bullet}^{\stt,\sharp}(C)\,{=}\,\Bz(C)\,{\cap}\,\Rad_{\,\;\bullet}^{\stt,\sharp},\;\;
 \Bz_{\bullet}^{\stt,\flat}(C)\,{=}\,\Bz(C)\,{\cap}\,\Rad_{\,\;\bullet}^{\stt,\flat},\;\;
 \Bz_{\bullet}^{\stt,\natural}\,{=}\,\Bz(C)\,{\cap}\,\Rad_{\,\;\bullet}^{\stt,\natural}
\end{equation*}

\begin{itemize}
\item[4.] If $\Btt$ is maximal, then 
\begin{equation*}
\etaup(\betaup)=\etaup(\betaup'),\;\;\forall\betaup,\betaup'\,{\in}\, \Bz_{\bullet}^{\stt,\natural}(C),\;
\forall\etaup\,{\in}\Homs.
\end{equation*}
If $\lambdaup\,{=}\,{\pm}1$ and 
$\Phi\,{\subseteq}\, \Bz_{\bullet}^{\stt,\sharp}(C){\setminus} \Bz_{\bullet}^{\stt,\natural}$, 
then we can find $\etaup\,{\in}\,\Homs$
 such that 
\begin{equation*} 
\begin{cases}
\etaup(\alphaup)=\lambdaup,&\text{if $\alphaup\,{\in}\, \Bz_{\bullet}^{\stt,\natural}$},\\
 \etaup(\alphaup)={-}1, &\text{if $\alphaup\,{\in}\,\Phi$,}\\
 \etaup(\alphaup)={+}1, &\text{if $\alphaup\,{\in}\, \Bz_{\bullet}^{\stt,\sharp}(C){\setminus}(\Phi\,{\cup}
 \Rad_{\,\medbullet}^{\stt,\natural})$}.
\end{cases}
\end{equation*}
\end{itemize}
 \end{lem}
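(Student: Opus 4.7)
The plan is to reduce, by Proposition~\ref{p2.7} and the classification of strongly orthogonal systems in $\Rad(\textsc{C}_{\ell})$ from \S\ref{s2.4}, to the standard model
\[
\Btt = \{\e_{2i-1}-\e_{2i} \mid 1 \leq i \leq r_1\}\cup\{2\e_{\ell-r_2+j}\mid 1 \leq j \leq r_2\},\qquad 2r_1+r_2 \leq \ell,
\]
with $\Btt$ maximal exactly when $2r_1+r_2=\ell$. Since $\stt=\sq_{\Btt}$ acts as $-\id$ on $\langle\Btt\rangle_{\R}$ and as $\id$ on its orthogonal complement, $\Rd{\bullet}{\stt}$ coincides with the set of roots of $\Rad(\textsc{C}_{\ell})$ lying in $\langle\Btt\rangle_{\R}$. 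A direct enumeration shows that these decompose as $r_1$ mutually strongly orthogonal copies of $\textsc{A}_1$ formed by the short roots $\pm(\e_{2i-1}-\e_{2i})$, together with the (irreducible, for $r_2\geq 2$) $\textsc{C}_{r_2}$ subsystem on $\langle\e_{\ell-r_2+1},\ldots,\e_\ell\rangle_{\R}$, absent when $r_2=0$. This yields (1) and (2), identifying $\Rd{\bullet}{\stt,\sharp}$ with the $\textsc{C}_{r_2}$ component.

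For (3) and (4) I take $C$ to be the standard Weyl chamber with basis $\alphaup_i=\e_i-\e_{i+1}$ for $1\leq i<\ell$ and $\alphaup_\ell=2\e_\ell$, and verify via Theorem~\ref{thmSch} that $C$ is an $S$-chamber for $\stt$. The imaginary simples are $\{\alphaup_{2i-1}\mid 1\leq i\leq r_1\}\cup\{\alphaup_j\mid \ell-r_2+1\leq j\leq \ell\}$, the complex simples are the interior roots $\alphaup_{2i}$ for $1\leq i< r_1$ together with the \emph{bridge} $\alphaup_{2r_1}=\alphaup_{\ell-r_2}$ when $r_2\geq 1$, and the gap roots $2r_1<j<\ell-r_2$ are real. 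A direct computation from $\stt=\sq_{\Btt}$ gives $\stt(\alphaup_{2i})-\alphaup_{2i}\equiv \alphaup_{2i-1}+\alphaup_{2i+1}\pmod{2}$ for the interior complex simples, which via the criterion~\eqref{eq4.9} of Proposition~\ref{p4.4} forces $\etaup_{\omegaup}(\alphaup_{2i-1})=\etaup_{\omegaup}(\alphaup_{2i+1})$, producing a common value $\lambdaup$ on all odd-indexed short simples. The endpoint analysis distinguishes the two cases: in the non-maximal case ($2r_1<\ell-r_2$) the simple $\alphaup_{2r_1}$ yields $\stt(\alphaup_{2r_1})-\alphaup_{2r_1}\equiv\alphaup_{2r_1-1}\pmod{2}$, forcing $\lambdaup=1$, and the bridge $\alphaup_{\ell-r_2}$ yields $\stt(\alphaup_{\ell-r_2})-\alphaup_{\ell-r_2}\equiv\alphaup_\ell\pmod{2}$, forcing $\etaup_{\omegaup}(\alphaup_\ell)=1$; in the maximal case ($2r_1=\ell-r_2$, $r_2\geq 1$) the bridge $\alphaup_{\ell-r_2}=\alphaup_{2r_1}$ produces instead $\stt(\alphaup_{2r_1})-\alphaup_{2r_1}\equiv\alphaup_{2r_1-1}+\alphaup_\ell\pmod{2}$, merely linking the two common values so that $\lambdaup\in\{\pm 1\}$ remains free.

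The extension from $\Bz^{\stt,\natural}_{\bullet}(C)$ to all of $\Rd{\bullet}{\stt,\natural}$ is immediate from the expansion $2\e_m=2\alphaup_m+\cdots+2\alphaup_{\ell-1}+\alphaup_\ell$ for the remaining long roots and from the fact that each short $\textsc{A}_1$ component in $\Rd{\bullet}{\stt,\flat}$ consists of a single pair $\pm\alphaup_{2i-1}$. For the existence statement in (4), I would exploit the concrete description
\[
(\Z[\Rad(\textsc{C}_\ell)])^{*}=\{(x_1,\ldots,x_\ell)\mid \text{all } x_i\in\Z \text{ or all } x_i\in\tfrac{1}{2}+\Z\};
\]
a global half-integer shift realises $\lambdaup=-1$ on $\alphaup_\ell$, while independent integer adjustments of the differences $x_j-x_{j+1}$ let us prescribe $\etaup_{\omegaup}$ on the short imaginary simples of the $\textsc{C}_{r_2}$ block arbitrarily, and the already-established linked equalities guarantee~\eqref{eq4.9}. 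I expect the main obstacle to be the careful bookkeeping of the bridge $\alphaup_{\ell-r_2}$ and the handling of the degenerate parameter values ($r_2\in\{0,1\}$ or $r_1=0$); once the correct formulas for $\stt$ on each complex simple are established, the remainder reduces to linear algebra modulo~$2$.
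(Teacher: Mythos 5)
Your proposal is correct and follows essentially the same route as the paper's proof: reduce to the standard representative $\texttt{S}_{r_{1},r_{2}}(\textsc{C}_{\ell})$, take the canonical basis as an $S$-chamber, compute $\stt$ on the complex simple roots (your formulas agree with the paper's, whose parameters $(h,k)$ correspond to $(r_{1},\ell{-}r_{2}{+}1)$, including the distinction between the non-maximal endpoint and the maximal ``bridge'' relation $\stt(\alphaup_{2r_{1}}){-}\alphaup_{2r_{1}}\equiv\alphaup_{2r_{1}-1}{+}\alphaup_{\ell}\pmod{2}$), and read off the constraints on $\etaup$ from criterion \eqref{eq4.9}. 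The only differences are expository: you make explicit the extension from $\Bz_{\bullet}^{\stt,\natural}(C)$ to all of $\Rad_{\,\;\bullet}^{\stt,\natural}$ and the construction of an $\omegaup$ in the dual lattice realizing the admissible sign patterns, both of which the paper leaves implicit in ``the statement is then a consequence of \eqref{eq4.9}.''
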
 
\begin{proof} If $\Rad_{\,\;\bullet}^{\stt}$ contains the roots $2\e_{i},2\e_{j}$ with $1{\leq}i{<}j{\leq}\ell,$
then it contains also ${\pm}\e_{i}{\pm}\e_{j}$. This implies [1] and [2]. \par
We can assume that the canonical basis
\begin{equation}\label{bC}
 \Bz(\textsc{C}_{\ell})=\{\e_{i}{-}\e_{i+1}\,{\mid}\,1{\leq}i{<}\ell\}\,{\cup}\,\{\alphaup_{\ell}{=}2\ell\}
\end{equation}
is the basis of simple positive roots of an
$S$-chamber $C$ for $\stt$ and that, with a pair of indices $h,k$ with $0{\leq}h{\leq}2h{\leq}k{-}1{\leq}\ell,$ 
we have
\begin{equation*}
 \Bz_{\bullet}^{\stt,\flat}(C)\,{=}\,\{\alphaup_{2i-1}\,{\mid}\,1{\leq}i{\leq}h\},\quad
 \Bz_{\bullet}^{\stt.\sharp}(C)\,{=}\,\{\alphaup_{i}\,{\mid}\,k{\leq}i{\leq}\ell\}
\end{equation*}
[if $h{=}0$ the first, if $k{=}\ell{+}1$ the second set are empty.]
Then 
\begin{equation*}
 \Bz_{\star}^{\stt}(C)=\{\alphaup_{2i}\,{\mid}\,1{\leq}i{\leq}h\}\,{\cup}\,\{\alphaup_{k-1}\}
\end{equation*}
and we get 
\begin{equation*} 
\begin{cases}
 \stt(\alphaup_{2i})=\alphaup_{2i-1}{+}\alphaup_{2i}{+}\alphaup_{2i{+}1}, \;\;\text{if $1{\leq}i{<}h$,}\\
 \stt(\alphaup_{2h})= 
\begin{cases}
 \alphaup_{2h-1}{+}\alphaup_{2h},\; \text{if $2h{<}k{-}1$,}\\
 \alphaup_{2h-1}{+}\alphaup_{2h}{+}\alphaup_{\ell}{+}2{\sum}_{i=k}^{\ell-1}\alphaup_{i},\;\text{if $2h{=}k{-}1$},
\end{cases}\\
\stt(\alphaup_{k-1}){=}\alphaup_{k-1}{+}\alphaup_{\ell}{+}2{\sum}_{i=k}^{\ell-1}\alphaup_{i},\;\text{if $2h{<}k{-}1$.}
\end{cases}
\end{equation*}
Note that $2h{=}k{-}1$ if and only if $\Btt$ is maximal.
The statement is then a consequence of \eqref{eq4.9}. 
\end{proof} 
\begin{cor}
 The non-compact and non-split real forms of type $\textsc{C}_{\ell}$ ($\ell{\geq}3$)
 are parametrized by the equivalence classes of the maximal systems of strongly orthogonal roots
 containing short roots 
\begin{equation*} 
 \Mtt_{p}=\begin{cases}\{\e_{2i-1}{-}\e_{2i}\,{\mid}\! 
\begin{smallmatrix}
 i=1,\hdots,p
\end{smallmatrix}\!\}\,{\cup}\,\{2\e_{i}\,{\mid}\! 
\begin{smallmatrix}
 i=2p{+}1,\hdots,\ell
\end{smallmatrix}\!\}, & 1{\leq}p{\leq}(\ell{-}1)/2,\\
\{\e_{2i-1}{-}\e_{2i}\,{\mid}\! 
\begin{smallmatrix}
 i=1,\hdots,p
\end{smallmatrix}\!\}, & \ell{=}2p.
\end{cases}
\end{equation*}
Then we have $\stt\,{=}\,\sq_{\Mtt_{p}}$ and 
\begin{equation*} \Rd{\bullet}{\sigmaup}
=\begin{cases}\begin{aligned}
\{{\pm}(\e_{2i-1}{-}\e_{2i})\,{\mid}\! 
\begin{smallmatrix}
 i=1,\hdots,p
\end{smallmatrix}\!\}\,{\cup}\,\{{\pm}2\e_{i}\,{\mid}\! 
\begin{smallmatrix}
 i=2p{+}1,\hdots,\ell
\end{smallmatrix}\!\}\,{\cup}\,
\{{\pm}\e_{i}{\pm}\e_{j}\,{\mid}\! 
\begin{smallmatrix}
 2p{<}i{<}j{\leq}\ell
\end{smallmatrix}\!\}, \\
 1{\leq}p{\leq}(\ell{-}1)/2,
 \end{aligned}\\
\{{\pm}(\e_{2i-1}{-}\e_{2i})\,{\mid}\! 
\begin{smallmatrix}
 i=1,\hdots,p
\end{smallmatrix}\!\}, \qquad\qquad\qquad\qquad\qquad\quad \ell{=}2p,
\end{cases}
\end{equation*}
with $\gs\,{\simeq}\,\spt(p,\ell{-}p)$.
\end{cor}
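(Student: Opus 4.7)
The plan is to combine Theorem~\ref{t3.16}, Proposition~\ref{p2.7}, Lemma~\ref{l3.20}, and the classification framework of Section~\ref{s3} (especially Theorems~\ref{t4.12} and~\ref{t4.14}) to enumerate non-compact, non-split real forms of type $\textsc{C}_\ell$. Since $\Ib^{*}(\Rad(\textsc{C}_\ell))\,{=}\,\{\id\}$ by Theorem~\ref{t3.16}, every involution has the form $\stt\,{=}\,\sq_{\Btt}$ for a strongly orthogonal system $\Btt$ (Proposition~\ref{p2.7}); the $\Wf$-classes are indexed by pairs $(r_{1},r_{2})$ with $2r_{1}{+}r_{2}{\leq}\ell$. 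The trivial system $\Btt\,{=}\,\emptyset$ yields the split form $\spt_{\ell}(\R)$, and the all-long maximal system $\Btt\,{=}\,\{2\e_{i}\,{\mid}\,1{\leq}i{\leq}\ell\}$ (so $\stt\,{=}\,\att$) with all simple imaginary roots declared compact via Lemma~\ref{l3.20}(4) yields the compact form $\spt(\ell)$; both are excluded from the enumeration.

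For each maximal $\Btt\,{=}\,\Mtt_{p}$ containing a short root ($1{\leq}p{\leq}\lfloor\ell/2\rfloor$), Lemma~\ref{l3.20}(4) with $\lambdaup\,{=}\,1$ and $\Phi\,{=}\,\emptyset$ produces an $\etaup\,{\in}\,\Homs$ constant equal to $+1$ on $\Bz_{\bullet}^{\stt}(C)$. The corresponding $\sigmaup\,{\in}\,\Invs$ then satisfies $\Bz_{\bullet}^{\sigmaup}(C)\,{=}\,\Bz_{\bullet}^{\stt}(C)$, and by Proposition~\ref{p2.32} (transitivity of the Weyl group of $\Rd{\bullet}{\stt}$ on its bases, compatible with the compact/noncompact dichotomy since $\sigmaup$ commutes with reflections in imaginary roots) this equality propagates to $\Rd{\bullet}{\sigmaup}\,{=}\,\Rd{\bullet}{\stt}$ and $\Rd{\oast}{\sigmaup}\,{=}\,\emptyset$. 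The explicit formulas for $\Rd{\bullet}{\stt}$ stated in the corollary are obtained by intersecting the linear span of $\Mtt_{p}$ with $\Rad(\textsc{C}_{\ell})$, listing roots of the form $\pm(\e_{2i-1}{-}\e_{2i})$ ($i{\leq}p$), $\pm 2\e_{i}$ ($i{>}2p$), and $\pm\e_{i}{\pm}\e_{j}$ ($2p{<}i{<}j{\leq}\ell$).

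To identify $\gs\,{\simeq}\,\spt(p,\ell{-}p)$, I apply the dimension formula \eqref{e4.14}. For $1{\leq}p{\leq}(\ell{-}1)/2$, $\text{span}(\Mtt_{p})$ is $(\ell{-}p)$-dimensional, so $\dim\hst^{+}\,{=}\,p$; the real roots are $\pm(\e_{2i-1}{+}\e_{2i})$ for $i{\leq}p$, giving $n_{1}\,{=}\,p$, and by construction $n_{3}^{\pt}\,{=}\,0$. A direct count yields $\dim_{\R}\ps\,{=}\,p\,{+}\,p\,{+}\,2n_{2}\,{=}\,4p(\ell{-}p)$, which matches $\dim_{\R}\ps$ for $\spt(p,\ell{-}p)$ and uniquely identifies this real form among those of $\spt_{2\ell}(\C)$, since the function $p'{\mapsto}4p'(\ell{-}p')$ is strictly monotone on $[0,\ell/2]$ and differs from the split value $\ell(\ell{+}1)$. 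The case $\ell\,{=}\,2p$ is handled identically.

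The main obstacle is verifying that the $\Mtt_{p}$'s give an irredundant and complete list, i.e., no other $\Btt$-class produces a new non-compact, non-split real form up to $\Aut_{\C}(\gt,\hg)$-conjugation. For this, given any such $(\gs,\hst)$, I invoke iterated Cayley transforms (Proposition~\ref{p4.10}) in the direction of noncompact imaginary roots strongly orthogonal to the current $\Btt$ to reach a maximally split Cartan subalgebra: the terminal $\Btt'$ must be a maximal strongly orthogonal system, since otherwise some noncompact imaginary root could still be absorbed, and it must contain a short root, since the all-long maximal case is $\stt'\,{=}\,\att$ (reserved for the compact Cartan of another representative of the same real form class). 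Hence $\Btt'\,{=}\,\Mtt_{p}$ for some $p$, and Theorem~\ref{t4.14} together with the signature computation above confirms that distinct $\Mtt_{p}$'s yield non-conjugate pairs.
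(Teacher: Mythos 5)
Your overall strategy is the one the paper intends: the corollary is meant to drop out of Lemma~\ref{l3.20} combined with Theorems~\ref{t4.12} and~\ref{t4.14}, and your enumeration of the classes $\Mtt_{p}$, your computation of $\Rd{\bullet}{\stt}$ as the set of roots lying in the span of $\Mtt_{p}$, the dimension count $\dim_{\R}\ps\,{=}\,4p(\ell{-}p)$ via \eqref{e4.14}, and the exclusion of the split and compact forms are all correct. The completeness step is also essentially right, although the one-line justification that a terminal non-maximal $\Btt'$ "could still absorb a noncompact root" secretly uses Lemma~\ref{l3.20}(3) together with Remark~\ref{rm4.7}: for a nonempty non-maximal $\Btt'$ every $\etaup\,{\in}\,\Hom_{\stt'}(\Z[\Rad],\Z_{2}^{*})$ is $1$ on $\Rad_{\,\;\bullet}^{\stt',\natural}$, while the quasi-split base point is noncompact there, so $\Rd{\oast}{\sigmaup'}\,{\neq}\,\emptyset$ is forced and only $\Btt'\,{=}\,\emptyset$ (split) or a maximal $\Btt'$ can terminate the Cayley reduction.

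There are, however, two steps that do not work as written. First, you infer from "Lemma~\ref{l3.20}(4) with $\lambdaup\,{=}\,1$, $\Phi\,{=}\,\emptyset$ gives $\etaup\,{\equiv}\,1$ on $\Bz_{\bullet}^{\stt}(C)$" that "the corresponding $\sigmaup$ has all simple imaginary roots compact". This conflates an element of the group $\Homs$ with the compactness pattern of an anti-involution: $\Homs$ acts on $\Invs$ by $\sigmaup_{0}\mapsto\psiup_{\etaup}\circ\sigmaup_{0}$, with $f_{\psiup_{\etaup}\circ\sigmaup_{0}}(\alphaup)\,{=}\,\etaup(\alphaup)f_{\sigmaup_{0}}(\alphaup)$, so $\etaup\,{\equiv}\,1$ changes nothing and what must be shown is that $f_{\sigmaup_{0}}|_{\Bz_{\bullet}^{\stt}(C)}$ itself is an achievable pattern, i.e.\ constant on $\Bz_{\bullet}^{\stt,\natural}(C)$. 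This is exactly where Remark~\ref{rm4.7} enters: for the quasi-split lift of $\sq_{\Mtt_{p}}$ from Thm.\ref{t3.15} every root of $\Mtt_{p}$ is noncompact, and $\Bz_{\bullet}^{\stt,\natural}(C)\,{=}\,\{\alphaup_{1},\alphaup_{3},\hdots,\alphaup_{2p-1},\alphaup_{\ell}\}\,{\subset}\,\Mtt_{p}$, so $f_{\sigmaup_{0}}$ is constant (${=}{-}1$) there and Lemma~\ref{l3.20}(4) supplies an $\etaup$ with $\etaup\,{=}\,f_{\sigmaup_{0}}$ on $\Bz_{\bullet}^{\stt}(C)$; then $\sigmaup\,{=}\,\psiup_{\etaup}\circ\sigmaup_{0}$ has every simple imaginary root compact. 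Second, the propagation from $\Bz_{\bullet}^{\sigmaup}(C)\,{=}\,\Bz_{\bullet}^{\stt}(C)$ to $\Rd{\oast}{\sigmaup}\,{=}\,\emptyset$ does not follow from Proposition~\ref{p2.32} (Weyl transitivity on bases does not preserve the compact/noncompact partition, cf.\ Example~\ref{ex4.8}); the correct mechanism is that $f_{\sigmaup}$ is multiplicative on the imaginary root subsystem, since in \eqref{eq4.2} one has $N_{\stt(\alphaup),\stt(\betaup)}\,{=}\,N_{-\alphaup,-\betaup}\,{=}\,N_{\alphaup,\betaup}$ for $\alphaup,\betaup\,{\in}\,\Rd{\bullet}{\stt}$, so compactness of a basis of $\Rd{\bullet}{\stt}$ forces compactness of every imaginary root. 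With these two repairs (and granting the mild circularity of identifying $\gs$ by $\dim_{\R}\ps$ rather than by its Satake diagram), your proof is complete.
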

\subsubsection*{Type $\textsc{D}_{\ell}$}
In the following Lemma we will consider the
irreducible  system $\Rad(\textsc{D}_{\ell})$
of type $\textsc{D}_{\ell}$.
Let us fix some notation.
Set 
\begin{equation*}
 \Btt'_{r}= 
\begin{cases}
 \emptyset, & r=0,\\
 \{\e_{2i-1}{-}\e_{2i}\,{\mid}\, 
\begin{smallmatrix}
 1{\leq}i{\leq}r
\end{smallmatrix}\!\}, & 1{\leq}r{\leq}[\ell/2],
\end{cases}\quad \Btt''_{r}= 
\begin{cases}
 \emptyset, & r=0,\\
 \{\e_{i}\,{\mid} 
\begin{smallmatrix}
 \ell{-}r{+}1\leq{i}\leq\ell
\end{smallmatrix}\!\}, & 1{\leq}r{\leq}\ell.
\end{cases}
\end{equation*}
By Thm.\ref{t2.26} the involutions 
 of $\Rad(\textsc{D}_{\ell})$ are pa\-ram\-e\-tri\-zed by
a pair of nonnegative integers $r_{1},r_{2}$ with $2r_{1}{+}r_{2}{\leq}\ell$.
We can take  representatives 
\begin{equation} \label{e4.24}
\sq_{r_{1},r_{2}}=\sq_{\vq_{1}}{\circ}\cdots{\circ}\sq_{\vq_{r_{1}+r_{2}}},
\;\;\text{with}\;\;\{\vq_{1},\hdots,\vq_{r_{1}+r_{2}}\}\,{=}\, 
\Btt_{r_{1},r_{2}}=\Btt'_{r_{1}}\cup\Btt''_{r_{2}}.
\end{equation}
The canonical basis 
\begin{equation}\label{bD}
 \Bz(\Rad(\textsc{D}_{\ell}))
 \,{=}\,\{\alphaup_{i}{=}\e_{i}{-}\e_{i+1}\mid 1{\leq}i<\ell\}\cup\{\alphaup_{\ell}{=}\e_{\ell-1}{+}\e_{\ell}\}, \end{equation}
consists  of the simple positive roots of an $S$-chamber $C$ for $\stt_{{r_{1},r_{2}}}$. Set 
{\small
\begin{align*}
 &\Bz'_{\bullet}(r)= 
\begin{cases}
 \emptyset, & r=0,\\
 \{\alphaup_{2i-1}\,{\mid}\, 
\begin{smallmatrix}
 1{\leq}i{\leq}r
\end{smallmatrix}\!\}, & 1{\leq}r{\leq}[\ell/2],
\end{cases} && \Bz''_{\bullet}(r)= 
\begin{cases}
 \emptyset, & r=0,1,\\
 \{\e_{i}\,{\mid} 
\begin{smallmatrix}
 \ell{-}r{+}1\leq{i}\leq\ell
\end{smallmatrix}\!\}, & 2{\leq}r{\leq}\ell,
\end{cases}
\\[6pt]
& \Bz'_{\star}(r)= 
\begin{cases}
 \emptyset, & r=0,\\
 \{\alphaup_{2i}\,{\mid}\, 
\begin{smallmatrix}
 1{\leq}i{\leq}r
\end{smallmatrix}\!\}, & 1{\leq}r{\leq}\tfrac{\ell-2}{2},\\
\{\alphaup_{2i}\,{\mid} 
\begin{smallmatrix}
 1{\leq}i{\leq}\tfrac{\ell-1}{2}
\end{smallmatrix}\!\}\,{\cup}\,\{\alphaup_{\ell}\}, & r=\tfrac{\ell-1}{2},\\
\{\alphaup_{2i}\,{\mid} 
\begin{smallmatrix}
 1{\leq}i{\leq}\tfrac{\ell-2}{2}
\end{smallmatrix}\!\}, & r=\tfrac{\ell}{2},
\end{cases} && \Bz''_{\star}(r)= 
\begin{cases}
 \emptyset, & r=0,\ell\\
 \{\alphaup_{\ell-1},\alphaup_{\ell}\}, & r=1,\\
 \{\alphaup_{\ell-r}\}, & 2{\leq}r{<}\ell.
\end{cases}
\end{align*}}
Then 
\begin{equation}\label{eq4.26}
 \Bz_{\bullet}^{\sq_{r_{1},r_{2}}}(C)=\Bz_{\bullet}'(r_{1})\cup\Bz''_{\bullet}(r_{2})\;\;\text{and}\;\;
 \Bz_{\star}^{\sq_{r_{1},r_{2}}}(C)=\Bz'_{\star}(r_{1})\cup\Bz''_{\star}(r_{2}).
\end{equation}
\begin{lem}[Type $\textsc{D}_{\ell}$] Let $\sq_{r_{1},r_{2}}$ be defined by \eqref{e4.24}. 
\begin{itemize}
\item If $2r_{1}{+}r_{2}{<}\ell$, then 
\begin{equation*}
 \etaup(\alphaup_{2i-1})=1,\;\;\forall \, 1{\leq}i{\leq}r_{1}.
\end{equation*}
\item If $2r_{1}{+}r_{2}{<}\ell$ and $r_{2}{=}2$, then 
\begin{equation*}
 \etaup(\alphaup_{\ell-1})=\etaup(\alphaup_{\ell})=\lambdaup\in\{{\pm}1\},\;\;\forall \etaup\,{\in}\,\Homs
\end{equation*}
and both cases occur. 
\item If $2r_{1}{+}r_{2}{<}\ell$ and 
$r_{2}{\geq}3,$ then 
\begin{equation*} 
 \etaup(\alphaup_{\ell-1})=\etaup(\alphaup_{\ell})
\;\; \forall \etaup\in\Homs
\end{equation*}
 and 
for every choice of $\lambdaup_{i}\,{=}\,{\pm}1$ we can find $\etaup\,{\in}\,\Homs$ such that 
\begin{equation*}  \etaup(\alphaup_{i})=\lambdaup_{i}, \; \forall\,i\;\text{with}\; \ell{-}r_{2}{+}1{\leq}i{\leq}\ell{-}1.
\end{equation*}
\item If $2r_{1}{=}\ell$, then 
\begin{equation*}
 \etaup(\alphaup_{2i-1})=\lambdaup\,{\in}\,\{{\pm}1\},\;\;\forall 1{\leq}i{\leq}r_{1},\;\; \forall\etaup\,{\in}\,\Homs
\end{equation*}
and both cases may occur.
\item If $2r_{1}{+}1{=}\ell$ and $r_{2}{=}1$, then 
\begin{equation*}
 \etaup(\alphaup_{2i-1})=\lambdaup\,{\in}\,\{{\pm}1\},\;\;\forall\etaup\,{\in}\,\Homs
\end{equation*}
and both cases may occur.
\item If $2r_{1}{+}r_{2}{=}\ell$ and $2{\leq}r_{2}{<}\ell$, then 
\begin{equation*} 
\begin{cases}
 \etaup(\alphaup_{2i-1})=\lambdaup\,{\in}\,\{{\pm}1\},\;\;\forall 1{\leq}i{\leq}r_{1},\\
 \etaup(\alphaup_{\ell-1}){\cdot}\etaup(\alphaup_{\ell})=\lambdaup, 
\end{cases}
\end{equation*}
and both cases of $\lambdaup$ may occur. 
\end{itemize}
\end{lem}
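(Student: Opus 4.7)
The plan is to apply the criterion of Proposition \ref{p4.4}: the character $\etaup_\omegaup$ belongs to $\Homs$ (with $\stt=\sq_{r_1,r_2}$) precisely when $(\alphaup-\sq_{r_1,r_2}(\alphaup)\,|\,\omegaup)\in 2\Z$ for every complex simple root $\alphaup\in\Bz^{\sq_{r_1,r_2}}_\star(C)$. Since $\etaup_\omegaup(\betaup)=(-1)^{(\betaup\,|\,\omegaup)}$, each such condition becomes a sign relation among the values $\etaup(\betaup)$ for $\betaup$ ranging over the imaginary simple roots appearing in the expansion of $\alphaup-\sq_{r_1,r_2}(\alphaup)$ in the basis $\Bz(C)$. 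The entire lemma then reduces to computing these expansions case by case from \eqref{eq4.26} and solving the resulting linear system over $\Z_2$.

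The first step is the interior calculation: for $1\leq i\leq r_1-1$ one has $\sq_{r_1,r_2}(\alphaup_{2i})=\e_{2i-1}-\e_{2i+2}=\alphaup_{2i-1}+\alphaup_{2i}+\alphaup_{2i+1}$, so $\alphaup_{2i}-\sq_{r_1,r_2}(\alphaup_{2i})=-\alphaup_{2i-1}-\alphaup_{2i+1}$ and the chain of equalities $\etaup(\alphaup_1)=\etaup(\alphaup_3)=\cdots=\etaup(\alphaup_{2r_1-1})$ is forced; I call the common value $\lambdaup$. Next I treat the junction root $\alphaup_{2r_1}$. When $2r_1+r_2<\ell$, $\sq_{r_1,r_2}(\alphaup_{2r_1})=\alphaup_{2r_1-1}+\alphaup_{2r_1}$, forcing $\lambdaup=1$ and giving the first bullet. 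When $2r_1+r_2=\ell$, one has $\sq_{r_1,r_2}(\alphaup_{2r_1})=\e_{2r_1-1}+\e_{2r_1+1}$, which I expand via the $\textsc{D}_\ell$ identity
\[
\e_i+\e_j=\alphaup_i+\cdots+\alphaup_{j-1}+2\alphaup_j+\cdots+2\alphaup_{\ell-2}+\alphaup_{\ell-1}+\alphaup_\ell \qquad (i<j\leq\ell-2),
\]
with the obvious variants for $j=\ell-1$ and $j=\ell$. Reducing modulo $2$ this gives $\alphaup_{2r_1}-\sq_{r_1,r_2}(\alphaup_{2r_1})\equiv\alphaup_{2r_1-1}+\alphaup_{\ell-1}+\alphaup_\ell$, i.e. the relation $\lambdaup\cdot\etaup(\alphaup_{\ell-1})\etaup(\alphaup_\ell)=1$ that appears in the sixth bullet.

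The remaining bullets are obtained by analyzing the complex roots of the $r_2$-block. For $r_2=2$ and $2r_1+r_2<\ell$ the single complex root $\alphaup_{\ell-2}$ gives $\sq_{r_1,r_2}(\alphaup_{\ell-2})=\alphaup_{\ell-2}+\alphaup_{\ell-1}+\alphaup_\ell$, yielding $\etaup(\alphaup_{\ell-1})\etaup(\alphaup_\ell)=1$, which is the second bullet. For $r_2\geq 3$ the analogous expansion produces coefficient $2$ on $\alphaup_{\ell-r_2+1},\ldots,\alphaup_{\ell-2}$ and coefficient $1$ on $\alphaup_{\ell-1},\alphaup_\ell$, so modulo $2$ only the relation $\etaup(\alphaup_{\ell-1})=\etaup(\alphaup_\ell)$ survives and the interior values $\etaup(\alphaup_i)$, $\ell-r_2+1\leq i\leq\ell-2$, are free (third bullet). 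The case $r_2=1$ has $\Bz_\bullet''(1)=\emptyset$ but $\Bz_\star''(1)=\{\alphaup_{\ell-1},\alphaup_\ell\}$, which are swapped by $\sq_{r_1,r_2}$; both give the single relation $\etaup(\alphaup_{\ell-2})\etaup(\alphaup_{\ell-1})\etaup(\alphaup_\ell)=1$, and together with the interior chain this forces $\lambdaup=\etaup(\alphaup_{\ell-1})\etaup(\alphaup_\ell)=(-1)^{2x_{\ell-1}}$, realizing both signs according to whether $\omegaup$ has integer or half-integer coordinates (fifth bullet). When $2r_1=\ell$ the junction constraint is absent and $\lambdaup$ is free (fourth bullet).

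To verify realizability I will exhibit explicit weights: the dual lattice $(\Z[\Rad(\textsc{D}_\ell)])^*$ equals $\Z^\ell\cup(\Z+\tfrac{1}{2})^\ell$, so $\omegaup=0$ gives $\lambdaup=1$, the alternating half-weight $\omegaup=\tfrac{1}{2}(\e_1-\e_2+\e_3-\e_4+\cdots)$ realizes $\lambdaup=-1$ whenever it is free, and an integer $\omegaup$ with $x_{\ell-1}+x_\ell$ odd flips $\etaup(\alphaup_{\ell-1})\etaup(\alphaup_\ell)$ when this product is free. The main technical obstacle is the junction case $2r_1+r_2=\ell$ together with the small values $r_2\leq 2$: there the fork at $\alphaup_{\ell-1},\alphaup_\ell$ couples the two end nodes of the $\textsc{D}_\ell$ diagram through the expansion of $\e_i+\e_j$, producing the coupled equalities $\etaup(\alphaup_{\ell-1})\etaup(\alphaup_\ell)=\lambdaup$ that distinguish, in terms of real forms, the several non-isomorphic $\so(p,q)$ with $p+q=2\ell$ sharing the same complex Cartan subalgebra.
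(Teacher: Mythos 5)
Your proposal is correct and follows essentially the same route as the paper: both reduce the lemma to the parity criterion \eqref{eq4.9} applied to the complex simple roots listed in \eqref{eq4.26}, expand $\alphaup-\sq_{r_{1},r_{2}}(\alphaup)$ in the simple basis (the interior chain, the junction root $\alphaup_{2r_{1}}$, and the root $\alphaup_{\ell-r_{2}}$ attached to the $\Btt''$-block), and read off the resulting $\Z_{2}$-relations, including the coupling $\etaup(\alphaup_{\ell-1})\etaup(\alphaup_{\ell})=\lambdaup$ when $2r_{1}{+}r_{2}{=}\ell$. The only addition is that you exhibit explicit elements of $(\Z[\Rad])^{*}$ (the zero weight, the alternating half-integer weight, and an integer weight with $\omegaup_{\ell-1}{+}\omegaup_{\ell}$ odd) to witness that both signs of $\lambdaup$ occur, which the paper asserts without writing out.
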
 
\begin{proof} We will use \eqref{eq4.26} and \eqref{eq4.9}.
\par
If $2r_{1}{+}r_{2}{<}\ell$, 
 then 
\begin{equation*} 
\begin{cases}
 \sq_{r_{1},r_{2}}(\alphaup_{2i})=\alphaup_{2i-1}{+}\alphaup_{2i}{+}\alphaup_{2i+1}, & 1{\leq}i{<}r_{1},\\
 \sq_{r_{1},r_{2}}(\alphaup_{2r_{1}})=\alphaup_{2r_{1}-1}{+}\alphaup_{2r_{1}},
\end{cases}
\end{equation*}
implies 
that $\etaup(\alphaup_{2i-1})\,{=}\,1$ for $1{\leq}i{\leq}r_{1}$. \par 
If $r_{2}\,{=}\,0,1$ there are no further conditions on the $\etaup\,{\in}\,\Homs$.
When $r_{2}{\geq}2$, then from  
\begin{equation*}
 \sq_{r_{1},r_{2}}(\alphaup_{\ell-r_{2}})=\alphaup_{\ell-r_{2}}+2(\alphaup_{\ell-r_{2}+1}{+}
 \cdots{+}\alphaup_{\ell-2})+\alphaup_{\ell-1}{+}\alphaup_{\ell},
\end{equation*}
we obtain that all $\etaup\,{\in}\,\Homs$ have $\etaup(\alphaup_{\ell-1}){=}\etaup(\alphaup_{\ell})$
and, assigning arbitrarily 
 $\lambdaup_{\ell-r_{2}+1},\hdots,\lambdaup_{\ell-2},\lambdaup\,{\in}\,\{{\pm}1\}$ 
there is an $\etaup\,{\in}\,\Homs$ with 
\begin{equation*}
 \etaup(\alphaup_{i})=\lambdaup_{i},\;\;\text{for $\ell{-}r_{2}{+}1{\leq}i{\leq}\ell{-}2$},\;\;
 \etaup(\alphaup_{\ell-1})\,{=}\,\etaup(\alphaup_{\ell})\,{=}\,\lambdaup.
\end{equation*}
\par\smallskip 
Let us consider next the cases in which $2r_{1}{+}r_{2}{=}\ell$. \par
If $\ell$ is even and equals $2r_{1}$, then from 
\begin{equation*}
 \sq_{(\ell/2),0}(\alphaup_{2i})=\alphaup_{2i-1}{+}\alphaup_{2i}{+}\alphaup_{2i+1},\;\;\forall \,
 1{\leq}i{\leq}(\ell/2)-1
\end{equation*} 
we obtain that, for all $\etaup\,{\in}\,\Homs$, 
\begin{equation*}\tag{$*$}
 \etaup(\alphaup_{1})\,{=}\,\cdots\,{=}\,\etaup(\alphaup_{2i-1})\,{=}\,\cdots\,{=}\,
 \etaup(\alphaup_{2r_{1}-1})\,{=}\,\lambdaup\,{\in}\{{\pm}1\}
\end{equation*}
 and both values of $\lambdaup$ are attained.\par
 If $\ell$ is odd and equal to $2r_{1}{+}1$, then we still have $(*)$ and the condition that 
\begin{equation*}
 \sq_{(\ell-1)/2,1}(\alphaup_{\ell-1})=\alphaup_{\ell-2}{+}\alphaup_{\ell},\;\;
 \sq_{(\ell-1)/2,1}(\alphaup_{\ell})=\alphaup_{\ell-2}{+}\alphaup_{\ell-1},
\end{equation*}
shows that both values ${\pm}1$ of $\lambdaup$ may be attained. 
When $r_{2}{\geq}2,$ we have  
\begin{equation*}
 \begin{cases}
 \sq_{r_{1},r_{2}}(\alphaup_{2i})=\alphaup_{2i-1}{+}\alphaup_{2i}{+}\alphaup_{2i+1}, & 1{\leq}i{<}r_{1},\\
 \sq_{r_{1},r_{2}}(\alphaup_{2r_{1}})=\alphaup_{2r_{1}-1}{+}\alphaup_{2r_{1}}{+}2(\alphaup_{2r_{1}+1}{+}
 \cdots{+}\alphaup_{\ell-2})+\alphaup_{\ell_{1}}+\alphaup_{\ell}.
\end{cases}
\end{equation*}
This yields for $\etaup\,{\in}\,\Homs$ 
the conditions 
\begin{equation*} \vspace{-18pt}
\begin{cases}
 \etaup(\alphaup_{2i-1})=\lambdaup\,{\in}\,\{{\pm}1\}, & 1{\leq}i{\leq}r_{1}, \\
 \etaup(\alphaup_{\ell-1}){\cdot}\etaup(\alphaup_{\ell})=\lambdaup.
\end{cases}
\end{equation*}
\end{proof}
\begin{cor} The non-compact and non-split real forms of type $\textsc{D}_{\ell}$ ($\ell{\geq}4$) 
are parametrized by the equivalence classes of the involutions $\sq_{\Btt_{r_{1},r_{2}}}$ with 
\begin{equation*} \left\{
\begin{aligned}
 &\Btt_{0,r}\,{=}\,\{\e_{i}\,{\mid}\,r{\leq}i{\leq}\,\ell\}, && 1{\leq}r{\leq}\ell, &&\gs\,{\simeq}\,\so(\ell{-}r,\ell{+}r)\\
 &\Btt_{r,0}\,{=}\,\{\e_{2i-1}{-}\e_{2i}\,{\mid}\, 1{\leq}i{\leq}r\}, && \ell{=}2r,&&\gs\,{\simeq}\,\su^{*}_{2r}(\Hb)\\
 &\Btt_{r,1}\,{=}\,\{\e_{2i-1}{-}\e_{2i}\,{\mid}\, 1{\leq}i{\leq}r\}\,{\cup}\,\{\e_{\ell}\}, && \ell{=}2r{+}1, &&\gs
 \,{\simeq}\, su^{*}_{2r+1}(\Hb). \qed
\end{aligned}\right.
\end{equation*}
 \end{cor}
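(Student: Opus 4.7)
The plan is to combine Theorem 4.14 with the preceding Lemma on $\Homs$ for type $\textsc{D}_{\ell}$. By Theorem 2.26 the involutions $\stt\,{\in}\,\Ib(\Rad(\textsc{D}_{\ell}))$ are exhausted, modulo equivalence, by $\sq_{r_{1},r_{2}}$ with $2r_{1}{+}r_{2}\,{\leq}\,\ell$; for each such $\stt$ the pairs $(\gs,\hst)$ containing the fixed Cartan $\hst$ are parametrized by $\Homs$, whose values on $\Bz_{\bullet}^{\stt}(C)$ determine, by Lemma 3.3, the isomorphism class of $(\gs,\hst)$.

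First I would enumerate case by case, following the four regimes of the preceding Lemma, which $\etaup\,{\in}\,\Homs$ are compatible with the property that $\Rd{\oast}{\sigmaup}\,{=}\,\emptyset$, i.e.\ $\sigmaup$ defines a quasi-split real form and $\hst$ is a maximally compact Cartan of $\gs$ (so that the real form can be read off from $\Rd{\bullet}{\stt}$). Then I would apply Theorem 4.12 to the remaining $\sigmaup$'s via successive Cayley transforms to obtain a $\sigmaup'\,{\simeq}\,\sigmaup$ with $\Rd{\oast}{\sigmaup'}\,{=}\,\emptyset$ and $\Rd{\bullet}{\sigmaup'}\,{=}\,\Rd{\bullet}{\rhoup(\sigmaup')}$, so that the real form is completely determined by the root subsystem $\Rd{\bullet}{\rhoup(\sigmaup')}$ of $\Rad(\textsc{D}_{\ell})$.

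Second, I would match the residual data against the three listed families. For $\stt\,{=}\,\sq_{0,r}$ with $r\,{\geq}\,1$, the imaginary subsystem $\Rd{\bullet}{\stt}$ is of type $\textsc{D}_{r}$ acting on the last $r$ coordinates, and the lemma shows the orbit in $\Homs$ producing a compact imaginary subsystem is unique; comparison with the standard signature count identifies $\gs\,{\simeq}\,\so(\ell{-}r,\ell{+}r)$. For $\stt\,{=}\,\sq_{r,0}$ with $\ell\,{=}\,2r$, the imaginary subsystem is $r$ disjoint copies of $\textsc{A}_{1}$ spanned by $\{\e_{2i-1}{-}\e_{2i}\}$, and the lemma shows that both global sign-choices of $\etaup$ on these $\textsc{A}_{1}$'s occur; exactly one produces the non-compact form, identified as $\su^{*}_{2r}(\Hb)$ via its Satake diagram (obtained by blackening the odd-indexed nodes). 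The case $\ell\,{=}\,2r{+}1$, $r_{2}\,{=}\,1$ is analogous.

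The main obstacle will be the bookkeeping in step two when $2r_{1}{+}r_{2}\,{<}\,\ell$ and $r_{2}\,{\geq}\,2$: in that regime the lemma produces many inequivalent $\etaup$'s (since $\etaup$ may be chosen freely on $\alphaup_{\ell-r_{2}+1},\hdots,\alphaup_{\ell-2}$), and one must verify via iterated Cayley transforms with respect to a maximal strongly orthogonal subset of $\Rd{\oast}{\sigmaup}$ that all these choices collapse, after reduction to the quasi-split form, to the single real form $\so(\ell{-}r',\ell{+}r')$ for the appropriate $r'$, rather than yielding new classes. This reduction is the direct analogue of what was carried out in the corollaries for $\textsc{B}_{\ell}$ and $\textsc{C}_{\ell}$, and closes the classification once combined with the removal of the compact ($r_{1}\,{=}\,r_{2}\,{=}\,0$) and split cases.
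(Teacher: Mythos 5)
Your proposal takes essentially the same route as the paper, which states this corollary without a separate proof: it is meant to follow from the preceding Lemma on $\Homs$ for type $\textsc{D}_{\ell}$ together with the general machinery already in place (Thm.\,\ref{t2.26} for the list of involutions, Lemma\,\ref{l3.3} and Thm.\,\ref{t4.13} for the isomorphism criterion, and Thm.\,\ref{t4.12} for the Cayley reduction to a $\sigmaup'$ with $\Rd{\oast}{\sigmaup'}\,{=}\,\emptyset$, i.e.\ to a Satake diagram). One terminological slip is worth correcting: $\Rd{\oast}{\sigmaup}\,{=}\,\emptyset$ does not mean that $\gs$ is quasi-split with $\hst$ maximally compact --- it means that the $\Sigma$-diagram is a Satake diagram and that $\hst$ is a maximally \emph{vectorial} Cartan subalgebra of $\gs$ (quasi-splitness is the different condition that $\Rd{\oast}{\sigmaup}$ contain a maximal strongly orthogonal system of $\Rd{\bullet}{\stt}$) --- but since all you actually use is that the real form can then be read off the diagram, the argument is unaffected.
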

\subsubsection*{Type $\textsc{E}_{6}$} 
In the following Lemma
we will discuss  
the exceptional Lie algebras of type $\mathrm{\textsc{E}_{6}}$. We keep the notation of \S\ref{s2.4} and
use the canonical basis 
\eqref{bE6} and \eqref{bE6a}:
\begin{equation*} \begin{aligned}
 &\Bz(\textsc{E}_{6})=\{\alphaup_{1}{=}\zetaup_{\emptyset},\,\alphaup_{2}{=}\e_{1}{+}\e_{2}\}\cup
 \{\alphaup_{i}{=}\e_{i-1}{-}\e_{i-2}\,{\mid}\, i=3,4,5,6\},\\
 & \Bz'(\textsc{E}_{6})=\{\alphaup'_{i}{=}\e_{i}{-}\e_{i+1}\,{\mid}\,1{\leq}i{\leq}5\}\cup\{\alphaup'_{6}{=}\zetaup_{4,5,6,7}\},
 \end{aligned}
\end{equation*}
the first to discuss involutions in the Weyl group, the second for those not belonging to the Weyl group. 
\begin{lem}[Type $\textsc{E}_{6}$] Let $\Rad$ be the irreducible root system
$\Rad(\textsc{E}_{6})$ of type $\textsc{E}_{6}$.  Modulo equivalence, the involutions belonging to the Weyl
group and the corresponding $\etaup$ in $\Homs$ are described by $\stt\,{=}\,\sq_{\Btt}$, with 
\begin{equation*}
\begin{array}{| c | c | c | c | c |}
\hline
\# & \Btt & \Bz_{\bullet}^{\stt}(C) & \Bz_{\star}^{\stt}(C) & \text{conditions} \\
\hline
(1) &\{\alphaup_{3}\} &\{\alphaup_{3}\} & \{\alphaup_{4}\} & \etaup(\alphaup_{3})=1\\
\hline
(2) & \{\alphaup_{3},\alphaup_{6}\} & \{\alphaup_{3},\alphaup_{6}\} & \{\alphaup_{4},\alphaup_{5}\}
& \etaup(\alphaup_{3})=\etaup(\alphaup_{6})=1\\
\hline
(3) & \{\alphaup_{2},\alphaup_{3},\alphaup_{6}\} &  \{\alphaup_{2},\alphaup_{3},\alphaup_{6}\} & 
\{\alphaup_{1},\alphaup_{4},\alphaup_{5}\} & \!\!\etaup(\alphaup_{2}){=}\etaup(\alphaup_{3}){=}\etaup(\alphaup_{6}){=}1\!\!\\
\hline
(4) & \{\e_{1}{\pm}\e_{2},\,\e_{3}{\pm}\e_{4}\} & \{\alphaup_{2},\alphaup_{3},\alphaup_{4},\alphaup_{5}\} & 
\{\alphaup_{1},\alphaup_{6}\} & 
\begin{gathered}
 \etaup(\alphaup_{2}({\cdot}\etaup(\alphaup_{3})=1,\\
  \etaup(\alphaup_{3}({\cdot}\etaup(\alphaup_{5})=1.
\end{gathered}\\
\hline
 \end{array}
\end{equation*}
\par
Let $\Rad$ be the irreducible system $\Rad'(\textsc{E}_{6})$ of type $\textsc{E}_{6}$. 
Modulo equivalence, the involutions not belonging to the Weyl
group  are described by $\stt\,{=}\,\epi{\circ}\sq_{\Btt}$, with
$\epi(\alphaup'_{i})=\alphaup'_{6-i}$ for $1{\leq}i{\leq}5$ and $\epi(\alphaup'_{6})=\alphaup'_{6}$. 
The corresponding $\etaup$ in $\Homs$ are described by the conditions 
in the table below:
\begin{equation*}
\begin{array}{| c | c | c | c | c |}
\hline
\# & \Btt & \Bz_{\bullet}^{\stt}(C) & \Bz_{\star}^{\stt}(C) & \text{conditions} \\
\hline
(5) & \emptyset & \emptyset & \!\!\{\alphaup'_{1},\alphaup'_{2},\alphaup'_{4},\alphaup'_{5}\}\! \!& \emptyset \\
\hline
(6) &\{\alphaup'_{3}\} &\{\alphaup'_{3}\} & \{\alphaup'_{2},\alphaup'_{4},\alphaup'_{6}\} & \etaup(\alphaup'_{3}){=}1 \\
\hline
(7) &\!\! \{\e_{2},\e_{3},\e_{4},\e_{5}\} \!\! & \{\alphaup'_{2},\alphaup'_{3},\alphaup'_{4}\} & \{\alphaup'_{1},\alphaup'_{5},\alphaup'_{6}\}
& \etaup(\alphaup'_{2}){\cdot}\etaup(\alphaup'_{4}){=}1\\
\hline
(8) & \begin{aligned}
\{\e_{1},\e_{2},\e_{3},\;\;\\[-4pt]
\e_{4},\e_{5},\e_{6}\} 
\end{aligned}&  \{\alphaup'_{1},\alphaup'_{2},\alphaup'_{3},\alphaup'_{4},\alphaup'_{5}\} & 
\{\alphaup'_{6}\} & \!\!\etaup(\alphaup'_{1}){\cdot}\etaup(\alphaup'_{3}){\cdot}\etaup(\alphaup'_{5}){=}1\!\!
\\
\hline
(9) & \!\!\begin{aligned}
\{\e_{1},\e_{2},\e_{3},\e_{4},\;\\[-4pt]
\e_{4},\e_{6},\e_{7},\e_{8}\} 
\end{aligned}\!\!& \begin{aligned} \{\alphaup'_{1},\alphaup'_{2},\alphaup'_{3},\alphaup'_{4},\;\;\\[-4pt]
\alphaup'_{5},\alphaup'_{6},\alphaup'_{7},\alphaup'_{8}\}\end{aligned} & 
\emptyset & \emptyset
\\
\hline
 \end{array}
\end{equation*}
\end{lem}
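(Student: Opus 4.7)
The plan is to proceed case by case through the nine equivalence classes, which we already know from Theorem~\ref{t2.26} exhaust $\Ib(\Rad(\textsc{E}_{6}))/{\sim}$. For each $\stt$ we will exhibit an $S\!$-chamber $C$ (guaranteed to exist by Theorem~\ref{t3.10}) whose basis of simple positive roots is the canonical one \eqref{bE6} if $\stt\in\Wf(\Rad)$, or the canonical basis \eqref{bE6a} of $\Rad'(\textsc{E}_{6})$ if $\stt\notin\Wf(\Rad)$. For the latter, Proposition~\ref{p2.34} allows one to assume that the $\Sigma$-chamber is simultaneously an $S\!$-chamber for the special factor $\epi$, so that $\epi$ acts on $\Bz'(C)$ by the diagram involution $\alphaup'_{i}\mapsto\alphaup'_{6-i}$, $\alphaup'_{6}\mapsto\alphaup'_{6}$, while $\sq_{\Btt}$ is a composition of reflections along strongly orthogonal imaginary roots.

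Having fixed $C$, I would then read off $\Bz_{\bullet}^{\stt}(C)$ and $\Bz_{\star}^{\stt}(C)$ directly from the representative $\stt=\epi\circ\sq_{\Btt}$ in Theorem~\ref{t2.26}: a simple root $\alphaup$ is in $\Bz_{\bullet}^{\stt}(C)$ iff $\epi(\alphaup)=\alphaup$ and $\alphaup\in\Btt^{\perp\!\perp}$ is flipped by $\sq_{\Btt}$, while a complex simple root lies in $\Bz_{\star}^{\stt}(C)=\Bz_{\oplus}^{\stt}(C)$. This gives the middle two columns of each table. The $r=0$ case $(5)$ in the second table is the purely special involution $\epi$ with $\Rd{\bullet}{\stt}=\emptyset$; the maximal length cases $(4)$ and $(9)$ correspond to $\stt\in\Ib_{\Wf}^{*}$ and to $\att$ respectively, and will be handled separately.

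The heart of the computation is the last column. By Proposition~\ref{p4.4}(2), an element $\etaup_{\omegaup}\in\Homs$ is determined by the congruences $(\alphaup-\stt(\alphaup)\mid\omegaup)\in 2\Z$ for $\alphaup\in\Bz_{\star}^{\stt}(C)$, and by Proposition~\ref{p1.5} each $\alphaup-\stt(\alphaup)$ is an integer combination of the roots in $\Bz_{\bullet}^{\stt}(C)$, so the congruences translate into multiplicative constraints on the values $\etaup(\betaup)\in\{\pm1\}$, $\betaup\in\Bz_{\bullet}^{\stt}(C)$. For each line of the two tables I would therefore compute $\stt(\alphaup)$ for $\alphaup\in\Bz_{\star}^{\stt}(C)$ using the involution rules displayed after Proposition~\ref{p2.34} (applied to the relevant connected subdiagram) and read off the resulting relations. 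For instance in case $(1)$, $\stt(\alphaup_{4})=\alphaup_{3}+\alphaup_{4}$ yields $\etaup(\alphaup_{3})=1$; in case $(4)$ the subdiagram of type $\textsc{D}_{4}$ around $\alphaup_{3},\alphaup_{4}$ produces the two relations $\etaup(\alphaup_{2})\etaup(\alphaup_{3})=\etaup(\alphaup_{3})\etaup(\alphaup_{5})=1$; in case $(8)$ the single complex simple root $\alphaup'_{6}$ sits inside a $\textsc{D}_{5}$-type subdiagram whose conjugation rule gives $\etaup(\alphaup'_{1})\etaup(\alphaup'_{3})\etaup(\alphaup'_{5})=1$; case $(9)$ is covered by Corollary~\ref{c4.5} and imposes no constraint.

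The main obstacle is bookkeeping in the mixed cases $(6)$, $(7)$, $(8)$: here one must carefully split $\stt=\epi\circ\sq_{\Btt}$ and track how the Weyl reflections in imaginary roots, composed with the outer diagram flip, act on each complex simple root, since the same simple root may be moved by both factors. I would manage this by writing $\stt(\alphaup'_{i})=\epi(\alphaup'_{i})-\sum_{\betaup\in\Btt}\langle\epi(\alphaup'_{i})\mid\betaup\rangle\betaup$ and expanding the right--hand side in the basis $\Bz'(\textsc{E}_{6})$ using the expressions of the roots $\e_{j}$ in $\Btt$ as linear combinations of simple roots; the parity of the coefficients of the $\alphaup'_{k}$'s lying in $\Bz_{\bullet}^{\stt}(C)$ then yields the displayed conditions, and independence of the remaining values $\etaup(\alphaup'_{k})$ is witnessed by exhibiting, for each free sign choice, an explicit $\omegaup\in(\Z[\Rad])^{*}_{\stt}$ realizing it (as in Lemma~\ref{l1.19}). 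Once all nine lines are verified, the lemma follows.
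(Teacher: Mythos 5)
Your strategy coincides with the paper's own proof: for each representative involution one computes $\stt(\alphaup)$ for the complex simple roots $\alphaup\,{\in}\,\Bz_{\star}^{\stt}(C)$ in the chosen $S$-chamber and reads the conditions on $\etaup$ off the parities of the coefficients of the imaginary simple roots via \eqref{eq4.9}, which is exactly what you propose (the paper simply tabulates the nine computations of $\stt(\alphaup)$ and cites \eqref{eq4.9}). One minor slip: in case $(8)$ the relevant subdiagram is the full $\textsc{E}_{6}$ diagram with the single white node $\alphaup'_{6}$, whose rule $\stt(\alphaup'_{6})\,{=}\,\alphaup'_{1}{+}2\alphaup'_{2}{+}3\alphaup'_{3}{+}2\alphaup'_{4}{+}\alphaup'_{5}{+}\alphaup'_{6}$ appears among the paper's listed conjugation rules, not a $\textsc{D}_{5}$ subdiagram --- though the resulting relation $\etaup(\alphaup'_{1})\etaup(\alphaup'_{3})\etaup(\alphaup'_{5})\,{=}\,1$ you state is the correct one.
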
 
\begin{proof} The statement follows from \eqref{eq4.9}, as we get, fo the roots in $\Bz_{\star}^{\stt}(C)$, 
\begin{align*}
 \tag{1} &\stt(\alphaup_{4})=\alphaup_{3}{+}\alphaup_{4},\\
 \tag{2} & 
\begin{cases}
 \stt(\alphaup_{4})=\alphaup_{3}{+}\alphaup_{4},\\
 \stt(\alphaup_{5})=\alphaup_{5}{+}\alphaup_{6}
\end{cases} \\
\tag{3} & 
\begin{cases}
 \stt(\alphaup_{1})=\alphaup_{1}{+}\alphaup_{2},\\
 \stt(\alphaup_{4})=\alphaup_{3}{+}\alphaup_{4},\\
 \stt(\alphaup_{5})=\alphaup_{5}{+}\alphaup_{6},
\end{cases}
\\
\tag{4} & 
\begin{cases}
 \stt(\alphaup_{1})=\alphaup_{1}{+}2(\alphaup_{2}{+}\alphaup_{4}){+}\alphaup_{3}{+}\alphaup_{5},\\
 \stt(\alphaup_{6})=\alphaup_{2}{+}\alphaup_{3}{+}2(\alphaup_{4}{+}\alphaup_{5}){+}\alphaup_{6},
\end{cases}
\end{align*} 
\begin{align*}
 \tag{5} & 
\begin{cases}
 \stt(\alphaup'_{1})=\alphaup'_{5},\\
 \stt(\alphaup'_{2})=\alphaup'_{4},
\end{cases}\\
\tag{6} & 
\begin{cases}
 \stt(\alphaup'_{2})=\alphaup'_{3}{+}\alphaup'_{4},\\
 \stt(\alphaup'_{4})=\alphaup'_{2}{+}\alphaup'_{3},\\
 \stt(\alphaup'_{6})=\alphaup'_{3}{+}\alphaup'_{6},
\end{cases}\\
\tag{7} & 
\begin{cases}
 \stt(\alphaup'_{1})=\alphaup'_{2}{+}\alphaup'_{3}{+}\alphaup'_{4}{+}\alphaup'_{5},\\
  \stt(\alphaup'_{5})=\alphaup'_{1}{+}\alphaup'_{2}{+}\alphaup'_{3}{+}\alphaup'_{4},\\
   \stt(\alphaup'_{6})=\alphaup'_{2}{+}\alphaup'_{4}{+}2\alphaup'_{3}{+}\alphaup'_{6},
\end{cases}
\\
\tag{8} & \stt(\alphaup'_{6})=\alphaup'_{1}{+}2\alphaup'_{2}{+}3\alphaup'_{3}{+}2\alphaup'_{4}{+}\alphaup'_{5}{+}\alphaup'_{6}.
\end{align*}
 
\end{proof}
\begin{cor} The non-compact and non-split or quasi-split 
real forms of type $\textsc{E}_{6}$  correspond to types (4) and (7) above, yielding $\textsc{E\,IV}$ and
$\textsc{E\,III}$, respectively (cf. \cite[Ch.X]{Hel78}).
 \end{cor}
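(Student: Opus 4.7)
The plan is to apply Theorem~4.12 to each $\sigmaup \in \Invs$ arising in the preceding Lemma, performing successive Cayley transforms with respect to noncompact imaginary roots until I obtain a representative $\sigmaup'$ with $\Rd{\oast}{\sigmaup'}=\emptyset$. The resulting Satake diagram is a complete invariant of the isomorphism class of $\gs$, which I will then match against the known list of real forms of $E_6$ (compact, $EI$, $EII$, $EIII$, $EIV$) to identify which types among (1)--(9) yield forms that are neither compact, nor split, nor quasi-split.

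For types (1), (2), (3) (Weyl involutions of length $\leq 3$), the Lemma forces $\etaup(\alpha)=1$ for every $\alpha\in\Bz_{\bullet}^{\stt}(C)$, so by Remark~4.7 each root of a strongly orthogonal system $\Btt$ decomposing $\stt$ lies in $\Rd{\oast}{\sigmaup}$. Iterating Proposition~4.10 through $\Btt$ produces a $\sigmaup''$ with $\Rd{\bullet}{\rhoup(\sigmaup'')}=\emptyset$, and since the underlying involution is in the Weyl group one recovers the split form $EI$ (using Theorem~3.13). For types (5), (6), (8) (outer, with small $\Rd{\bullet}{\stt}$) a similar argument applies: some $\etaup$ permitted by the Lemma makes a maximal strongly orthogonal subset of $\Rd{\bullet}{\stt}$ noncompact; the Cayley reduction then yields a special involution, i.e.\ the quasi-split form $EII$. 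Type~(9) has $\stt=\att$ and by Corollary~4.5 allows all sign choices; the all-positive choice gives the compact form, while the nontrivial choices recover $EII$, $EIII$, $EIV$ with different Cartan subalgebras already covered by (4), (7).

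Types (4) and (7) are the cases in which the reduction to a quasi-split form fails. For type (4) the Lemma imposes $\etaup(\alpha_{2})=\etaup(\alpha_{3})=\etaup(\alpha_{5})=:\lambda$ with $\etaup(\alpha_{4})$ free; when $\lambda=+1$ one recovers the split form, but when $\lambda=-1$ the roots $\alpha_{2},\alpha_{3},\alpha_{5}$ are compact and no maximal strongly orthogonal subset of $\Rd{\bullet}{\stt}$ can be placed in $\Rd{\oast}{\sigmaup}$, so the Cayley procedure terminates at a $\sigmaup'$ with $\Rd{\bullet}{\sigmaup'}$ of type $F_{4}$; a dimension check via \eqref{e4.14} matches $\dim\ks=52$, $\dim\ps=26$, identifying $\gs$ with $EIV=E_{6(-26)}$. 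For type (7), the constraint $\etaup(\alpha'_{2})\cdot\etaup(\alpha'_{4})=1$ similarly forbids a full reduction: the residual $\Rd{\bullet}{\sigmaup'}$ after all admissible Cayley transforms is a $D_{5}$ subsystem, and the dimension count gives $\dim\ks=46=\dim(\so(10)\oplus\R)$, identifying $\gs$ with $EIII=E_{6(-14)}$.

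The principal obstacle is the combinatorial bookkeeping in the last two cases: one must explicitly trace each Cayley transform using the $\Perp$/$\uperp$ dichotomy of Proposition~4.10 to show that no further simplification of $\Rd{\bullet}{\sigmaup'}$ is possible and to recognize the residual root system as $F_{4}$ or $D_{5}$. This can be done directly by enumerating roots (in the spirit of the proof of Theorem~4.12), or alternatively by invoking Theorem~4.14 to compare the pairs $(\Rd{\bullet}{\sigmaup'},\Rd{\oast}{\sigmaup'})$ modulo the Weyl group with the known Satake diagrams of $EIII$ and $EIV$; either route yields the asserted identification.
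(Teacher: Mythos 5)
Your overall strategy --- reduce each lift by Cayley transforms to a $\sigmaup'$ with $\Rd{\oast}{\sigmaup'}\,{=}\,\emptyset$ and identify the resulting Satake diagram, using the dimension formula \eqref{e4.14} as the final arbiter --- is exactly the machinery the paper has set up (Prop.~\ref{p4.10}, Thm.~\ref{t4.12}, Thm.~\ref{t4.13}); the corollary is stated without proof precisely because it is meant to be read off in this way, and your identification of type (4) with $\textsc{E\,IV}$ and type (7) with $\textsc{E\,III}$ is correct. The dimension counts $\dim_{\R}\ks\,{=}\,52$ and $\dim_{\R}\ks\,{=}\,46$ do pin down the forms, since the five real forms of $\textsc{E}_{6}$ have pairwise distinct compact dimensions.

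Two of your intermediate assertions are wrong, however. First, you claim that the Cayley procedure for type (4) terminates with $\Rd{\bullet}{\sigmaup'}$ ``of type $F_{4}$'' and for type (7) with ``a $D_{5}$ subsystem''. By \eqref{e4.15} one has $\Rd{\bullet}{\stt'}\,{=}\,\Rd{\bullet}{\stt}\,{\cap}\,\betaup^{\perp}$, so the imaginary root system can only shrink under Cayley transforms and always remains a subsystem of $\Rd{\bullet}{\stt}$, hence of the simply-laced $\Rad(\textsc{E}_{6})$, which contains no $F_{4}$. The correct residual systems are the full $D_{4}$ spanned by $\alphaup_{2},\alphaup_{3},\alphaup_{4},\alphaup_{5}$ in case (4) and the $A_{3}$ spanned by $\alphaup'_{2},\alphaup'_{3},\alphaup'_{4}$ in case (7): these are the root systems of the semisimple part of $\mathfrak{m}\,{=}\,Z_{\ks}(\at)$, i.e.\ the black part of the Satake diagram, not of the maximal compact subalgebras $\ft_{4}$ and $\so(10)\oplus\R$, which is what you have written down. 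Second, in case (9) you assert that the nontrivial sign choices on the compact Cartan subalgebra ``recover $\textsc{E\,II}$, $\textsc{E\,III}$, $\textsc{E\,IV}$'': since $\textsc{E\,IV}$ has maximal compact subalgebra $\ft_{4}$ of rank $4\,{<}\,6$, it admits no compact Cartan subalgebra and cannot occur in case (9). Neither error propagates to the stated conclusion, because the identification ultimately rests on \eqref{e4.14}, which you apply correctly; but both should be repaired, and in case (4) the analysis should also account for the free sign $\etaup(\alphaup_{4})$ rather than only for $\lambdaup\,{=}\,{\pm}1$.
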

\par\medskip
\par\medskip
\subsubsection*{Type $\textsc{E}_{7}$} We consider the root system $\Rad\,{=}\,\Rad(\textsc{E}_{7})$ 
of type $\textsc{E}_{7}$, described in \S\ref{s2.4}. There we showed that all systems of orthogonal roots
containing $k$ elements are equivalent if $0{\leq}k{\leq}7$ and $k\,{\neq}\,3,4$
and that there are two equivalence classes for $k\,{=}\,3,4$. We recall that the standard basis is 
\begin{equation*}
 \Bz(\textsc{E}_{7})=\{\alphaup_{1}{=}\zetaup_{\emptyset},\alphaup_{2}{=}\e_{1}{+}\e_{2}\}\cup
 \{\alphaup_{i}=\e_{i-1}{-}\e_{i-2}\,{\mid}\, 3{\leq}i{\leq}7\}.
\end{equation*}

\par\bigskip
\begin{lem}[Type $\textsc{E}_{7}$]
 Let $\Rad\,{=}\,\Rad(\textsc{E}_{7})$ be an irreducible system of type $\textsc{E}_{7}$, $\stt\,{\in}\,\Ib(\Rad)$.
 We assume, as we can, that the standard basis $\Bz(\textsc{E}_{7})$ of \ref{bE7} 
 consists of the simple positive roots of an $S$-chamber for $\stt\,{=}\,\sq_{\Btt}$.
Modulo conjugation, we reduce to the following cases: 
\begin{equation*} 
\begin{array}{| c | c | c | c | c |}
\hline
\# & \Btt & \Bz_{\bullet}^{\stt}(C) & \Bz_{\star}^{\stt}(C) & \text{conditions} \\
\hline 
(1) & \{\alphaup_{3}\} & \{\alphaup_{3}\} & \alphaup_{4} & \etaup(\alphaup_{3}){=}1\\
 \hline
 (2) & \{\alphaup_{3},\alphaup_{7}\} & \{\alphaup_{3},\alphaup_{7}\} & \{\alphaup_{4},\alphaup_{6}\} &
 \etaup(\alphaup_{3}){=}\etaup(\alphaup_{7}){=}1\\
 \hline
 (3) & \{\alphaup_{3},\alphaup_{5},\alphaup_{7}\} & \{\alphaup_{3},\alphaup_{7}\} & \{\alphaup_{4},\alphaup_{6}\} &
 \etaup(\alphaup_{3}){=}\etaup(\alphaup_{5}){=}\etaup(\alphaup_{7})\\
 \hline
  (4) & \{\alphaup_{2},\alphaup_{3},\alphaup_{5}\} & \{\alphaup_{2},\alphaup_{3},\alphaup_{5}\} & \{\alphaup_{1},\alphaup_{4},\alphaup_{6}\} & \!\!
 \etaup(\alphaup_{2}){=}\etaup(\alphaup_{3}){=}\etaup(\alphaup_{5}){=}1\!\!\\
 \hline
  (5) & \{\alphaup_{1},\alphaup_{3},\alphaup_{5},\alphaup_{7}\} & \!\!\{\alphaup_{1},\alphaup_{3},\alphaup_{5},\alphaup_{7}\} 
 \!\!& \{\alphaup_{4},\alphaup_{6}\} & \begin{aligned}
 \etaup(\alphaup_{1}){=}
 \etaup(\alphaup_{3}){=}\etaup(\alphaup_{5})\;\;\\[-4pt]
 {=}\etaup(\alphaup_{7}){=}1\end{aligned}\\
 \hline
 (6) & \{\e_{1}{\pm}\e_{2},\e_{3}{\pm}\e_{4}\} &\!\!\{\alphaup_{2},\alphaup_{3},\alphaup_{4},\alphaup_{5}\} 
 & \{\alphaup_{1},\alphaup_{6}\} \!\!& 
 \etaup(\alphaup_{2}){=}\etaup(\alphaup_{3}){=}\etaup(\alphaup_{5})\\
 \hline
  (7) &\!\!\! \begin{aligned}
  \{\e_{1}{\pm}\e_{2},\e_{3}{\pm}\e_{4},\;\;
  \\[-4pt]
  \e_{5}{-}\e_{6}\} \end{aligned}
  & \begin{aligned}
  \{\alphaup_{2},\alphaup_{3},\alphaup_{4},\;\; \\[-4pt]
  \alphaup_{5},\alphaup_{7}\} \end{aligned}\!\!
 & \{\alphaup_{1},\alphaup_{6}\} & \begin{gathered}
\etaup(\alphaup_{3}){\cdot}\etaup(\alphaup_{5}){=}1,\\[-4pt]
\etaup_{2}{\cdot}\etaup(\alphaup_{3}){\cdot}\etaup(\alphaup_{7}){=}1
\end{gathered}\\
 \hline
   (8) &\!\! \begin{aligned}
  \{\e_{1}{\pm}\e_{2},\e_{3}{\pm}\e_{4},\;\;
  \\[-4pt]
  \e_{5}{\pm}\e_{6}\} \end{aligned}\!\!\!
  & \begin{aligned}
  \{\alphaup_{2},\alphaup_{3},\alphaup_{4},\;\; \\[-4pt]
  \alphaup_{5},\alphaup_{6},\alphaup_{7}\} \end{aligned}
 & \{\alphaup_{1}\} & \!\!
\etaup(\alphaup_{2}){\cdot}\etaup(\alphaup_{5}){\cdot}\etaup(\alphaup_{7}){=}1\!\!
\\
 \hline
  (9) &\!\! \begin{aligned}
  \{\e_{1}{\pm}\e_{2},\e_{3}{\pm}\e_{4},\;\;
  \\[-4pt]
  \e_{5}{\pm}\e_{6},\e_{7}{-}\e_{8}\} \end{aligned}\!\!\!
  & \!\!\begin{aligned}
  \{\alphaup_{1},\alphaup_{2},\alphaup_{3},\alphaup_{4},\;\; \\[-4pt]
  \alphaup_{5},\alphaup_{6},\alphaup_{7}\} \end{aligned}\!\!
 & \emptyset & \emptyset
\\
 \hline
\end{array}
\end{equation*}
\end{lem}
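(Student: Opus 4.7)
The plan is to reduce the statement to a finite case analysis built on two ingredients already proved earlier: the classification of systems of strongly orthogonal roots in $\Rad(\textsc{E}_7)$ from \S\ref{s2.4} (nine equivalence classes, namely one class for each $k\in\{0,1,2,5,6,7\}$ plus two classes for $k=3$ and two classes for $k=4$, distinguished by whether $\Kf_4(\alphaup_1,\alphaup_2,\alphaup_3,\alphaup_4)\subset\Wf(\Rad)$), and the characterisation \eqref{eq4.9} of the group $\Homs$ inside $(\Z[\Rad])^{*}$. Since Theorem\,\ref{t2.26} puts every involution $\stt$ in $\Ib(\Rad(\textsc{E}_7))$ in the Weyl group and realises it as $\sq_{\Btt}$ for some strongly orthogonal system $\Btt$, these nine classes precisely match the nine lines of the table.

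First I would choose, in each of the nine equivalence classes, the representative $\Btt$ listed in the table; the representatives in lines $(1)$--$(5)$ are made of roots from $\Bz(\textsc{E}_7)$, while in $(6)$--$(9)$ they are taken among the $\gammaup_i$'s of $\Mtt(\textsc{E}_7)$ in \eqref{maxE7}. In each case I would verify, by a direct inspection of the expansions of positive roots on the basis $\Bz(\textsc{E}_7)$ of \eqref{bE7}, that the standard Weyl chamber $C$ with $\Bz(C)=\Bz(\textsc{E}_7)$ is an $S$-chamber for $\stt=\sq_{\Btt}$; equivalently, using Lemma preceding \eqref{e2.40}, that $\Bz_{\oplus}^{\stt}(C)=\Bz_{\star}^{\stt}(C)$. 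This is immediate because each $\alphaup\in\Bz(C)\setminus\Btt^{\perp}$ has the form $\alphaup-\langle\alphaup|\betaup_j\rangle\betaup_j$ with $\betaup_j\in\Rad^{+}(C)$, so $\stt(\alphaup)\in\Rad^{+}(C)$ as well. The decomposition $\Bz(C)=\Bz_{\circ}^{\stt}(C)\sqcup\Bz_{\bullet}^{\stt}(C)\sqcup\Bz_{\star}^{\stt}(C)$ reported in the table is then read off directly from the $\betaup_j$'s and their supports.

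Next, for every $\alphaup\in\Bz_{\star}^{\stt}(C)$ I would compute the expansion of $\stt(\alphaup)=\alphaup-\sum_j\langle\alphaup|\betaup_j\rangle\betaup_j$ in the basis $\Bz(C)$; the nontrivial contributions come only from those $\betaup_j$'s whose support meets $\alphaup$. For example in $(7)$, the identities $\stt(\alphaup_1)=\alphaup_1+2(\alphaup_2+\alphaup_3+\alphaup_4+\alphaup_5)+\alphaup_7$ and $\stt(\alphaup_6)=\alphaup_2+\alphaup_3+\alphaup_4+\alphaup_5+\alphaup_6$ are straightforward to produce, and the corresponding instance of \eqref{eq4.9'} $(\alphaup-\stt(\alphaup)\mid\omegaup)\in 2\Z$ translates, via the definition $\etaup(\xiup)=(-1)^{(\xiup|\omegaup)}$ in \eqref{e2.13}, exactly into the two relations $\etaup(\alphaup_3)\etaup(\alphaup_5)=1$ and $\etaup(\alphaup_2)\etaup(\alphaup_3)\etaup(\alphaup_7)=1$ listed in line $(7)$. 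Each other line is handled the same way, producing the monomials in the last column. The converse — that any assignment of signs on $\Bz_{\bullet}^{\stt}(C)$ satisfying the indicated monomial relations extends to an $\etaup\in\Homs$ — follows because Proposition \ref{p4.4} reduces the defining condition of $\Hom_{\stt}(\Z[\Rad],\Z_{2}^{*})$ to \eqref{eq4.9} on a single basis of positive simple roots of an $S$-chamber, and because $(\Z[\Rad])^{*}$ surjects onto the space of sign assignments on $\Bz(C)$.

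The genuinely delicate step, and the main obstacle, will be case $(5)$ versus case $(6)$ and case $(3)$ versus case $(4)$: both pairs involve the same number of strongly orthogonal roots but belong to distinct equivalence classes under $\Wf(\Rad(\textsc{E}_7))$, so one has to check that the representative $\Btt$ chosen above really lies in the expected class of Proposition on $k=3,4$-tuples in $\Rad(\textsc{E}_7)$, i.e.~that $\Kf_{4}(\alphaup_1,\alphaup_3,\alphaup_5,\alphaup_7)\subset\Wf(\Rad(\textsc{E}_7))$ while $\Kf_{4}(\e_1{-}\e_2,\e_1{+}\e_2,\e_3{-}\e_4,\e_3{+}\e_4)\not\subset\Wf(\Rad(\textsc{E}_7))$, and analogously for the $3$-element subsystems. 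This is handled by the Corollary \ref{c2.11} and Table\,{I}, which identify the Klein $4$-groups contained in $\Wf(\Rad(\textsc{E}_8))$ and hence, by intersecting with the hyperplane $(\e_7-\e_8)^{\perp}$, inside $\Wf(\Rad(\textsc{E}_7))$. Once this identification is made, the table in the statement follows by assembling the nine explicit calculations.
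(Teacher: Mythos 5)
Your overall strategy is exactly the paper's: since $\Ib^{*}(\Rad(\textsc{E}_{7}))$ is trivial, every involution is $\sq_{\,\Btt}$ for a strongly orthogonal system $\Btt$; one runs through the nine nontrivial equivalence classes of such systems (including the $\Kf_{4}$-criterion separating the two classes for $k=3,4$), expands $\stt(\alphaup)$ in the standard basis for each $\alphaup\in\Bz_{\star}^{\stt}(C)$, and reads the sign conditions on $\Homs$ off \eqref{eq4.9}. The paper's proof is nothing more than the list of these expansions together with an appeal to \eqref{eq4.9}, so in method you and the paper coincide.

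The problem is that your one worked example, case $(7)$, is computed wrongly, and the wrong formulas do not yield the relations you claim they "exactly" translate into. With $\Btt=\{\e_{1}{\pm}\e_{2},\,\e_{3}{\pm}\e_{4},\,\e_{5}{-}\e_{6}\}$ the involution sends $\e_{1},\e_{2},\e_{3},\e_{4}$ to their negatives and swaps $\e_{5}$ with $\e_{6}$, so $\stt(\alphaup_{1})=\alphaup_{1}+(\e_{1}{+}\e_{2})+(\e_{3}{+}\e_{4})=\alphaup_{1}+2(\alphaup_{2}{+}\alphaup_{4})+\alphaup_{3}+\alphaup_{5}$ and $\stt(\alphaup_{6})=\e_{4}{+}\e_{6}=\alphaup_{2}+\alphaup_{3}+2(\alphaup_{4}{+}\alphaup_{5})+\alphaup_{6}+\alphaup_{7}$, not the expressions you wrote. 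Your version of $\stt(\alphaup_{1})$ would force $\etaup(\alphaup_{7})=1$ and your version of $\stt(\alphaup_{6})$ would force $\etaup(\alphaup_{2})\etaup(\alphaup_{3})\etaup(\alphaup_{4})\etaup(\alphaup_{5})=1$, neither of which is the pair $\etaup(\alphaup_{3})\etaup(\alphaup_{5})=1$, $\etaup(\alphaup_{2})\etaup(\alphaup_{3})\etaup(\alphaup_{7})=1$ appearing in the table; the correct expansions give exactly that pair. Since the whole content of the lemma lives in these expansions, each of the nine cases has to be carried out with this level of care. Two smaller points: your enumeration of "nine classes" actually lists ten, because you include $k=0$ (the identity, which is excluded from the table); and the claim that the standard chamber is "immediately" an $S\!$-chamber is not automatic in cases $(6)$--$(9)$, where some $\betaup_{j}$ are non-simple positive roots -- it does hold, but only after checking that $\langle\alphaup\,|\,\betaup_{j}\rangle\leq 0$ for every simple $\alphaup\notin\Btt$ in the chosen representatives.
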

\begin{proof}
The statement is a consequence of \eqref{eq4.9} and the equalities:
\begin{align*}
\tag{1} 
& \stt(\alphaup_{4})\,{=}\,\alphaup_{3}{+}\alphaup_{4},
\\
\tag{2} &\begin{cases}
 \stt(\alphaup_{4}){=}\alphaup_{3}{+}\alphaup_{4},\\
 \stt(\alphaup_{6}){=}\alphaup_{6}{+}\alphaup_{7},
\end{cases}\\
\tag{3} & \begin{cases}
 \stt(\alphaup_{4}){=}\alphaup_{3}{+}\alphaup_{4}{+}\alphaup_{5},\\
 \stt(\alphaup_{6}){=}\alphaup_{5}{+}\alphaup_{6}{+}\alphaup_{7},
\end{cases}\\
\tag{4} & \begin{cases}
 \stt(\alphaup_{1}){=}\alphaup_{1}{+}\alphaup_{2},\\
 \stt(\alphaup_{4}){=}\alphaup_{2}{+}\alphaup_{3}{+}\alphaup_{4}{+}\alphaup_{5},\\
 \stt(\alphaup_{6}){=}\alphaup_{5}{+}\alphaup_{6},
\end{cases}\\
\tag{5} &\begin{cases}
 \stt(\alphaup_{2}){=}\alphaup_{1}{+}\alphaup_{2},\\
 \stt(\alphaup_{4}){=}\alphaup_{3}{+}\alphaup_{4}{+}\alphaup_{5},\\
 \stt(\alphaup_{6}){=}\alphaup_{5}{+}\alphaup_{6}{+}\alphaup_{7},
\end{cases}\\
\tag{6} &\begin{cases}
 \stt(\alphaup_{1}){=}\alphaup_{1}{+}2(\alphaup_{2}{+}\alphaup_{4}){+}2\alphaup_{3}{+}\alphaup_{5},\\
 \stt(\alphaup_{6}){=}\alphaup_{2}{+}\alphaup_{3}{+}2(\alphaup_{4}{+}\alphaup_{5}){+}\alphaup_{6},
\end{cases}\\
\tag{7} 
&\begin{cases}
 \stt(\alphaup_{1}){=}\alphaup_{1}{+}2(\alphaup_{2}{+}\alphaup_{4}){+}\alphaup_{3}{+}\alphaup_{5},\\
 \stt(\alphaup_{6}){=}\alphaup_{2}{+}\alphaup_{3}{+}2(\alphaup_{4}{+}\alphaup_{5}){+}\alphaup_{6}{+}\alphaup_{7},
\end{cases}\\
 \tag{8} & \stt(\alphaup_{1})=\alphaup_{1}{+}3\alphaup_{2}{+}2\alphaup_{3}
{+}4\alphaup_{4}{+}3\alphaup_{5}{+}2\alphaup_{6}{+}\alphaup_{7}.
\end{align*}
\end{proof}
\begin{cor}
The non-compact and non-split or quasi-split 
real forms of type $\textsc{E}_{7}$  correspond to types (3) and (6) above, yielding $\textsc{E\,VI}$ and
$\textsc{E\,VII}$, respectively (cf. \cite[Ch.X]{Hel78}). 
\end{cor}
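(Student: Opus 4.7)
Proof Plan. The strategy is to identify, for each involution $\stt$ listed in the preceding lemma, the real forms $\gs$ arising from its admissible lifts $\sigmaup\,{\in}\,\Invs$ by computing the character $\chi\,{=}\,\dim_{\R}\ps\,{-}\,\dim_{\R}\ks$ via \eqref{e4.14}, and then to match $\chi$ against the four known characters of the real forms of $\textsc{E}_{7}$ (see \cite[Ch.X]{Hel78}): $\chi\,{=}\,{-}133$ (compact), $\chi\,{=}\,{+}7$ (split $\textsc{E\,V}$), $\chi\,{=}\,{-}5$ ($\textsc{E\,VI}$), and $\chi\,{=}\,{-}25$ ($\textsc{E\,VII}$). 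As a preliminary observation, Theorem~\ref{t3.16} gives $\Ib^{*}(\Rad(\textsc{E}_{7}))\,{=}\,\{\id\}$, so Proposition~\ref{p1.23} forces the quasi-split and the split real forms of $\textsc{E}_{7}$ to coincide; hence the non-compact, non-split-or-quasi-split real forms are precisely $\textsc{E\,VI}$ and $\textsc{E\,VII}$, and the task reduces to matching them to types $(3)$ and $(6)$ of the lemma.

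For each $\stt\,{=}\,\sq_{\Btt}$ one has $\ell^{\kt}\,{=}\,|\Btt|$, $\ell^{\pt}\,{=}\,7\,{-}\,|\Btt|$, and $n_{3}\,{=}\,\tfrac{1}{2}\#\Rd{\bullet}{\stt}$; for any admissible $\etaup\,{\in}\,\Homs$ and corresponding $\sigmaup$, formula \eqref{e4.14} reduces to
\[
\chi\,{=}\,(7\,{-}\,2|\Btt|)\,{+}\,2(n_{3}^{\pt}\,{-}\,n_{3}^{\kt}).
\]
Applying this to type $(3)$: $|\Btt|\,{=}\,3$ and $\Rd{\bullet}{\stt}\,{=}\,\{{\pm}\alphaup_{3},{\pm}\alphaup_{5},{\pm}\alphaup_{7}\}$ is of type $3\textsc{A}_{1}$ (so $n_{3}\,{=}\,3$); the admissible choice $\etaup(\alphaup_{3})\,{=}\,\etaup(\alphaup_{5})\,{=}\,\etaup(\alphaup_{7})\,{=}\,{-}1$ turns every imaginary root into a compact one, producing $\Rd{\oast}{\sigmaup}\,{=}\,\emptyset$ and $\chi\,{=}\,1\,{-}\,6\,{=}\,{-}5$, hence $\gs\,{\simeq}\,\textsc{E\,VI}$. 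Applying it to type $(6)$: $|\Btt|\,{=}\,4$ and $\Rd{\bullet}{\stt}$ is of type $\textsc{D}_{4}$ (twenty-four roots, so $n_{3}\,{=}\,12$); the admissible choice with all $\etaup$-values on $\Bz_{\bullet}^{\stt}(C)$ equal to ${-}1$ yields $\Rd{\oast}{\sigmaup}\,{=}\,\emptyset$ and $\chi\,{=}\,{-}1\,{-}\,24\,{=}\,{-}25$, hence $\gs\,{\simeq}\,\textsc{E\,VII}$.

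For the remaining types $(1),(2),(4),(5),(7),(8),(9)$ I would verify that every admissible $\sigmaup$ yields $\chi\,{\in}\,\{{+}7,{-}133\}$, i.e.\ the split or the compact real form. By Theorem~\ref{t4.12}, each such $\sigmaup$ may be transported via successive Cayley transforms to a $\sigmaup'$ with $\Rd{\oast}{\sigmaup'}\,{=}\,\emptyset$; then Theorem~\ref{t4.14} ensures that the $\Wf$-conjugacy class of $\Rd{\bullet}{\sigmaup'}$ uniquely identifies $\gs$. The main obstacle is precisely this case-by-case verification: when $\Rd{\bullet}{\stt}$ has larger or mixed structure (for instance of type $\textsc{A}_{3}$, $\textsc{D}_{4}$, or $\textsc{D}_{6}$ for types $(4),(5),(7),(8)$), one must extend the sign assignment $\etaup$ from $\Bz_{\bullet}^{\stt}(C)$ to all of $\Rd{\bullet}{\stt}$ via the cocycle relation \eqref{eq4.2} and enumerate the resulting Weyl-equivalence classes of pairs $(\Rd{\bullet}{\sigmaup},\Rd{\oast}{\sigmaup})$, verifying that the intermediate characters $\pm 5$ or $\pm 25$ never arise.
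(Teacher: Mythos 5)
Your identification of types $(3)$ and $(6)$ with $\textsc{E\,VI}$ and $\textsc{E\,VII}$ via the character $\chi=\dim_{\R}\ps-\dim_{\R}\ks$ is a legitimate route (the paper leaves this corollary unproved), and the computation for type $(3)$ is complete as it stands: there $\Rd{\bullet}{\stt}=\{\pm\alphaup_{3},\pm\alphaup_{5},\pm\alphaup_{7}\}$ consists only of the simple imaginary roots and their negatives, so flipping the three signs visibly empties $\Rd{\oast}{\sigmaup}$ and $\chi=-5$ follows from \eqref{e4.14}. For type $(6)$, however, $\Rd{\bullet}{\stt}$ is a full $\textsc{D}_{4}$ with $24$ roots, and your assertion that putting $\etaup\equiv{-}1$ on $\Bz_{\bullet}^{\stt}(C)$ yields $\Rd{\oast}{\sigmaup}=\emptyset$ is not justified: whether a given $\etaup$ empties $\Rd{\oast}{}$ depends on the compact/noncompact pattern of the quasi-split lift $\stt^{\sharp}$ on \emph{all} of $\Rd{\bullet}{\stt}$, not only on the simple roots, and that pattern must be computed. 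A cleaner alternative is to match anisotropic kernels: $\textsc{E\,VII}$ is the unique real form whose maximally vectorial Cartan subalgebra has $\Rd{\bullet}{\stt}$ of type $\textsc{D}_{4}$, and $(6)$ is the unique type in the table with that property; for $\textsc{E\,VI}$ (kernel $3\textsc{A}_{1}$) one must still rule out type $(4)$, which the tabulated condition $\etaup(\alphaup_{2})=\etaup(\alphaup_{3})=\etaup(\alphaup_{5})=1$ does, since it prevents the noncompact $\betaup_{i}$ of Remark~\ref{rm4.7} from ever becoming compact.

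The more serious problem is your plan for the remaining types: the statement you propose to verify --- that every admissible $\sigmaup\in\Invs$ for types $(1),(2),(4),(5),(7),(8),(9)$ has $\chi\in\{+7,-133\}$ --- is false. Type $(9)$ is the compact Cartan subalgebra ($\stt=\att$, no conditions on $\etaup$), and every real form of $\textsc{E}_{7}$ contains a compact Cartan subalgebra, so type $(9)$ admits lifts with $\chi=-5$ and $\chi=-25$; likewise the Cartan subalgebras of types $(7)$ and $(8)$ occur inside $\textsc{E\,VI}$ and $\textsc{E\,VII}$, so those types also admit lifts isomorphic to them. The correspondence asserted in the corollary is not ``type $t$ $\leftrightarrow$ all real forms containing a Cartan subalgebra of type $t$'' but ``real form $\leftrightarrow$ the type of its maximally vectorial Cartan subalgebra'', i.e.\ the unique $\stt$ admitting a lift with $\Rd{\oast}{\sigmaup}=\emptyset$ isomorphic to the given form (cf.\ Thm.~\ref{t4.12} and item $(3)$ of the subsection on involutions and $\Sigma$-chambers; compare the corollaries for types $\textsc{A}$--$\textsc{D}$, which record Satake data). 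What must be checked for the remaining types is therefore only the lifts with $\Rd{\oast}{\sigmaup}=\emptyset$: for $(1),(2),(4),(5)$ the conditions force $\etaup=1$ on $\Bz_{\bullet}^{\stt}(C)$ so no such lift exists at all, for $(7)$ and $(8)$ none exists because no real form of $\textsc{E}_{7}$ has anisotropic kernel $\textsc{D}_{4}{+}\textsc{A}_{1}$ or $\textsc{D}_{6}$, and for $(9)$ the unique such lift is the compact form. With the verification reformulated in these terms, your character argument for $(3)$ and $(6)$ completes the proof.
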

\subsubsection*{Type $\textsc{E}_{8}$} 
We consider the root system $\Rad\,{=}\,\Rad(\textsc{E}_{8})$ 
of type $\textsc{E}_{8}$, described in \S\ref{s2.4}. There we showed that all systems of orthogonal roots
containing $k$ elements are equivalent if $0{\leq}k{\leq}8$ and $k\,{\neq}\,4$
and that there are two equivalence classes for $k\,{=}\,4$. We recall that the standard basis is 
\begin{equation*}
 \Bz(\textsc{E}_{8})=\{\alphaup_{1}{=}\zetaup_{\emptyset},\alphaup_{2}{=}\e_{1}{+}\e_{2}\}\cup
 \{\alphaup_{i}=\e_{i-1}{-}\e_{i-2}\,{\mid}\, 3{\leq}i{\leq}8\}.
\end{equation*}
\begin{lem}[Type $\textsc{E}_{8}$]
 Let $\Rad\,{=}\,\Rad(\textsc{E}_{8})$ be an irreducible system of type $\textsc{E}_{8}$, $\stt\,{\in}\,\Ib(\Rad)$.
 We assume, as we can, that the standard basis $\Bz(\textsc{E}_{8})$ of \ref{bE8} 
 consists of the simple positive roots of an $S$-chamber for $\stt$.
Modulo conjugation, we reduce to the following cases: 
 \begin{align*}
 \tag{1}
 &\Bz_{\bullet}^{\stt}(C)\,{=}\,\{\alphaup_{1}\}, && \Bz_{\star}^{\stt}(C)\,{=}\,\{\alphaup_{2}\}, \\ 
\tag{2}
 &\Bz_{\bullet}^{\stt}(C)\,{=}\,\{\alphaup_{1},\alphaup_{3}\}, && \Bz_{\star}^{\stt}(C)\,{=}\,\{\alphaup_{2},\alphaup_{4}\}, 
 \\
\tag{3}
 &\Bz_{\bullet}^{\stt}(C)\,{=}\,\{\alphaup_{3},\alphaup_{5},\alphaup_{7}\}, 
 && \Bz_{\star}^{\stt}(C)\,{=}\,\{\alphaup_{4},\alphaup_{6},\alphaup_{8}\}, \\ 
\tag{4}
 &\Bz_{\bullet}^{\stt}(C)\,{=}\,\{\alphaup_{1},\alphaup_{3},\alphaup_{5},\alphaup_{7}\}, 
 && \Bz_{\star}^{\stt}(C)\,{=}\,\{\alphaup_{2},\alphaup_{4},\alphaup_{6},\alphaup_{8}\}, \\
\tag{5}
 &\Bz_{\bullet}^{\stt}(C)\,{=}\,\{\alphaup_{2},\alphaup_{3},\alphaup_{4},\alphaup_{5}\}, 
 && \Bz_{\star}^{\stt}(C)\,{=}\,\{\alphaup_{1},\alphaup_{6}\}, \\
\tag{6}
 &\Bz_{\bullet}^{\stt}(C)\,{=}\,\{\alphaup_{2},\alphaup_{3},\alphaup_{4},\alphaup_{5},\alphaup_{7}\}, 
 && \Bz_{\star}^{\stt}(C)\,{=}\,\{\alphaup_{1},\alphaup_{6},\alphaup_{8}\}, \\
\tag{7}
&\Bz_{\bullet}^{\stt}(C){=}\{\alphaup_{2},\alphaup_{3},\alphaup_{4},\alphaup_{5},\alphaup_{6},\alphaup_{7}\}, 
&&\Bz_{\star}(C){=}\{\alphaup_{1},\alphaup_{8}\}, \\
\tag{8}
&\Bz_{\bullet}^{\stt}(C){=}\{\alphaup_{1},\alphaup_{2},\alphaup_{3},\alphaup_{4},\alphaup_{5},\alphaup_{6},\alphaup_{7}\}, 
&&\Bz_{\star}(C){=}\{\alphaup_{8}\}
\end{align*}
Accordingly, the following conditions characterize  $\etaup\,{\in}\,\Homs$: 
\begin{align*}
 \tag{1} & \etaup(\alphaup_{1})\,{=}\,1,\\
 \tag{2} & \etaup(\alphaup_{1}){=}\etaup(\alphaup_{3})\,{=}\,1,\\
 \tag{3} &  \etaup(\alphaup_{3}){=}\etaup(\alphaup_{5}){=}\etaup(\alphaup_{7})=1,\\ 
 \tag{4} &  \etaup(\alphaup_{1}){=}\etaup(\alphaup_{3})=\etaup(\alphaup_{5}){=}\etaup(\alphaup_{7})=1,\\ 
 \tag{5} &  \etaup(\alphaup_{2}){=}\etaup(\alphaup_{3}){=}\etaup(\alphaup_{5})\,{=}\,{\pm}1,\\ 
 \tag{6} &  \etaup(\alphaup_{2}){=}\etaup(\alphaup_{3}){=}\etaup(\alphaup_{5})\,{=}\,{\pm}1,\;
 \etaup(\alphaup_{7}){=}1, \\ 
  \tag{7} &  \etaup(\alphaup_{2}){=}\etaup(\alphaup_{3})\,{=}\,{\pm}1,\; \etaup(\alphaup_{2}){\cdot}
  \etaup(\alphaup_{5}){\cdot}\etaup(\alphaup_{7})\,{=}\,1,
  \\ 
   \tag{8} &  \etaup(\alphaup_{3}){\cdot}\etaup(\alphaup_{5}){\cdot}\etaup(\alphaup_{7})=1.\\
\end{align*}
\end{lem}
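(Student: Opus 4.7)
The plan is to mimic, at larger scale, the case analyses already carried out for $\textsc{E}_{6}$ and $\textsc{E}_{7}$. First I would invoke Theorem~\ref{t2.26} to enumerate, up to conjugation in $\Af(\Rad)$, the involutions of $\Rad(\textsc{E}_{8})$, and combine Theorem~\ref{thmSch} with Proposition~\ref{p2.32} to choose for each class a representative $\stt=\sq_{\Btt}$ such that the standard basis $\Bz(\textsc{E}_{8})$ of \eqref{bE8} is the basis of simple roots of an $S$-chamber $C$. The nine conjugacy classes of Theorem~\ref{t2.26} reduce to the eight cases of the statement because, by the Proposition following Corollary~\ref{c2.11}, the two classes of systems of four strongly orthogonal roots that differ by whether or not the associated Klein group lies in $\Wf(\Rad(\textsc{E}_{8}))$ yield isomorphic $(\Rd{\bullet}{\stt},\Rd{\star}{\stt})$ only when one picks the $\textsc{D}_{4}$-spanning class; the other representative is absorbed into case $(3)$ upon conjugation. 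The partitions $\Bz(C)=\Bz_{\circ}^{\stt}(C)\cup\Bz_{\bullet}^{\stt}(C)\cup\Bz_{\star}^{\stt}(C)$ stated in $(1)$--$(8)$ can then be read directly off the corresponding $S$-diagrams.

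Next, for each case I would compute, using the graphical conjugation rules of Section~3 (applied to each subdiagram containing a complex simple root as an edge), the explicit expression of $\stt(\alpha)$ as an integral linear combination of the simple roots of $\Bz(C)$ for every $\alpha\in\Bz_{\star}^{\stt}(C)$. In the low-length cases $(1)$--$(4)$ the black subdiagram is a disjoint union of at most four $\textsc{A}_{1}$'s, so the standard formula $\stt(\alpha_{2i})=\alpha_{2i-1}+\alpha_{2i}+\alpha_{2i+1}$ (or its boundary variants) suffices. In $(5)$--$(7)$ the black subdiagrams are of type $\textsc{D}_{4},\textsc{D}_{5},\textsc{D}_{6}$ and the images of the complex simple roots are computed via the $\textsc{D}_{\ell}$-rule $\stt(\alpha)=\alpha+2(\alpha_{i+1}+\cdots+\alpha_{\ell-2})+\alpha_{\ell-1}+\alpha_{\ell}$; in $(8)$ the black subdiagram is of type $\textsc{E}_{7}$ and the single complex simple root $\alpha_{8}$ satisfies the $\textsc{E}_{7}$-formula already used in the proof of the lemma for $\textsc{E}_{7}$.

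Finally, once every $\stt(\alpha)$ is written out, Proposition~\ref{p4.4}\,(1) reduces the membership of $\etaup_{\omegaup}\in\Homs$ to the congruences
\begin{equation*}
(\alpha-\stt(\alpha)\mid\omegaup)\in 2\Z,\qquad\forall\,\alpha\in\Bz_{\star}^{\stt}(C),
\end{equation*}
and via $\etaup_{\omegaup}(\gamma)=(-1)^{(\gamma|\omegaup)}$ of \eqref{e2.13} these translate directly into multiplicative identities among the values $\etaup(\alpha_{i})$ with $\alpha_{i}\in\Bz_{\bullet}^{\stt}(C)$. Cases $(1)$--$(4)$ yield the purely unipotent conditions $\etaup(\alpha_{i})=1$, because every complex $\alpha$ has an imaginary neighbour with coefficient~$1$; cases $(5)$--$(8)$ yield the sign freedom $\pm 1$ exactly when the only imaginary simple roots appearing in $\stt(\alpha)-\alpha$ do so with even multiplicity.

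The main obstacle is the bookkeeping in cases $(6)$, $(7)$, $(8)$: when $\Rd{\bullet}{\stt}$ has type $\textsc{D}_{5},\textsc{D}_{6},\textsc{E}_{7}$, several of the coefficients of $\stt(\alpha)-\alpha$ are $\geq 2$ and therefore silently vanish modulo~$2$, so one must verify that the surviving congruences are precisely those stated, in particular that in $(7)$ the two conditions $\etaup(\alpha_{2})=\etaup(\alpha_{3})=\etaup(\alpha_{5})={\pm}1$ and $\etaup(\alpha_{2}){\cdot}\etaup(\alpha_{5}){\cdot}\etaup(\alpha_{7})=1$ together account for the two independent relations coming from $\alpha_{1}$ and $\alpha_{8}$, and that in $(8)$ the single residual condition $\etaup(\alpha_{3}){\cdot}\etaup(\alpha_{5}){\cdot}\etaup(\alpha_{7})=1$ indeed encodes $\stt(\alpha_{8})$ modulo~$2$ while leaving $\etaup(\alpha_{1}),\etaup(\alpha_{2}),\etaup(\alpha_{4}),\etaup(\alpha_{6})$ unconstrained.
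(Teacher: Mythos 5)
Your strategy is the same as the paper's: the printed proof consists exactly of listing, for each case, the expansion of $\stt(\alphaup)$ in the basis $\Bz(C)$ for every $\alphaup\,{\in}\,\Bz_{\star}^{\stt}(C)$ and then invoking \eqref{eq4.9}, so that $(\alphaup\,{-}\,\stt(\alphaup)\,|\,\omegaup)\,{\in}\,2\Z$ translates, via \eqref{e2.13}, into the stated multiplicative relations among the $\etaup(\alphaup_{i})$ with $\alphaup_{i}\,{\in}\,\Bz_{\bullet}^{\stt}(C)$. Your second and third paragraphs reproduce this computation (including the correct mod-$2$ reductions in cases $(5)$--$(8)$), and your use of the graphical conjugation rules to obtain the $\stt(\alphaup)$ is just a reorganization of the same arithmetic.

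There is, however, one concrete error in your reduction from the nine nontrivial classes of Theorem~\ref{t2.26} to the eight cases of the statement. You claim that one of the two classes of four strongly orthogonal roots is ``absorbed into case $(3)$ upon conjugation''. That cannot happen: the two classes are not conjugate in $\Af(\Rad(\textsc{E}_{8}))$ (this is precisely the dichotomy $\Kf_{4}\,{\subset}\,\Wf(\Rad(\textsc{E}_{8}))$ versus $\Kf_{4}\,{\not\subset}\,\Wf(\Rad(\textsc{E}_{8}))$ established after Cor.~\ref{c2.11}), and in fact \emph{both} survive in the lemma: case $(4)$, where $\Rd{\bullet}{\stt}$ is four disjoint copies of $\textsc{A}_{1}$, and case $(5)$, where $\Rd{\bullet}{\stt}$ is of type $\textsc{D}_{4}$. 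The class that is actually absent from the list is the length-$8$ involution $\att\,{=}\,\sq_{\e_{1}}{\circ}\cdots{\circ}\sq_{\e_{8}}$, for which $\Bz_{\bullet}^{\stt}(C)\,{=}\,\Bz(C)$, $\Bz_{\star}^{\stt}(C)\,{=}\,\emptyset$, and the condition on $\etaup\,{\in}\,\Homs$ is vacuous; that case is treated separately in \S\ref{s4.5.1}. With this correction the case count comes out right and the rest of your argument goes through unchanged.
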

[Notice, for case $(6)$, that the diagram $\textsc{D}_{4}$ in which the three extreme roots are compact
and the one in the middle is not compact represents the corresponding split Lie algebra.]
\begin{proof}
The statement is a consequence of \eqref{eq4.9} and the equalities:
\begin{align*}
\tag{1} & \begin{cases}
 \stt(\alphaup_{2})\,{=}\,\alphaup_{1}{+}\alphaup_{2},
\end{cases}\\
\tag{2} &\begin{cases}
 \stt(\alphaup_{2}){=}\alphaup_{1}{+}\alphaup_{2},\\
 \stt(\alphaup_{4}){=}\alphaup_{3}{+}\alphaup_{4},
\end{cases}\\
\tag{3} & \begin{cases}
 \stt(\alphaup_{4}){=}\alphaup_{3}{+}\alphaup_{4}{+}\alphaup_{5},\\
 \stt(\alphaup_{6}){=}\alphaup_{5}{+}\alphaup_{6}{+}\alphaup_{7},\\
 \stt(\alphaup_{8}){=}\alphaup_{7}{+}\alphaup_{8},
\end{cases}\\
\tag{4} & \begin{cases}
 \stt(\alphaup_{2}){=}\alphaup_{1}{+}\alphaup_{2},\\
 \stt(\alphaup_{4}){=}\alphaup_{3}{+}\alphaup_{4}{+}\alphaup_{5},\\
 \stt(\alphaup_{6}){=}\alphaup_{5}{+}\alphaup_{6}{+}\alphaup_{7},\\
 \stt(\alphaup_{8}){=}\alphaup_{7}{+}\alphaup_{8},
\end{cases}\\
\tag{5} &\begin{cases}
 \stt(\alphaup_{1}){=}\alphaup_{1}{+}2(\alphaup_{2}{+}\alphaup_{4}){+}\alphaup_{3}{+}\alphaup_{5},\\
 \stt(\alphaup_{6}){=}\alphaup_{2}{+}\alphaup_{3}{+}2(\alphaup_{4}{+}\alphaup_{5}){+}\alphaup_{6},\\
\end{cases}\\
\tag{6} &\begin{cases}
 \stt(\alphaup_{1}){=}\alphaup_{1}{+}2\alphaup_{2}{+}\alphaup_{3}{+}2\alphaup_{4}{+}\alphaup_{5},\\
 \stt(\alphaup_{6}){=}\alphaup_{2}{+}\alphaup_{3}{+}2\alphaup_{4}{+}2\alphaup_{5}{+}\alphaup_{6}{+}\alphaup_{7},\\
 \stt(\alphaup_{8}){=}\alphaup_{7}{+}\alphaup_{8}
\end{cases}\\
\tag{7}
&\begin{cases}
 \stt(\alphaup_{1}){=}\alphaup_{1}{+}3\alphaup_{2}{+}2\alphaup_{3}{+}4\alphaup_{4}
 {+}3\alphaup_{5}{+}2\alphaup_{6}{+}\alphaup_{7},\\
 \stt(\alphaup_{8}){=}\alphaup_{2}{+}\alphaup_{3}{+}2(\alphaup_{4}{+}\alphaup_{5}{+}\alphaup_{6}{+}\alphaup_{7}){+}
 \alphaup_{8},
\end{cases}\\
 \tag{8} & \begin{cases}\stt(\alphaup_{8})=2\alphaup_{1}{+}4\alphaup_{2}{+}3\alphaup_{3}
{+}6\alphaup_{4}{+}5\alphaup_{5}{+}4\alphaup_{6}{+}3\alphaup_{7}{+}\alphaup_{8}.
\end{cases}
\end{align*}
\end{proof}
\par\medskip
\subsubsection*{Type $\textsc{F}_{4}$}
We consider the root system $\Rad\,{=}\,\Rad(\textsc{F}_{4})$ 
of type $\textsc{F}_{4}$, described in \S\ref{s2.4}. There we showed that all systems of 
strongly orthogonal roots
with the same number of short and long roots are equivalent. 
 We recall that the standard basis is 
\begin{equation}\label{bF4}
 \Bz(\textsc{F}_{4})=\big\{\alphaup_{1}{=}\e_{1}{-}\e_{2},\,\alphaup_{2}{=}\e_{2}{-}\e_{3},\, \alphaup_{3}{=}\e_{3},\,
 \alphaup_{4}{=}\tfrac{1}{2}(\e_{4}{-}\e_{1}{-}\e_{2}{-}\e_{3}\big\}.
\end{equation}
\begin{lem}[Type $\textsc{F}_{4}$]
 Let $\Rad\,{=}\,\Rad(\textsc{F}_{4})$ be an irreducible system of type $\textsc{F}_{4}$.
 Modulo equivalence, we reduce to the following cases, 
where $\stt\,{=}\,\sq_{\Btt}$
 for a system of orthogonal roots $\Btt\,{\subset}\,\Rad(\textsc{F}_{4})$,
and 
 the standard basis $\Bz(\textsc{F}_{4})$ of \eqref{bF4} 
 consists of the simple positive roots of an $S$-chamber for $\stt$. 
\begin{equation*} 
\begin{array}{| c | c | c | c | c |} \hline
\#  & \Btt & \Bz_{\bullet}^{\stt}(C) & \Bz_{\star}^{\stt}(C) & \text{conditions} \\
\hline
(1) & \{\alphaup_{4}\} &
\{\alphaup_{4}\} & \{\alphaup_{3}\} & \etaup(\alphaup_{4})=1 \\
 \hline
 (2) &\{\alphaup_{1}\} & \{\alphaup_{1}\} & \{\alphaup_{2}\} & \etaup(\alphaup_{1})=1 \\
 \hline
 (3) & \{\alphaup_{1},\alphaup_{3}\} & \{\alphaup_{1},\alphaup_{3}\} & \{\alphaup_{2},\alphaup_{4}\} & \etaup(\alphaup_{1})=1,\,
\etaup( \alphaup_{3})=1\\ \hline
(4) & \{\e_{2}{\pm}\e_{3}\} & \{\alphaup_{2},\alphaup_{3}\} & \{\alphaup_{1},\alphaup_{4}\} & \etaup(\alphaup_{2})=1 \\
\hline
(5) & \{\e_{1}{\pm}\e_{2},\e_{3}\} & \{\alphaup_{1},\alphaup_{2},\alphaup_{3}\} & \{\alphaup_{4}\} & \etaup(\alphaup_{1}){\cdot}
\etaup(\alphaup_{3})=1\\
\hline
(6) & \{\e_{1}{-}\e_{4},\,\e_{2}{\pm}\e_{3}\} & 
\{\alphaup_{2},\alphaup_{3},\alphaup_{4}\} & \{\alphaup_{1}\} &\etaup(\alphaup_{2})=1\\
\hline
\end{array}
\end{equation*}
\end{lem}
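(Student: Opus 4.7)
The proof follows the template of the preceding lemmas for the other irreducible types. The plan is to enumerate the six equivalence classes of involutions by invoking \textsc{Table II} together with Lemma~\ref{l2.4} to reduce to strongly orthogonal representatives $\Btt$, to verify for each that the canonical basis $\Bz(\textsc{F}_{4})$ of \eqref{bF4} consists of the simple positive roots of an $S$-chamber $C$, and then to apply the characterization \eqref{eq4.5} of $\Homs$ root by root on $\Bz_\star^{\stt}(C)$ in order to derive the tabulated constraints on $\etaup$.

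First I would check completeness. \textsc{Table II} exhibits seven equivalence classes of involutions of $\Rad(\textsc{F}_{4})$; the identity and the antipodal $\att\,{=}\,\sq_{\e_{1}}{\circ}\sq_{\e_{2}}{\circ}\sq_{\e_{3}}{\circ}\sq_{\e_{4}}$ have $\Bz_{\star}^{\stt}(C)\,{=}\,\emptyset$ and impose no condition, leaving exactly the six cases of the statement. For each representative $\Btt$ the subset $\Bz_{\bullet}^{\stt}(C)$ is then read off as the collection of simple roots lying in the $\R$-span of $\Btt$, with $\Bz_{\star}^{\stt}(C)$ its complement in $\Bz(\textsc{F}_{4})$; the partitions so obtained agree with those listed, and the $S$-chamber property follows from \eqref{e2.40} since every element of $\Bz_{\star}^{\stt}(C)$ turns out to lie in $\Bz_{\oplus}^{\stt}(C)$ (equivalently, one may cite Proposition~\ref{p2.34}).

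The main computational step is to produce, for every $\alphaup\,{\in}\,\Bz_{\star}^{\stt}(C)$, the decomposition $\stt(\alphaup)\,{=}\,\alphaup'{+}\alphaup''$ of Proposition~\ref{p1.5}, with $\alphaup'\,{\in}\,\Bz(C)$ and $\alphaup''\,{\in}\,\Z^{+}[\Bz_{\bullet}^{\stt}(C)]$, by successively applying the reflections of $\Btt$ to $\alphaup$. For instance, in case~(1) with $\Btt\,{=}\,\{\alphaup_{4}\}$ one obtains $\stt(\alphaup_{3})\,{=}\,\alphaup_{3}{+}\alphaup_{4}$; in case~(5) with $\Btt\,{=}\,\{\e_{1}{-}\e_{2},\,\e_{1}{+}\e_{2},\,\e_{3}\}$ the successive Cayley reflections give $\stt(\alphaup_{4})\,{=}\,\tfrac{1}{2}(\e_{4}{+}\e_{1}{+}\e_{2}{+}\e_{3})\,{=}\,\alphaup_{4}{+}\alphaup_{1}{+}2\alphaup_{2}{+}3\alphaup_{3}$; and in case~(6) the analogous computation yields $\stt(\alphaup_{1})\,{=}\,\e_{2}{+}\e_{4}\,{=}\,\alphaup_{1}{+}3\alphaup_{2}{+}4\alphaup_{3}{+}2\alphaup_{4}$. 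Imposing $\etaup(\alphaup)\,{\cdot}\,\etaup(\alphaup')\,{\cdot}\,\etaup(\alphaup'')\,{=}\,1$ from \eqref{eq4.5} and using $\etaup(\alphaup_{i})^{2}\,{=}\,1$, each relation collapses to the tabulated multiplicative condition on the $\etaup(\alphaup_{i})$ with $\alphaup_{i}\,{\in}\,\Bz_{\bullet}^{\stt}(C)$: e.g.\ $\etaup(\alphaup_{4}){=}1$ in case~(1), $\etaup(\alphaup_{1}){\cdot}\etaup(\alphaup_{3}){=}1$ in case~(5), and $\etaup(\alphaup_{2}){=}1$ in case~(6).

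The principal obstacle is the bookkeeping of the Cartan integers $\langle\alphaup\,|\,\betaup\rangle$ when $\alphaup$ and $\betaup$ have different lengths, since the asymmetry between short-long pairs (yielding coefficients $\pm 1$ versus $\pm 2$ according to which root is longer) governs the parities of the coefficients of $\alphaup''$ in $\Bz_{\bullet}^{\stt}(C)$, and hence determines which $\etaup(\alphaup_{i})$ are forced to equal $1$ as opposed to being cancelled out as squares. The cases~(1), (3), (5) involving short reflections in $\Btt$ are therefore the most sensitive, whereas cases~(2), (4), (6), in which $\Btt$ consists entirely of long roots, are straightforward variants of the computations already carried out for type $\textsc{D}_{\ell}$.
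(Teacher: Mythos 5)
Your proposal follows essentially the same route as the paper: the proof there consists precisely of listing $\stt(\alphaup)$ for each $\alphaup\,{\in}\,\Bz_{\star}^{\stt}(C)$ and invoking the parity condition \eqref{eq4.9} (equivalently your \eqref{eq4.5}-based relation $\etaup(\alphaup){\cdot}\etaup(\stt(\alphaup))\,{=}\,1$, which kills all even coefficients). Your sample computations are correct; in case (6) you obtain $\stt(\alphaup_{1})\,{=}\,\e_{2}{+}\e_{4}\,{=}\,\alphaup_{1}{+}3\alphaup_{2}{+}4\alphaup_{3}{+}2\alphaup_{4}$, which is in fact the value consistent with the tabulated condition $\etaup(\alphaup_{2})\,{=}\,1$ (the paper's displayed equality $\alphaup_{1}{+}3\alphaup_{2}{+}3\alphaup_{3}{+}2\alphaup_{4}$ is not even a root, so your version corrects an apparent typo). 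One small slip: $\Bz_{\star}^{\stt}(C)$ is \emph{not} the complement of $\Bz_{\bullet}^{\stt}(C)$ in $\Bz(\textsc{F}_{4})$ in cases (1)--(4), where some simple roots are real for $\stt$ (e.g.\ $\alphaup_{1},\alphaup_{2}$ in case (1)); the complement splits as $\Bz_{\circ}^{\stt}(C)\sqcup\Bz_{\star}^{\stt}(C)$, with the complex ones being those neither in $\langle\Btt\rangle_{\R}$ nor orthogonal to it. This does not affect the derived conditions, since real simple roots contribute only the vacuous relation $\etaup(\alphaup)^{2}\,{=}\,1$.
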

\begin{proof}
The statement is a consequence of \eqref{eq4.9} and the equalities:
\begin{align*}
(1) &\quad
 \stt(\alphaup_{3})\,{=}\,\alphaup_{3}{+}\alphaup_{4},
\\
(2) &\quad
 \stt(\alphaup_{2}){=}\alphaup_{1}{+}\alphaup_{2},
\\
(3) &\quad \begin{cases}
 \stt(\alphaup_{2}){=}\alphaup_{1}{+}\alphaup_{2}{+}2\alphaup_{3},\\
 \stt(\alphaup_{4}){=}\alphaup_{3}{+}\alphaup_{4},
\end{cases}\\
(4) &\quad \begin{cases}
\stt(\alphaup_{1}){=}\alphaup_{1}{+}2\alphaup_{2}{+}2\alphaup_{3},\\
 \stt(\alphaup_{4}){=}\alphaup_{2}{+}2\alphaup_{3}{+}\alphaup_{4},
\end{cases}\\ 
(5) &\quad
 \stt(\alphaup_{4}){=}\alphaup_{1}{+}2\alphaup_{2}{+}3\alphaup_{3}{+}\alphaup_{4}
\\
(6) &\quad
 \stt(\alphaup_{1}){=}\alphaup_{1}{+}3\alphaup_{2}{+}3\alphaup_{3}{+}2\alphaup_{4}.
\end{align*}
\end{proof}
\begin{rmk} The $S\!$-diagrams in the different cases are 
\begin{gather*}
(1)\quad \xymatrix @M=0pt @R=7pt @!C=3pt{
 \medcirc \ar@{-}[r] 
 &\medcirc\ar@{=>}[r]
 &\medcirc\ar@{-}[r]
 &\medbullet}\\
 (2)\quad \xymatrix @M=0pt @R=7pt @!C=3pt{
 \medbullet \ar@{-}[r] 
 &\medcirc\ar@{=>}[r]
 &\medcirc\ar@{-}[r]
 &\medcirc }\\
  (3)\quad \xymatrix @M=0pt @R=7pt @!C=3pt{
 \medbullet \ar@{-}[r] 
 &\medcirc\ar@{=>}[r]
 &\medbullet\ar@{-}[r]
 &\medcirc }\\
  (4)\quad \xymatrix @M=0pt @R=7pt @!C=3pt{
 \medcirc \ar@{-}[r] 
 &\medbullet\ar@{=>}[r]
 &\medbullet\ar@{-}[r]
 &\medcirc }\\
(5)\quad \xymatrix @M=0pt @R=7pt @!C=3pt{
 \medbullet \ar@{-}[r] 
 &\medbullet\ar@{=>}[r]
 &\medbullet\ar@{-}[r]
 &\medcirc} \\
(6)\quad \xymatrix @M=0pt @R=7pt @!C=3pt{
 \medcirc \ar@{-}[r] 
 &\medbullet\ar@{=>}[r]
 &\medbullet\ar@{-}[r]
 &\medbullet} \\
\end{gather*}
 
\end{rmk}

\subsubsection*{Type $\textsc{G}_{2}$}
A root system of type $\textsc{G}_{2}$ and its unique, modulo conjugation, maximal system of
strongly orthogonal roots are 
\begin{align*}
 &\Rad\,{=}\,\{{\pm}(\e_{i}{-}\e_{j})\,{\mid}\,(i,j){\in}\Sb_{3;2}\}\,{\cup}\,\{{\pm}(2\e_{i}{-}\e_{j}{-}\e_{k}\,{\mid}\,
 (i,j,k){\in}\Sb_{3}\},\\
& \Btt\,{=}\,\{\e_{1}{-}\e_{2},\,2\e_{3}{-}\e_{1}{-}\e_{2}\}.
\end{align*}
\paragraph{1} With $\Btt\,{=}\,\{\e_{1}{-}\e_{2}\}$ we obtain $\Rad_{\,\;\bullet}^{\stt}\,{=}\,\{{\pm}(\e_{1}{-}\e_{2})\}$,
with $S\!$-diagram 
\begin{equation*}
 \xymatrix @M=0pt @R=7pt @!C=3pt{
 \alphaup_{1}&\alphaup_{2}\\
 \medbullet\ar@3{->}[r]&\medcirc}
 \end{equation*}
 and therefore 
\begin{equation*}
 \Bz_{\bullet}^{\stt}(C)\,{=}\,\{\alphaup_{1}\},\;\; \Bz_{\star}^{\stt}(C)\,{=}\,\{\alphaup_{2}),\;\;
 \stt(\alphaup_{2})\,{=}\, 3\alphaup_{1}{+}\alphaup_{2}.
\end{equation*}
Then,  by \eqref{eq4.9}, \par\centerline{
$\etaup\in\Homs$ iff $\etaup\,{\in}\,\Homz$ and $\etaup(\alphaup_{1})\,{=}\,1$.} 
\par\medskip 
 \paragraph{2} With $\Btt\,{=}\,\{2\e_{3}{-}\e_{1}{-}\e_{2}\}$ we obtain 
 $\Rad_{\,\;\bullet}^{\stt}\,{=}\,\{{\pm}(2\e_{3}{-}\e_{1}{-}\e_{2})\}$,
with $S\!$-diagram 
\begin{equation*}
 \xymatrix @M=0pt @R=7pt @!C=3pt{
 \alphaup_{1}&\alphaup_{2}\\
 \medcirc\ar@3{->}[r]&\medbullet}
 \end{equation*}
 and therefore 
\begin{equation*}
 \Bz_{\bullet}^{\stt}(C)\,{=}\,\{\alphaup_{2}\},\;\; \Bz_{\star}^{\stt}(C)\,{=}\,\{\alphaup_{1}),\;\;
 \stt(\alphaup_{1})\,{=}\, \alphaup_{1}{+}\alphaup_{2}.
\end{equation*}
Then,  by \eqref{eq4.9}, \par\centerline{
$\etaup\in\Homs$ iff $\etaup\,{\in}\,\Homz$ and $\etaup(\alphaup_{2})\,{=}\,1$.}
 
\subsection{Cartan subalgebras of a real semisimple Lie algebra}
Assume now that a real form $\go$ of $\gt$ has been fixed. 
By using Thm.\ref{t3.1} and 
the Cayley transfom (see e.g. \S\ref{s3.2.2}),
we can find $\sigmaup_{0}\,{\in}\,\Invs$ such that, for $\stt\,{=}\,\rhoup(\sigmaup_{0})$, 
we have $\go\,{\simeq}\,\gt_{\sigmaup_{0}}$ and $\Rad^{\stt}_{\bullet}\,{=}\,\Rad^{\sigmaup_{0}}_{\bullet}$.
In this case $\hg_{\stt}$ is a Cartan subalgebra of $\gt_{\sigmaup_{0}}$, with maximal vector part. 
From the discussion of the previous subsections we obtain 
\begin{thm}
 Let $\go$ be a real semisimple Lie algebra and $\sigmaup_{0}\,{\in}\,\Inv^{\tauup}(\gr,\hr)$ an involution
 such that $\gt_{\sigmaup_{0}}\,{\simeq}\go$. Let 
\begin{equation*}
 \sigmaup_{0}\,{=}\,\epi\circ\sq_{\,\betaup_{1}}{\circ}\cdots{\circ}\sq_{\,\betaup_{r}}
\end{equation*}
for an involution $\epi\,{\in}\,\Ib^{*}(\Rad)$ and a set $\betaup_{1},\hdots,\betaup_{r}$
of strongly orthogonal roots in $\Rd{\circ}{\epi}$. Then the 
equivalence classes of Cartan subalgebras of $\go$
are in a one to one correspondence with the equivalence classes $\Btt$ of strongly orthogonal roots
in $\Rd{\circ}{\epi}$ containing $\betaup_{1},\hdots,\betaup_{r}$. \qed
\end{thm}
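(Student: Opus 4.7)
My plan is to establish the bijection by lifting to the quasi-split real form attached to $\epi$ and applying Thm~\ref{t3.20}, then transferring the resulting parametrization to $\go$ via the twisting construction $\sigmaup'=\sq_{\,\Btt}^{\sharp}\circ\sigmaup$ from the (unlabelled) theorem stated just after Notation~\ref{n3.4}. By Thm~\ref{t3.14} there is a lift $\epi^\sharp\in\Inv^{\tauup}_{\!\epi}(\gr,\hr)$, which is quasi-split by Prop~\ref{p1.23} since $\epi\in\Ib^*(\Rad)$; Thm~\ref{t3.20} then identifies conjugacy classes of Cartan subalgebras of $\gt_{\epi^\sharp}$ with equivalence classes of strongly orthogonal systems $\Btt\subset\Rd{\circ}{\epi}$. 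Writing $\stt_0=\epi\circ\sq_{\,\Btt_0}=\rhoup(\sigmaup_0)$ with $\Btt_0=\{\betaup_1,\ldots,\betaup_r\}$, my claim is that the $\Btt$'s giving rise to Cartan subalgebras of the (possibly non-quasi-split) real form $\go\simeq\gt_{\sigmaup_0}$ are exactly those containing $\Btt_0$ modulo equivalence.

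For sufficiency, let $\Btt\supseteq\Btt_0$ be strongly orthogonal in $\Rd{\circ}{\epi}$ and write $\Btt=\Btt_0\sqcup\Btt_1$. Each $\alphaup\in\Btt_1$ is orthogonal to every $\betaup_i\in\Btt_0$, so $\sq_{\,\Btt_0}(\alphaup)=\alphaup$, and is fixed by $\epi$, whence $\stt_0(\alphaup)=\alphaup$; strong orthogonality of $\Btt_1$ is inherited from $\Btt$. Since $\Btt_1\subset\Rd{\circ}{\epi}$, $\sq_{\,\Btt_1}$ commutes with both $\epi$ and $\sq_{\,\Btt_0}$, so $\sq_{\,\Btt_1}\circ\stt_0=\epi\circ\sq_{\,\Btt}$. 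Applying the theorem after Notation~\ref{n3.4} to the pair $(\sigmaup_0,\Btt_1)$ produces
\begin{equation*}
\sigmaup_{\Btt}\;{:=}\;\sq_{\,\Btt_1}^{\sharp}\circ\sigmaup_0\in\Inv^{\tauup}_{\!\epi\circ\sq_{\,\Btt}}(\gr,\hr),\qquad \gt_{\sigmaup_{\Btt}}\simeq\gt_{\sigmaup_0}\simeq\go,
\end{equation*}
so $\hg_{\epi\circ\sq_{\,\Btt}}$ realizes a Cartan subalgebra of $\go$ parametrized by $\Btt$; the case $\Btt=\Btt_0$ recovers the original $\hg_{\stt_0}\subset\go$.

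For necessity and well-definedness, let $\ho\subset\go$ be an arbitrary Cartan subalgebra. By Thm~\ref{t3.1}, $(\go,\ho)$ is $\Aut_\C(\gt)$-conjugate to $(\gt_{\sigmaup},\hst)$ for some $\sigmaup\in\Inv^{\tauup}_{\!\stt}(\gr,\hr)$ with $\gt_{\sigmaup}\simeq\go$, and Prop~\ref{p2.7} writes $\stt=\epi'\circ\sq_{\,\Btt'}$ with $\epi'\in\Ib^*(\Rad)$ and $\Btt'\subset\Rd{\circ}{\epi'}\cap\Rd{\bullet}{\stt}$ strongly orthogonal. The main obstacle is to show that $(\epi',\Btt')$ is $\Wf(\Rad)$-equivalent to $(\epi,\Btt)$ for some $\Btt\supseteq\Btt_0$. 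To do this I would apply Thm~\ref{t4.12} to both $\sigmaup$ and $\sigmaup_0$, performing iterated Cayley transforms (Prop~\ref{p4.10}) through maximal strongly orthogonal systems of noncompact imaginary roots, to reach involutions $\sigmaup^{\flat},\sigmaup_0^{\flat}$ whose real forms remain isomorphic to $\go$ but with $\Rd{\oast}{\cdot}=\emptyset$; Thm~\ref{t4.14} then forces $(\Rd{\bullet}{\sigmaup^{\flat}},\Rd{\oast}{\sigmaup^{\flat}})$ and $(\Rd{\bullet}{\sigmaup_0^{\flat}},\Rd{\oast}{\sigmaup_0^{\flat}})$ into a common $\Wf(\Rad)$-orbit, and tracking the Cayley chains in reverse identifies $\epi'$ with $\epi$ and $\Btt'$ with some $\Btt\supseteq\Btt_0$, both up to equivalence. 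This canonicity of $\epi$ and of the equivalence class of $\Btt_0$ as invariants of the isomorphism class of $\go$ is the hard part. Finally, Thm~\ref{t4.13} ensures that two $\Wf(\Rad)$-equivalent systems $\Btt,\Btt'\supseteq\Btt_0$ in $\Rd{\circ}{\epi}$ yield conjugate Cartan subalgebras of $\go$, giving well-definedness of the bijection.
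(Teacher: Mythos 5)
Your assembly uses exactly the ingredients the paper itself relies on (the paper offers no argument for this theorem beyond ``from the discussion of the previous subsections''), and the sufficiency half is sound: the observation that $\Btt_{1}\subset\Rd{\circ}{\stt_{0}}$, the commutation $\sq_{\,\Btt_{1}}\circ\stt_{0}=\epi\circ\sq_{\,\Btt}$, and the twisting theorem following Notation~\ref{n3.4} do realize every class $\Btt\supseteq\Btt_{0}$ by a Cartan subalgebra of a real form isomorphic to $\go$.

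The genuine gap is the surjectivity step that you yourself flag as ``the hard part'', and closing it needs two concrete facts your sketch does not supply. (i) Reversing the Cayley chain of Thm~\ref{t4.12}: if $\Mtt$ is the maximal strongly orthogonal system in $\Rd{\oast}{\sigmaup}$ used there, then $\rhoup(\sigmaup^{\flat})=\stt\circ\sq_{\,\Mtt}$ and every root of $\Mtt$ is \emph{real} for $\rhoup(\sigmaup^{\flat})$, so after the Weyl conjugation matching the two Satake data one gets $\stt=\stt_{0}\circ\sq_{\,\Mtt}$ with $\Mtt\subset\Rd{\circ}{\stt_{0}}$. (ii) The identity $\Rd{\circ}{\stt_{0}}=\Rd{\circ}{\epi}\,{\cap}\,\{\betaup_{1},\hdots,\betaup_{r}\}^{\perp}$, which is what lets you conclude that $\Btt_{0}\cup\Mtt$ is an orthogonal system \emph{inside} $\Rd{\circ}{\epi}$ \emph{containing} $\Btt_{0}$ with $\stt=\epi\circ\sq_{\,\Btt_{0}\cup\Mtt}$; it follows from the eigenvector argument of Prop.~\ref{p2.7} and Prop.~\ref{p4.10}, since $\stt_{0}(\muup)=\muup$ gives $\epi(\muup)-\muup={\sum}_{i}\langle\muup|\betaup_{i}\rangle\betaup_{i}$ and applying $\epi$ reverses the sign of the left side while fixing the right, forcing both to vanish. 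Without (ii) you cannot place the system attached to an arbitrary Cartan subalgebra in $\Rd{\circ}{\epi}$ over $\Btt_{0}$, which is the whole content of the statement. Two smaller points remain: a word (via Lemma~\ref{l2.4} or the long-root convention built into Thm~\ref{t4.12}) is needed to make $\Btt_{0}\cup\Mtt$ \emph{strongly} orthogonal in the non-simply-laced types, and your appeal to Thm~\ref{t4.13} for well-definedness should be restricted to Weyl elements centralizing $\epi$, since $w\circ(\epi\circ\sq_{\,\Btt})\circ w^{-1}=(w\epi{w}^{-1})\circ\sq_{\,w(\Btt)}$.
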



%
\end{document}